\documentclass[draft,reqno,T1]{amsproc}
\usepackage{tikz}
\usetikzlibrary{arrows,automata,calc,shapes,positioning}
\usepackage{color}
\usepackage{amssymb}
\usepackage{amsmath}
\usepackage{amsfonts}
\usepackage[top=2.5cm,bottom=2.5cm,outer=2.2cm,inner=2.2cm,heightrounded,marginparwidth=1cm,marginparsep=.5cm]{geometry}
\usepackage[noadjust]{marginnote}
\usepackage{tkz-berge} 
\usepackage{thmtools}
\usepackage{proof} 
\usepackage{cmll} 
\usepackage{centernot} 
\usepackage{wasysym} 
\usepackage{caption} 

\usepackage[all]{xy}

\setcounter{tocdepth}{1}

\definecolor{myurlcolor}{rgb}{0,0,0.4}
\definecolor{mycitecolor}{rgb}{0,0.5,0}
\definecolor{myrefcolor}{rgb}{0.5,0,0}
\usepackage[pagebackref,draft=false]{hyperref}
\hypersetup{colorlinks,
linkcolor=myrefcolor,
citecolor=mycitecolor,
urlcolor=myurlcolor}

\newcommand{\beq}{\[}
\newcommand{\eeq}{\]}
\newcommand{\beqn}{\begin{equation}}
\newcommand{\eeqn}{\end{equation}}
\newcommand{\Z}{\mathbb{Z}}
\newcommand{\N}{\mathbb{N}}
\newcommand{\Npos}{\mathbb{N}_{>0}}
\newcommand{\Q}{\mathbb{Q}}
\newcommand{\Qpos}{\mathbb{Q}_{> 0}}
\newcommand{\Qplus}{\mathbb{Q}_{\geq 0}}
\newcommand{\B}{\mathbb{B}}
\newcommand{\C}{\mathbb{C}}
\newcommand{\R}{\mathbb{R}}
\newcommand{\Rplus}{\mathbb{R}_{\geq 0}}
\newcommand{\Rpos}{\mathbb{R}_{>0}}

\newcommand{\lra}{\longrightarrow}
\newcommand{\eps}{\varepsilon}

\newcommand{\lin}{\mathrm{lin}}

\newcommand{\defin}{:=}
\newcommand{\ann}{\mathrm{ann}} 
\newcommand{\Rreg}{R^\mathrm{reg}}

\newcommand{\Chem}{\mathtt{Chem}}
\newcommand{\CC}{\mathtt{CommCh}}
\newcommand{\Grph}{\mathtt{Grph}}
\newcommand{\ProbMaj}{\mathtt{ProbMajor}}


\newcommand{\OCM}{\mathtt{OCM}}
\newcommand{\OAG}{\mathtt{OAG}}
\newcommand{\OVS}{\mathtt{OVS}_\Q}
\newcommand{\AOVS}{\mathtt{AOVS}_\Q}

\newcommand{\canc}{\mathrm{canc}}
\newcommand{\oag}{\mathrm{oag}}
\newcommand{\tf}{\mathrm{tf}}
\newcommand{\ovs}{\mathrm{ovs}_\Q}
\newcommand{\aovs}{\mathrm{aovs}_\Q}

\newcommand{\inalph}{\mathcal{A}}
\newcommand{\outalph}{\mathcal{B}}
\newcommand{\inalphh}{\mathcal{C}}
\newcommand{\outalphh}{\mathcal{D}}
\newcommand{\enc}{\mathop{\mathrm{enc}}}
\newcommand{\dec}{\mathop{\mathrm{dec}}}

\newcommand{\lovasz}{\overline{\vartheta}}

\swapnumbers
\theoremstyle{plain}
\newtheorem{thm}{Theorem}[section]
\newtheorem{defn}[thm]{Definition}
\newtheorem{lem}[thm]{Lemma}
\newtheorem{prop}[thm]{Proposition}
\newtheorem{cor}[thm]{Corollary}

\newtheorem{qstn}[thm]{Question}
\newtheorem*{uqstn}{Question}
\theoremstyle{definition}
\newtheorem{ex}[thm]{Example}
\newtheorem{prob}[thm]{Problem}

\theoremstyle{remark}
\newtheorem{rem}[thm]{Remark}

\numberwithin{equation}{section}

\newcommand{\implproof}[2]{\underline{\ref{#1}$\Rightarrow$\ref{#2}:}}


\allowdisplaybreaks

\renewcommand{\emph}[1]{\textbf{#1}}
\newcommand{\emphalt}[1]{\textit{#1}}



\begin{document}
\sloppy

\setlength{\jot}{6pt}



\title{Resource convertibility and ordered commutative monoids}

\author{\medskip}

\address{Tobias Fritz, Perimeter Institute for Theoretical Physics\\
Waterloo ON, Canada}
\email{tfritz@perimeterinstitute.ca}

\date{\today}

\keywords{Ordered commutative monoids; resource theories; catalysis; economy of scale; rates of conversion; ordered vector spaces; Hahn--Banach theorem; von Neumann--Morgenstern utility theorem; graph invariants.}

\subjclass[2010]{Primary: 90B99 (Operations research and management science), 06F05 (Ordered semigroups and monoids). Secondary: 05C60 (graph homomorphisms), 92E20 (Chemical reactions), 18D20 (Enriched categories), 46B40 (Ordered normed spaces).}

\thanks{\textit{Acknowledgements.} I would like to thank Rob Spekkens for numerous discussions over the past two years which shaped my thinking and for very detailed comments on a draft; Corsin Pfister for reminding me of the work of Lieb and Yngvason; Jamie Vicary for asking the most obvious questions that were not obvious to me; Micha{\l} Horodecki for opposition to embezzlement; David Roberson and Laura Man{\v{c}}isnka for discussions on graph theory; John Baez, David Deutsch, Elliot Lieb and Chiara Marletto for valuable feedback; an anonymous referees for important suggestions and for having high standards; and last but not least, all contributors to the \href{http://www.ncatlab.org/nlab/show/HomePage}{nLab} for having created an incredibly useful resource for everything categorical. Research at Perimeter Institute is supported by the Government of Canada through Industry Canada and by the Province of Ontario through the Ministry of Economic Development and Innovation. The author has been supported by the John Templeton Foundation.}

\begin{abstract}
Resources and their use and consumption form a central part of our life. Many branches of science and engineering are concerned with the question of which given resource objects can be converted into which target resource objects. For example, information theory studies the conversion of a noisy communication channel instance into an exchange of information. Inspired by work in quantum information theory, we develop a general mathematical toolbox for this type of question. The convertibility of resources into other ones and the possibility of combining resources is accurately captured by the mathematics of ordered commutative monoids. As an intuitive example, we consider chemistry, where chemical reaction equations such as
\[
\mathrm{2H_2 + O_2} \lra \mathrm{2H_2O}
\]
are concerned both with a convertibility relation ``$\lra$'' and a combination operation ``$+$''. We study ordered commutative monoids from an algebraic and functional-analytic perspective and derive a wealth of results which should have applications to concrete resource theories, such as a formula for rates of conversion. As a running example showing that ordered commutative monoids are also of purely mathematical interest without the resource-theoretic interpretation, we exemplify our results with the ordered commutative monoid of graphs.

While closely related to both Girard's linear logic and to Deutsch's constructor theory, our framework also produces results very reminiscent of the utility theorem of von Neumann and Morgenstern in decision theory and of a theorem of Lieb and Yngvason on the foundations of thermodynamics.

Concerning pure algebra, our observation is that some pieces of algebra can be developed in a context in which equality is not necessarily symmetric, i.e.~in which the equality relation is replaced by an ordering relation. For example, notions like cancellativity or torsion-freeness are still sensible and very natural concepts in our ordered setting.
\end{abstract}

\vspace*{-0mm}

\maketitle

\tableofcontents

\newpage
\section{Introduction}
\label{introduction}

In our industrialized world, we deal with resources and their management on a daily basis. This applies to things that are commonly considered resources, such as energy, water, and certain minerals like coal or iron ore. But also more abstract quantities such as time or information can be considered resources. The main characteristic of a resource is that it can be used or consumed, either in order to produce some desirable commodity, or in order to produce \emphalt{other resources} for producing desirable commodities, such as machine equipment.

The goal of our present work is part of an ongoing project to develop a general mathematical theory of resources and their convertibility, with intended applications to the natural sciences, engineering, and eventually economic theory. This has been started in~\cite{resourcesI} and will be continued here. Before diving into the details, we now describe the main ingredients of our approach and where they originate. Other already existing approaches will be discussed in Section~\ref{sectcompare} and compared to ours.

For our purposes, we make no distinction between desired target commodities and the resource objects that are used or consumed in generating them. More concretely, we also treat any target commodity itself as a resource object. The advantage of this is that it allows for a uniform mathematical formalism in which ``everything is a resource object'', where ``everything'' refers to every entity in the context under consideration. In real-world applications, this may also include things like waste or pollution. These are ``resource'' objects to which one would naturally ascribe a negative value. In the language of economics, most \emphalt{negative externalities} are resource objects of this form. In some cases, it may not even be clear whether a given resource object has a positive or negative value. For example, a chemical such as lead (Pb) may be very useful for the construction of batteries, but at the same time may be a serious annoyance due to its toxicity. For this reason, it is among the basic tenets of our framework that resource objects should \emphalt{not} be assigned a unique number which measures their value. Rather, the utility of a resource is determined solely through its interaction with other resource objects. This \emphalt{structuralist} philosophy will be familiar to anyone who has studied some category theory.

As far as this work is concerned, the main characteristic of resource objects is their \emphalt{convertibility}, i.e.~the potential of resource objects to be turned into other resource objects. Due to the inherent circularity in this statement, this cannot be understood as a definition of what constitutes a resource object. Rather, it serves as a guiding principle for the mathematical formalism that we develop. This formalism is intended to be a toolbox for asking and answering questions of the following form:\medskip
\begin{enumerate}
\item\label{qa} Under which conditions can a resource object $x$ be converted into a resource object $y$?
\item Can the use of a third resource object $z$ as a catalyst help in achieving the conversion of $x$ into $y$?
\item\label{qc} If one tries to convert many copies of $x$ into many copies of $y$, then how many copies of $x$ does one need on average in order to produce one copy of $y$?
\end{enumerate}\medskip
As we argued in~\cite{resourcesI}, these kinds of questions belong to the \emphalt{pragmatic} approach to science, since they are of an engineering-type nature. So our work is not intended to be of relevance to e.g.~fundamental physics, but we do not rule out such applications.

Questions~\ref{qa}--\ref{qc} have already been studied extensively in certain particular contexts, and this is where we draw some of our motivation and ideas from. Especially in quantum information theory, the ``resource'' aspect of certain quantum states has been widely investigated, and answers to the above questions have been sought. For example, quantum entanglement has long been thought of as a valuable resource for quantum information processing~\cite{pureent,partialent,quantuminformation,entanglement}. The conversion of entanglement as a resource into information processing as a target commodity happens through a multitude of protocols such as quantum teleportation, dense coding etc. In recent years, the study of \emphalt{resource theories} other than entanglement has become a subdiscipline of quantum information theory~\cite{thermocost,nonuniformity,asymmetry,secondlaws}. The term ``resource theory'' refers to a particular context in which one investigates a collection of resource objects with a given convertibility relation.

Some of these resource theories have close connections to thermodynamics and its second law. In fact, resource-theoretic considerations have been central to thermodynamics since Carnot's work on heat engines, which was concerned precisely with questions of the above form: how much work can be extracted from two bodies of different temperatures? So it is no surprise that definitions and results closely related to ours can also be found in the approach of Lieb and Yngvason to thermodynamics and thermodynamic entropy~\cite{secondlaw1}\footnote{See~\cite{secondlaw2,freshlook,secondlaw3} for further developments and exposition, and~\cite{entropyprinciple} for a textbook account.}. We hope that our toolbox will be useful for the further development of Lieb--Yngvason thermodynamics. For example, our Theorem~\ref{numocm} is closely related to the main theorem of~\cite{secondlaw1} on the characterization of thermodynamic entropy---and surprisingly, also to the ``utility theorem'' of von Neumann and Morgenstern in the foundations of economics~\cite{games}. Besides the study of resource theories in quantum information theory, these can be considered to be the main precursors to our work. Lieb and Yngvason already anticipated the possibility that mathematical structures similar to theirs may have wide applicability in the sciences~\cite[p.~2]{secondlaw3}. 

\begin{figure}
\centering
\xymatrix{ 
& & & & & \textrm{Section~\ref{sectfun}} \\
& & \textrm{Section~\ref{sectoag}} \ar[r] & \textrm{Section~\ref{sectovs}} \ar[r] & \textrm{Section~\ref{sectaovs}} \ar[ru] \ar[dr] \ar[r] & \textrm{Section~\ref{sectrates}} \ar@{-->}[d] \\
 \textrm{Section~\ref{prelims}} \ar[r] & \textrm{Section~\ref{sectocm}} \ar[ur] \ar[dr] & & & & \textrm{Section~\ref{sectonedim}} \\
 & & \textrm{Section~\ref{sectcompare}} }
\caption{Section dependence.}
\label{sectdepend}
\end{figure}
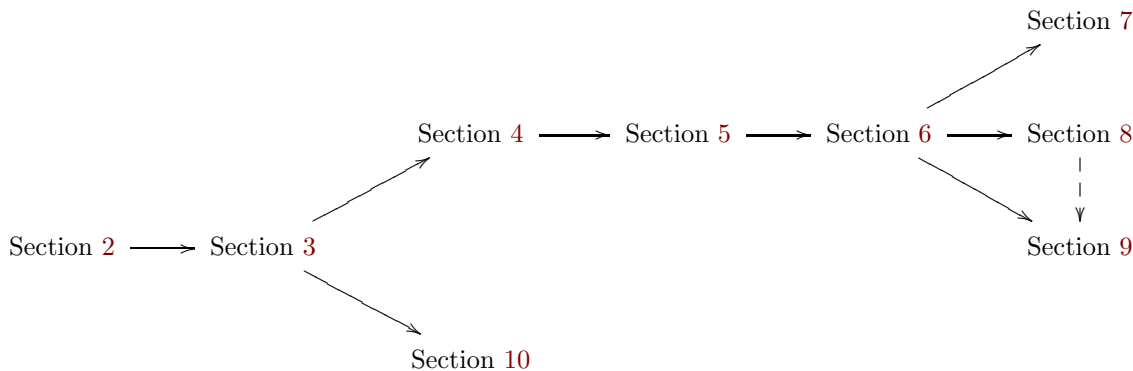

\bigskip

\subsection*{How to read this paper}

Except for Appendix~\ref{enrichment}, this paper is self-contained. Almost all our references are pointers to further reading or have been included for proper attribution. Since this paper is long and contains a substantial amount of material, we now try to assist the reader in deciding which parts to read. Of course, this depends on the reader's background and interests.\medskip

\begin{itemize}
\item Sections~\ref{prelims} to~\ref{sectovs} are mathematically rather basic, and our main points in these sections are the resource-theoretic interpretations. The mathematics will be close to obvious to anyone familiar with universal constructions in algebra or with the different kinds of symmetric monoidal categories (Appendix~\ref{enrichment}), and these readers may proceed through these sections rather quickly.
\item On the other hand, Sections~\ref{sectaovs} to~\ref{sectonedim} are mathematically more technical, and contain less new material on the resource-theoretic interpretation. This is where most of our hard results are to be found, which we hope to be useful in concrete applications. The kind of mathematics used here is functional analysis, and readers with a good background on locally convex spaces will encounter familiar material.
\item Readers with a background in linear logic may start with Section~\ref{linlog}, possibly after a cursory read of Section~\ref{sectocm}.
\item Readers who understand symmetric monoidal categories well may want to refer to Appendix~\ref{enrichment} rather early on and refer back to it repeatedly.
\end{itemize}\medskip

The following detailed outline will help readers get a better idea of which parts of the paper are of interest to them.

\bigskip

\subsection*{Summary and main results}

The purpose of this paper is to construct a mathematical framework in which questions of the form~\ref{qa}--\ref{qc} can be asked, and to develop some general tools for answering them.

In Section~\ref{prelims}, we start gently by considering the mathematics of ordered sets. These formalize the convertibility relation between resource objects: if $x$ can be converted into $y$ and $y$ can be converted into $z$, then also $x$ can be converted into $z$; moreover, any $x$ can be converted into itself. Mathematically, this constitutes the definition of a preorder. Since any two mutually interconvertible resource objects might as well be considered to be ``the same'' resource object, we argue that there is no loss of generality in working with partial orders instead of preorders, which is what we do from then on. We discuss some questions of interpretation and present the resource theory of chemistry as our first example, where the resource objects are collections of molecules or atoms such as
\beq
\mathrm{CH_4}, \qquad \mathrm{C_{60} + HCl + C_6 H_6}, \qquad \mathrm{2H_2O_2},\qquad \mathrm{4 N_2 + O_2}, \qquad \ldots
\eeq
There is a notion of convertibility of two such resource objects generated by chemical reactions, such as
\beq
\mathrm{CH_4 + 2O_2} \lra \mathrm{CO_2 + 2H_2 O}, \qquad \mathrm{Zn + 2HCl} \lra \mathrm{ZnCl_2 + H_2}, \qquad \ldots
\eeq
Then we introduce ordered maps as morphisms of ordered sets. Resource monotones which measure the value of every object in terms of a real number are ordered maps with values in $\R$. We explain that $x\geq y$ in an ordered set $A$ if and only if $f(x)\geq f(y)$ for all such resource monotones $f:A\to\R$. A resource monotone $f$ is a conservation law if $-f$ is a resource monotone as well.

In Section~\ref{sectocm}, we go further by also considering the other main structure of resource theories besides the convertibility relation, and this is the combination operation: for any two resource objects $x$ and $y$, there should exist a resource object denoted $x+y$ which describes the situation of having both $x$ and $y$ together, and moreover there should exist a resource object denoted $0$ which represents the vacuous resource object or ``nothing''. This leads to our Definition~\ref{ocm}, which introduces the notion of ordered commutative monoid. The idea is that in any resource theory, the convertibility relation and combination operation should equip the collection of resource objects with the structure of an ordered commutative monoid. Mathematically, ordered commutative monoids are the central concept of this paper. Our third example is the resource theory of communication, as developed by Shannon in his foundational work which established information theory. It has communication channels---mathematically formalized by stochastic matrices---as resource objects. These can be converted into each other by pre- and post-processing using representing encoding and decoding operations. Two such communication channels can be combined by using them in parallel. In this way, the resource theory of communication is described by an ordered commutative monoid denoted $\CC$. Unfortunately, we do not yet know how to deal with the analytic details in Shannon's theory having to do with allowing small errors of communication in order to optimize throughput. Therefore we consider $\CC$ mostly because of its close relation with $\Grph$ in terms of zero-error communication, and also to give some idea of what the challenges for improving our present approach are. We hope to achieve a comprehensive resource-theoretic treatment of Shannon's theorems in future work.

More examples of resource theories can be found in~\cite{resourcesI}. Specific examples will need to be developed in detail in dedicated work for each application separately, and so we keep the selection of examples rather sparse in this paper.

The section continues with the definition of homomorphisms and isomorphisms of ordered commutative monoids. Definition~\ref{deffunc} introduces functionals on an ordered commutative monoid as homomorphisms with values in $\R$; in resource-theoretic terms, these are the resource monotones which respect the combination operation. We then pose the main question of the present work:

\begin{restatable*}{qstn}{mainqstn}
\label{mainqstn}
If $f(x) \geq f(y)$ for all functionals $f$, what does this tell us about the ordering of $x$ relative to $y$?
\end{restatable*}

In resource-theoretic terminology, the same question reads like this:

\begin{uqstn}
If $f(x) \geq f(y)$ for all additive resource monotones $f$, what does this tell us about the convertibility of $x$ into $y$?
\end{uqstn}

We use this as a guiding question for the rest of the paper---not only because it seems like a natural question, but also because the methods that we develop to answer it should be interesting and useful in their own right. Definition~\ref{generatingpair} then introduces the notion of generating pair as a technical property that a given ordered commutative monoid may or may not possess. We hope that generating pairs will exist in many cases of interest, and many of our subsequent results assume the existence of a generating pair.

Section~\ref{sectoag} explores ordered abelian groups as a particularly nice and tractable class of ordered commutative monoids. We explain how to every commutative monoid $A$, one can associate an ordered abelian group $\oag(A)$ together with a homomorphism $A\to\oag(A)$ having the property that every homomorphism $f:A\to G$ to another ordered abelian group $G$ factors uniquely through $\oag(A)$, resulting in Theorem~\ref{ocmtooag}. Since $\R$ is an ordered abelian group, this applies in particular with $G=\R$, and hence gives a partial answer to Question~\ref{mainqstn}. The construction of $\oag(A)$ has an appealing resource-theoretic interpretation in terms of catalysis: $\oag(A)$ describes the same resource theory as $A$, up to two important differences. First, the convertibility relation is replaced by ``catalytic convertibility'', in which arbitrary catalysts are allowed to facilitate conversions. Second, for every resource object $x$ there is a new resource object $-x$ such that $x + (-x) = 0$, i.e.~such that $-x$ stands for ``owing'' a copy of $x$ to a ``bank''. In fact, this second modification necessitates the first, since then we can borrow any desired catalyst from a bank and return it after use. The construction of $\oag(A)$ generalizes the standard construction of the Grothendieck group associated to a commutative monoid.

Section~\ref{sectovs} continues this strategy of ``regularizing'' ordered commutative monoids to structures that are easier to analyze mathematically. The new protagonists are ordered $\Q$-vector spaces, which form a particularly well-behaved subclass of ordered abelian groups, namely those that are torsion-free and divisible. Similar to the previous case, we associate to every ordered abelian group $G$ an ordered $\Q$-vector space $\ovs(G)$ together with a homomorphism $G\to \ovs(G)$ such that any other homomorphism $G\to V$ to an ordered $\Q$-vector space $V$ factors uniquely through $\ovs(G)$. This regularization also has a nice resource-theoretic interpretation, again in terms of two important differences between $G$ and $\ovs(G)$. First, the convertibility relation on $\ovs(G)$ is the many-copy convertibility, where $x$ can be converted into $y$ if and only if some number of copies of $x$ can be converted into the same number of copies of $y$ in $G$. Second, $\ovs(G)$ contains additional resource objects which behave like formal $n$-th parts or $n$-th fractions of the resource objects in $G$. 

If we start with an ordered commutative monoid $A$, then we can combine the two regularization procedures and obtain an ordered $\Q$-vector space $\ovs(\oag(A))$, which we also denote by $\ovs(A)$ for brevity. By doing this, both the catalytic and the many-copy regularizations have been performed. This may facilitate many conversions between resource objects that were impossible originally. At the mathematical level, we are now dealing with a vector space rather than a monoid, and hence the well-understood tools of linear algebra and functional analysis can be applied. The price to pay is that our two regularizations have lost some information about the original ordered commutative monoid $A$, and in particular about the original plain convertibility relation.

In Section~\ref{sectaovs}, a third step of regularization $V\mapsto\aovs(V)$ takes us from an ordered $\Q$-vector space $V$ to an Archimedean ordered $\Q$-vector space $\aovs(V)$, again satisfying a universal property analogous to the previous two. And again, an Archimedean ordered $\Q$-vector space is an ordered $\Q$-vector space which is well-behaved in a certain technical sense. The resource-theoretic interpretation is that in addition to allowing catalysis and many-copy conversions, we now also allow the use of seeds, i.e.~the consumption of an arbitrarily small amount of a resource object which may facilitate a desired conversion. At this point, we finally find an answer to Question~\ref{mainqstn} in the form of Theorem~\ref{aovshb}, which can be rephrased as saying that $x\geq y$ holds in an Archimedean ordered $\Q$-vector space $W$ if and only if $f(x)\geq f(y)$ for all functionals $f:W\to\R$. In Theorem~\ref{ocmhb}, we lift this result to a complete answer to Question~\ref{mainqstn} for ordered commutative monoids with a generating pair.

Figure~\ref{smcats} provides an overview of the three regularization procedures that we perform between ordered commutative monoids and the three well-behaved subclasses of ordered commutative monoids that have been mentioned.

In Section~\ref{sectfun}, we look abstractly at the collection of all functionals on an ordered commutative monoid, with particular emphasis on those ordered commutative monoids which have a generating pair. For this case, we prove in Corollary~\ref{ocmfunrep}, roughly speaking, that every functional is a nonnegative linear combination of extremal functionals (possibly infinite, i.e.~an integral).

In Section~\ref{sectrates}, we consider the resource-theoretic notion of rate and formalize it in our framework. Rates are concerned with the scaling of how many copies of a resource object $y$ one can extract from many copies of a resource object $x$. We propose a precise definition of rate and prove various properties. Since this notion of rate still turns out to have some technical deficits, we also propose a notion of regularized rate which applies to ordered commutative monoids with a generating pair. We derive a multitude of results on regularized rates, some of which are straightforward to show, while others seem surprisingly deep. Our probably most useful result is Theorem~\ref{rateformula}, which provides a formula for maximal regularized rates,
\beq
\boxed{\Rreg_{\max}(x\to y) = \inf_f \frac{f(x)}{f(y)}.}
\eeq
Here, $f$ ranges over all functionals with $f(y)\neq 0$, and it is assumed that $x,y\geq 0$. This results in a new formula~\eqref{scapform} for the Shannon capacity of a graph, which is a graph invariant that is notoriously hard to compute. As speculated in Example~\ref{probmajconj}, the rate formula may also have immediate applicability to the resource theories which have been investigated in quantum information theory. 

Section~\ref{sectonedim} investigates when an ordered commutative monoid $A$ embeds into $\R$, which means that there exists a functional $f:A\to\R$ such that $x\geq y$ is equivalent to $f(x)\geq f(y)$. In resource-theoretic terms, this property means that there is an essentially unique way of assigning a value or price to every resource object. Using our earlier results, we provide conditions for when this happens in Theorem~\ref{numocm}. This result is closely related to the utility theorem of von Neumann and Morgenstern in decision theory and economics, and also to a theorem of Lieb and Yngvason in the foundations of thermodynamics. While it is an extremely strong requirement for $A$ itself to embed into $\R$, it is a much weaker requirement for the regularization $\aovs(A)$ to embed into $\R$, and Theorem~\ref{aovsonedim} provides a characterization for when this happens. For technical reasons, we formulate and prove this result only for those ordered commutative monoids $A$ which have a generating pair and satisfy $x\geq 0$ for all $x\in A$. 

In Section~\ref{sectcompare}, we briefly compare our approach to other approaches for mathematically formalizing resources and their convertibility. This comprises Girard's linear logic, Deutsch's constructor theory, and some works which are more or less general approaches to resource theories in quantum information theory.

Appendix~\ref{appendixhb} proves the standard Hahn--Banach extension theorem for $\Q$-vector spaces, which we use in Sections~\ref{sectaovs} and~\ref{sectrates}.

Finally, Appendix~\ref{enrichment} explains how parts of the main text can be regarded as special cases of certain pieces of category theory. The reason that this is interesting and relevant is that using the more general setup of category theory could help us answer resource-theoretic questions of a different flavour than~\ref{qc}--\ref{qc}, such as: if $x$ can be converted into $y$, then \emphalt{how} does one achieve such a conversion?

\tikzstyle{isa}=[draw=black,arrows={>->},thick]
\tikzstyle{regularize}=[draw=black,arrows={->},thick]
\tikzstyle{turnsinto}=[draw=black,arrows={->},thick]
\tikzstyle{category}=[rectangle,draw=black,align=center,rounded corners]

\def\adjointpair#1#2#3{
	\draw [isa] (#2) to [bend left=25] (#1) ;
	\draw [regularize] (#1) to [bend left=25] (#2) ;
	\path (#1) -- (#2) node[midway,align=center] {$\perp$\\[-3pt] #3} ; }

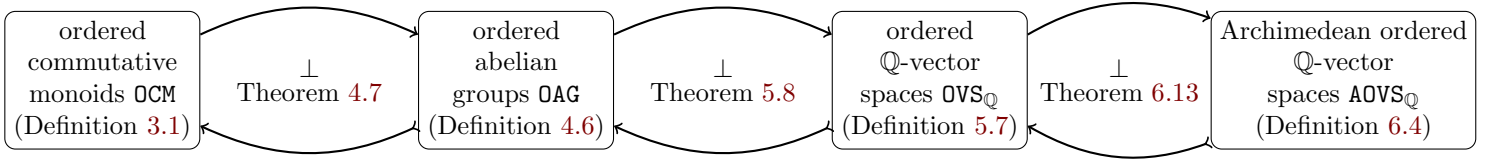
\begin{figure}
\makebox[\textwidth][c]{
\begin{tikzpicture}
\newcommand*{\horizontalsep}{5.5}
\node[category] at (0,0) (ocm) {ordered\\ commutative\\ monoids $\OCM$\\ (Definition~\ref{ocm})} ;
\node[category] at (\horizontalsep,0) (oag) {ordered\\ abelian\\ groups $\OAG$\\ (Definition~\ref{oag})} ;
\node[category] at (2*\horizontalsep,0) (ovs) {ordered\\ $\Q$-vector\\ spaces $\OVS$\\ (Definition~\ref{ovs})} ;
\node[category] at (3*\horizontalsep,0) (aovs) {Archimedean ordered\\ $\Q$-vector\\ spaces $\AOVS$\\ (Definition~\ref{aovs})} ;
\adjointpair{ocm}{oag}{Theorem~\ref{ocmtooag}}
\adjointpair{oag}{ovs}{Theorem~\ref{oagtoovs}}
\adjointpair{ovs}{aovs}{Theorem~\ref{ovstoaovs}}
\end{tikzpicture}}
\caption[]{The main kinds of structures considered in this paper and their relationships. Every box is a category, every arrow is a functor, and the left-pointing arrows indicate inclusions of full subcategories. Each pair of arrows denotes an adjunction, so that all these subcategories are reflective.

In down-to-earth terms, this means that we can read the first left-pointing arrow as ``ordered abelian groups are just ordered commutative monoids, but with extra properties'', while the first right-pointing arrow says ``Every ordered commutative monoid can be regularized to an ordered abelian group in a universal manner'', and similarly for the other arrows.}
\label{smcats}
\end{figure}

\bigskip
\subsection*{Terminology and notation}

We have followed three guiding principle in choosing our terminology:\medskip

\begin{itemize}
\item The mathematical terms should be separate and independent from their resource-theoretic interpretation. One reason for this is that the mathematical structures that we investigate here also come up in other contexts, such as the $K$-theory of operator algebras~\cite{KtheoryCstar,extstates}. One exception where we have not followed this principle is with the notion of \emphalt{rate} investigated in Section~\ref{sectrates}.
\item The various terms used should nicely match up with each other. For example, we use the adjective ``ordered'' all the way from ordered commutative monoids to Archimedean ordered $\Q$-vector spaces.
\item Many concepts from algebra in an unordered setting have a similar flavour in our ordered setting, and hence we find it useful to employ the same terminology. This applies to the definitions of annihilators (Remark~\ref{annihilator}), cancellativity (Definition~\ref{canc}), homomorphisms (Definition~\ref{defhom}) and torsion-freeness (Definition~\ref{deftf}).
\end{itemize}
\medskip

In some cases, this means that our mathematical terminology and notation clashes with some other mathematical literature, but the benefits of a consistent terminology following these principles should be higher than the cost.

\newcommand{\notation}[2]{#1: & \textrm{\: #2}. \\}

Concerning the mathematical notation, we have tried not to overload the same symbol with different meanings. The following table provides an overview of most of our notation, roughly in order of appearance:

\begin{align*}
\notation{j,k,l,m,n}{natural number coefficients in $\N$ (including $0$) or $\Npos$}
\notation{\alpha,\beta,\eps,\kappa,\lambda,\mu}{rational coefficients in $\Q$, $\Qplus$ or $\Qpos$, even when not explicitly designated as rational}
\notation{r,s,t}{real coefficients in $\R$, $\Rplus$ or $\Rpos$}
\notation{A}{an ordered commutative monoid (Definition~\ref{ocm}), in Section~\ref{prelims} an ordered set (Definition~\ref{os})}
\notation{x,y,z,w}{elements of an ordered commutative monoid}
\notation{\mathcal{G},\mathcal{H}}{graphs (Example~\ref{graphs})}
\notation{f}{a functional (Definition~\ref{deffunc}) or more general kind of homomorphism (Definition~\ref{defhom})}
\notation{(g_+,g_-)}{a generating pair in an ordered commutative monoid (Definition~\ref{generatingpair})}
\notation{\mathcal{K}_n}{the complete graph on $n$ vertices (Example~\ref{grphgenpair})} 
\notation{G}{an ordered abelian group (Definition~\ref{oag})}
\notation{g}{a generator in an ordered abelian group (Definition~\ref{oaggenerator})}
\notation{V}{an ordered $\Q$-vector space (Definition~\ref{ovs})}
\notation{W}{an Archimedean ordered $\Q$-vector space (Definition~\ref{aovs})}
\notation{U}{an absolutely convex absorbent set (Definition~\ref{defacc})}
\notation{R_{\min}, R_{\max}}{minimal and maximal rate (defined in~\eqref{defminrate} and~\eqref{defmaxrate})}
\notation{\Rreg_{\min}, \Rreg_{\max}}{minimal and maximal regularized rates (Remark~\ref{minmaxregrates})}
\notation{p}{a sublinear function (Theorem~\ref{hbext})}
\end{align*}

Last but not least, we try to harness the power of category theory as an organizing language whenever this seems possible, while also trying to explain matters in a way which will hopefully be comprehensible to those who have not yet come to appreciate the ``joy of cats''~\cite{cats,basiccats}.

\bigskip
\subsection*{A disclaimer} 

Due to the wide span of phenomena and situations which our general formalism is built to capture, we cannot be certain that our definitions will necessarily apply to all different kinds of resource theories that are of relevance to applications. In fact, as Remark~\ref{epsilonification} shows, it is already clear that there are resource theories with high relevance to probability theory and information theory which cannot be described in our framework. We hope to address this important issue in future work.

Furthermore, there is evidence that the technical details of even our current limited framework have not yet reached their final form. In particular, we see two main issues:

\begin{enumerate}
\item Many of our results assume the existence of a generator or generating pair. We suspect that this can mostly be avoided at the cost of higher complexity in the definitions and theorem statements: Lemma~\ref{seqclose} shows that the existence of a generator in an ordered $\Q$-vector space $V$ simplifies the construction of $\aovs(A)$ significantly as compared to the general case. It would be better for the presentation of our results to either assume the existence of a generating pair throughout, or to do away with it completely. Since we know neither whether we can expect a generating pair to exist in (almost) all applications of interest, nor how much more complicated the general definitions and theorems would be, we have left this tension unresolved for now.
\item The catalytic regularization of Section~\ref{sectoag} and the many-copy regularization of Section~\ref{sectovs} are qualitatively different from the seed regularization of Section~\ref{sectaovs}: the former two do not only modify the ordering or convertibility relation, but also complete the collection of resource objects by throwing in new ``imaginary'' objects like formal negatives or formal fractions; the latter regularization only regularizes the ordering without adding new elements. It turns out that one can perform this third step in a manner which also adds new elements and thereby completes an Archimedean ordered $\Q$-vector space further to an Archimedean ordered $\R$-vector space; see Remark~\ref{QvsR}. However, an Archimedean ordered $\R$-vector space is more than just an ordered commutative monoid with special properties: the scalar multiplication by arbitrary reals really does add additional structure. Since we try to focus on ordered commutative monoids in this work, we do not consider this additional step. We rather work with Archimedean ordered $\Q$-vector spaces, since these still are ordered commutative monoids with extra properties but no extra structure.
\end{enumerate}

Finally, since this paper touches on many different areas and brings them together, it is only natural that many of our methods and results are not new or even completely standard. We have tried to explain this in all cases in which we are aware of this, but it is hard to guarantee that all non-original ideas are properly designated as such. For this reason, we do not want to claim any particular one of our ideas to be original, but nevertheless hope that some of them are novel and interesting.

\newpage
\section{Preliminaries on ordered sets}
\label{prelims}

Before embarking on the journey to resource theories with both a notion of convertibility and a notion of combining resource objects, we set the stage by considering the structure of resource convertibility only. This allows for a simple mathematical treatment in which we can still make some essential observations. We hope that grasping these points will help the reader understand the upcoming main part of this work.

So what is a notion of convertibility? For any two resource objects $x$ and $y$, either $x$ is convertible into $y$, which we denote $x\geq y$, or $x$ is not convertible into $y$, for which we write $x\not\geq y$. The way that we think about the convertibility $x\geq y$ is that if we have $x$, then we can turn it into $y$, and $x$ gets consumed in doing so.

Which mathematical properties should this convertibility relation have? Clearly, any $x$ should be convertible into itself, e.g.~by doing nothing. Furthermore, if $x$ is convertible into $y$ and $y$ into $z$, then $x$ should also be convertible into $z$, e.g.~by composing a conversion of $x$ into $y$ and a conversion of $y$ into $z$. In this way, we find that the collection of resource objects forms an ordered set:

\begin{defn}
\label{os}
An \emph{ordered set} $A$ is a set equipped with a binary relation $\geq$ satisfying
\begin{itemize}
\item reflexivity, 
\beq
x\geq x.
\eeq
\item transitivity,
\beq
x\geq y,\quad  y\geq z \quad\Longrightarrow\quad x\geq z.
\eeq
\item antisymmetry,
\beq
x\geq y,\quad y\geq x \quad\Longrightarrow\quad x = y.
\eeq
\end{itemize}
\end{defn}

The way that we think about the antisymmetry axiom is not as an additional axiom, but rather as the \emphalt{definition} of equality in terms of the ordering:
\beqn
\label{equaldef}
x=y \quad:\Longleftrightarrow\quad x\geq y \:\textrm{ and }\: y\geq x .
\eeqn
In other words, two resource objects can be considered the same for the purpose of resource theories as soon as they are mutually interconvertible. This is the relevant notion of being ``the same'' in our context, and hence we might as well use the equality symbol for it and call it ``equality'', instead of introducing another term like ``equivalence''. So here and in all of the following sections, we work with~\eqref{equaldef}, even when not mentioned explicitly. This means that whenever we define a binary relation ``$\geq$'' which satisfies reflexivity and transitivity, we automatically regard two elements of the underlying set as equal as soon as they are ordered both ways, i.e.~we automatically take the quotient of the underlying set such that the antisymmetry axiom holds as well.

\begin{ex}
Another reason to adopt~\eqref{equaldef} as the definition of equality is that in some resource theories, it is not even clear what \emphalt{other} notions of equality there are. For example, consider a resource theory in which a resource object is a finite probability space, i.e.~a finite set equipped with a probability distribution. Then when should two such resource objects be considered equal? Do the underlying sets of these distributions have to contain exactly the same elements? Or is it sufficient to have a bijection between the outcome sets which preserves the individual elements' probabilities? This second notion of equality would be much coarser than the first. In particular, it would make two ``different'' distributions on \emphalt{the same set} equal as soon as the set's elements can be permuted so that the probabilities match up. But not only is the notion of equality of these probability spaces conventional; as far as the resource theory is concerned, it is also \emphalt{irrelevant}.
\end{ex}

\begin{rem}
\label{set}
For us, an unordered set is an ordered set in which the ordering relation is symmetric, i.e.~in which $x\geq y$ holds if and only if $y\geq x$ holds. This clearly means that $x\geq y$ holds if and only if $x=y$. Along these lines, we may also think of an ordering relation $\geq$ as a notion of equality in which the assumption of symmetry of equality is dropped.
\end{rem}

\begin{rem}
Instead of interpreting $x\geq y$ as ``$x$ can be converted into $y$'', we can also take it to mean ``$x$ can substitute for $y$''. For example, consider a company and two of their employees, say Alice and Bob. Say that Alice can substitute for Bob when Bob is away, but Bob cannot stand in for Alice when she is off work. This results in a ``substitutivity'' relation which is very similar in spirit to a convertibility relation, but the interpretation is slightly different. In particular, Alice can trivially substitute for herself, which leads to reflexivity; and if Alice can substitute for Bob and Bob for Charlie, then Alice can also substitute for Charlie (not necessarily at the same time), which leads to transitivity. Substitutivity is not symmetric in general, and therefore not an equivalence relation. This has previously been found to be important by the philosopher Brandom, who discusses the issue of symmetry of substitutivity in the context of inferentialism in~\cite[Chapter~4]{inferentialism}.

The difference between a convertibility relation and a substitutivity relation is that in convertibility, it is the objects themselves which become different; in substitutivity, the objects stay the same, and it is only the way in which they can be used which changes. This is very similar to the distinction between \emphalt{active transformations} and \emphalt{passive transformations} in physics. Since the mathematics of convertibility relations and substitutivity relations is the same, for us there is no need to distinguish between convertibility and substitutivity. In particular, all of our results also apply to substitutivity relations. 

Yet other possible interpretations in possible. In economics, we may take ``$x\geq y$'' to mean ``I prefer $x$ over $y$'', so that the preferences of an agent are encoded in an ordered set~\cite{games}. Another possible interpretation is the ``financial accessibility'' of~\cite[Section~1.2]{entropyprinciple}, where ``$x\geq y$'' is taken to stand for ``there is a market in which it is possible to exchange $x$ for $y$''.
\end{rem}

\begin{ex}[The resource theory of chemistry]
\label{rtchem}
The resource theory of chemistry forms a subfield of chemistry known as \emphalt{stoichiometry}. Here, the resource objects are collections of molecules denoted as formal sums such as
\beq
\mathrm{CH_4}, \qquad \mathrm{C_{60} + HCl + C_6 H_6}, \qquad \mathrm{2H_2O_2},\qquad \mathrm{4 N_2 + O_2}, \qquad \ldots
\eeq
where the coefficients describe the number of molecules of each kind. Hereby, we think of each molecule in the collection as contained in a separate ``box'', so that the collection really consists of a bunch of boxes containing one molecule each. This will ensure that an imaginary chemical engineer operating with these molecules has complete control over what happens to each individual molecule, and can let the different molecules react as desired. We also make the idealized assumption that while a molecule is contained in its box, it is guaranteed not to decompose spontaneously.

The convertibility relation is going to be defined in terms of chemical reactions, such as:
\beqn
\label{chemreact}
\mathrm{CH_4 + 2O_2} \lra \mathrm{CO_2 + 2H_2 O},\qquad \mathrm{Zn + 2HCl} \lra \mathrm{ZnCl_2 + H_2} .
\eeqn
Concretely, we declare a collection of molecules to be ``greater than or equal to'' another one if and only if there is a sequence of chemical reactions which can be performed so as to convert the former collection into the latter. For example, the reactions~\eqref{chemreact} taken together tell us that
\beq
\mathrm{CH_4 + 2O_2 + Zn + 2HCl} \geq \mathrm{CO_2 + 2H_2O + ZnCl_2 + H_2}.
\eeq
Hereby, we imagine that the boxes containing each molecule can be brought together at will, so that the operator can decide which reactions are to take place in order to convert which subcollection of molecules into something else. With this in mind, one collection of molecules becomes convertible into another if and only if there is a sequence of individual reactions which achieves the conversion, and such that each reaction operates on a subcollection of the molecules. Following~\eqref{equaldef}, two collections of molecules are regarded as equal as soon as they are mutually interconvertible. This defines the ordered set $\Chem$, modulo a certain ambiguity as to which which reactions exactly one wants to allow. In applications to laboratory or industrial chemistry, one can choose all reactions~\eqref{chemreact} that can be realized with the given equipment.

We will soon consider $\Chem$ as an ordered commutative monoid, and this is then the mathematical structure which formalizes the resource theory of chemistry.
\end{ex}

\begin{rem}
\label{epsilonification}
In many resource theories of practical interest, there is more mathematical structure in resource convertibility than just an ordering. The reason is the following: if a given $x$ cannot be converted into a desired $y$ exactly, then one would like to know \emphalt{how close} one can get to $y$ from $x$. In other words, one needs to be able to consider \emphalt{approximate} conversion of $x$ into $y$. This is of central importance for example in Shannon's resource theory of communication, which we discuss in Section~\ref{sectocm}. In such a setup, describing convertibility as a binary relation with possible values ``yes, $x$ is convertible into $y$'' or ``no, $x$ is not convertible into $y$'' is not sufficiently expressive. We intend to tackle this problem of ``epsilonification''~\cite{resourcesI} in future work.

Epsilonification is also relevant for $\Chem$. If one follows the above prescription for constructing the order, one needs to make a sharp choice on which reactions~\eqref{chemreact} are to be considered and which ones are not. In an epsilonified setting, it should be possible to take more quantitative information about the individual reactions into account, such as the speed at which an individual reaction happens.
\end{rem}

Just as with most other mathematical structures, it is not only of interest to consider individual ordered sets, but also maps between ordered sets.

\begin{defn}
\label{orderedmap}
An \emph{ordered map} between ordered sets $A$ and $B$ is a function $f:A\to B$ such that
\beq
x\geq y \quad\Longrightarrow\quad f(x)\geq f(y).
\eeq
\end{defn}

Ordered maps automatically respect equality: since $x=y$ means $x\geq y$ and $y\geq x$, this implies $f(x)\geq f(y)$ and $f(y)\geq f(x)$, and therefore $f(x)=f(y)$.

In resource theories, one is often interested in assigning a real number to each resource object, with the idea being that this number measures the value of the resource object relative to the other ones. In other words, if $A$ is the ordered set formalizing the resource theory, then one is interested in ordered maps $A\to \R$, where $\R$ is an ordered set in the usual way. Such ordered maps are sometimes known as \emph{resource monotones} or just \emph{monotones}; the terminology possibly originates with Vidal, who introduced resource monotones in the study of quantum entanglement~\cite{entanglementmonotones}. By definition, a monotone $f$ preserves the convertibility relation: if $x\geq y$, then $f(x)\geq f(y)$. An important question is: under what conditions does the converse hold as well? Is there a monotone $f$ which not only preserves, but also \emphalt{reflects} the ordering in the sense that
\beqn
\label{ff}
x\geq y \quad\Longleftrightarrow \quad f(x) \geq f(y) \quad ?
\eeqn
Such a monotone is extremely useful, as it allows one to decide the convertibility of $x$ into $y$ unambiguously simply by comparing $f(x)$ with $f(y)$. It is automatically injective: $f(x)=f(y)$ means $f(x)\geq f(y)$ and $f(y)\geq f(x)$, which implies $x\geq y$ and $y\geq x$ by assumption, and therefore $x=y$ by the definition of equality~\eqref{equaldef}. Therefore, the existence of such a monotone is equivalent to the existence of an embedding of $A$ into $\R$.

More generally, one can ask for the existence of a \emphalt{family} of monotones $f_i$ indexed by some $i\in I$ such that
\beqn
\label{sepfamily}
x\geq y \quad\Longleftrightarrow \quad f_i(x) \geq f_i(y) \quad \forall i.
\eeqn
Because of the same argument as before, the existence of such a family is equivalent to an embedding $A\to \R^I$, where $\R^I$ is the set of all functions $I\to\R$ equipped with the pointwise ordering. It is easy to see that such a family of monotones always exists if one takes $I=A$~\cite[Proposition~5.2]{resourcesI}, and in fact it is sufficient to let them take values in the Booleans $\{0,1\}\subseteq \R$ only instead of all of $\R$. So, proving the existence of a family of monotones satisfying~\eqref{sepfamily} is a very simple matter. However, in practice it is of interest to find a reasonably small such family such that all its members are easily computable, and this is typically much more difficult. For example, if one considers the ordered set of Turing machines with $\mathcal{T}\geq\mathcal{S}$ whenever $\mathcal{T}$ can simulate $\mathcal{S}$, then no computable family of monotones with~\eqref{sepfamily} can exist, since this would solve the halting problem.

\begin{rem}
\label{conservationlaw}
Another interesting kind of monotone $f$ is when $-f$ is also a monotone. Such a monotone is usually called a \emph{conservation law}. A function $f:A\to\R$ is a conservation law if and only if $x\geq y$ implies that $f(x)=f(y)$, which means that the value of $x$ is conserved under any conversion, including conversions that are not reversible.
\end{rem}

\begin{ex}
\label{totalatoms}
In chemistry, the total number of atoms of any given element is a conservation law. For example, the number of hydrogen atoms is $6$ on both sides of the reaction
\beq
\mathrm{Zn + 2HCl} \lra \mathrm{ZnCl_2 + H_2},
\eeq
and it is similarly conserved in any other chemical reaction. (For simplicity, we do not consider nuclear reactions to be part of chemistry.)
\end{ex}

\newpage
\section{Resource theories: ordered commutative monoids}
\label{sectocm}

Until now, we have only considered the convertibility relation on resource objects. The mathematics relevant for this is the theory of ordered sets. While ordered sets can be highly complex in their structure, they have been extensively studied during the 20th century and we do not have anything new to say about them.

However, the convertibility relation is not the only piece of structure that a resource theory typically has: there also should exist the possibility of \emphalt{combining} resource objects, in the sense that for any two resource objects $x$ and $y$, there also is a resource object $x+y$ which describes their combination. We think of $x+y$ as the object which represents having access to both $x$ and $y$ together and at the same time, and such that both $x$ and $y$ get consumed in the process of converting $x+y$ into something else. However, other interpretations are certainly possible: for example, one could also take $x+y$ to represent a disjunctive combination of resource objects, and all our definitions and results will apply just as well, although certain things like functionals (Definition~\ref{deffunc}) will be more difficult to interpret.

When this additional structure is considered, the mathematics of ordered sets needs to be replaced by the mathematics of ordered commutative monoids, which turns out to be considerably richer. We propose that any resource theory (modulo Remark~\ref{epsilonification}) should be mathematically described by an ordered commutative monoid in the following sense:

\begin{defn}
\label{ocm}
An \emph{ordered commutative monoid} is an ordered set $A$ equipped with the following additional pieces of structure:\medskip
\begin{itemize}
\item a binary operation $+$,
\item a distinguished element $0\in A$,\medskip
\end{itemize}
and such that the following axioms hold:\medskip
\begin{itemize}
\item $+$ is associative and commutative,
\beq
x+(y+z) = (x+y)+z, \qquad x+y=y+x,
\eeq
\item $0\in R$ is a neutral element,
\beq
0 + x = x,
\eeq
\item addition respects the ordering,
\beqn
\label{monotoneplus}
x\geq y \quad\Longrightarrow\quad x+z\geq y+z.
\eeqn
\end{itemize}
\end{defn}

\begin{rem}
Up to the discussion on equality in the previous section, this coincides with our previous definition of a ``theory of resource convertibility''~\cite[Definition~4.1]{resourcesI}. Also in Lieb and Yngvason's approach to the foundations of thermodynamics~\cite{secondlaw1}, our definition corresponds to the axioms ``A1'' to ``A3'' together with their background assumptions on forming composite systems. Furthermore, mathematical structures like this or closely related ones have previously been studied:
\begin{itemize}
\item Definition~\ref{ocm} coincides with the definition of commutative pomonoid of~\cite{Fakhruddin,RafteryVanAlten}, the definition of commutative p.o.~monoid of~\cite{Luttik}, and with the definition of partially ordered monoid of~\cite{TsinakisZhang}\footnote{The connection to Petri nets made in~\cite{TsinakisZhang} is especially interesting: a Petri net can be understood as presentation of an ordered commutative monoid in terms of generators and relations, and this is what the adjunctions of~\cite{TsinakisZhang} secretly describe. This should be investigated in more detail elsewhere.}.
\item It also coincides with the notion of partially-ordered semimodule over the partially-ordered semiring $\N$ in the sense of~\cite{semirings}.
\item Preordered abelian semigroups~\cite[Section~1.1]{convexcones} only differ in using preorders instead of partial orders, i.e.~two elements are not necessarily regarded as equal when they are ordered both ways.
\item The positively ordered monoids of~\cite{injpoms} and the $po^+$-monoids of~\cite{seppom} differ in addition by also postulating $x\geq 0$ for all $x$.
\item The partially ordered abelian semigroups of~\cite{strongsemi} differ from ordered commutative monoids only in that their are not required to have a neutral element.
\item The preordered semigroups of~\cite{extstates} do neither require a neutral element nor postulate~\eqref{equaldef}.
\item Unital commutative quantales~\cite{Rosenthal} are ordered commutative monoids in which the ordering is a complete lattice, and the addition preserves joins.
\end{itemize}
\end{rem}

While the ordering in an ordered commutative monoid formalizes the convertibility relation and addition describes the possibility of combining resources, it remains to discuss the meaning of the three axioms in Definition~\ref{ocm}. First, associativity and commutativity of ``$+$'' say that any finite set of resources can be grouped together in a unique way, in the sense that any two ways of grouping the resource objects result in mutually interconvertible composite resource objects. Second, the zero element $0$ represents the void resource object, which when combined with any other resource object results in something mutually interconvertible with the second. In both cases, we speak about ``mutually interconvertible'', since this is how we defined equality in~\eqref{equaldef}. There is still no need to use any other notion of equality on $A$. Third, the monotonicity of addition~\eqref{monotoneplus} means that if $x$ can be converted into $y$, then we can also convert the composite object $x+z$ into the composite object $y+z$, e.g.~by converting $x$ into $y$ and doing nothing to $z$.

We do not assume that $x\geq 0$ necessarily holds for all $x\in A$. There are several reasons for this: first, assuming $x\geq 0$ for all $x$ would amount to assuming that any resource object can be converted into nothing, i.e.~can be discarded or be made to vanish at will. Although this indeed happens in many resource theories that are of interest in practice, this will not always be the case. For example, one can consider resource theories which contain objects that cannot just be ignored after their disposal, such as nuclear waste. Second, a related issue is that if $x\geq 0$ for all $x\in A$, then $A$ cannot have any nontrivial conservation law: any resource theory with a non-trivial conservation law, such as $\Chem$, will have resource objects $x$ with $x\not\geq 0$. Third, most of our results do not require the assumption $x\geq 0$; a notable exception is Theorem~\ref{rateformula}. Fourth, imposing $x\geq 0$ would make Definition~\ref{ocm} lose the appealing and useful self-duality property that reversing the ordering in an ordered commutative monoid yields another ordered commutative monoid.

\begin{rem}
\begin{enumerate}
\item We can understand the monotonicity property~\eqref{monotoneplus} as stating that the map $x\mapsto x+z$ is an ordered map $A\to A$ in the sense of Definition~\ref{orderedmap}.
\item In particular, this monotonicity property implies that the addition operation respects equality,
\beq
x = y \quad\Longrightarrow\quad x+z = y+z.
\eeq
\end{enumerate}
\end{rem}

\begin{rem}
\label{commmon}
Following Remark~\ref{set}, a commutative monoid is an ordered commutative monoid having the special property that the ordering is symmetric, in the sense that $x\geq y$ implies $y\geq x$.
\end{rem}

Other properties of ordered commutative monoids are as follows:

\begin{itemize}
\item The unit element $0$ is unique: if $0'$ is another unit element, then we have $0=0+0'=0'+0=0'$.
\item The monotonicity~\eqref{monotoneplus} also holds in a stronger form: if $x\geq y$ and $w\geq z$, then also $x+w\geq y+z$, since
\beq
x+w \stackrel{\eqref{monotoneplus}}{\geq} y+w \;=\; w+y \stackrel{\eqref{monotoneplus}}{\geq} z+y \;=\; y+z.
\eeq
\end{itemize}

When $A$ is an ordered commutative monoid, $x\in A$ and $n\in\N$, then we also write $nx$ as shorthand for the $n$-fold sum of $x$,
\beq
nx \defin \underbrace{x+\ldots+x}_{n \textrm{ times}}.
\eeq
This turns $A$ into a semimodule over the semiring $\N$, which means nothing other than that the distributive laws $(n+m)x=nx+mx$ and $n(x+y)=nx+ny$ hold, as well as $1x=x$ and $0x=0$.

\begin{rem}
\label{annihilator}
Let $A$ be an ordered commutative monoid and $x,y\in A$. Then the set
\beq
\ann(x,y) \defin \{ \: n\in \N \:|\: nx\geq ny\: \} 
\eeq
is a subset of $\N$. We call this subset the \emph{annihilator}, since in the special case of an abelian group, it coincides with the familiar notion of annihilator in algebra: an abelian group is trivially ordered, and hence $nx\geq ny$ is equivalent to $nx=ny$, and hence also to $n(x-y)=0$. So in this case, our $\ann(x,y)$ coincides with the set of all $n\in\N$ for which $n(x-y)=0$, which is the standard notion of annihilator. 

In general, the annihilator $\ann(x,y)$ is an ideal in $\N$, in the sense that it contains $0$, is closed under addition, and therefore also closed under multiplication by arbitrary natural numbers. It is itself a commutative monoid. While we could also regard it as an ordered commutative monoid with respect to the usual ordering on $\N$, it is not clear to us what the relevance of considering this particular ordering would be.
\end{rem}

We now get to our main examples of ordered commutative monoids and refer to~\cite{resourcesI} for many more.

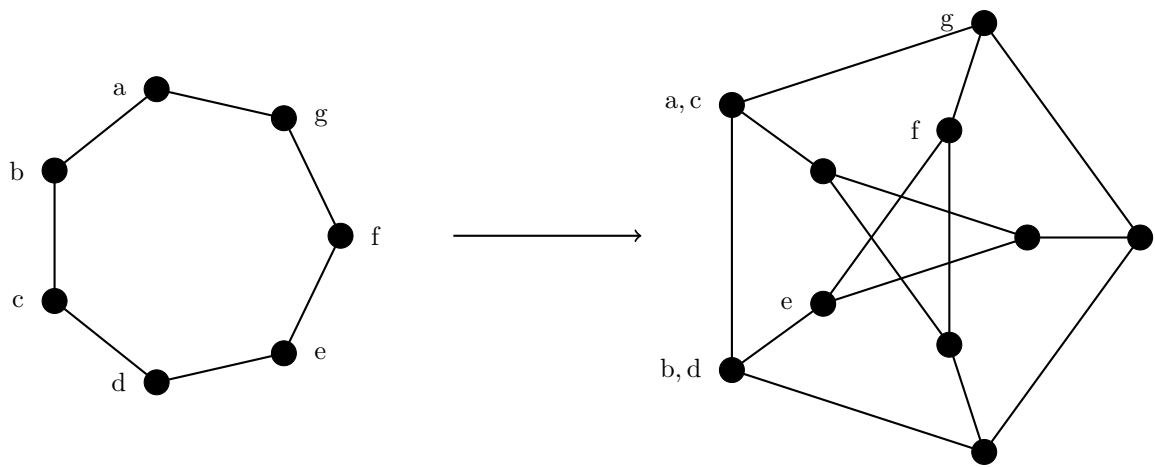
\begin{figure}
\centering
\begin{tikzpicture}
\tikzset{VertexStyle/.style={shape=circle,draw,fill}}
\SetVertexNoLabel
\begin{scope}[local bounding box=scope1]
	\grCycle[RA=2]{7}
	\tikzset{AssignStyle/.append style={left=8pt}}
	\AssignVertexLabel{a}{,,a,b,c,d}
	\tikzset{AssignStyle/.append style={right=8pt}}
	\AssignVertexLabel{a}{f,g,,,,,e}
\end{scope}
\draw[->,thick] (3.5,0) -- (6,0) ;
\begin{scope}[shift={($(scope1.east)+(7cm,0)$)}]
	\grPetersen[form=1,RA=3,RB=1.5]
	\tikzset{AssignStyle/.append style={left=8pt}}
	\AssignVertexLabel{a}{,g,{a,\thinspace c},{b,\thinspace d}}
	\tikzset{AssignStyle/.append style={left=8pt}}
	\AssignVertexLabel{b}{,f,,e}
\end{scope}
\end{tikzpicture}
\caption{Two graphs together with a graph map between them. The map is specified by taking every vertex of the source graph to the corresponding vertex of the target graph which carries the same label.}
\label{graphmap}
\end{figure}

\begin{ex}[The ordered commutative monoid of chemistry]
In the resource theory of chemistry from Example~\ref{rtchem}, we can add two collections of molecules by simply joining them up:
\beq
\mathrm{(C_{60} + 3H_2) + (4N_2 + O_2)} \defin \mathrm{C_{60} + 3H_2 + 4N_2 + O_2}.
\eeq
By definition of the convertibility relation, this addition preserves the ordering $\geq$. We also have a neutral element for ``$+$'' given by the empty collection of molecules, denoted by ``$0$''.  Hence $\Chem$ becomes an ordered commutative monoid.
\end{ex}

\begin{ex}[The ordered commutative monoid of graphs]
\label{graphs}
Another ordered commutative monoid that our formalism can be successfully applied to is $\Grph$, the ordered commutative monoid of graphs. In the upcoming sections, we will achieve a glance of some of its highly intricate structure.

In order to define $\Grph$, we start with the collection of all finite graphs\footnote{For us, graphs are \emphalt{simple graphs}, i.e.~undirected graphs that do not have self-loops or parallel edges. We also require a graph to be nonempty, i.e.~to have at least one vertex, while allowing its set of edges to be empty.}. If $\mathcal{G},\mathcal{H}\in\Grph$, then we put $\mathcal{H}\geq \mathcal{G}$ whenever there exists a graph map $\mathfrak{m}:\mathcal{G}\to \mathcal{H}$. Such a map consists of a function which sends every vertex of $\mathcal{G}$ to a vertex of $\mathcal{H}$ such that adjacent vertices get taken to adjacent vertices; Figure~\ref{graphmap} shows an example. We think of this as saying that $\mathcal{H}$ can in some sense ``simulate'' $\mathcal{G}$ and is therefore at least as useful as $\mathcal{G}$. By this, we have basically defined $\Grph$ as the preorder reflection (Appendix~\ref{enrichment}) of the category of graphs. Already as an ordered set, the structure of $\Grph$ is highly complex; for example, it is known that every finite ordered set arises in $\Grph$ as an ordered subset. This is a special case of the results of~\cite[Section~IV.3]{representations}.

Concerning how to combine graphs, there are many different possible ways to do this: one can take the disjoint union, the disjoint union plus all edges between the two components (graph join), or one of the many different products of graphs~\cite{prodgraphs}. We suspect that many of these choices will give rise to an interesting ordered commutative monoid of graphs. But ultimately, the choice of which combination operation to use has to be determined by the desired application. For our purposes, this will be the relation to communication channels in Proposition~\ref{cctogrph}, and that we want to ultimately recover some well-known graph invariants in terms of the ordered commutative monoid structure of $\Grph$, in particular the chromatic number or the Shannon capacity. This leads us to take the combination of graphs $\mathcal{G}$ and $\mathcal{H}$ to be given by the \emph{disjunctive product} $\mathcal{G}\ast \mathcal{H}$, also known as the \emph{co-normal product}. The disjunctive product $\mathcal{G}\ast \mathcal{H}$ is defined to be the graph whose vertices are pairs $(v,w)$ with $v$ a vertex of $\mathcal{G}$ and $w$ a vertex of $\mathcal{H}$, and such that $(v,w)$ is adjacent to $(v',w')$ if and only if $v$ is adjacent to $v'$ or $w$ is adjacent to $w'$,
\beq
(v,w) \sim (v',w') \quad :\Longleftrightarrow\quad v\sim v' \:\lor\: w\sim w'.
\eeq
We leave it to the reader to check that the ordering of graphs $\geq$ together with this binary operation $\ast$ results in an ordered commutative monoid whose neutral element is $0=\mathcal{K}_1$, the graph on one vertex. We denote this ordered commutative monoid by $\Grph$. We will use it as our running example of an ordered commutative monoid in the upcoming sections and also find some (more or less obvious and well-known) relations to important graph invariants.
\end{ex}

Note that we do not claim that the study of resource theories or ordered commutative monoids to be especially closely linked to graph theory. We rather want to illustrate graph theory as one particular area in which our language and way of thinking can be useful, and we hope that there will be plenty of others as well.

Those who are happy with $\Grph$ as an example and not particularly interested in information theory may skip the following example and continue with Definition~\ref{defhom}.

\begin{ex}[{The resource theory of communication~\cite[Example~2.6]{resourcesI}}]
We would like to discuss and analyze one particular example of a resource theory of particular practical relevance: the \emph{resource theory of communication}, as developed by Shannon~\cite{Shannon}. It is concerned with communication channels, which are mathematical abstractions of communicating at limited bandwidth and in the presence of potential transmission errors due to noise. One can use encoding and decoding of messages in order to simulate a target communication channel---typically the one which models perfect transmission without error---with the help of a given channel. One can also combine channels by using them in parallel. Hence we can describe the resource theory of communication in terms of an ordered commutative monoid which we denote $\CC$. Again loosely following ideas of Shannon~\cite{zeroerror}, we will also see that one can study $\CC$ in terms of a homomorphism to $\Grph$.

In more detail, a resource object in the theory of communication is a \emph{communication channel} $P:\inalph\to\outalph$, consisting of the following pieces of data:
\begin{itemize}
\item an \emph{input alphabet}, which is a finite set $\inalph$,
\item an \emph{output alphabet}, which is a finite set $\outalph$,
\item for every input symbol $a\in\inalph$, a probability $P(b|a)\in[0,1]$ to get the output symbol $b\in\outalph$ upon sending the input symbol $a\in\inalph$ through the channel, so that $\sum_b P(b|a)=1$.
\end{itemize}

In summary, we can understand the probabilities $P(b|a)$ as forming a conditional probability distribution, i.e.~a \emph{stochastic map}, of type $P:\inalph\to\outalph$\footnote{Note that the arrow notation does not mean that $P$ is a function with domain $\inalph$ and codomain $\outalph$, but we can interpret $P:\inalph\to\outalph$ as stating that $P$ is a morphism in the category of stochastic maps between the objects $\inalph$ and $\outalph$.}. We think of $P$ as the mathematical model of a communication link between two distant locations or parties. This communication link accepts an input symbol $a\in\inalph$ and transmits it to the other end; but due to noise or other modifications possibly afflicted to the symbol, the output symbol $b\in\outalph$ may be totally different and is not a deterministic function of $a$. We can only specify the probability of receiving a particular output $b$, given that a particular input $a$ has been sent. 

The idea now is that we can use such a communication channel in order to simulate other communication channels, by pre-processing of the input symbol (\emph{encoding}) and post-processing of the output symbol (\emph{decoding}). In other words, we can convert a channel $P : \inalph \to \outalph$ into a channel $Q : \inalphh\to\outalphh$ if and only if there are stochastic maps $\enc : \inalphh\to\inalph$ and $\dec : \outalph\to\outalphh$ such that
\beqn
\label{channeltrafo}
Q = \dec\circ P\circ \enc,
\eeqn
where the composition of stochastic maps on the right-hand side is given by matrix multiplication,
\beq
(\dec\circ P\circ \enc)(d|c) = \sum_{a,b} \dec(d|b) P(b|a) \enc(a|c).
\eeq
We can also write~\eqref{channeltrafo} pictorially:
\beq
\begin{tikzpicture}[scale=.5]
\draw[<-] (-21,0) -- (-19,0) ;
\draw (-19,-1) rectangle (-15,1) node [pos=.5] {$Q$} ;
\draw[<-] (-15,0) -- (-13,0) ;
\node at (-12,0) {$=$} ;
\draw[<-] (-11,0) -- (-9,0) ;
\draw (-9,-1) rectangle (-5,1) node [pos=.5] {$\dec$} ;
\draw[<-] (-5,0) -- (-2,0) ;
\draw (-2,-1) rectangle (2,1) node [pos=.5] {$P$} ;
\draw[<-] (2,0) -- (5,0) ;
\draw (5,-1) rectangle (9,1) node [pos=.5] {$\enc$} ;
\draw[<-] (9,0) -- (11,0) ;
\end{tikzpicture}
\eeq

We write $P\geq Q$ if and only if such a conversion of $P$ into $Q$ exists, and this is the ordering relation in the ordered commutative monoid which models the resource theory of communication. More permissive convertibility relations between channels can be considered alternatively, such as allowing shared randomness or quantum entanglement between the encoding and decoding operations~\cite{assistedchannel}. 

We combine two channels $P:\inalph\to\outalph$ and $Q:\inalphh\to\outalphh$ to a new channel by using $P$ and $Q$ in parallel,
\beq
(P\otimes Q)(bd|ac) \defin P(b|a)\cdot Q(d|c).
\eeq
Or in pictures:
\beq
\begin{split}
\label{channelcomb}
\begin{tikzpicture}[scale=.5]
\draw[<-] (-5,2) -- (-2,2) ;
\draw (-2,1) rectangle (2,3) node [pos=.5] {$P$} ;
\draw[<-] (2,2) -- (5,2) ;
\draw[<-] (-5,-1) -- (-2,-1) ;
\draw (-2,-2) rectangle (2,0) node [pos=.5] {$Q$} ;
\draw[<-] (2,-1) -- (5,-1) ;
\end{tikzpicture}
\end{split}
\eeq
It is not hard to verify that communication channels thereby form an ordered commutative monoid whose neutral element is the trivial channel between any two one-element sets. We denote this ordered commutative monoid by $\CC$. This is the mathematical structure which captures the resource theory of communication with respect to exact conversion (as opposed to approximate). Unfortunately, stating Shannon's noisy coding theorem would require us to also consider small errors in the conversion of one channel into another. It is not currently clear to us how to capture this type of ``epsilonification'' (Remark~\ref{epsilonification}) in terms of an ordered commutative monoid, but we believe that the best way of doing so will involve a generalization of Definition~\ref{ocm}. Since no such generalization has as yet been found to recover Shannon's theorem, we disregard for now the possibility of approximate conversions between resource objects and continue to focus on the exact case.
\end{ex}

We now proceed with the general theory and introduce the analogue of Definition~\ref{orderedmap} for ordered commutative monoids.

\begin{defn}
\label{defhom}
A \emph{homomorphism} $f:A\to B$ of ordered commutative monoids is an ordered map,
\beq
x\geq y \quad \Longrightarrow \quad f(x) \geq f(y),
\eeq
which is moreover additive and preserves zero,
\beq
f(x + y) = f(x) + f(y), \qquad f(0) = 0.
\eeq
\end{defn}

\begin{rem}
If $B$ is cancellative (Definition~\ref{canc}), the requirement $f(0) = 0$ is redundant, since then one can cancel $f(0)$ on both sides of $f(0) = f(0+0) = f(0) + f(0)$.
\end{rem}

As an example, we construct a homomorphism $f:\CC\to\Grph$. By virtue of this homomorphism, some results about $\Grph$ can be translated into results about $\CC$. 

On the level of resource objects, every communication channel $P:\inalph\to\outalph$ has a \emph{distinguishability graph}. Its vertex set is the input alphabet $\inalph$, and two input symbols $a_1,a_2\in\inalph$ are defined to be adjacent if and only if they can be perfectly distinguished, meaning that for every output symbol $b\in\outalph$, we have $P(b|a_1)=0$ or $P(b|a_2)=0$. This defines the distinguishability graph $f(P)\in\Grph$. The following observation is essentially due to Shannon~\cite{zeroerror}:

\begin{prop}
\label{cctogrph}
This assignment $P\mapsto f(P)$ defines a homomorphism $f:\CC\to\Grph$.
\end{prop}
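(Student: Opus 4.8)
To show that $P \mapsto f(P)$ is a homomorphism $\CC \to \Grph$, I need to verify three things: that it respects the ordering ($P \geq Q$ implies $f(P) \geq f(Q)$), that it is additive ($f(P \otimes Q) = f(P) \ast f(Q)$), and that it preserves the neutral element ($f$ sends the trivial channel to $\mathcal{K}_1$). The last is immediate: the trivial channel has a one-element input alphabet, so its distinguishability graph is the one-vertex graph $\mathcal{K}_1 = 0$ in $\Grph$.

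\textbf{Monotonicity.} Suppose $P : \inalph \to \outalph$ and $Q : \inalphh \to \outalphh$ with $P \geq Q$, so $Q = \dec \circ P \circ \enc$ for stochastic maps $\enc : \inalphh \to \inalph$ and $\dec : \outalph \to \outalphh$. I want a graph map $f(Q) \to f(P)$, i.e.~a function on input alphabets sending adjacent vertices to adjacent vertices. The naive attempt is to use $\enc$ itself, but $\enc$ is stochastic, not deterministic. The fix is to choose for each $c \in \inalphh$ some $a = \phi(c) \in \inalph$ with $\enc(a|c) > 0$; this defines a function $\phi : \inalphh \to \inalph$. I then claim $\phi$ is a graph map $f(Q) \to f(P)$: if $c_1, c_2$ are adjacent in $f(Q)$, then for every $d$ we have $Q(d|c_1) = 0$ or $Q(d|c_2) = 0$; I must deduce that $\phi(c_1), \phi(c_2)$ are adjacent in $f(P)$, i.e.~that for every $b$, $P(b|\phi(c_1)) = 0$ or $P(b|\phi(c_2)) = 0$. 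Suppose not: there is $b$ with $P(b|\phi(c_1)) > 0$ and $P(b|\phi(c_2)) > 0$. Since $\dec$ is stochastic and nonzero columns sum to $1$, there is some $d$ with $\dec(d|b) > 0$. Then $Q(d|c_1) = \sum_{a,b'} \dec(d|b') P(b'|a) \enc(a|c_1) \geq \dec(d|b) P(b|\phi(c_1)) \enc(\phi(c_1)|c_1) > 0$, and similarly $Q(d|c_2) > 0$, contradicting adjacency of $c_1, c_2$ in $f(Q)$. Hence $\phi$ is a graph map and $f(P) \geq f(Q)$.

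\textbf{Additivity.} For channels $P : \inalph \to \outalph$ and $Q : \inalphh \to \outalphh$, both $f(P \otimes Q)$ and $f(P) \ast f(Q)$ have vertex set $\inalph \times \inalphh$, so it suffices to check the adjacency relations coincide. By definition, $(a_1, c_1) \sim (a_2, c_2)$ in $f(P \otimes Q)$ iff for all $(b,d)$ we have $(P \otimes Q)(bd \mid a_1 c_1) = 0$ or $(P \otimes Q)(bd \mid a_2 c_2) = 0$, i.e.~$P(b|a_1) Q(d|c_1) = 0$ or $P(b|a_2) Q(d|c_2) = 0$. On the other hand, $(a_1,c_1) \sim (a_2,c_2)$ in $f(P) \ast f(Q)$ iff $a_1 \sim a_2$ in $f(P)$ or $c_1 \sim c_2$ in $f(Q)$, i.e.~(for all $b$: $P(b|a_1) = 0$ or $P(b|a_2) = 0$) or (for all $d$: $Q(d|c_1) = 0$ or $Q(d|c_2) = 0$). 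The forward implication is routine: if $a_1 \sim a_2$ then for every $(b,d)$ one of $P(b|a_1), P(b|a_2)$ vanishes, killing the corresponding product. For the converse, suppose $a_1 \not\sim a_2$ and $c_1 \not\sim c_2$: pick $b$ with $P(b|a_1), P(b|a_2) > 0$ and $d$ with $Q(d|c_1), Q(d|c_2) > 0$; then both products $(P\otimes Q)(bd|a_1 c_1)$ and $(P \otimes Q)(bd|a_2 c_2)$ are positive, so $(a_1,c_1) \not\sim (a_2,c_2)$ in $f(P \otimes Q)$. This establishes $f(P \otimes Q) = f(P) \ast f(Q)$.

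\textbf{Main obstacle.} The one subtlety worth care is well-definedness on equivalence classes: $\CC$ and $\Grph$ are quotients by the equality~\eqref{equaldef}, so I should note that $f$ respects interconvertibility. But this is automatic once monotonicity is established, since $P = P'$ in $\CC$ means $P \geq P'$ and $P' \geq P$, whence $f(P) \geq f(P')$ and $f(P') \geq f(P)$, so $f(P) = f(P')$ in $\Grph$. Thus the genuine content is concentrated in the monotonicity argument, and within that, in the observation that a stochastic encoding map induces a deterministic graph map by selecting a single input symbol of positive weight for each source symbol; I expect this selection step to be the only point that requires a moment's thought.
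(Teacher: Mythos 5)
Your proof is correct and takes essentially the same approach as the paper: you construct the same graph map by choosing, for each source symbol $c$, an input symbol with positive encoding weight, and you use the same facts about nonnegative sums and stochasticity of $\dec$, just phrased contrapositively instead of directly. Your additional remarks about the neutral element and well-definedness are sound (the paper leaves them implicit).
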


\begin{proof}
We need to check that $f$ respects both the convertibility relation and the combination operation, starting with the former. If we have a conversion between channels~\eqref{channeltrafo} which witnesses $P\geq Q$, then we define a graph map $\mathfrak{m}:f(Q)\to f(P)$ by sending each input symbol $c\in\inalphh$ to some $\mathfrak{m}(c)\in\inalph$ with $\enc(\mathfrak{m}(c)|c)>0$; the particular choice is irrelevant. This preserves adjacency due to the following reasoning: adjacency in $f(Q)$ for input symbols $c_1$ and $c_2$ means that $Q(d|c_1)=0$ or $Q(d|c_2)=0$ for all $d$. By $Q=\dec\circ P\circ\enc$, we can write this as
\beq
\sum_{a,b} \dec(d|b) P(b|a) \enc(a|c_1) = 0 \quad\textrm{ or }\quad \sum_{a,b} \dec(d|b) P(b|a) \enc(a|c_2) = 0.
\eeq
Since all terms in these sums are nonnegative, vanishing of such a sum implies that each term in the sum must vanish. Hence we have for all $d$,
\beq
\sum_b \dec(d|b) P(b|\mathfrak{m}(c_1)) = 0 \quad\textrm{ or }\quad \sum_b \dec(d|b) P(b|\mathfrak{m}(c_2)) = 0 .
\eeq
Using the same reasoning again,
\beq
\dec(d|b) P(b|\mathfrak{m}(c_1)) = 0 \quad\forall b\quad\textrm{ or }\quad \dec(d|b) P(b|\mathfrak{m}(c_2)) = 0 \quad\forall b.
\eeq
If for a given $b$ we choose $d$ with $\dec(d|b)>0$ arbitrarily, then we find that one of these products must vanish, and therefore $P(b|\mathfrak{m}(c_1))=0$ or $P(b|\mathfrak{m}(c_2))=0$. This means that $\mathfrak{m}:f(Q)\to f(P)$ is indeed a graph map. This finishes the proof of $f(P)\geq f(Q)$, which shows that $f$ respects the ordering.

Proving $f(P\otimes Q) = f(P)\ast f(Q)$ is easier: by the definition of equality~\eqref{equaldef}, it is sufficient to show that the graphs $f(P\otimes Q)$ and $f(P)\otimes f(Q)$ are isomorphic, since then in particular we have a map of graphs in each direction. First, the set of vertices is the cartesian product $\inalph\times\inalphh$ in both cases; second, adjacency of $(a_1,c_1)$ and $(a_2,c_2)$ in $f(P\otimes Q)$ means that for all $b\in\outalph$ and $d\in\outalphh$,
\beq
P(b|a_1)Q(d|c_1) = 0 \quad\textrm{ or }\quad P(b|a_2)Q(d|c_2) = 0 ,
\eeq
while adjacency in $f(P)\ast f(Q)$ means that for all $b\in\outalph$, we have $P(b|a_1)=0$ or $P(b|a_2)=0$, or for all $d\in\outalphh$, we have $Q(d|c_1)=0$ or $Q(d|c_2)=0$. These conditions are equivalent as well.
\end{proof}

Therefore we can analyze part of the structure of $\CC$ on the level of graphs. More concretely, we can make use of $f:\CC\to\Grph$ as a witness of non-convertibility: if we happen to find $f(P)\not\geq f(Q)$, then we can conclude $P\not\geq Q$. So we can think of $f$ as a $\Grph$-valued invariant of communication channels.

Returning to the general theory, ordered commutative monoids and their homomorphisms form a category $\OCM$. We can hence apply standard concepts from category theory:

\begin{defn}
\label{isom}
An \emph{isomorphism} between ordered commutative monoids $A$ and $B$ is a homomorphism $f:A\to B$ for which there exists $g:B\to A$ such that $g(f(x)) = x$ for all $x\in A$ and $f(g(y))=y$ for all $y\in B$. If an isomorphism exists, then we say that $A$ and $B$ are \emph{isomorphic}.
\end{defn}

\begin{prop}
\label{isocrit}
A homomorphism $f:A\to B$ is an isomorphism if and only if
\begin{enumerate}
\item\label{fullyfaithful} $f$ reflects the ordering,
\beq
f(x) \geq f(y) \quad\Longrightarrow\quad x \geq y,
\eeq
\item\label{essentiallysurjective} and for every $z\in B$ there is $x\in A$ with $f(x)=z$.
\end{enumerate}
\end{prop}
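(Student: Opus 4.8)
The plan is to treat the two implications separately, with the ``only if'' direction being essentially immediate from the definitions and the ``if'' direction requiring us to manufacture the inverse homomorphism from the set-theoretic inverse of $f$.

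For the ``only if'' direction, suppose $f$ is an isomorphism with inverse homomorphism $g:B\to A$. Surjectivity~\ref{essentiallysurjective} is immediate: given $z\in B$, the element $x\defin g(z)$ satisfies $f(x)=f(g(z))=z$. For~\ref{fullyfaithful}, recall that a homomorphism is in particular an ordered map; so if $f(x)\geq f(y)$, then applying $g$ yields $g(f(x))\geq g(f(y))$, i.e.~$x\geq y$.

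For the ``if'' direction, the first step is to observe that~\ref{fullyfaithful} forces $f$ to be injective: if $f(x)=f(y)$, then $f(x)\geq f(y)$ and $f(y)\geq f(x)$, hence $x\geq y$ and $y\geq x$ by~\ref{fullyfaithful}, so $x=y$ by the definition of equality~\eqref{equaldef} (this is where antisymmetry of the order on $A$ enters). Combined with surjectivity~\ref{essentiallysurjective}, $f$ is a bijection, and we may define $g:B\to A$ to be its set-theoretic inverse, so that $g\circ f=\id$ and $f\circ g=\id$ hold automatically. It then remains to check that this $g$ is a homomorphism of ordered commutative monoids, which I would do by exploiting uniqueness of preimages under the bijection $f$. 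For additivity: since $f$ is additive, $f(g(z)+g(w))=f(g(z))+f(g(w))=z+w$, so $g(z)+g(w)$ is the unique $f$-preimage of $z+w$, which is $g(z+w)$; hence $g(z+w)=g(z)+g(w)$. Likewise $f(0)=0$ gives $g(0)=0$. For monotonicity: if $z\geq w$ in $B$, write $z=f(x)$ and $w=f(y)$; then $f(x)\geq f(y)$, so $x\geq y$ by~\ref{fullyfaithful}, i.e.~$g(z)\geq g(w)$.

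There is no genuinely hard step here; the only point that deserves care is noticing that~\ref{fullyfaithful} does double duty --- it both guarantees injectivity of $f$ (via antisymmetry, which is what makes $f^{-1}$ well-defined as a function) and guarantees that $f^{-1}$ is order-preserving. Everything else is the familiar fact that a bijective homomorphism whose inverse function is again a homomorphism is an isomorphism.
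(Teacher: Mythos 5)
Your proof is correct and takes essentially the same route as the paper's: both directions use the same computations, and the ``if'' direction constructs $g$ from the bijectivity of $f$ (injectivity from (a) plus antisymmetry, surjectivity from (b)) and then verifies $g$ is a homomorphism by pushing everything through $f$. The only cosmetic difference is that the paper defines $g$ by choosing preimages via (b) and then deduces $g\circ f=\id$ from injectivity, whereas you establish bijectivity up front; you also explicitly verify $g(0)=0$, which the paper leaves implicit (it follows at once from $g(0)=g(f(0))=0$).
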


Condition~\ref{fullyfaithful} means in particular that $f$ is injective, i.e.~reflects equality: if $f(x)=f(y)$, then also $x=y$.

\begin{proof}
If we have an isomorphism as in Definition~\ref{isom}, then $f$ reflects the ordering since $f(x) \geq f(y)$ implies that
\beq
x = g(f(x)) \geq g(f(y)) = y.
\eeq
Moreover, for every $z\in B$ we have $f(g(z))=z$, which is~\ref{essentiallysurjective}.

Conversely, suppose that $f$ satisfies the hypotheses of the proposition. Then for every $z\in B$, we define $g(z)$ by choosing some $x\in A$ with $f(x)=z$. Then $f(g(z))=z$ holds by definition, while $g(f(x))=x$ follows from $f(g(f(x)))=f(x)$ and cancelling the outer application of $f$, which we can do since $f$ reflects equality. Finally, it remains to be shown that $g$ is a homomorphism. First, $g$ preserves the ordering since if we have $y\geq z$ in $B$, then we also have $f(g(y)) \geq f(g(z))$, and therefore $g(y)\geq g(z)$ by~\ref{fullyfaithful}. Similarly, we can show that
\beq
f(g(y+z)) = y+z = f(g(y)) + f(g(z)) = f(g(y) + g(z)),
\eeq
and therefore $g(y+z) = g(y) + g(z)$.
\end{proof}

Next we will encounter a simple example of an isomorphism. As in Remark~\ref{commmon}, we consider a commutative monoid as a special kind of ordered commutative monoid.

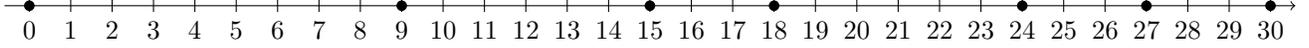
\begin{figure}
\begin{tikzpicture}[scale=.55]
\tikzstyle{point}=[fill,circle,inner sep=0pt,minimum size=4pt]
\draw[->] (-.6,0) -- (30.6,0) ;
\foreach \x in {0,...,30}
	\draw[xshift=\x cm] (0,-.15) node[below] {$\x$} -- (0,.15) ;
\foreach \x in {0,3,5,6,8,9,10}
	\node[point] at (3*\x,0) {} ;
\end{tikzpicture}
\caption{An example of a commutative submonoid of $\N$. Starting with $24$, it contains all natural numbers divisible by $3$.}
\label{fignumericalex}
\end{figure}

\begin{ex}
\label{numsg}
Motivated by Remark~\ref{annihilator}, we may consider general subsets $S\subseteq \N$ that contain $0$ and are closed under addition. This is an interesting and non-trivial class of commutative monoids. As a concrete example, consider the set
\beqn
\label{numericalex}
\{\: 0,9,15,18,24,27,30,\ldots \:\},
\eeqn
where at $24$ and after, the set is defined to contain all multiples of $3$; see Figure~\ref{fignumericalex} for an illustration. This set contains $0$ and is closed under addition, and hence forms a commutative submonoid of $\N$. Those commutative submonoids $S\subseteq\N$ for which the elements of $S$ are coprime, i.e.~for which $\gcd(S)=1$ holds, are known as \emph{numerical semigroups}~\cite{numericalsemigroup}. Possibly surprisingly, the condition $\gcd(S)=1$ is equivalent to $\N\setminus S$ being finite~\cite[Lemma~2.1]{numericalsemigroup}. The example~\eqref{numericalex} has $\gcd(S)=3$, and hence is not a numerical semigroup.

Any commutative monoid $S\subseteq\N$, and in particular any of the annihilators $\ann(x,y)$ from Remark~\ref{annihilator}, is either trivial, i.e.~$S\cong\{0\}$, or isomorphic to a numerical semigroup as follows. The greatest common divisor of all the elements of $S$,
\beq
d \defin \gcd(S) ,
\eeq
is a positive integer dividing every element of $S$. We think of $d$ as the generator of the principal ideal in $\Z$ which is generated by the ideal $S\subseteq\N$. Then,
\beq
S' \defin \{ \: n\in \N \:|\: dn\in S \:\}
\eeq
is a numerical semigroup, since $\gcd(S')=1$ holds by construction. In the example~\eqref{numericalex}, we obtain
\beq
S' = \{\: 0,3,5,6,8,9,10,\ldots \:\},
\eeq
and now indeed $\N\setminus S'=\{1,2,4,7\}$ is finite. In general, the map
\beqn
\label{ssprime}
S' \lra S, \qquad n\longmapsto dn
\eeqn
is an isomorphism of commutative monoids, and hence the $S$ is isomorphic to a numerical semigroup~\cite[Proposition~2.2]{numericalsemigroup}. In particular, every annihilator from Remark~\ref{annihilator},
\beq
\ann(x,y) \defin \{ \: n\in \N \:|\: nx\geq ny\: \} ,
\eeq
is isomorphic to a numerical semigroup. In this sense, studying the possible behaviours of the many-copy convertibility relation $nx\geq ny$ as a function of $n$ is equivalent to studying numerical semigroups.
\end{ex}

The real numbers $\R$ form an ordered commutative monoid with respect to the usual ordering and addition. As indicated at the end of Section~\ref{prelims}, we are particularly interested in homomorphisms of the form $A\to\R$, which correspond to consistent assignments of values or prices to each resource object. Due to this special significance, we introduce an abbreviated term for these homomorphisms, indicative of our upcoming considerations of ordered vector spaces.

\begin{defn}
\label{deffunc}
A \emph{functional} $f$ on an ordered commutative monoid $A$ is a homomorphism $f:A\to\R$.
\end{defn}

In resource-theoretic parlance, the functionals are precisely the \emph{additive monotones}. In the resource theories studied in quantum information theory, many additive monotones are given by quantities closely related to R\'enyi entropies~\cite{nonuniformity}. Additivity is a useful property for a monotone to have, since computing the value of the monotone on a composite resource object reduces to computing the value of each part. For example, if $f$ is an additive monotone and $x+y\geq z$, then we can conclude $f(x)+f(y)\geq f(z)$; hence the latter inequality is a necessary condition for $x+y\geq z$. The usefulness of additivity here lies in the fact that we only need to compute $f(x)$ and $f(y)$ individually, without making possibly more complicated considerations about composite resource objects (although these may be relevant for \emphalt{proving} additivity in the first place).

Moreover, if we have a functional $f$ which also reflects the ordering as in~\eqref{ff}, then we obtain an embedding of our ordered commutative monoid inside $\R$, in the sense that \emphalt{both} the ordering and the addition are given by that of real numbers. A similar statement applies to families of additive monotones, in a sense analogous to~\eqref{sepfamily}. We will get to this in Section~\ref{sectonedim}.

\begin{rem}
\label{multifun}
It is sometimes advantageous to consider monotones with a multiplicative rather than additive scaling in the following sense. The positive real numbers $\Rpos$ form an ordered commutative monoid with respect to multiplication and the usual ordering. Therefore we can also consider \emph{multiplicative functionals} or \emph{multiplicative monotones} in the form of homomorphisms $f:A\to\Rpos$. Instead of additivity, such an $f$ satisfies the multiplicativity equation
\beq
f(x+y) = f(x)f(y).
\eeq
This kind of functional is natural especially in those situations in which the combination operation ``$+$'' itself is of a multiplicative character. 

With respect to any base, the logarithm
\beqn
\label{log}
\log : \Rpos \to \R,
\eeqn
is an isomorphism of ordered commutative monoids whose inverse is the exponential function with respect to the same base. We can translate back and forth between additive and multiplicative functionals using these functions. Due to this correspondence, we focus on additive functionals and do not consider the multiplicative case separately.
\end{rem}

For example in $\Grph$ (Example~\ref{graphs}), the \emph{Lov\'asz number} $\vartheta$---one of the most widely studied graph invariants---is essentially a multiplicative functional on $\Grph$. The qualification ``essentially'' refers to the fact that for technical reasons, we need to work in terms of the ``complementary Lov\'asz number'', which is the graph invariant assigning to every graph $\mathcal{G}$ the Lov\'asz number $\vartheta(\overline{\mathcal{G}})$ of its complement graph $\overline{\mathcal{G}}$. In order to have less confusing notation, we denote this by $\lovasz(\mathcal{G})$.

Among the various equivalent definitions of $\lovasz$~\cite{Lovasz,sandwich}, we choose
\beq
\lovasz(\mathcal{G}) \defin \max \sum_v |\langle \psi,\phi_v\rangle|^2 ,
\eeq
where the maximum is taken over all families of vectors $(\phi_v)$ and $\psi$ in a sufficiently high-dimensional inner product space with $\langle\phi_v,\phi_v\rangle \leq 1$ and $\langle\psi,\psi\rangle\leq 1$, such that the $(\phi_v)$ are indexed by the vertices $v\in \mathcal{G}$ and assumed to satisfy the orthogonality constraint
\beq
v \sim w \quad\Longrightarrow\quad \langle\phi_v,\phi_w\rangle = 0.
\eeq

\begin{prop}
\label{lovprop}
This gives a multiplicative functional $\lovasz:\Grph\to\Rpos$ in the sense of Remark~\ref{multifun}.
\end{prop}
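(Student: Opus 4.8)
The proposition packages three things about $\lovasz\colon\Grph\to\Rpos$: that it is order-preserving, that it turns the disjunctive product into multiplication of real numbers, and that it sends the neutral element $\mathcal{K}_1=0$ to the neutral element $1$ of $(\Rpos,\cdot)$. The plan is to read all three off the variational formula directly, the one tool used over and over being that a tensor product of inner product spaces multiplies norms and inner products, $\|a\otimes b\|=\|a\|\,\|b\|$ and $\langle a\otimes b,a'\otimes b'\rangle=\langle a,a'\rangle\langle b,b'\rangle$. Two pieces are essentially free. Well-definedness: for a graph on $n$ vertices an ambient space of dimension $n$ already suffices, so the maximum is taken over a compact set and is attained and finite; and it is $\geq 1$ (take one vertex $v$, put $\phi_v=\psi$ a unit vector and make all other $\phi_w$ mutually orthogonal and orthogonal to $\psi$), so $\lovasz$ indeed lands in $\Rpos$. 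The neutral element: when $\mathcal{G}=\mathcal{K}_1$ there is a single vertex, so the formula maximizes $|\langle\psi,\phi\rangle|^2$ over $\|\phi\|,\|\psi\|\le1$, which by Cauchy--Schwarz is $1$, attained at $\phi=\psi$; hence $\lovasz(\mathcal{K}_1)=1$.

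For order-preservation, suppose $\mathcal{H}\ge\mathcal{G}$, witnessed by a graph map $\mathfrak{m}\colon\mathcal{G}\to\mathcal{H}$; the plan is to transport an optimal configuration for $\mathcal{G}$ along $\mathfrak{m}$ to a feasible one for $\mathcal{H}$ of the same value. Fix an optimal orthonormal representation $(\phi_v)_{v\in\mathcal{G}}$ of $\mathcal{G}$ (non-adjacent vertices getting orthogonal vectors, as in the formula for $\lovasz=\vartheta(\overline{\,\cdot\,})$) together with a unit handle $\psi$ achieving $\lovasz(\mathcal{G})$. For each vertex $a$ of $\mathcal{H}$ let $S_a$ be the span of $\{\phi_v:\mathfrak{m}(v)=a\}$ with orthogonal projection $P_a$, and set $\phi'_a:=P_a\psi/\|P_a\psi\|$, choosing an arbitrary unit vector (orthogonal to everything in sight, after enlarging the ambient space) where $P_a\psi=0$ or where $a$ is not in the image. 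The two facts that make this work are: $\mathfrak{m}$ preserves adjacency, so distinct vertices of a common fibre are non-adjacent in $\mathcal{G}$, whence the $\phi_v$ over a fixed fibre are orthonormal and $\sum_{\mathfrak{m}(v)=a}|\langle\psi,\phi_v\rangle|^2=\|P_a\psi\|^2=|\langle\psi,\phi'_a\rangle|^2$; and, by the contrapositive, a non-adjacent pair of vertices of $\mathcal{H}$ pulls back to a family of non-adjacent pairs of $\mathcal{G}$, so $S_a\perp S_b$ whenever $a,b$ are non-adjacent in $\mathcal{H}$, making $(\phi'_a)_{a\in\mathcal{H}}$ a legitimate orthonormal representation of $\mathcal{H}$. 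Its value at the handle $\psi$ is then $\sum_a|\langle\psi,\phi'_a\rangle|^2=\sum_{v\in\mathcal{G}}|\langle\psi,\phi_v\rangle|^2=\lovasz(\mathcal{G})$, so $\lovasz(\mathcal{H})\ge\lovasz(\mathcal{G})$.

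For multiplicativity I would prove the two inequalities separately. The inequality $\lovasz(\mathcal{G}\ast\mathcal{H})\ge\lovasz(\mathcal{G})\,\lovasz(\mathcal{H})$ is the routine one: from optimal data $(\phi_v)_v,\psi$ for $\mathcal{G}$ and $(\chi_w)_w,\omega$ for $\mathcal{H}$, form the family $\phi_v\otimes\chi_w$ over the vertex set $V(\mathcal{G})\times V(\mathcal{H})$ of $\mathcal{G}\ast\mathcal{H}$ with handle $\psi\otimes\omega$; all norms stay $\le1$, the orthogonality constraint is checked by a short case split against $(v,w)\sim(v',w')\iff v\sim v'\lor w\sim w'$, and the value is $\bigl(\sum_v|\langle\psi,\phi_v\rangle|^2\bigr)\bigl(\sum_w|\langle\omega,\chi_w\rangle|^2\bigr)=\lovasz(\mathcal{G})\,\lovasz(\mathcal{H})$. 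The reverse inequality is the crux, and the main obstacle, because the max-formula by itself never produces upper bounds on $\lovasz$; for it I would bring in the dual minimax description of the Lov\'asz number (that $\lovasz(\mathcal{G})$ equals, simultaneously, the minimum over orthonormal representations $(u_v)$ of $\overline{\mathcal{G}}$ and unit handles $c$ of $\max_v 1/\langle c,u_v\rangle^2$) and take the tensor product $u_v\otimes u'_w$ of dual-optimal representations of $\mathcal{G}$ and $\mathcal{H}$: adjacency in $\mathcal{G}\ast\mathcal{H}$ forces orthogonality of one factor, so this is a dual-feasible representation for $\mathcal{G}\ast\mathcal{H}$ of value $\lovasz(\mathcal{G})\,\lovasz(\mathcal{H})$, whence $\lovasz(\mathcal{G}\ast\mathcal{H})\le\lovasz(\mathcal{G})\,\lovasz(\mathcal{H})$. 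Equivalently one may simply cite Lov\'asz's theorem that $\vartheta$ is multiplicative under the strong product, together with the identity that the complement of $\mathcal{G}\ast\mathcal{H}$ is the strong product of the complements of $\mathcal{G}$ and $\mathcal{H}$. Either way, the substantive input is the equivalence of the max- and min-characterizations of $\vartheta$, which for the purposes of this paper can be borrowed from the cited literature on the Lov\'asz number rather than reproved.
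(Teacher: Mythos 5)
Your proof is correct and, modulo level of detail, follows the same route as the paper's one-line citation proof: the paper points to~\cite{SvsH} for monotonicity under graph maps and to the multiplicativity of $\vartheta$ under the strong product~\cite[Theorem~7]{Lovasz} together with $\overline{\mathcal{G}\ast\mathcal{H}}=\overline{\mathcal{G}}\boxtimes\overline{\mathcal{H}}$ for multiplicativity, and your closing paragraph offers exactly this second argument as the alternative. Your worked-out arguments---well-definedness by compactness, fibre spans and projection of the handle for monotonicity, and tensoring in both the max- and min-characterizations of $\vartheta$ for multiplicativity with the min-max duality imported as a black box---are all sound and are essentially the proofs the cited references contain.

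One point worth flagging: you tacitly take the orthogonality constraint to be $v\not\sim w\Rightarrow\langle\phi_v,\phi_w\rangle=0$ (``non-adjacent vertices getting orthogonal vectors''), whereas the paper's displayed constraint reads $v\sim w\Rightarrow\langle\phi_v,\phi_w\rangle=0$. Your reading is the correct one for $\lovasz(\mathcal{G})=\vartheta(\overline{\mathcal{G}})$: with the constraint as printed the formula computes $\vartheta(\mathcal{G})$ instead, yielding for example $\lovasz(\mathcal{K}_n)=1$ rather than the value $n$ that the paper itself uses in the sandwich theorem, and monotonicity under graph maps would also fail. So your proof silently corrects a sign error in the paper's stated definition of $\lovasz$.
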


\begin{proof}
This is well-known. That $\lovasz$ is monotone under maps of graphs has been noticed e.g.~in~\cite{SvsH}. The multiplicativity is a standard result, although it is typically stated without taking complement graphs: the Lov\'asz number of a strong product of graphs is equal to the product of the individual Lov\'asz numbers~\cite[Theorem 7]{Lovasz},~\cite[Section 20]{sandwich}.
\end{proof}

Other multiplicative functionals on $\Grph$ include the fractional chromatic number (Example~\ref{moregraphinvariants}) and the projective rank~\cite{projrank}. For the fractional chromatic number, monotonicity is a simple consequence of~\cite[Proposition~3.2.1]{fgt}, while multiplicativity is~\cite[Corollary~3.4.2]{fgt}. For the projective rank, monotonicity is a special case of~\cite[Theorem~6.13.4]{graphhoms}, while multiplicativity is~\cite[Theorem~27]{entass}.

For another bit of general theory, we generalize another standard notion of algebra from the unordered setting to our ordered setup: the notion of \emph{generators} of an algebraic structure.

\begin{defn}
\label{generatingpair}
A \emph{generating pair} $(g_+,g_-)$ is a pair of elements $g_+,g_-\in A$ with $g_+\geq g_-$ such that for every $x\in A$ there is $n\in\N$ with
\beq
ng_+ \geq x + ng_- \quad\textrm{ and }\quad ng_+ + x \geq ng_-.
\eeq
\end{defn}

We cannot claim that this is necessarily the only or even the best definition of what it means to ``generate'' an ordered commutative monoid. Nevertheless, the existence of a generating pair is crucial for many of the technical results that we will prove in the remainder of this paper. We can restate the main condition in an equivalent manner by postulating that for all $x,y\in A$, there is $n\in\N$ with
\beq
ng_+ + x \geq y + ng_-.
\eeq

From a resource-theoretic perspective, a generating pair is a pair of \emph{universal resources} in the sense that every other resource $x$ can both be created, as in $ng_+\geq x + ng_-$, and be absorbed, as in $ng_+ + x \geq ng_-$, using a sufficiently big number of copies of the universal pair. In many resource theories of practical interest, such a universal pair exists.

\begin{rem}
\label{posgen}
It sometimes happens that $x\geq 0$ for all $x\in A$ (Definition~\ref{defpos}). In this case, it is easy to see that if $(g_+,g_-)$ is a generating pair, then so is $(g_+,0)$. Hence a generating pair exists if and only if there is $g_+\in A$ such that for every $x\in A$ there exists $n\in\N$ with
\beq
n g_+ \geq x.
\eeq
In this situation, we say that $g_+$ is a \emph{generator}.
\end{rem}

\begin{ex}
\label{grphgenpair}
In $\Grph$, we indeed have $x\geq 0$ for all $x$. There is a generator given by $g_+\defin \mathcal{K}_2$, the complete graph on two vertices. Every graph $\mathcal{G}$ is a subgraph of some $\mathcal{K}_{2^n} = \mathcal{K}_2^{\ast n} = n \mathcal{K}_2$, and with this $n$ we obtain $\mathcal{K}_{2^n} = n \mathcal{K}_2 \geq \mathcal{G}$.
\end{ex}

A generating pair often arises as part of an entire \emphalt{family} of pairs $(g_+^n,g_-^n)$ indexed by a size parameter $n\in\Npos$ in such a way that the universal family suitably scales as a function of $n$. This is captured by the following definition:

\begin{defn}
\label{generatingfamily}
A \emph{family of generating pairs} is a family of pairs $(g_+^n,g_-^n)_{n\in\Npos}$ with $g_+^n\geq g_-^n$, and
\beq
g_+^{mn} = g_+^m + g_+^n,\qquad g_-^{mn} = g_-^m + g_-^n,
\eeq
and such that for every $x$ there is $n\in\Npos$ with
\beqn
\label{genk}
g_+^n + x \,\geq\,  g_-^n \quad\textrm{ and }\quad g_+^n \,\geq\, x + g_-^n.
\eeqn
\end{defn}

Every pair $(g_+^n,g_-^n)$ in a family of generating pairs is indeed itself a generating pair. The main interest and usefulness of such families lies in considering the \emphalt{smallest} $n$ for which~\eqref{genk} holds, and this $n$ can be understood as a measure of the ``size'' of $x$. In general, one can consider the two parts of~\eqref{genk} separately and obtain two kinds of ``size'' measures. This kind of idea has been used e.g.~in nonequilibrium thermodynamics as the notion of ``entropy meter''~\cite{entropymeter}. But we can also apply it in many other contexts, such as to $\Grph$:

\begin{ex}
\label{grphex}
$\Grph$ has a family of generating pairs given by
\beq
g_+^n \defin \mathcal{K}_n ,\qquad g_-^n \defin 0 ,
\eeq
where $\mathcal{K}_n$ is the complete graph on $n$ vertices. We have indeed $g_+^{mn} = g_+^m + g_+^n$ since $\mathcal{K}_{mn} = \mathcal{K}_m \ast \mathcal{K}_n$, and moreover $g_-^{mn} = 0 = g_-^m + g_-^n$. But most importantly, every graph $\mathcal{G}$ satisfies $\mathcal{K}_{|\mathcal{G}|}\geq \mathcal{G}+0$, where $|\mathcal{G}|$ is the number of vertices of $\mathcal{G}$, and trivially also $\mathcal{K}_{|\mathcal{G}|} + \mathcal{G} \geq 0$.

It is of interest to consider the smallest $n$ for which $\mathcal{K}_n\geq \mathcal{G}$, i.e.~the smallest $n$ for which $\mathcal{K}_n$ can ``simulate'' $\mathcal{G}$ in the sense that there exists a graph map $\mathcal{G}\to \mathcal{K}_n$. In graph-theoretic parlance, this $n$ is commonly known as the \emph{chromatic number} of $\mathcal{G}$ and denoted $\chi(\mathcal{G})$. Similarly, we may be interested in the largest $m\in\Npos$ for which $\mathcal{G}\geq \mathcal{K}_m$, or for which $\mathcal{G}$ can ``simulate'' $\mathcal{K}_m$ in the sense of a graph map $\mathcal{K}_m\to \mathcal{G}$. In graph-theoretic terminology, this $m$ is commonly known as the \emph{clique number} of $\mathcal{G}$ and denoted $\omega(\mathcal{G})$.

If $\chi(\mathcal{G})=\omega(\mathcal{G})$, then we have $\mathcal{K}_{\chi(\mathcal{G})}\geq \mathcal{G}\geq \mathcal{K}_{\omega(\mathcal{G})}$, and therefore $\mathcal{G}=\mathcal{K}_{\chi(\mathcal{G})}$ as elements of $\Grph$, and hence even $\mathcal{G} = \mathcal{K}_{\chi(\mathcal{G})}$.\footnote{It may seem strange to write $\mathcal{G}=\mathcal{K}_{\chi(\mathcal{G})}$, although $\mathcal{G}$ itself may not actually be a complete graph. But conceptually, this is no different from how algebraic expressions can be written in different forms, such as $(x+y)^2 + (x-y)^2 = x^2 + 2xy + y^2 + x^2 - 2xy + y^2 = 2x^2 + 2y^2$, where we use the equality symbol as a way of denoting a certain kind of equivalence.} The \emph{perfect graphs}\footnote{Among several equivalent definitions, $\mathcal{G}$ is perfect if neither $\mathcal{G}$ itself nor its complement contains an induced odd cycle~\cite{perfect}.} are an interesting class of graphs with this property.
\end{ex}

In summary, upon considering the structure of $\Grph$ as an ordered commutative monoid together with a family of generating pairs, we have found that two of the most important graph invariants arise very naturally. While this is a relatively simple and well-known fact~\cite{graphhoms}, we nevertheless hope that it elucidates the potential of our ideas. For example, we can now rederive Lov\'asz's \emph{sandwich theorem} in an abstract and conceptually clear manner:

\begin{thm}[\cite{Lovasz,sandwich}]
For every graph $\mathcal{G}\in\Grph$, we have
\beq
\omega(\mathcal{G}) \leq \lovasz(\mathcal{G}) \leq \chi(\mathcal{G}).
\eeq
\end{thm}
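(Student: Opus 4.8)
The plan is to reduce both inequalities to a single computation, namely $\lovasz(\mathcal{K}_n) = n$, and then feed this into the monotonicity of $\lovasz$ guaranteed by Proposition~\ref{lovprop}. The key structural input is the description of the chromatic and clique numbers from Example~\ref{grphex}: $\chi(\mathcal{G})$ is the least $n$ with $\mathcal{K}_n\geq\mathcal{G}$, and $\omega(\mathcal{G})$ is the greatest $m$ with $\mathcal{G}\geq\mathcal{K}_m$, so that in $\Grph$ we have the chain
\[
\mathcal{K}_{\chi(\mathcal{G})} \;\geq\; \mathcal{G} \;\geq\; \mathcal{K}_{\omega(\mathcal{G})}.
\]
Since $\lovasz:\Grph\to\Rpos$ is a homomorphism, hence an ordered map, applying it to this chain yields $\lovasz(\mathcal{K}_{\chi(\mathcal{G})})\geq\lovasz(\mathcal{G})\geq\lovasz(\mathcal{K}_{\omega(\mathcal{G})})$, and once $\lovasz(\mathcal{K}_n)=n$ is known this is exactly the asserted sandwich.

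So the first thing I would do is evaluate $\lovasz(\mathcal{K}_n)$ directly from its definition. For the lower bound I would note that $\mathcal{K}_n$ has no pair of non-adjacent vertices, so the orthogonality constraint on the $\phi_v$ is vacuous; taking all $\phi_v$ equal to a single unit vector $\psi$ is then admissible and gives $\sum_v|\langle\psi,\phi_v\rangle|^2 = n$, whence $\lovasz(\mathcal{K}_n)\geq n$. For the upper bound, Cauchy--Schwarz gives $|\langle\psi,\phi_v\rangle|^2\leq\langle\psi,\psi\rangle\,\langle\phi_v,\phi_v\rangle\leq 1$ for each of the $n$ vertices, so $\lovasz(\mathcal{K}_n)\leq n$. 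An alternative route, if one prefers not to touch the analytic definition beyond small cases, is to observe $\lovasz(\mathcal{K}_1)=\lovasz(0)=1$ and $\lovasz(\mathcal{K}_2)=2$, and then pin down $\lovasz(\mathcal{K}_n)$ for all $n$ by combining the multiplicativity $\lovasz(\mathcal{K}_{mn})=\lovasz(\mathcal{K}_m)\lovasz(\mathcal{K}_n)$ from Proposition~\ref{lovprop} with the equivalence $\mathcal{K}_m\geq\mathcal{K}_n\Leftrightarrow m\geq n$ in a squeezing argument over the powers $\mathcal{K}_{n^k}$.

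Finally I would assemble the two bounds: from $\mathcal{K}_{\chi(\mathcal{G})}\geq\mathcal{G}$ and monotonicity, $\chi(\mathcal{G}) = \lovasz(\mathcal{K}_{\chi(\mathcal{G})})\geq\lovasz(\mathcal{G})$; and from $\mathcal{G}\geq\mathcal{K}_{\omega(\mathcal{G})}$, $\lovasz(\mathcal{G})\geq\lovasz(\mathcal{K}_{\omega(\mathcal{G})}) = \omega(\mathcal{G})$. There is no genuine obstacle in this argument: essentially all the work has already been carried out in Proposition~\ref{lovprop} (monotonicity of $\lovasz$ under graph maps) and in the identification of $\chi$ and $\omega$ with comparisons against complete graphs in Example~\ref{grphex}. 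The only point where care is needed is the evaluation $\lovasz(\mathcal{K}_n)=n$ --- in particular, getting the orthogonality convention in the definition of $\lovasz$ the right way around --- so that is the step I would double-check.
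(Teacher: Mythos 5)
Your final assembly---sandwich $\mathcal{K}_{\chi(\mathcal{G})}\geq\mathcal{G}\geq\mathcal{K}_{\omega(\mathcal{G})}$, apply monotonicity of $\lovasz$, use $\lovasz(\mathcal{K}_n)=n$---matches the paper exactly, so the only real divergence is how $\lovasz(\mathcal{K}_n)=n$ is established. Your ``alternative route'' is in fact the paper's route: the paper uses multiplicativity and monotonicity together to conclude $\lovasz(\mathcal{K}_n)=n^r$ via a theorem of Erd\H{o}s (your squeezing over the powers $\mathcal{K}_{n^k}$ is exactly the standard proof of that result), and then $\lovasz(\mathcal{K}_2)=2$ fixes $r=1$. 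The paper favours this over the direct analytic computation because, as it notes immediately after the theorem statement, the abstract argument applies verbatim to any multiplicative functional normalized so that $f(\mathcal{K}_2)=2$; your primary route gives $\lovasz(\mathcal{K}_n)=n$ more concretely but does not exhibit this generality.

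Your instinct to double-check the orthogonality convention was well founded, and more was at stake than you may have realized. The paper's displayed constraint reads $v\sim w\Rightarrow\langle\phi_v,\phi_w\rangle=0$, i.e.\ the $\phi_v$ must be orthogonal for adjacent pairs. Taken literally, on $\mathcal{K}_n$ this forces all $\phi_v$ to be mutually orthogonal, and Bessel's inequality then gives $\sum_v|\langle\psi,\phi_v\rangle|^2\leq\|\psi\|^2\leq 1$, so the formula would yield $1$ rather than $n$; it would likewise give $n$ rather than $1$ on the graph of $n$ isolated vertices, despite that graph being identified with $\mathcal{K}_1=0$ in $\Grph$. The correct definition of $\lovasz(\mathcal{G})=\vartheta(\overline{\mathcal{G}})$ imposes orthogonality on non-adjacent pairs, which is how you in fact read it (``$\mathcal{K}_n$ has no pair of non-adjacent vertices, so the constraint is vacuous''). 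So your direct computation is correct for the intended $\lovasz$, but it silently corrects a sign error in the paper's displayed orthogonality constraint; if you take the paper's formula at face value, the primary route fails at the lower bound, which is precisely what you flagged as the step to double-check.
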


Strictly speaking, this is only a part of the sandwich theorem, which also crucially comprises a complexity statement. As the proof shows, the theorem also holds with any other multiplicative functional $f$ in place of $\lovasz$ normalized as $f(\mathcal{K}_2)=2$.

\begin{proof}
Besides the above definitions of $\omega$ and $\chi$, we only need Proposition~\ref{lovprop} together with the normalization $\lovasz(\mathcal{K}_2) = 2$ and the fact that the $\mathcal{K}_n$ form a family of generating pairs as above.

Since $\lovasz$ is a multiplicative functional, we have $\lovasz(\mathcal{K}_{mn}) = \lovasz(\mathcal{K}_m) \lovasz(\mathcal{K}_n)$. Together with monotonicity, this implies that $\lovasz(\mathcal{K}_n) = n^r$ for some constant $r\in\Rplus$ due to a result of Erd\H{o}s~\cite{erdos}. The normalization condition $\lovasz(\mathcal{K}_2)=2$ requires $r=1$, and hence
\beqn
\label{lovkn}
\lovasz(\mathcal{K}_n) = n
\eeqn
for all $n$. Now the first inequality follows easily,
\beq
\omega(\mathcal{G}) = \lovasz(\mathcal{K}_{\omega(\mathcal{G})}) \leq \lovasz(\mathcal{G}).
\eeq
where the first step is by~\eqref{lovkn} and the second follows from $\mathcal{G}\geq \mathcal{K}_{\omega(\mathcal{G})}$. The proof of the other inequality is similar.
\end{proof}

From here on, there would be many different routes that our investigations could take. But for the present work, we would like to focus on the following question as a guiding principle: if we have
\beqn
\label{Mord}
f(x) \geq f(y) \textrm{ for all functionals } f,
\eeqn
then does this imply that $x\geq y$? The answer to this question will easily be seen to be negative, and hence we will consider the question in a revised form: what does the assumption~\eqref{Mord} tell us about the convertibility of $x$ and $y$? Is there a suitably relaxed or regularized notion of convertibility which can be defined without reference to functionals and is equivalent to~\eqref{Mord}? The answer to this revised question will turn out to be positive.

To summarize, our main question is:

\mainqstn

Next we will see a simple reason for why the answer is not just ``it tells us that $x\geq y$''.

\newpage
\section{The catalytic regularization: ordered abelian groups}
\label{sectoag}

A very common phenomenon in resource theories is that of \emph{catalysis}: there may be $x,y,z\in A$ such that $x\not\geq y$, but
\beqn
\label{catalytic}
x+z\geq y+z.
\eeqn
In this situation, we say that $z$ is a \emph{catalyst} for the conversion of $x$ into $y$. The possibility of catalysis in chemistry is arguably among the most fundamental insights into chemistry as a resource theory.

\begin{ex}
In chemistry, hydrogen peroxide decomposes into water and oxygen only rather slowly, 
\beqn
\label{hydroperox}
\mathrm{2H_2O_2} \stackrel{\textrm{slow}}{\lra} \mathrm{2H_2O} + \mathrm{O_2} .
\eeqn
But in the presence of manganese dioxide, the reaction is much quicker,
\beq
\mathrm{2H_2O_2} + \mathrm{MnO_2} \stackrel{\textrm{fast}}{\lra} \mathrm{2H_2O} + \mathrm{O_2} + \mathrm{MnO_2}.
\eeq
Hence manganese dioxide catalyzes the decomposition of hydrogen peroxide into water and oxygen. So if we define the ordered commutative monoid $\Chem$ such that the second reaction is considered viable but the first one is not, then manganese dioxide $\mathrm{MnO_2}$ is a catalyst also in the sense of~\eqref{catalytic} for the conversion of hydrogen peroxide into water and oxygen,
\beq
\mathrm{2H_2O_2} \not\geq \mathrm{2H_2O} + \mathrm{O_2} ,\qquad \mathrm{2H_2O_2} + \mathrm{MnO_2} \geq \mathrm{2H_2O} + \mathrm{O_2} + \mathrm{MnO_2}.
\eeq

In chemistry, there are also \emphalt{inhibitors}, which can decrease the rate at which a reaction happens when added to the reactants. For example, the presence of acetanilide $\mathrm{C_8H_9NO}$ slows down the above reaction~\eqref{hydroperox},
\beqn
\label{inhibit}
\mathrm{2H_2O_2 + C_8H_9NO} \stackrel{\textrm{very slow}}{\lra} \mathrm{2H_2O + O_2 + C_8H_9NO} .
\eeqn
Now even if one considers the reaction~\eqref{hydroperox} to be a viable basic conversion in $\Chem$, but not~\eqref{inhibit}, in the ordered commutative monoid $\Chem$ we still have
\beqn
\label{inhibitconvert}
\mathrm{2H_2O_2 + C_8H_9NO} \geq \mathrm{2H_2O + O_2 + C_8H_9NO} .
\eeqn
This is due to our definition of the ordering in Example~\ref{rtchem}: we imagine that the boxes that contain the molecules can be brought together at will, causing their inhabitants to react in exactly the way desired by the operator. With this in mind, the conversion~\eqref{inhibitconvert} can be achieved by taking the two hydrogen peroxide molecules on the left-hand side out of their boxes and waiting until they have decomposed into water and oxygen; the acetanilide inhibitor remains in its box.
\end{ex}

\begin{ex}
\label{tools}
In everyday life, catalysts are usually known as \emph{tools}. They enable conversions like this:
\beq
\textrm{timber} + \textrm{nails} \not\geq \textrm{table},\qquad \textrm{timber} + \textrm{nails} + \textrm{saw} + \textrm{hammer} \geq \textrm{table} + \textrm{saw} + \textrm{hammer}.
\eeq
So ``$\textrm{saw} + \textrm{hammer}$'' is the catalyst which enables the conversion of ``$\textrm{timber} + \textrm{nails}$'' into ``$\textrm{table}$''.
\end{ex}

On the mathematical level of ordered commutative monoids, catalysis is among the simplest phenomena concerned with how the ordering relation and the addition operation interact.

If $f$ is any functional (additive monotone), then~\eqref{catalytic} implies that $f(x)+f(z)\geq f(y)+f(z)$, and therefore $f(x)\geq f(y)$. Hence in order for $f(x)\geq f(y)$ to hold for all functionals $f$, it is sufficient that $x$ is convertible into $y$ \emphalt{catalytically}; in particular, it does generally not mean that $x\geq y$. This provides a simple partial answer to Question~\ref{mainqstn}~\cite[p.~31]{nonuniformity}.

In some resource theories, catalytic convertibility does imply convertibility without a catalyst, and then genuine catalysis is impossible. In mathematical terms:

\begin{defn}[{cf.~\cite[Definition~4.10]{resourcesI}}]
\label{canc}
An ordered commutative monoid $A$ is \emph{cancellative} if $x+z\geq y+z$ implies $x\geq y$.
\end{defn}

We have borrowed this term from algebra, where cancellativity of a commutative monoid means that $x+z=y+z$ implies $x=y$. If an ordered commutative monoid $A$ is cancellative in our sense, then it is also cancellative in this unordered sense: $x+z=y+z$ means that $x+z\geq y+z$ and $y+z\geq x+z$, which yields $x\geq y$ and $y\geq x$ by cancellativity, and therefore $x=y$. So in light of Remark~\ref{commmon}, our definition of cancellativity generalizes the usual notion of cancellativity to the ordered setting. This has been done before in~\cite{separativeosgs,seppom}.

Other names for the concept of Definition~\ref{canc} have been used. For example,~\cite{strongsemi} uses the term ``strong''. In~\cite{resourcesI}, we have used the term ``catalysis-free''.

For an example of a resource theory whose ordered commutative monoid turns out to be cancellative, see~\cite[Theorem~25]{asymmetry}.

\begin{ex}
\label{grphncanc}
It is challenging to figure out whether $\Grph$ is cancellative. A computer search using \textsc{Sage} has failed to produce any counterexample. Nevertheless, by anticipating some of the upcoming theory, we can construct a counterexample of moderate size.

The non-cancellativity follows from the upcoming Example~\ref{grphntf} together with Lemma~\ref{distribute}\ref{cancimpltf}, where we take the alternative binary operation $\lor$ on $\Grph$ to be given by the \emph{graph join}, which is the disjoint union of two graphs together with all edges connecting the two components. We leave it to the reader to verify that the graph join also makes $\Grph$ into an ordered commutative monoid, and that the disjunctive product ``$+$'' distributes over the join ``$\lor$''.

This argument can be turned into a concrete example as follows. Let $\pentagon$ denote the \emphalt{5-cycle}, which is the graph that just looks like a pentagon $\pentagon$. Example~\ref{grphntf} explains that $6\pentagon\geq 2\mathcal{K}_{11}$, although $3\pentagon\not\geq \mathcal{K}_{11}$. Plugging this into the proof of Lemma~\ref{distribute} results in the catalyst being given by~\eqref{zconstruction},
\beq
\mathcal{G} = 3\pentagon \lor \mathcal{K}_{11},
\eeq
i.e.~the graph which is the graph join of $\pentagon^{\ast 3}\defin\pentagon\ast\pentagon\ast\pentagon$ with the complete graph on $11$ vertices. The proof of Lemma~\ref{distribute} then shows that we have $3\pentagon + \mathcal{G} \geq \mathcal{K}_{11} + \mathcal{G}$, although $3\pentagon \not\geq \mathcal{K}_{11}$.
\end{ex}

Getting back to the general theory, we can make any ordered commutative monoid into a cancellative one by introducing a new ordering denoted ``$\succeq$'' and defined as
\beq
x\succeq y \quad :\Longleftrightarrow\quad \exists z\in A, \: x+z\geq y+z.
\eeq
If $A$ is the ordered commutative monoid describing a resource theory, then ``$\succeq$'' is the \emph{catalytic ordering} in which $x$ is declared convertible into $y$ if there exists a catalyst which facilitates the conversion. It is straightforward to show that $\succeq$ is indeed reflexive and transitive, and that this ordering coincides with the original ``$\geq$'' if $A$ already happened to be cancellative. Moreover, the addition $+$ is clearly monotone with respect to $\succeq$ as well. If we write $\canc(A)$ for the ordered commutative monoid given by $A$ equipped with this new ordering, then $\canc(A)$ is cancellative: assuming $x+z\succeq y+z$ means that there exists $w\in A$ with $x+z+w\geq y+z+w$ in the original ordering, which indeed implies $x\succeq y$. There is a canonical homomorphism $\eta_A:A\to \canc(A)$ which simply maps every element to itself, and this has the following universal property: if $B$ is any other ordered commutative monoid which is cancellative, and $f:A\to B$ is any homomorphism, then there is a unique homomorphism $\canc(f):\canc(A)\to B$ such that $f = \canc(f)\circ \eta_A$, i.e.~such that the diagram
\beq
\xymatrix{ A \ar[rr]^{\eta_A} \ar[dr]_f & & \canc(A) \ar@{-->}[dl]^{\canc(f)} \\
 & B }
\eeq
commutes. In categorical language, this universal property states that the category of cancellative ordered commutative monoids is a reflective subcategory of $\OCM$. 

We refer to~\cite[Proposition~B.3]{secondlaws} for an example of a successful characterization of the catalytic ordering in a concrete resource theory.

\begin{rem}
\label{cateq}
The induced equality relation on $\canc(A)$ may differ from the one on $A$, meaning that $\eta_A(x)=\eta_A(y)$ may not necessarily imply that $x=y$. For example, if there is an element $a\in A$ which is \emph{absorbing} in the sense that $x+a=a$ for all $x\in A$, then this equation $x+a=a$ will also hold in $\canc(A)$, and cancellativity then results in $x=0$ in $\canc(A)$. Therefore $\canc(A)$ is isomorphic to the trivial commutative monoid in which all elements are equal to $0$. A similar thing happens if $A$ is a (semi-)lattice and ``$+$'' is given by the meet or join operation.
\end{rem}

There is a particularly interesting class of ordered commutative monoids that are automatically cancellative:

\begin{defn}
\label{oag}
An \emph{ordered abelian group} $G$ is an ordered commutative monoid for which the monoid structure is a group, i.e.~such that for every $x\in G$ there exists a $(-x)\in G$ with $x+(-x)=0$.
\end{defn}

As before, we assume that the equality relation ``$=$'' on $G$ is the one induced from the ordering relation. We are interested in ordered abelian groups at this point because of the intimate relation to cancellativity. First, every ordered abelian group is automatically cancellative: $x+z \geq y+z$ implies $x+z+(-z) \geq y+z+(-z)$, and therefore $x\geq y$~\cite[Theorem~1]{strongsemi}.

Moreover, for any $x$ the inverse $(-x)$ is necessarily unique: if there was another inverse $(-x)'$, then we would get $x+(-x) = 0 = x+(-x)'$, and therefore $(-x) = (-x)'$ by cancellativity. This justifies removal of the brackets and writing $-x$ for \emphalt{the} negative inverse of $x$, and using the usual rules for how $+$ and $-$ interact. If $G$ is an ordered abelian group and $A$ any ordered commutative monoid, then any homomorphism $f:G\to A$ preserves negatives in the sense that $f(-x) = -f(x)$, even if $A$ does not have negatives for all of its elements. This is because $f(-x) + f(x) = f(-x+x) = f(0) = 0$, so that $f(-x)$ is an additive inverse of $f(x)$.

We can turn every cancellative ordered commutative monoid $A$ into an ordered abelian group $\oag(A)$ by formally throwing in negative inverses as follows. We define the elements of $\oag(A)$ to be pairs of elements of $A$,
\beq
\oag(A) \defin A\times A,
\eeq
and we think of a pair $(x,y)\in \oag(A)$ as representing a formal difference $x-y$. We declare two pairs $(x,y)$ and $(x',y')$ to be ordered $(x,y)\geq (x',y')$ if and only if\footnote{This definition corresponds to the ``generalized ordering'' of Lieb and Yngvason~\cite[p.~23]{secondlaw1}.}
\beqn
\label{oagord}
x + y' \geq x' + y
\eeqn
holds in $A$. Finally, we define the addition operation on pairs componentwise, so that
\beq
(x,y)+(x',y')\defin (x+x',y+y').
\eeq
It is straightforward to check that this satisfies Definition~\ref{oag} with neutral element $(0,0)\in \oag(A)$ and inverses $-(x,y) \defin (y,x)$. All of this should be intuitive if one keeps the intended interpretation of $(x,y)$ as a formal difference $x-y$ in mind.

Cancellativity of $A$ is relevant for proving that the ordering~\eqref{oagord} is transitive: the assumption $(x_1,y_1)\geq (x_2,y_2)\geq (x_3,y_3)$ means that
\beq
x_1 + y_2 \geq x_2 + y_1,\qquad x_2 + y_3 \geq x_3 + y_2,
\eeq
and this implies
\beq
x_1 + y_3 + y_2 \geq x_3 + y_1 + y_2.
\eeq
In order to conclude $(x_1,y_1)\geq (x_3,y_3)$, which corresponds to $x_1 + y_3\geq x_3 + y_1$, we need to appeal to cancellativity. The other axioms of an ordered abelian group are straightforward to verify and do not rely on the cancellativity of $A$.

There is a canonical homomorphism $\gamma_A:A\to \oag(A)$ which maps every element $x\in A$ to the pair $(x,0)\in \oag(A)$, and this homomorphism has the following universal property: if $G$ is any other ordered abelian group and $f:A\to G$, then there is a unique $\oag(f):\oag(A)\to G$ such that $\oag(f)\circ \gamma_A = f$, i.e.~such that the diagram
\beq
\xymatrix{ A \ar[rr]^{\gamma_A} \ar[dr]_f & & \oag(A) \ar@{-->}[dl]^{\oag(f)} \\
 & G }
\eeq
commutes. The uniqueness part follows from
\beq
\oag(f)( (x,y) ) = \oag(f)( (x,0) - (y,0) ) = \oag(f) ( (x,0) ) - \oag(f)( (y,0) ) = f(x) - f(y).
\eeq
A simple computation shows that defining $\oag(f)$ by this equation indeed yields a homomorphism, so this also proves existence. In categorical language, this universal property states that the category of ordered abelian groups, which we denote $\OAG$, is a reflective subcategory of the category of cancellative ordered commutative monoids. We also note that by~\eqref{oagord}, $\gamma_A$ automatically reflects the ordering,
\beq
\gamma_A(x) \geq \gamma_A(y) \quad \Longrightarrow \quad x\geq y,
\eeq
and in particular $\gamma_A$ must be injective. Hence we can regard $A$ as a submonoid of $\oag(A)$ carrying the induced ordering~\cite[Theorem~5]{strongsemi},~\cite[Proposition~5.2]{commsemigroups}. Appendix~\ref{enrichment} explains how this construction is a special case of a standard result of category theory.

In resource-theoretic terms, we interpret $\oag(A)$ as follows. If a resource theory does not allow genuine catalysis---or has already been regularized such that the convertibility relation is catalytic convertibility---then we can formally introduce resource objects $-x$ which stand for ``borrowing'' the resource object $x\in A$ from somewhere else, or for ``going into debt'' with respect to $x$. Since catalysis is impossible, we cannot achieve any additional conversions by first borrowing $x$, conducting a resource conversion, and then returning $x$, than without having borrowed $x$ in the first place. Hence introducing $-x$ does not modify the convertibility relation, and we gain the mathematical convenience of working with a group rather than a monoid. We may think of a formal difference $x-y$ of resource objects in terms of a bank account: we have a credit of $x$ at our disposal, but also a debt of $y$ which will eventually have to paid off. But just as having a resource $x$ as credit may not necessarily be beneficial---as the nuclear waste example shows---having a debt of $y$ is not necessarily unfavourable.

So if we start with an ordered commutative monoid $A$ which is not necessarily cancellative, we can make it cancellative first by constructing $\canc(A)$, and then turn $\canc(A)$ into an ordered abelian group $\oag(\canc(A))$. In resource-theoretic terms, this ordered abelian group is the \emph{catalytic regularization} of the original theory. There is a canonical homomorphism $\gamma_A \eta_A : A \to \oag(\canc(A))$ having the following universal property:  if $G$ is any ordered abelian group and $f:A\to G$, then there is a unique homomorphism $\oag(\canc(f)):\oag(\canc(A))\to G$ such that $\oag(\canc(f))\circ \gamma_A\eta_A = f$, i.e.~such that the diagram
\beq
\xymatrix{ A \ar[rr]^{\gamma_A\eta_A} \ar[dr]_f & & \oag(\canc(A)) \ar@{-->}[dl]^{\oag(\canc(f))} \\
 & G }
\eeq
commutes. In the unordered setting, this construction of turning a commutative monoid to an abelian group is commonly known as forming the \emphalt{Grothendieck group}\footnote{Not to be confused with the \emphalt{Grothendieck construction} from category theory, which doesn't have much to do with it.}, and is fundamental e.g.~for the development of $K$-theory~\cite[Section~II.1]{KtheoryTop},~\cite[Section~3.1]{KtheoryCstar}. While the conventional construction of the Grothendieck group requires taking the quotient of $A\times A$ with respect to an equivalence relation, this is implicit in our ordered setting by the definition of equality in terms of the ordering.

We could as well have performed both steps of this construction in one go by directly going from the ordered commutative monoid to the ordered abelian group. However, we find it instructive to split the construction into two separate parts which are simpler to understand individually. We nevertheless abbreviate $\oag(\canc(A))$ by $\oag(A)$, in order not to clutter notation too much.

In category-theoretic terms, we can state the universal property like this:

\begin{thm}
\label{ocmtooag}
$\OAG$ is a reflective subcategory of $\OCM$, i.e.~the canonical inclusion functor $\OAG \hookrightarrow \OCM$ has a left adjoint.
\end{thm}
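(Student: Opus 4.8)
The plan is to assemble the desired left adjoint from the two reflections already constructed in this section. Recall that we have produced, for every ordered commutative monoid $A$, a cancellative ordered commutative monoid $\canc(A)$ together with a homomorphism $\eta_A:A\to\canc(A)$ enjoying a universal property with respect to homomorphisms into cancellative ordered commutative monoids; and for every cancellative $A$, an ordered abelian group $\oag(A)$ together with a homomorphism $\gamma_A:A\to\oag(A)$ enjoying a universal property with respect to homomorphisms into ordered abelian groups. Since every ordered abelian group is cancellative, and since a homomorphism of ordered commutative monoids between ordered abelian groups automatically preserves negatives, $\OAG$ is a full subcategory of the category of cancellative ordered commutative monoids, which in turn is a full subcategory of $\OCM$. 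Hence it suffices to exhibit, for each $A\in\OCM$, a universal arrow from $A$ to the inclusion $\OAG\hookrightarrow\OCM$.

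First I would take this universal arrow to be the composite $\gamma_{\canc(A)}\circ\eta_A : A\to\oag(\canc(A))$, which is what the preceding discussion abbreviates as $\gamma_A\eta_A$. Given any ordered abelian group $G$ and any homomorphism $f:A\to G$, I would factor it in two stages. Because $G$ is cancellative, the universal property of $\eta_A$ yields a unique homomorphism $\canc(f):\canc(A)\to G$ with $\canc(f)\circ\eta_A=f$; because $G$ is an ordered abelian group, the universal property of $\gamma_{\canc(A)}$ yields a unique homomorphism $\oag(\canc(f)):\oag(\canc(A))\to G$ with $\oag(\canc(f))\circ\gamma_{\canc(A)}=\canc(f)$. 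Composing gives $\oag(\canc(f))\circ(\gamma_{\canc(A)}\circ\eta_A)=f$. Uniqueness follows by chaining the two uniqueness clauses: any homomorphism $h:\oag(\canc(A))\to G$ with $h\circ\gamma_{\canc(A)}\circ\eta_A=f$ has $h\circ\gamma_{\canc(A)}$ factoring $f$ through $\eta_A$, hence $h\circ\gamma_{\canc(A)}=\canc(f)$ by the first universal property, hence $h=\oag(\canc(f))$ by the second.

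Finally I would invoke the standard fact that a functor admits a left adjoint precisely when every object of the codomain admits a universal arrow to it. Defining the tentative left adjoint on objects by $A\mapsto\oag(\canc(A))$ and on morphisms by $f\mapsto\oag(\canc(f))$ as extracted above, functoriality (preservation of identities and composites) follows formally from the uniqueness in the universal property, and the unit of the adjunction is the natural transformation with components $\gamma_{\canc(A)}\circ\eta_A$. This shows that $\OAG$ is a reflective subcategory of $\OCM$.

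There is no genuinely hard step here: the substance has already been carried out in the explicit constructions of $\canc$ and $\oag$ and in the verification of their universal properties. The only points requiring a little care are bookkeeping ones, namely checking that $\OAG$ is indeed \emph{full} in $\OCM$ (so that ``universal arrow'' is the correct notion, with no side conditions on morphisms) and observing that the composite of two reflections is again a reflection, equivalently that the composite of two left adjoints is a left adjoint. This last fact may be cited rather than reproved; alternatively, Appendix~\ref{enrichment} places the $\oag$-construction within a general categorical framework in which the composability of such adjunctions is automatic.
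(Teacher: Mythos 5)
Your proof is correct and matches the paper's own treatment exactly: the paper likewise builds the reflection as the composite of the $\canc$-reflection into cancellative ordered commutative monoids with the $\oag$-reflection into $\OAG$, verifies the composite universal property of $\gamma_{\canc(A)}\circ\eta_A$, and states Theorem~\ref{ocmtooag} as the categorical summary of what was already established. Your care about fullness of $\OAG$ in $\OCM$ is also warranted and is handled in the paper via the observation that homomorphisms of ordered commutative monoids between ordered abelian groups automatically preserve negatives.
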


The relevance of this result for Question~\ref{mainqstn} is that $\R$ is an ordered abelian group, and therefore any functional on $A$ can be uniquely extended to a functional on $\oag(A)$. In particular, in order to have $f(x)\geq f(y)$ for all functionals $f$, it is sufficient that $x\geq y$ in $\oag(A)$, i.e.~that $x$ can be converted into $y$ catalytically. But is $x\geq y$ in $\oag(A)$ also \emphalt{necessary} for $f(x)\geq f(y)$ for all $f$? In resource-theoretic terms, do additive monotones witness catalytic convertibility perfectly? We now know that this is really a question about ordered abelian groups. By adding or subtracting $y$ on both sides, an inequality of the form $x\geq y$ is equivalent to $x-y\geq 0$, and hence it is sufficient to consider the case that the element on the right-hand side is zero:

\begin{qstn}
\label{revisedmainqstn}
If $G$ is an ordered abelian group and $f(x)\geq 0$ for all functionals $f$, then does this imply that $x\geq 0$? If not, what does it tell us?
\end{qstn}

In the next section, we will see that the answer is again negative for a simple reason, and another step of regularization will be performed.

There is an alternative definition of ordered abelian group which we find useful. Since $x\geq y$ is equivalent to $x-y\geq 0$, it is sufficient to specify which elements are $\geq 0$ in order to specify the ordering relation on $G$ completely. We denote this class of elements by $G_+$ and call it the \emph{positive cone} of $G$. We can then also rewrite $x\geq y$ equivalently as $x-y\in G_+$, or as $x\in y+G_+$, or even as $x+G_+\subseteq y+G_+$. With this in mind, it is easy to see that Definition~\ref{oag} is equivalent to~\cite[p.~3]{poags}:

\begin{defn}[alternative]
\label{oagalt}
An \emph{ordered abelian group} is a set $G$ equipped with the following pieces of structure:
\begin{itemize}
\item a subset $G_+\subseteq G$, called the \emph{positive cone},
\item a binary operation $+$,
\item a distinguished element $0\in G$,
\end{itemize}
and such that the following axioms hold:
\begin{itemize}
\item $+$ is associative, commutative, has $0$ as a neutral element and inverses,
\item $0\in G_+$,
\item $G_+ + G_+\subseteq G_+$,
\item the positive cone is \emph{pointed}: if $x\in G_+$ and $-x\in G_+$, then $x=0$.
\end{itemize}
\end{defn}

The positive cone $G_+$ measures the difference between an ordering $x\geq y$ and an equality $x=y$. The cone being pointed encodes~\eqref{equaldef}, our definition of equality in terms of the ordering. With this definition, ordered abelian groups come up e.g.~in the $K$-theory of operator algebras~\cite[Definition~6.2.1]{Ktheoryop}.

We sketch how an ordered abelian group in the sense of Definition~\ref{oagalt} is also an ordered abelian group in the sense of Definition~\ref{oag}, leaving the other direction to the reader. $G$ becomes an ordered commutative monoid by taking $x\geq y$ to mean that $x-y\in G_+$. This is transitive thanks to $G_+ + G_+\subseteq G_+$: the assumption $x\geq y\geq z$ means that $x-y\in G_+$ and $y-z\in G_+$, which gives
\beq
(x-y) + (y-z) = x-z \:\in G_+,
\eeq
and hence $x\geq z$. The axiom $G_+ + G_+ \subseteq G_+$ also results in the monotonicity of addition~\eqref{monotoneplus}: the assumption $x\geq y$ means that $x-y\in G_+$, and therefore also
\beq
(x + z) - (y + z) \:\in G_+,
\eeq
and hence $x+z\geq y+z$. The existence of negatives as in Definition~\ref{oag} is by construction.

In the following sections, we will freely confuse this alternative definition of ordered abelian group with the original one. In particular, we will freely translate between $x\geq y$ and $x-y\in G_+$.

In resource-theoretic terms, the positive cone $G_+$ consists of all those resource objects which can be freely disposed of, i.e.~which can be converted into nothing. Equivalently, one can think of an $x\geq 0$ as a resource object such that having $x$ is at least as good as having nothing. Alternatively, it may be more intuitive to think in terms of $-G_+$, which is exactly the set of all those resource objects which can be freely produced, i.e.~which can be obtained from nothing. In the case $G=\oag(A)$, these ``resource objects'' are pairs $(x,y)$ consisting of a ``credit'' $x\in A$ and a ``debt'' $y\in A$ as before, together with the additional property that $y + z\geq x + z$ for some $z\in A$.

\begin{ex}
As shown in Example~\ref{grphncanc}, we have $3\pentagon - \mathcal{K}_{11}\in \oag(\Grph)_+$, although $3\pentagon\not\geq\mathcal{K}_{11}$ in $\Grph$.
\end{ex}

At the level of ordered abelian groups, we can give a simpler version of Definition~\ref{generatingpair}:

\begin{defn}
\label{oaggenerator}
A \emph{generator} in an ordered abelian group $G$ is an element $g\in G$ with $g\geq 0$ such that for every $x$ there is $n\in\N$ with $ng\geq x$.
\end{defn}

In particular, this implies that every $x\in G$ can be written as a difference $ng - (ng - x)$ of two elements of the positive cone.

Applying the generator property with $-x$ in place of $x$ shows that there also exists an $m\in\N$ with $x\geq -mg$, or equivalently $mg+x\geq 0$. Choosing $n$ such that both $ng\geq x$ and $ng+x\geq 0$ hold shows that the pair $(g,0)$ is a generating pair in the sense of Definition~\ref{generatingpair}. Conversely, if $(g_+,g_-)$ is a generating pair in an ordered abelian group, then $g_+-g_-$ is a generator. More generally, if $A$ is any ordered commutative monoid with a generating pair $(g_+,g_-)$, then $g_+-g_-$ is a generator in $\oag(A)$, as follows directly from the definitions.

A generator in an ordered abelian group is also known as an \emph{order unit}~\cite[p.~4]{poags}. We have chosen not to follow this terminology, since it does not extend nicely to the setting of ordered commutative monoids, where we need to consider pairs of elements as in Definition~\ref{generatingpair}.

\newpage
\section{The many-copies regularization: ordered $\Q$-vector spaces}
\label{sectovs}

Many resource theories display the phenomenon of \emph{economy of scale}: it may happen that the convertibility relation
\beq
x + x \geq y + y
\eeq
holds, even though $x\not\geq y$. This is another manner in which $f(x)\geq f(y)$ for all functionals $f$ does not necessarily imply that $x\geq y$, since already $x+x\geq y+y$ implies that $f(x+x)\geq f(y+y)$, and therefore $f(x)\geq f(y)$ by additivity. More generally, if $nx\geq ny$ for some $n\in\Npos$, then $f(x)\geq f(y)$, but this does not necessarily mean that $x\geq y$. In resource-theoretic terms, considering convertibility at the many-copy level may enable conversions which were impossible before. Modulo epsilonification (Remark~\ref{epsilonification}), this is a central theme in Shannon's theorems~\cite{Shannon}. Below, we will show that it also occurs in $\Grph$, but not in $\oag(\Grph)$.

Similar as to how the absence of nontrivial catalysis is captured by the mathematical notion of cancellativity, also the absence of an economy of scale effect is quite natural mathematically:

\begin{defn}
\label{deftf}
An ordered commutative monoid $A$ is \emph{torsion-free} if $nx\geq ny$ for some $n\in\Npos$ implies $x\geq y$.
\end{defn}

In~\cite[Definition~6]{strongsemi}, ordered commutative monoids with this property are called ``normal'', and it is shown that any totally ordered abelian group has this property.

In an ordered abelian group $G$, one can rephrase this definition as the requirement that $nx\in G_+$ for $n\in\Npos$ should imply $x\in G_+$. Ordered abelian groups with this property are often called ``unperforated''~\cite[Definition~6.7.1]{Ktheoryop} or ``semiclosed''~\cite[Definition~3.2]{logroups2}. So it may seem misleading to use the alternative term ``torsion-free'' for this notion, since torsion-freeness already has an established meaning in algebra: an abelian group is torsion-free if $nx=0$ implies $x=0$, and more generally a commutative monoid is torsion-free if $nx = ny$ implies $x=y$. However, our definition specializes to the conventional one in the unordered case (Remark~\ref{commmon}), similar to how our notion of cancellativity specializes to the conventional one.

\begin{rem}
$A$ is torsion-free if and only if for every $n\in\Npos$, the map $x\mapsto nx$ reflects the order in the sense of~\eqref{ff}.
\end{rem}

\begin{ex}
\label{grphntf}
$\Grph$ is not torsion-free, and we construct an example inspired by Shannon's theory of zero-error communication~\cite{zeroerror}. Let us consider the pentagon graph $\pentagon$, also called the \emphalt{$5$-cycle}, which just looks like a pentagon $\pentagon$. Some computation using \textsc{Sage} reveals the clique numbers
\beq
\omega(\pentagon \ast \pentagon) = 5, \qquad \omega(\pentagon \ast \pentagon \ast \pentagon) = 10.
\eeq
By Example~\ref{grphex}, this means that $2\pentagon \geq \mathcal{K}_5$ and $3\pentagon\geq \mathcal{K}_{10}$, and $5$ and $10$ are the largest numbers for which these inequalities hold. The first inequality implies $6\pentagon\geq 3 \mathcal{K}_5 = \mathcal{K}_{5^3} \geq \mathcal{K}_{11^2} = 2 \mathcal{K}_{11}$. Now if $\Grph$ was torsion-free, we would be led to conclude that $3\pentagon\geq \mathcal{K}_{11}$, which the above shows to be false.
\end{ex}

What follows is a powerful result relating torsion-freeness to cancellativity. As far as we know, it was originally developed in~\cite{manycopyvscatalytic} in the context of the resource theory of quantum entanglement. See the earlier Example~\ref{grphncanc} for an application.

\begin{lem}[\cite{manycopyvscatalytic}]
\label{distribute}
Let $A$ be an ordered commutative monoid equipped with an additional binary operation $\lor$ such that $A$ also is an ordered commutative monoid with respect to $\lor$, and such that $+$ distributes over $\lor$,
\beqn
\label{pldist}
x + (y\lor z) = (x + y) \lor (x + z).
\eeqn
Then:
\begin{enumerate}
\item\label{mctocat} If there is an $n\in\Npos$ with $nx \geq ny$ in $A$,\footnote{Here, $nx$ is still shorthand for the $n$-fold sum $x+\ldots+x$ and does not involve $\lor$.} then there also is $z\in A$ with $x+z\geq y+z$.
\item\label{cancimpltf} If $A$ is cancellative, then it is also torsion-free.
\item\label{oagtf} $\oag(A)$ is torsion-free.
\end{enumerate}
\end{lem}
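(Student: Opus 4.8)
The plan is to prove part~\ref{mctocat} by exhibiting an explicit catalyst built from a telescoping join, and then to deduce parts~\ref{cancimpltf} and~\ref{oagtf} from it with little extra work. For part~\ref{mctocat}, suppose $nx \ge ny$ with $n \ge 1$; the case $n = 1$ is trivial (take $z = 0$), so assume $n \ge 2$ and set
\[
z \;:=\; \bigvee_{i=0}^{n-1}\bigl(i x + (n-1-i) y\bigr),
\]
where the iterated $\lor$ makes sense since $\lor$ is associative and commutative. Applying distributivity~\eqref{pldist} repeatedly to move the added copy of $x$ (resp.\ $y$) inside the join gives
\[
x + z \;=\; \bigvee_{j=1}^{n}\bigl(j x + (n-j) y\bigr), \qquad y + z \;=\; \bigvee_{i=0}^{n-1}\bigl(i x + (n-i) y\bigr).
\]
Writing $c := \bigvee_{k=1}^{n-1}\bigl(k x + (n-k) y\bigr)$ for the part common to both expressions, this reads $x + z = c \lor nx$ and $y + z = c \lor ny$. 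Since $\lor$ is an ordered-commutative-monoid operation it is monotone, so $nx \ge ny$ yields $c \lor nx \ge c \lor ny$, i.e.\ $x + z \ge y + z$, as desired. (For $n = 2$ this produces $z = x \lor y$, which is exactly the catalyst used in Example~\ref{grphncanc}.)

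Part~\ref{cancimpltf} is then immediate: if $nx \ge ny$ for some $n \ge 1$, part~\ref{mctocat} gives $z$ with $x + z \ge y + z$, and cancellativity (Definition~\ref{canc}) forces $x \ge y$, which is the defining property of torsion-freeness (Definition~\ref{deftf}). For part~\ref{oagtf}, recall that $\oag(A)$ abbreviates $\oag(\canc(A))$, that its elements are formal differences $x - y$ with $x,y \in A$, and that it is an abelian group; hence torsion-freeness of $\oag(A)$ is equivalent to the implication that $n(x-y) \ge 0$ for some $n \ge 1$ forces $x - y \ge 0$. Unwinding the definitions of the orderings on $\oag$ and on $\canc$, the hypothesis $n(x-y) \ge 0$ means that $nx + w \ge ny + w$ in $A$ for some $w \in A$. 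Adding $(n-1)w$ to both sides gives $n(x+w) \ge n(y+w)$, so part~\ref{mctocat} applied to $x+w$ and $y+w$ supplies $z \in A$ with $(x+w) + z \ge (y+w) + z$, i.e.\ $x + z' \ge y + z'$ with $z' := w + z$. This is precisely the statement that $x - y \ge 0$ in $\oag(A)$, which finishes the proof. (If $A$ is already cancellative one may take $w = 0$ and work directly with $nx \ge ny$.)

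The main obstacle is the indexing in part~\ref{mctocat}: one must choose the ``exponents'' in the join defining $z$ so that adding one copy of $x$ on the left and one copy of $y$ on the right yields two joins that agree on all summands except a single extreme one, namely $nx$ versus $ny$ — and this is exactly the place where the hypothesis $nx \ge ny$ enters. Once part~\ref{mctocat} is established, parts~\ref{cancimpltf} and~\ref{oagtf} are routine, the only mild subtlety being the bookkeeping through $\canc(A)$ (the auxiliary catalyst $w$) in the case where $A$ is not assumed cancellative.
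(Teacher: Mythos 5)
Your proof is correct and follows essentially the same route as the paper: the catalyst $z$ you construct via the telescoping join is the paper's~\eqref{zconstruction} up to a reindexing $k \mapsto i+1$, the key step is the same use of $nx \geq ny$ via monotonicity of $\lor$ on the extreme summand, and parts~\ref{cancimpltf} and~\ref{oagtf} are deduced exactly as in the paper (including the trick of adding $(n-1)w$ to reduce part~\ref{oagtf} to part~\ref{mctocat}).
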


For example, if $A$ has binary joins (as an ordered set), then $A$ is automatically an ordered commutative monoid with respect to these joins. If the addition additionally preserves these joins, then the lemma applies. The same holds if $A$ has binary meets. For a closely related statement on lattice-ordered groups, see~\cite[Lemma~2.1.2]{pogs}.

In the original application of~\cite{manycopyvscatalytic}, the addition $+$ is the tensor product of vectors $\otimes$, while $\lor$ is the direct sum $\oplus$. In this case, the distributivity requirement~\eqref{pldist} takes a familiar form,
\beq
x\otimes (y\oplus z) = (x\otimes y) \oplus (x\otimes z).
\eeq

\begin{proof}
\begin{enumerate}
\item We take the argument of~\cite{manycopyvscatalytic} and explain it in our notation. For $nx\geq ny$ with given $n\in\Npos$, define
\beqn
\label{zconstruction}
z \defin \bigvee_{k=1}^{n} \left( (k - 1) x + (n - k) y \right).
\eeqn
Then we have, by the assumed distributivity,
\begin{align*}
x + z & = x + \bigvee_{k=1}^{n} \left( (k - 1) x + (n - k) y \right) \stackrel{\eqref{pldist}}{=} \bigvee_{k=1}^n \left( kx + (n - k) y \right) \\
 & = \left[\bigvee_{k=1}^{n-1} \left( kx + (n - k) y \right) \right] \lor nx \:\stackrel{(*)}{\geq}\: ny \lor \left[\bigvee_{k=1}^{n-1} \left( kx + (n - k) y \right) \right] \\
 & = \bigvee_{k=1}^n \left( (k-1)x + (n - k + 1) y \right) \stackrel{\eqref{pldist}}{=} y + \bigvee_{k=1}^{n} \left( (k - 1) x + (n - k) y \right) = y + z.
\end{align*}
Here, the step marked $(*)$ is where the assumption $nx\geq ny$ comes in. The subsequent step also uses a reindexing $k\mapsto k-1$.
\item Assume $A$ to be cancellative. Then a putative inequality $nx\geq ny$ implies $x+z\geq y+z$ for some $z$ by part~\ref{mctocat}, and hence $x\geq y$ by assumption. Therefore $A$ is torsion-free.
\item Suppose that we have $n(x-y)\geq 0$ for $n\in\Npos$ and $x-y$ some generic element of $\oag(A)$ with $x,y\in A$. By definition of the ordering in $\oag(A)$, this means that there is $z\in A$ such that $nx + z\geq ny + z$ holds in $A$. By adding $(n-1)z$ on both sides, we also have $n(x+z)\geq n(y+z)$. But then by~\ref{mctocat}, we know that there is $w\in A$ with $x+z+w\geq y+z+w$. This results in $x-y\geq 0$ in $\oag(A)$.
\qedhere
\end{enumerate}
\end{proof}

From now on, much of the development in this section will be parallel to the one from the previous section. For simplicity, we restrict ourselves to the consideration of ordered abelian groups only. For a resource theory, this means that we assume that the catalytic regularization has already been performed.

We can make any ordered abelian group $G$ into a torsion-free one by introducing a new ordering denoted ``$\succeq$'' and defined as
\beq
x\succeq y \quad :\Longleftrightarrow\quad \exists n\in \Npos, \: nx\geq ny.
\eeq
If $G$ is the ordered abelian group describing a resource theory (after catalytic regularization, if necessary), then this relation is \emph{many-copy convertibility}. It is straightforward to show that $\succeq$ is indeed reflexive and transitive, and that this ordering coincides with the original ``$\geq$'' if $G$ already happened to be torsion-free. Moreover, the addition $+$ is clearly monotone with respect to $\succeq$ as well. If we write $\tf(G)$ for the ordered commutative monoid given by $G$ equipped with this new ordering, then $\tf(G)$ is evidently torsion-free: assuming $nx\succeq 0$ means that there exists $m\in \Npos$ with $mnx\geq 0$ in the original ordering, which indeed implies $x\succeq 0$. There is a canonical homomorphism $\eta_G:G\to \tf(G)$ which simply maps every element to itself, and this has the following universal property: if $H$ is any other ordered abelian group which is torsion-free, and $f:G\to H$ is any homomorphism, then there is a unique homomorphism $\tf(f):\tf(G)\to H$ such that $f = \tf(f)\circ \eta_G$, i.e.~such that the diagram
\beq
\xymatrix{ G \ar[rr]^{\eta_G} \ar[dr]_f & & \tf(G) \ar@{-->}[dl]^{\tf(f)} \\
 & H }
\eeq
commutes. In categorical language, the category of torsion-free ordered abelian groups is a reflective subcategory of $\OAG$.

\begin{rem}
\label{tfeq}
The induced equality relation on $\tf(G)$ may differ from the one on $G$, meaning that $\eta_G(x)=0$ may not necessarily imply that $x=0$. For example, if $G$ is a \emph{torsion group}, i.e.~if for every $x\in G$ there exists $n\in\Npos$ with $nx=0$, then we have $0\succeq x$ and $x\succeq 0$ for all $x$, resulting in $x=0$ in $\tf(G)$. Therefore $\tf(G)$ is isomorphic to the trivial group in which all elements are equal to $0$.
\end{rem}

\begin{defn}
\label{defdiv}
An ordered commutative monoid $A$ is \emph{divisible} if for all $x\in A$ and $n\in\Npos$, there is $y\in A$ with $x=ny$.
\end{defn}

\begin{defn}
\label{ovs}
An \emph{ordered $\Q$-vector space} $V$ is an ordered abelian group which is torsion-free and divisible.
\end{defn}

While in the previous section, being an ordered abelian group implied cancellativity, the analogous statement here is not the case: in general, divisibility does not imply torsion-freeness. For example, the abelian group $\Q$ is divisible, and if we define its positive cone to consist of $0$ together with all rational numbers $\geq 1$, then we obtain an ordered abelian group which is divisible, but not torsion-free.

It may sound a bit funny to use the term ``vector space'' for an ordered commutative monoid with extra \emphalt{properties}, rather than the extra \emphalt{structure} consisting of scalar multiplication. But it is a basic fact that any ordered $\Q$-vector space---as we defined it---is indeed a vector space over $\Q$ in the conventional sense. The missing piece of structure is scalar multiplication of any $x\in V$ by an arbitrary fraction $\tfrac{p}{q}\in\Q$ with $p\in\Z$ and $q\in\Npos$, and this can be defined as follows: first, we find a $y\in V$ with $qy=x$; by torsion-freeness, this $y$ is unique. Then we can define $\tfrac{p}{q}\cdot x\defin py$. It is straightforward to check that this indeed yields a $\Q$-vector space structure on $V$ in the usual sense.

If $V$ is an ordered $\Q$-vector space, $A$ any torsion-free ordered commutative monoid and $f:V\to A$ any homomorphism, then $f(\tfrac{p}{q}\cdot x) = \tfrac{p}{q} f(x)$, in the sense that the element $f(\tfrac{p}{q}\cdot x)$ solves the equation $qf(\tfrac{p}{q}\cdot x) = p f(x)$, and is necessarily the unique element to do so by torsion-freeness. In the language of \emphalt{property, structure and stuff}~\cite{propertystructurestuff}, this confirms that an ordered $\Q$-vector space is an ordered commutative monoid with extra properties, but no extra structure or stuff: the forgetful functor from ordered $\Q$-vector spaces to ordered commutative monoids is fully faithful.

Similarly to how every cancellative ordered commutative monoid embeds into an abelian group, every torsion-free ordered abelian group $G$ embeds into an ordered $\Q$-vector space $\ovs(G)$ in a universal way. We construct $\ovs(G)$ by taking $G$ and throwing in all formal $n$-th fractions of group elements (which may represent resource objects). More precisely, $\ovs(G)$ is defined as the ordered $\Q$-vector space
\beq
\ovs(G) \defin \Npos \times G,
\eeq
so that the elements of $\ovs(G)$ are pairs $(n,x)$ consisting of a positive natural number $n$ and an element $x\in G$. We think of such a pair as the formal $\tfrac{1}{n}$-th multiple or formal $n$-th part of $x$. Concerning the ordering, we declare that
\beqn
\label{ovsord}
(m,x)\geq (n,y) \quad:\Longleftrightarrow\quad n x \geq my .
\eeqn
Transitivity of this new relation follows from torsion-freeness of $\geq$. Since we then have $(n,x)\geq (mn,mx)\geq (n,x)$ for every $m\in\Npos$, we also obtain $(n,x)=(mn,mx)$.

The addition of two elements $(m,x)$ and $(n,y)$ in $\ovs(G)$ is defined by
\beq
(m,x) + (n,y) \defin (mn,nx + my),
\eeq
which corresponds to taking fractions with common denominator $mn$, as one would expect from the intuitive idea that the pairs represent formal fractions. We take the neutral element $0\in \ovs(G)$ to be given by the pair $(1,0)$. This yields an ordered commutative monoid which is torsion-free and divisible by construction, and therefore forms an ordered $\Q$-vector space.

There is a canonical homomorphism $\gamma_G:G\to \ovs(G)$ which maps every element $x\in G$ to the pair $(1,x)\in \ovs(G)$, and this has the following universal property: if $V$ is any other ordered $\Q$-vector space and $f:G\to V$, then there is a unique $\ovs(f):\ovs(G)\to V$ such that $\ovs(f)\circ \gamma_G = f$, i.e.~such that the diagram
\beq
\xymatrix{ G \ar[rr]^{\gamma_G} \ar[dr]_f & & \ovs(G) \ar@{-->}[dl]^{\ovs(f)} \\
 & V }
\eeq
commutes. The uniqueness part follows from
\beq
\ovs(f)( (n,x) ) = \ovs(f)\left( \tfrac{1}{n} \cdot (1,x)  \right) = \tfrac{1}{n}\cdot \ovs(f)( (1,x) ) = \tfrac{1}{n} f(x).
\eeq
A simple computation shows that this definition also works, which proves existence. We also note that $\gamma_G : G \to \ovs(G)$ clearly reflects the ordering in the sense of~\eqref{ff},
\beq
\gamma_G(x) \geq \gamma_G(y) \quad \Longrightarrow \quad x\geq y,
\eeq
and in particular is injective. Hence we regard $G$ as a subgroup of $\ovs(G)$ equipped with the induced ordering. In this way, we gain the mathematical convenience of working with a $\Q$-vector space rather than a group. This takes our investigations to the well-trodden ground of linear algebra, and we will make judicious use of this in the following sections. In resource-theoretic terms, $\ovs(G)$ is the \emph{many-copy regularization} of $G$.

If we start with an ordered abelian group $G$ which is not necessarily torsion-free, we can make it torsion-free by constructing $\tf(G)$, and then turn it into the ordered $\Q$-vector space $\ovs(\tf(G))$. Then there is a canonical homomorphism $\gamma_G \eta_G : G \to \ovs(\tf(G))$ having the following universal property:  if $V$ is any ordered $\Q$-vector space and $f:G\to V$, then there is a unique $\ovs(\tf(f)):\ovs(\tf(G))\to V$ such that $\ovs(\tf(f))\circ \gamma_G \eta_G = f$, i.e.~such that the diagram
\beq
\xymatrix{ G \ar[rr]^{\gamma_G\eta_G} \ar[dr]_f & & \ovs(\tf(G)) \ar@{-->}[dl]^{\ovs(\tf(f))} \\
 & V }
\eeq
commutes. At least in the setting of unordered abelian groups, this composite construction of turning an abelian group into a $\Q$-vector space is simply given by taking the tensor product of $\Q$ as abelian groups,
\beq
\ovs(\tf(G)) = \Q\otimes_\Z G.
\eeq
We leave open the question to which extent an analogous statement can also be made in our ordered setting. 

Again, we could have performed both steps of the $\ovs(\tf(G))$ construction at once and directly go from the ordered abelian group to the ordered $\Q$-vector space, but we find it instructive to split it into two separate parts which are simpler to grasp individually. Nevertheless, for the remainder of this paper we abbreviate $\ovs(\tf(G))$ by $\ovs(G)$. In categorical language, we can restate the universal property of $\ovs(G)$ like this:

\begin{thm}
\label{oagtoovs}
$\OVS$ is a reflective subcategory of $\OAG$.
\end{thm}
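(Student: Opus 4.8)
The plan is to recognize Theorem~\ref{oagtoovs} as a repackaging of the universal property that has already been verified for the composite construction $G\mapsto\ovs(\tf(G))$, combined with one elementary observation about fullness. Recall that for a \emph{full} subcategory $\mathcal{D}\subseteq\mathcal{C}$, being reflective is equivalent to the statement that every object $G$ of $\mathcal{C}$ admits a universal arrow into $\mathcal{D}$: an object $RG\in\mathcal{D}$ and a morphism $\rho_G\colon G\to RG$ such that every $f\colon G\to V$ with $V\in\mathcal{D}$ factors uniquely as $\bar f\circ\rho_G$. Such pointwise universal arrows then automatically organize into a left adjoint $R$ of the inclusion, with the $\rho_G$ as its unit (see Appendix~\ref{enrichment}, or any standard reference on adjoint functors). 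So the task reduces to exhibiting, for each $G\in\OAG$, a universal arrow into $\OVS$, and to checking that $\OVS$ is indeed full in $\OAG$.

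First I would dispatch fullness. By Definition~\ref{ovs} an ordered $\Q$-vector space is simply an ordered abelian group with the extra \emph{properties} of being torsion-free and divisible, and the morphisms in $\OVS$ are just homomorphisms of ordered commutative monoids; as observed in the discussion after Definition~\ref{ovs} (in the language of property, structure and stuff), any such homomorphism between ordered $\Q$-vector spaces automatically respects the unique rational scalar multiplication, so no morphisms are lost and $\OVS$ is a full subcategory of $\OAG$. I would also note the harmless point that $\tf(G)$ is again an ordered abelian group — it has the same underlying group as $G$, only a coarser ordering — so that the construction $\ovs(-)$ may legitimately be applied to it.

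Then I would assemble the reflection. For arbitrary $G\in\OAG$, the homomorphism $\gamma_G\eta_G\colon G\to\ovs(\tf(G))$ together with the factorization property verified immediately before the theorem statement is precisely a universal arrow from $G$ to $\OVS$: given any ordered $\Q$-vector space $V$ and any homomorphism $f\colon G\to V$, the homomorphism $\ovs(\tf(f))$ is the unique one with $\ovs(\tf(f))\circ\gamma_G\eta_G=f$. By the categorical fact recalled above, $G\mapsto\ovs(\tf(G))$ therefore extends uniquely to a functor $\OAG\to\OVS$ (its value on a morphism $f$ being $\ovs(\tf(f))$) which is left adjoint to the inclusion $\OVS\hookrightarrow\OAG$. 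Equivalently — and this is perhaps the slickest phrasing — reflective-subcategory inclusions compose: the category of torsion-free ordered abelian groups is reflective in $\OAG$ via $\tf$, and $\OVS$ is reflective in it via $\ovs$, so composing the two left adjoints yields the left adjoint of $\OVS\hookrightarrow\OAG$. I do not expect a genuine obstacle here; all the substantive work — the constructions of $\tf$ and $\ovs$ and the verification of their universal properties — has already been done, and the one point deserving a moment's attention is the fullness claim, which rests entirely on an ordered $\Q$-vector space carrying no extra structure over its underlying ordered abelian group.
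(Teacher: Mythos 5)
Your proposal is correct and takes essentially the same approach as the paper: Theorem~\ref{oagtoovs} is presented there as a categorical restatement of the universal property of $\gamma_G\eta_G\colon G\to\ovs(\tf(G))$ verified in the immediately preceding discussion, which is exactly the composite of the two reflections you identify. The fullness point you flag is handled implicitly in the paper via the remark that ordered $\Q$-vector spaces are ordered commutative monoids with extra properties but no extra structure, so your argument is the same argument, just slightly more explicit about that step.
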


When $A$ is any old ordered commutative monoid, then we even abbreviate $\ovs(\oag(A))$ to $\ovs(A)$. Generally speaking, our notation is such that the outermost functor that we apply represents the highest degree of regularity, where the degree of regularity increases to the right in Figure~\ref{smcats}, and we omit all the inner functors from the notation. So by definition of both regularizations, $x\geq y$ in $\ovs(A)$ for $x,y\in A$ means that there exists $n\in\Npos$ and $z\in A$ such that with respect to the ordering in $A$,
\beqn
\label{catmc}
n(x + z) \geq n(y + z).
\eeqn
Equivalently, this happens whenever there exists $w\in A$ with
\beqn
\label{mccat}
nx + w \geq ny + w,
\eeqn
again with respect to the ordering in $A$.

In resource-theoretic terms, the elements of $\ovs(A)$ can be understood as all formal $\Q$-linear combinations of the original resource objects which make up the ordered commutative monoid $A$. We may think of such a linear combination as a \emph{portfolio} of resource objects\footnote{This terminology is due to Rob Spekkens.}, consisting of credit in some shares of some objects while at the same time owing some shares of others. The ordering relation on $\ovs(A)$ is then concerned with how one can convert such portfolios into each other. A conversion $x\geq y$ may be interpreted like this: if one owns a portfolio $y$ and has the option to trade $y$ for another portfolio $x$, then $x\geq y$ means that this trade is guaranteed to be favourable. Equivalently, if $x-y\geq 0$ holds and one has the chance of acquiring the portfolio $x-y$ for free, then one should definitely do so. 

The relevance of Theorem~\ref{oagtoovs} for our revised main question (Question~\ref{revisedmainqstn}) is that $\R$ is an ordered $\Q$-vector space, and therefore any functional on an ordered abelian group $G$ can be uniquely extended to a functional on $\ovs(G)$. If we start with an ordered commutative monoid $A$, then in order to have $f(x)\geq f(y)$ for all functionals $f$, it is therefore sufficient that $x\geq y$ in $\ovs(A)$, i.e.~that $x$ can be converted into $y$ catalytically and at the level of many copies. We can now reformulate the question even further:

\begin{qstn}
\label{revised2mainqstn}
If $V$ is an ordered $\Q$-vector space and $f(x)\geq 0$ for all functionals $f$, does this imply that $x\geq 0$? If not, what does it tell us?
\end{qstn}

So in resource-theoretic terms, we now ask: do additive monotones witness the catalytic many-copies convertibility relation~\eqref{catmc}--\eqref{mccat} perfectly? As we have shown, this is really a question about ordered $\Q$-vector spaces. In the next section, we will see that the answer is again negative, although this is now much less simple to see. A third and final step of regularization will be performed, this time of a much more analytic rather than algebraic nature.

There is a standard alternative definition of ordered $\Q$-vector space which we find useful: 

\begin{defn}[alternative]
\label{ovsalt}
An \emph{ordered $\Q$-vector space} is a set $V$ equipped with the following additional pieces of structure:
\begin{itemize}
\item a subset $V_+\subseteq V$ called the \emph{positive cone},
\item a binary operation $+$,
\item a scalar multiplication $\cdot : \Q\times V\to V$,
\item a distinguished element $0\in V$,
\end{itemize}
and such that the following axioms hold:
\begin{itemize}
\item these operations turn $V$ into a $\Q$-vector space,
\item $V_+$ is closed under addition and multiplication by nonnegative scalars,
\item\label{ovspointed} the positive cone is \emph{pointed}: if $x\in V_+$ and $-x\in V_+$, then $x=0$.
\end{itemize}
\end{defn}

Again, we think of the last requirement not as an axiom, but rather as the \emphalt{definition} of equality on $V$.

Getting back to the concept of generators, the following well-known property of points in an ordered $\Q$-vector space is often of interest:

\begin{lem}[{see also e.g.~\cite[Lemma 1.7]{cones}}]
\label{unitinterior}
For $g\geq 0$ in an ordered $\Q$-vector space $V$, the following are equivalent:
\begin{enumerate}
\item\label{ovsgenpair} $(g,0)$ is a generating pair in the sense of Definition~\ref{generatingpair}.
\item\label{ovsgenerator} $g$ is a generator in the sense of Definition~\ref{oaggenerator}.
\item\label{orderunit} $g$ is an \emph{order unit}, i.e.~for every $x\in V$ there exists $\lambda\in\Q$ such that $x+\lambda g\geq 0$.
\item\label{interiorpoint} $g$ is an \emph{interior point}, i.e.~for every $x\in V$ there exists\footnote{Recall our stipulation that lowercase Greek letters always designate \emphalt{rational} numbers.} $\eps>0$ such that $g+\eps x\geq 0$.
\end{enumerate}
\end{lem}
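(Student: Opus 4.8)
The plan is to prove the chain of equivalences $\ref{ovsgenpair}\Leftrightarrow\ref{ovsgenerator}\Leftrightarrow\ref{orderunit}\Leftrightarrow\ref{interiorpoint}$ by elementary manipulations, using only the standing hypothesis $g\geq 0$ together with the fact that in an ordered $\Q$-vector space the positive cone is closed under addition and under scalar multiplication by nonnegative rationals (Definition~\ref{ovsalt}). Two of the directions are immediate by weakening: $\ref{ovsgenpair}\Rightarrow\ref{ovsgenerator}$ holds because a generating pair $(g,0)$ in particular provides, for each $x$, an $n\in\N$ with $ng\geq x$; and $\ref{interiorpoint}\Rightarrow\ref{orderunit}$ holds because multiplying $g+\eps x\geq 0$ by $1/\eps$ (a positive rational) turns an interior-point witness into an order-unit witness with $\lambda=1/\eps$.

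For $\ref{ovsgenerator}\Rightarrow\ref{ovsgenpair}$, given $x\in V$ I would apply property $\ref{ovsgenerator}$ to $x$ and to $-x$, obtaining $n_1,n_2\in\N$ with $n_1 g\geq x$ and $n_2 g\geq -x$; setting $n\defin\max(n_1,n_2)$ and using $g\geq 0$ to get $ng\geq n_i g$, we obtain $ng\geq x$ and $ng+x\geq 0$, which is precisely the condition making $(g,0)$ a generating pair (Definition~\ref{generatingpair}, with $g_+=g$, $g_-=0$). For $\ref{ovsgenerator}\Rightarrow\ref{orderunit}$, apply $\ref{ovsgenerator}$ to $-x$ to find $n\in\N$ with $ng\geq -x$, i.e.\ $x+ng\geq 0$, so $\lambda=n$ works. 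Conversely, for $\ref{orderunit}\Rightarrow\ref{ovsgenerator}$, given $x$ use $\ref{orderunit}$ on $-x$ to find $\lambda\in\Q$ with $\lambda g\geq x$, then choose $n\in\N$ with $n\geq\lambda$: since $(n-\lambda)g\geq 0$ we get $ng=(n-\lambda)g+\lambda g\geq\lambda g\geq x$. Finally, for $\ref{orderunit}\Rightarrow\ref{interiorpoint}$, take $\lambda\in\Q$ with $x+\lambda g\geq 0$; if $\lambda>0$ then $g+\tfrac{1}{\lambda}x=\tfrac{1}{\lambda}(\lambda g+x)\geq 0$, while if $\lambda\leq 0$ then $x\geq-\lambda g\geq 0$ and hence $g+x\geq g\geq 0$, so $\eps=1$ is a witness.

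The only step requiring the slightest care---and the closest thing to an obstacle here---is the passage between the ``integer-coefficient'' conditions $\ref{ovsgenpair}$, $\ref{ovsgenerator}$ and the ``rational-coefficient'' conditions $\ref{orderunit}$, $\ref{interiorpoint}$: one must round a possibly negative rational scalar up to a natural number and absorb the nonnegative remainder into the positive cone, which is legitimate exactly because $g\geq 0$ and the cone is closed under multiplication by nonnegative rationals. Everything else is routine bookkeeping with the cone axioms, so I expect the full proof to run only slightly longer than this sketch.
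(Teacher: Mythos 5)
Your proof is correct and takes essentially the same route as the paper's: you establish the same chain of equivalences $\ref{ovsgenpair}\Leftrightarrow\ref{ovsgenerator}\Leftrightarrow\ref{orderunit}\Leftrightarrow\ref{interiorpoint}$ by the same elementary scaling and absorption manipulations with the positive cone. The only cosmetic difference is that you prove $\ref{ovsgenpair}\Leftrightarrow\ref{ovsgenerator}$ explicitly, whereas the paper merely cites that this equivalence was already noted after Definition~\ref{oaggenerator}.
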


So all these seemingly different concepts become simple reformulations of one another at the level of ordered $\Q$-vector spaces.

\begin{proof}
We already know that~\ref{ovsgenpair} and~\ref{ovsgenerator} are equivalent.
\begin{itemize}
\item[\implproof{ovsgenerator}{orderunit}] For given $x$, we know that there exists $n\in\N$ with $ng\geq -x$, and hence $x + ng\geq 0$.
\item[\implproof{orderunit}{ovsgenerator}] Applying the assumption to $-x$ yields $-x+\lambda g \geq 0$. Together with $g\geq 0$, this implies that $\lceil\lambda\rceil g\geq x$.
\item[\implproof{orderunit}{interiorpoint}] In $x+\lambda g\geq 0$, we may assume without loss of generality that $\lambda>0$ since $g\geq 0$. Then $g+\lambda^{-1} x\geq 0$.
\item[\implproof{interiorpoint}{orderunit}] The assumption $g+\eps x\geq 0$ implies $x+\eps^{-1} g\geq 0$.\qedhere
\end{itemize}
\end{proof}

Many ordered $\Q$-vector spaces that arise in nature come equipped with a \emphalt{distinguished} order unit. Considering such \emph{order unit spaces} results in a rich mathematical theory, which is usually developed with $\R$ rather than $\Q$ as the ground field~\cite{orderunit}. For resource theories, there often may exist a canonical choice of generator or order unit as well---as in Example~\ref{grphgenpair}.

\newpage
\section{The seed regularization: Archimedean ordered $\Q$-vector spaces}
\label{sectaovs}

So after having performed the catalytic and many-copy regularizations, does $f(x)\geq f(y)$ for all functionals $f$ imply that $x\geq y$? Again, the answer is negative, as the following example shows: consider the ordered $\Q$-vector space $V\defin \Q^2$ with positive cone
\beqn
\label{nonarch}
V_+ \defin \{\: (0,0) \:\} \:\cup\: \{\: (\alpha,\beta)\in\Q^2 \:|\: \alpha > 0 \:\}.
\eeqn
In this example, we claim that $f( (0,1) ) = 0$ for any functional $f$, and that therefore $(0,1)$ cannot be distinguished from the origin $(0,0)$ by functionals only. The reason for this is that $(\eps,1)\geq 0$ for any $\eps>0$, and therefore
\beq
f( (0,1) ) + \eps f( (1,0) ) = f( (\eps,1) ) \geq 0,
\eeq
which implies $f( (0,1) ) \geq 0$ upon taking $\eps\to 0$. So we have $f( (0,1) )\geq 0$ for all functionals $f$, although $(0,1)\not\geq 0$. Since we must also have $f( (0,-1) ) = - f( (0,1) ) \geq 0$ by symmetry, we can even conclude that $f( (0,1) ) = 0$. 

Ordered $\Q$-vector spaces that display this phenomenon may arise from thermodynamic resource theories as follows. Let the $\beta$ coordinate be given by a physical quantity which is additive under combination of systems, such as the volume of a gas. Let $\alpha$ be a quantity like the negative of the entropy of the system. Then, in a perfectly adiabatic transition, $\beta$ can be changed arbitrarily without incurring any cost in $\alpha$. However, real-world transitions are never perfectly adiabatic, because realizing adiabatic transitions requires an infinite amount of time; hence, any actual transition will incur a nonzero but arbitrarily small cost in $\alpha$. This leads to a positive cone of the form~\eqref{nonarch}. So the considerations of this section may be of particular relevance to thermodynamics.

We now consider the conditions under which a phenomenon like this is guaranteed not to occur. This is more intricate than the simple algebraic conditions of cancellativity and torsion-freeness of the previous two sections, and we need to deal with some functional analysis.

\begin{defn}
\label{defacc}
A subset $U\subseteq V$ of an ordered $\Q$-vector space $V$ is said to be
\begin{enumerate}
\item \emph{convex} if $x,y\in U$ and rational $\lambda\in [0,1]$ gives $\lambda x + (1-\lambda)y\in U$,
\item\label{absconvdef} \emph{absolutely convex} if it is convex and $V_+\subseteq U$,
\item\label{absorbdef} \emph{absorbent} if for every $x\in V$, there exists $\mu\in \Qpos$ with $\mu x \in U$.
\end{enumerate}
\end{defn}

Part~\ref{absconvdef} is again a definition which usually carries that name only in the unordered setting, in which $V_+=\{0\}$, so that the additional condition simply states that $0\in U$.

The absolutely convex absorbent sets form a neighbourhood basis of $0$ in a topology on $V$ which turns $V$ into something close to a topological $\Q$-vector space: addition and multiplication by nonnegative scalars are both continuous, but inversion $x\mapsto -x$ may not be.

The resource-theoretic interpretation of a convex set $U$ is that it is a set of portfolios which is closed under combining portfolios, but only in a way which does not increase the total amount of content in the portfolios by taking a $\lambda$-th share of the first portfolio and a $(1-\lambda)$-th share of the second. A set of portfolios is absolutely convex if it contains in addition all those portfolios which are at least as good as the empty portfolio. Finally, a set of portfolios is absorbent if it contains some (typically small) multiple of every other portfolio.

\begin{ex}
\label{epsU}
If $V$ is an ordered $\Q$-vector space with a generator $g$ and $\eps>0$, then the set
\beqn
\label{Ueps}
U_\eps\defin \{\: x\in V \:|\: x + \eps g \geq 0 \:\}
\eeqn
is absolutely convex absorbent.
\end{ex}

Absolutely convex absorbent sets of the form~\eqref{Ueps} have the special property that they are \emph{upwards closed}: if $y\in U_\eps$ and $x\geq y$, then also $x\in U_\eps$. Equivalently, we have $U_\eps + V_+ \subseteq U_\eps$. A general absolutely convex absorbent set need not have this property, but it will at least be very close to doing so:

\begin{lem}
\label{almostupwardsclosed}
If $U$ is absolutely convex absorbent, $x\in U$ and $y\geq 0$, then for every $\lambda\in(0,1)$ we have $\lambda x + y \in U$. In other words, $\lambda U + V_+ \subseteq U$.
\end{lem}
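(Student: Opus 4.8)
The plan is to realise $\lambda x + y$ as a convex combination of $x$ and a suitable nonnegative multiple of $y$, and then to invoke convexity of $U$ together with the inclusion $V_+\subseteq U$ that comes with absolute convexity.

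First I would set $\mu\defin\tfrac{1}{1-\lambda}$. Since $\lambda$ is rational with $0<\lambda<1$, the number $1-\lambda$ is a positive rational, so $\mu\in\Qpos$ is well defined. Now $y\geq 0$ means $y\in V_+$, and since $V_+$ is closed under multiplication by nonnegative rationals, $\mu y\in V_+$; by absolute convexity of $U$ this gives $\mu y\in U$.

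Next I would simply observe that
\beq
\lambda x + y \;=\; \lambda\, x + (1-\lambda)\,(\mu y),
\eeq
which exhibits $\lambda x + y$ as a convex combination of the two points $x\in U$ and $\mu y\in U$ with rational coefficients $\lambda,\,1-\lambda\in[0,1]$ summing to $1$. Convexity of $U$ then yields $\lambda x+y\in U$, as desired. The second assertion $\lambda U + V_+\subseteq U$ is just a reformulation, obtained by letting $x$ range over $U$ and $y$ over $V_+$.

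There is essentially no serious obstacle here; the only point needing a moment's care is that the rescaling factor $\mu=\tfrac{1}{1-\lambda}$ must land in $\Qpos$, which is precisely why the hypothesis requires $\lambda$ to be rational and strictly smaller than $1$---at $\lambda=1$ the claimed inclusion would amount to upward-closedness of $U$, which a general absolutely convex absorbent set need not possess (as noted just before the lemma). It is also worth recording that absorbence of $U$ plays no role in this particular argument.
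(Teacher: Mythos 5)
Your argument is correct and coincides with the paper's own proof: both rescale $y$ to $(1-\lambda)^{-1}y\in V_+\subseteq U$ and then write $\lambda x+y$ as the convex combination $\lambda\cdot x+(1-\lambda)\cdot(1-\lambda)^{-1}y$. The extra remarks about rationality of $\mu$ and the non-use of absorbence are accurate but not additional mathematical content.
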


This approximates upward closure in the limit $\lambda\to 1$ and shows that not taking upward closure as part of the definition of absolutely convex absorbent sets makes little difference.

\begin{proof}
We have $(1-\lambda)^{-1} y\geq 0$ and hence $(1-\lambda)^{-1} y\in U$. Therefore the convex combination
\beq
\lambda\cdot x + (1-\lambda) \cdot (1-\lambda)^{-1}y = \lambda x + y
\eeq
must be in $U$ as well.
\end{proof}

We now claim that if $x\in V$ is such that $x\in U$ for all absolutely convex absorbent $U$, then $f(x)\geq 0$ for all functionals $f$. The reason is simple: for every rational $\eps>0$, the set of all $y\in V$ with $f(y)\geq -\eps$ is absolutely convex absorbent. This is yet another sufficient condition which will finally also turn out to be necessary.

\begin{defn}
\label{aovs}
An ordered $\Q$-vector space $W$ is \emph{Archimedean} if its positive cone is the intersection of all absolutely convex absorbent sets,
\beq
W_+ = \bigcap\:\{\: U \!\textrm{ absolutely convex absorbent}\:\}.
\eeq
\end{defn}

In other words, $W$ is Archimedean if and only if $x\in U$ for every $U$ implies that $x\in W_+$.

\begin{rem}
If one develops the theory of locally convex vector spaces over $\Q$ in analogy to that over $\R$, then $W$ is Archimedean if and only if $W_+$ is topologically closed in the finest locally convex topology on $W$.
\end{rem}

\begin{ex}
\label{fdarch}
If $W$ is finite-dimensional, then it is Archimedean if and only if $W_+$ is topologically closed as a subset of $\Q^{\dim(W)}$ in the standard (Euclidean) topology. In particular,~\eqref{nonarch} is not Archimedean.
\end{ex}

For another important class of examples, we have the following criterion:

\begin{prop}
\label{ovsarchcrit}
If an ordered $\Q$-vector space $W$ has a generator $g$, then it is Archimedean if and only if $x+\eps g\geq 0$ for all $\eps>0$ implies $x\geq 0$.
\end{prop}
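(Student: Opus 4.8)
The plan is to argue directly from Definition~\ref{aovs}, i.e.\ from the identity $W_+ = \bigcap\{U \text{ absolutely convex absorbent}\}$. Since every absolutely convex absorbent set contains $W_+$ by definition, the inclusion $W_+ \subseteq \bigcap U$ is automatic, so in both directions the real content concerns the reverse inclusion. The pivot of the whole argument is the family $U_\eps := \{\, y\in W : y + \eps g \geq 0 \,\}$ from Example~\ref{epsU}: membership of $x$ in every $U_\eps$ is literally the statement ``$x + \eps g \geq 0$ for all $\eps > 0$''.

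For the ``if'' direction, I would assume the stated implication and take any $x$ lying in every absolutely convex absorbent set. Each $U_\eps$ is absolutely convex absorbent by Example~\ref{epsU} --- and this is exactly where the hypothesis that $g$ is a \emph{generator} is needed, since absorbency of $U_\eps$ uses the order-unit property of $g$. Hence $x\in U_\eps$ for every $\eps>0$, i.e.\ $x+\eps g\geq 0$ for all $\eps>0$, and so $x\geq 0$ by assumption. Therefore $\bigcap U\subseteq W_+$ and $W$ is Archimedean. This is the routine direction.

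For the ``only if'' direction, assume $W$ is Archimedean and $x+\eps g\geq 0$ for all $\eps>0$; it suffices to show $x\in U$ for an arbitrary absolutely convex absorbent $U$, since then $x\in\bigcap U = W_+$. Here is the key step. Since $U$ is absorbent, there is $\mu\in\Qpos$ with $-\mu g\in U$. Fix any rational $\lambda\in(0,1)$, say $\lambda=\tfrac12$. Then $w := x + \lambda\mu g \geq 0$, because $\lambda\mu>0$ and $x+\eps g\geq 0$ for every positive rational $\eps$. Now apply Lemma~\ref{almostupwardsclosed} to the point $-\mu g\in U$ and the element $w\in W_+$: this yields $\lambda(-\mu g) + w \in U$, and $\lambda(-\mu g)+w = -\lambda\mu g + x + \lambda\mu g = x$. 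So $x$ lies in every absolutely convex absorbent set, hence in $W_+$, as desired.

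The one genuinely delicate point --- and the step I expect to take the most care --- is precisely this last passage from the two special sets $U_\eps$ to an \emph{arbitrary} absolutely convex absorbent $U$: a general such set need not be upward closed, so one cannot argue ``$x+\eps g\in U$ and $x+\eps g\geq x$, hence $x\in U$''. Lemma~\ref{almostupwardsclosed} is the substitute for upward closure, and the decomposition $x = \lambda(-\mu g) + w$ is engineered so that the harmless shrinking by $\lambda<1$ of the element $-\mu g\in U$ is exactly compensated by the surplus $\lambda\mu g$ folded into $w$. It is worth noting that the generator hypothesis enters only through Example~\ref{epsU} in the ``if'' direction; the ``only if'' direction works verbatim for any $g\in W$ with $g\geq 0$.
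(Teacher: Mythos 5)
Your proof is correct and follows essentially the same route as the paper's: both directions use the family $U_\eps$ from Example~\ref{epsU}, and the nontrivial ``only if'' direction hinges on the same combination of absorbency (producing $-\mu g\in U$, the paper's $-2\eps g$) and Lemma~\ref{almostupwardsclosed} with $\lambda=\tfrac12$. The only organisational difference is that the paper packages the key step as the containment $U_\eps\subseteq U$ for suitable $\eps$ and then specialises, whereas you apply the same decomposition directly to the given $x$; your closing observation that the generator hypothesis is used only for absorbency of $U_\eps$ in the ``if'' direction is accurate and a nice addition.
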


In fact, for order unit spaces, this is generally taken to be the \emphalt{definition} of Archimedeanicity~\cite{orderunit}.

\begin{proof}
If $W$ is Archimedean, then we need to show that $x + \eps g\geq 0$ for all $\eps>0$ implies $x\in U$ for all absolutely convex absorbent $U$. This follows if every $U$ contains some $U_\eps$ of the form~\eqref{Ueps}. And indeed, for a given $U$ choose $\eps>0$ such that $-2\eps g\in U$. For any $y\in U_\eps$, we have $y+\eps g\geq 0$ by definition of $U_\eps$. Therefore by Lemma~\ref{almostupwardsclosed} with $\lambda=\tfrac{1}{2}$, we conclude $y\in U$. This proves $U_\eps\subseteq U$.

Conversely, assume that the implication in the claim holds. Then if $x\in U$ for all absolutely convex absorbent $U$, we in particular have $x\in U_\eps$ for all $\eps$, and therefore $x\geq 0$ by the assumed implication.
\end{proof}

\begin{prop}
\label{ocmarchcrit}
If $A$ is an ordered commutative monoid with a generating pair $(g^+,g^-)$, then $\ovs(A)$ is Archimedean if and only if the following implication holds: if for all $\eps>0$ there are $n,k\in\Npos$ with $k\leq\eps n$ and 
\beq
nx+kg_+\geq ny+kg_-,
\eeq
then there are $m\in\Npos$ and $z\in A$ with
\beq
mx+z\geq my+z.
\eeq
\end{prop}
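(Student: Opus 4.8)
The plan is to reduce the statement to Proposition~\ref{ovsarchcrit} applied to $W\defin\ovs(A)$ with the generator $g\defin g_+-g_-$, which is indeed a generator of $\ovs(A)$ (a generating pair in $A$ gives a generator of $\oag(A)$ by the remark at the end of Section~\ref{sectoag}, and this stays a generator after passing to $\ovs$). By Proposition~\ref{ovsarchcrit}, $\ovs(A)$ is Archimedean if and only if, for every $w\in\ovs(A)$, the condition $w+\eps g\geq 0$ for all $\eps>0$ forces $w\geq 0$; and since every element of $\ovs(A)$ is of the form $\tfrac1N(\gamma(x)-\gamma(y))$ with $x,y\in A$ and $N\in\Npos$ (where $\gamma\colon A\to\ovs(A)$ is the canonical homomorphism), and both sides of that implication are invariant under replacing $w$ by $Nw$, it suffices to treat $w=\gamma(x)-\gamma(y)$. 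Finally, $\gamma(x)-\gamma(y)\geq 0$ in $\ovs(A)$, i.e.\ $\gamma(x)\geq\gamma(y)$ in $\ovs(A)$, unwinds by~\eqref{mccat} to exactly ``there are $m\in\Npos$ and $z\in A$ with $mx+z\geq my+z$'', the conclusion in the Proposition. So the whole claim reduces to proving that ``$(\gamma(x)-\gamma(y))+\eps g\geq 0$ in $\ovs(A)$ for all $\eps>0$'' is equivalent to ``for all $\eps>0$ there are $n,k\in\Npos$ with $k\leq\eps n$ and $nx+kg_+\geq ny+kg_-$ in $A$''.

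For the direction ``$\ovs(A)$ Archimedean $\Rightarrow$ the implication holds'', assume the Proposition's hypothesis; given $\eps>0$, pick $n,k$ with $k\leq\eps n$ and $nx+kg_+\geq ny+kg_-$ in $A$, apply the order-preserving additive map $\gamma$, and rearrange in the $\Q$-vector space $\ovs(A)$ to $n(\gamma(x)-\gamma(y))\geq -kg$, hence $(\gamma(x)-\gamma(y))+\tfrac kn g\geq 0$, hence $(\gamma(x)-\gamma(y))+\eps g\geq 0$ since $0<\tfrac kn\leq\eps$ and $g\geq 0$. As this holds for all $\eps$, Proposition~\ref{ovsarchcrit} (using Archimedeanicity) gives $\gamma(x)-\gamma(y)\geq 0$ in $\ovs(A)$, which is the Proposition's conclusion.

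The converse ``the implication holds $\Rightarrow$ $\ovs(A)$ Archimedean'' carries the real content. We verify the criterion of Proposition~\ref{ovsarchcrit}: assume $(\gamma(x)-\gamma(y))+\eps g\geq 0$ in $\ovs(A)$ for all $\eps>0$; fixing a target $\eps$, apply this at $\eps'\defin\eps/2$, clear the denominator of $\eps'$, and use~\eqref{mccat} to obtain $a,b\in\Npos$ with $b/a=\eps'$, a catalyst $z\in A$, and $ax+bg_+ +z\geq ay+bg_- +z$ in $A$. The crux -- and the main obstacle -- is that the Proposition's hypothesis must be \emph{catalyst-free}, so we remove $z$ using \emph{both} inequalities of the generating pair applied to $z$: choose $\ell\in\Npos$ with $\ell g_+\geq z+\ell g_-$ and $\ell g_+ +z\geq\ell g_-$. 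Iterating the catalytic inequality (via monotonicity and transitivity, keeping the same single catalyst) gives $jax+jbg_+ +z\geq jay+jbg_- +z$ in $A$ for every $j\in\Npos$; adding $\ell g_+$ to both sides and using $\ell g_+ +z\geq\ell g_-$ on the right gives $jax+(jb+\ell)g_+ +z\geq jay+(jb+\ell)g_-$; adding $\ell g_-$ to both sides and using $\ell g_+\geq z+\ell g_-$ on the left gives the catalyst-free inequality $jax+(jb+2\ell)g_+\geq jay+(jb+2\ell)g_-$ in $A$. Now put $n\defin ja$ and $k\defin jb+2\ell$, so $k/n=\eps'+2\ell/(ja)$, which is $\leq\eps$ once $j\geq 4\ell/(a\eps)$; thus the Proposition's hypothesis holds for this $x,y$, and its implication yields $mx+z'\geq my+z'$ for some $m\in\Npos$, $z'\in A$, i.e.\ $\gamma(x)-\gamma(y)\geq 0$ in $\ovs(A)$. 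Hence $\ovs(A)$ is Archimedean.

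Apart from this catalyst-elimination step, everything is routine unwinding of the definitions of $\oag$, $\ovs$, and of Archimedeanicity through Proposition~\ref{ovsarchcrit}. The elimination is precisely where a generating \emph{pair} is needed rather than a single element -- both the upper and the lower control of $z$ by multiples of $g_+$ and $g_-$ are used -- and where passing to $j\to\infty$ copies is essential, so that the fixed overhead $2\ell$ becomes negligible against $jb$.
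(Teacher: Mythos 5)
Your proof is correct and follows essentially the same route as the paper: reduce via Proposition~\ref{ovsarchcrit} with $g=g_+-g_-$ to checking the criterion on $w=\gamma(x)-\gamma(y)$, handle the easy direction by applying $\gamma$ and dividing, and eliminate the catalyst by iterating the catalytic inequality and absorbing $z$ with the generating-pair bounds $\ell g_+\geq z+\ell g_-$, $\ell g_+ + z\geq \ell g_-$. The one small difference is bookkeeping: the paper defers that catalyst-elimination step to its proof of Theorem~\ref{ocmhb}, where it multiplies through by a fixed $\ell$ and uses a ``WLOG $k\geq 2$'' device, whereas you make it self-contained by letting the iteration count $j\to\infty$ so that the fixed overhead $2\ell$ becomes negligible against $jb$ — a clean variant of the same idea.
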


\begin{proof}
If the criterion of Proposition~\ref{ovsarchcrit} holds for some $x$, then it will also hold for every positive scalar multiple of $x$. Hence among all the formal $\Q$-linear combinations that make up $\ovs(A)$, it is sufficient to apply the criterion to all formal integer linear combinations, i.e.~the elements of $\oag(A)$. A generic such element is of the form $x-y$ for $x,y\in A$. Then $x-y\geq 0$ in $\ovs(A)$ is directly equivalent to the consequent stated, that there are $m\in\Npos$ and $z\in A$ with $mx+z\geq my+z$.

On the other hand, we need to translate the antecedent condition of Proposition~\ref{ovsarchcrit}, which is $(x-y)+\eps g\geq 0$ in $\ovs(A)$ for all $\eps>0$, into the condition stated in the claim. First, if the condition in terms of $nx+kg_+\geq ny+kg_-$ holds, then we certainly also have $nx+n\eps g_+\geq ny+n\eps g_-$, possibly after multiplying $n$ by a large enough factor such as to make $n\eps$ an integer. This leads to $(x-y)+\eps (g_+ - g_-)\geq 0$ in $\ovs(A)$, which is the desired inequality with $g=g_+ - g_-$ the generator. We will show the other direction later in the proof of Theorem~\ref{ocmhb}.
\end{proof}

\begin{prob}
We do not know whether $\ovs(\Grph)$ is Archimedean. In light of Lemma~\ref{distribute}\ref{oagtf} and the graph join of Example~\ref{grphncanc}, the criterion of Proposition~\ref{ocmarchcrit} becomes equivalent to the following question: if $\mathcal{G}$ and $\mathcal{H}$ are graphs such that for every $\eps>0$ there exist $n\in\Npos$ and a graph map $\mathcal{H}^{\ast n}\lra \mathcal{G}^{\ast n}\ast \mathcal{K}_{\lfloor 2^{n\eps}\rfloor}$, does this imply that there exists a graph $\mathcal{L}$ and a graph map $\mathcal{H}\ast\mathcal{L}\lra\mathcal{G}\ast\mathcal{L}$?
\end{prob}

Being Archimedean has some nice consequences:

\begin{lem}
\label{approach}
If $W$ is an Archimedean ordered $\Q$-vector space and $x,y\in W$ are such that $x+\eps_n y\geq 0$ for a sequence $(\eps_n)_{n\in\N}$ with $\eps_n\to 0$, then $x\geq 0$.
\end{lem}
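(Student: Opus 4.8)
The plan is to deduce the statement directly from the Archimedean hypothesis. By Definition~\ref{aovs}, $W_+$ is the intersection of all absolutely convex absorbent subsets of $W$, so it suffices to fix an arbitrary absolutely convex absorbent set $U\subseteq W$ and prove that $x\in U$; then $x\in W_+$, i.e.\ $x\geq 0$, follows immediately.

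Before doing so I would simplify the sequence $(\eps_n)$. If $\eps_n=0$ for some $n$, then $x=x+\eps_n y\geq 0$ and there is nothing to prove; discarding such indices, passing to a subsequence along which the $\eps_n$ all have the same sign (one of the two signs must occur infinitely often since $\eps_n\to 0$ and all remaining terms are nonzero), and finally replacing $y$ by $-y$ if that common sign is negative, I may assume without loss of generality that $\eps_n>0$ for all $n$ while still $\eps_n\to 0$ and $x+\eps_n y\geq 0$. Now fix $U$. Since $U$ is absorbent, applying the absorbency condition to the vector $-y$ yields $\mu\in\Qpos$ with $-\mu y\in U$. Since $\eps_n\to 0$ through positive values, choose an index $n$ with $0<\eps_n<\mu$, and set $\lambda\defin\eps_n/\mu\in(0,1)$. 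We have $-\mu y\in U$ and $x+\eps_n y\geq 0$, so Lemma~\ref{almostupwardsclosed} (applied with its ``$x$'' taken to be $-\mu y$ and its ``$y$'' taken to be $x+\eps_n y$) gives $\lambda(-\mu y)+(x+\eps_n y)\in U$. But $\lambda(-\mu y)=-\eps_n y$, so the element on the left-hand side is precisely $x$; hence $x\in U$, and since $U$ was arbitrary we conclude $x\in W_+$, as claimed.

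I do not expect a genuine obstacle here. The only real content is recognizing that Lemma~\ref{almostupwardsclosed} is exactly the device needed to ``absorb'' the vanishing error term $\eps_n y$ while landing on $x$ on the nose rather than only approximately, together with the routine bookkeeping that lets us arrange $0<\eps_n<\mu$ for the $\mu$ attached to each given $U$. (Alternatively the argument could be run through functionals, using that membership in every absolutely convex absorbent set forces $f(x)\geq 0$ for all functionals $f$, but the direct route above is shorter and self-contained.)
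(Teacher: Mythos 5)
Your proof is correct and follows essentially the same route as the paper's: reduce to the case $\eps_n>0$, then for each absolutely convex absorbent $U$ use absorbency to get $-\mu y\in U$ and combine it convexly with $x+\eps_n y\in W_+\subseteq U$. Two small differences: you handle the sign-changing case by passing to a subsequence, where the paper instead superposes $x+\eps_n y\geq 0$ with $x-|\eps_m|y\geq 0$ to get $(\eps_n+|\eps_m|)x\geq 0$; and by invoking Lemma~\ref{almostupwardsclosed} with the exactly matched scalar $\lambda=\eps_n/\mu$ you land on $x$ itself, which is a little slicker than the paper's direct convex combination, which only reaches $\tfrac{\lambda}{\eps_n+\lambda}x$ and then needs a further shrink to $\tfrac{x}{2}$ using $0\in U$ before concluding.
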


\begin{proof}
First, suppose that $\eps_n=0$ for some $n$. In this case, we already have $x\geq 0$ as an assumption and hence there is nothing to prove. So we assume from now on that all $\eps_n$ have definite sign.

Second, suppose that the sequence $(\eps_n)$ changes sign somewhere, i.e. that $\eps_n>0$ but $\eps_m<0$ for some $n,m\in\N$. Then we have $x+\eps_n y\geq 0$ and $x - |\eps_m|y \geq 0$ by assumption. These inequalities superpose to $(|\eps_m| + \eps_n)x \geq 0$, which implies $x\geq 0$ as claimed.

Hence the remaining case is that all of the $\eps_n$'s have the same sign. By replacing $y\mapsto -y$ if necessary, we assume without loss of generality that $\eps_n>0$ for all $n$. We then prove that $\tfrac{x}{2}\in U$ for every absolutely convex absorbent $U$, which implies $\tfrac{x}{2}\in W_+$ by assumption and hence also $x\in W_+$. Since $U$ is absorbent, we know that $-\lambda y\in U$ for some $\lambda\in\Qpos$. But we also have $x+\eps_n y\in U$ by assumption, and therefore the point
\beq
\frac{\eps_n}{\eps_n + \lambda} (-\lambda y) + \frac{\lambda}{\eps_n + \lambda} (x + \eps_n y)  = \frac{\lambda}{\eps_n+\lambda} x
\eeq
also lies in $U$ by convexity of $U$. As long as $\eps_n\leq\lambda$, the coefficient in front of $x$ is at least $\tfrac{1}{2}$, and then we can again use convexity of $U$ and $0\in U$ to show that $\tfrac{x}{2}\in U$.
\end{proof}

We write $\AOVS$ for the category of Archimedean ordered $\Q$-vector spaces. If $V_+\subseteq V$ is the positive cone of an ordered $\Q$-vector space, then we can ``Archimedeanize'' it by keeping the underlying set $\aovs(V)\defin V$ and taking the positive cone to be given by
\begin{align*}
\aovs(V)_+ \defin  & \bigcap \: \{\: U \!\textrm{ absolutely convex absorbent}\:\} \\
 = & \left\{ \: x\in V \:|\: x\in U \textrm{ for every absolutely convex absorbent } U \:\right\}.
\end{align*}
This $\aovs(Q)$ is easily checked to be an ordered $\Q$-vector space as well, and it has the property that its absolutely convex absorbent sets are the same as the original $V$'s: if $U$ is absorbent and absolutely convex with respect to $V_+$, then it also contains $\aovs(V)_+$ by definition of the latter as an intersection in which $U$ occurs. This implies in particular that $\aovs(V)$ itself is indeed Archimedean, as the notation suggests.

\begin{ex}
In our initial example~\eqref{nonarch}, we have
\beq
\aovs(V)_+ = \{\: (\alpha,\beta) \in \Q^2 \:|\: \alpha \geq 0 \:\} .
\eeq
Since this contains the whole line $\{0\}\times\Q$, the resulting notion of equality induced by the ordering as in~\eqref{equaldef} collapses this entire line to a single point. Hence $\aovs(V)$ actually turns out to be one-dimensional. So also in this third step of regularization, the resulting notion of equality may change.
\end{ex}

The definition of $\aovs(V)_+$ in terms of absolutely convex absorbent sets is very difficult to deal with in practice due to the abundance of absolutely convex absorbent sets. Hence it is useful to have some other ways of constructing $\aovs(V)_+$ which apply in certain situations. For example if $V$ is finite-dimensional, then $\aovs(V)_+$ is just the closure of $V_+$ in the standard topology. But since we expect $V$ to be infinite-dimensional in most situations of interest, this will probably not be very useful. 

Motivated by Lemma~\ref{approach}, one can try to construct $\aovs(V)_+$ in general by taking it to contain all those $x$ for which there exists $y$ and a sequence $\eps_n\to 0$ with $x+\eps_n y\geq 0$. This results in the so-called \emph{sequential closure} of $V_+$, which unfortunately may be significantly smaller than $\aovs(V)_+$~\cite{closures}. But if a generator exists, then everything is fine:

\begin{lem}[{e.g.~\cite[Section~2.3]{orderunit}}]
\label{seqclose}
If $V$ is an ordered $\Q$-vector space with a generator $g$ (Lemma~\ref{unitinterior}), then $x\in\aovs(V)_+$ if and only if $x+\eps g\geq 0$ for all $\eps>0$.
\end{lem}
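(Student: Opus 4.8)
The plan is to prove the two implications separately, leaning on two facts already in hand: that every set of the form $U_\eps\defin\{x\in V: x+\eps g\geq 0\}$ is absolutely convex absorbent (Example~\ref{epsU}, which is where the generator hypothesis enters), and that absolutely convex absorbent sets are ``almost upwards closed'' (Lemma~\ref{almostupwardsclosed}).

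The forward implication is immediate. If $x\in\aovs(V)_+$, then by definition $x$ belongs to every absolutely convex absorbent set; in particular $x\in U_\eps$ for each rational $\eps>0$, which says exactly that $x+\eps g\geq 0$.

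For the converse, assume $x+\eps g\geq 0$ for all $\eps>0$, and let $U$ be an arbitrary absolutely convex absorbent set; the goal is $x\in U$. Since $U$ is absorbent, there is $\mu\in\Qpos$ with $-\mu g\in U$; set $\eps\defin\mu/2>0$. By hypothesis $x+\eps g\in V_+$. Now the identity $x=\tfrac12(-2\eps g)+(x+\eps g)$ exhibits $x$ as $\lambda u+v$ with $u=-2\eps g\in U$, $v=x+\eps g\geq 0$, and $\lambda=\tfrac12\in(0,1)$, so Lemma~\ref{almostupwardsclosed} yields $x\in U$. Since $U$ was arbitrary, $x\in\aovs(V)_+$.

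I do not expect a genuine obstacle here; the only delicate point is to choose $\eps$ as half of whatever multiple of $g$ absorbency provides, so that the coefficient on the $U$-element is exactly $\tfrac12$ and Lemma~\ref{almostupwardsclosed} applies cleanly. This is the same manoeuvre used inside the proof of Proposition~\ref{ovsarchcrit}; the present lemma simply isolates it as the statement one obtains whenever a generator is available, with the role of the sequence in Lemma~\ref{approach} replaced by the single element $g$.
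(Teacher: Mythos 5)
Your proof is correct. The forward direction is the same as the paper's. For the converse, the paper simply cites Lemma~\ref{approach}: one notes $x+\eps g\geq 0$ also holds in $\aovs(V)$ (since $V_+\subseteq\aovs(V)_+$), that $\aovs(V)$ is Archimedean, and then Lemma~\ref{approach} applied there with $y=g$ and $\eps_n=\tfrac1n$ gives $x\in\aovs(V)_+$. You instead argue directly from the defining intersection, showing $x$ lies in every absolutely convex absorbent $U$ by choosing $\mu$ with $-\mu g\in U$ and invoking Lemma~\ref{almostupwardsclosed} with $\lambda=\tfrac12$ — which, as you correctly observe, is the same manoeuvre already used in the proof of Proposition~\ref{ovsarchcrit}. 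The two routes are about equally long, but yours is more self-contained: it avoids the implicit passage from $V$ to $\aovs(V)$ and the appeal to Archimedeanicity of the latter, at the cost of re-deriving the core step of Lemma~\ref{approach} rather than quoting it. The paper's version makes the dependency chain (Lemma~\ref{almostupwardsclosed} $\to$ Lemma~\ref{approach} $\to$ Lemma~\ref{seqclose}) explicit, which is useful for seeing that Lemma~\ref{approach} is the ``real'' result and Lemma~\ref{seqclose} is its specialization to the presence of a generator; your version shows that a generator makes the sequence in Lemma~\ref{approach} dispensable.
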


\begin{proof}
If one chooses any sequence $(\eps_n)_{n\in\N}$ of positive rationals with $\eps_n\to 0$, then the ``if'' part is an instance of Lemma~\ref{approach}. For the ``only if'' part, we consider the absolutely convex absorbent $U_\eps$ from~\eqref{Ueps}. By definition of $\aovs(V)_+$, every $x\in\aovs(V)_+$ is contained in all of these.
\end{proof}

Replacing $V$ by $\aovs(V)_+$ is what we call the \emph{seed regularization}. In the case when $\aovs(V)_+$ coincides with the sequential closure, the resource-theoretic interpretation of this regularization is in terms of \emph{seeds}. Originating in the field of pseudorandom number generators, the notion of ``seed'' refers to the possibility of consuming (or absorbing) a small amount of a valuable resource object $g$ in order to facilitate the conversion of some other resource object $x$ into $y$; with this definition, the seed regularization has been employed in~\cite{thermaleq,revshannon}. Its main point is that the required amount of $g$ is sublinear $o(n)$ in the number of copies being converted, and it is in this sense that it ``seeds'' the conversion. For example, $g$ may be a catalyst for the conversion of $x$ into $y$. In this case, one only needs a single copy of it in order to convert arbitrarily many $x$'s into $y$'s, since the same catalyst can be used any number of times; we will make use of this further down.

We can formulate a universal property of $\aovs(V)$ analogous to the universal properties that we encountered in the previous two sections. There is a canonical homomorphism $\eta_V : V \to \aovs(V)$ given by the identity function on the underlying vector spaces. If $W$ is any Archimedean ordered $\Q$-vector space and $f:V\to W$, then there is a unique $\aovs(V):\aovs(V)\to W$ such that $\aovs(f)\circ \eta_V = f$, i.e.~such that the diagram
\beq
\xymatrix{ V \ar[rr]^{\eta_V} \ar[dr]_f & & \aovs(V) \ar@{-->}[dl]^{\aovs(f)} \\
 & W }
\eeq
commutes. To see this, note that uniqueness is clear, since $\gamma_V$ is the identity function on the level of underlying vector spaces. So all that we need to show is that if $f$ is a homomorphism and $x\in V$ lies in all absolutely convex absorbent sets, then $f(x)\geq 0$. But since $W$ is Archimedean, for showing $f(x)\geq 0$ it is enough to prove that $f(x)$ lies in any absolutely convex absorbent set $U\subseteq W$. Because $f^{-1}(U)\subseteq V$ is also absolutely convex absorbent and therefore contains $x$ by assumption, we can indeed conclude $f(x)\in U$, as was to be shown.

So in category-theoretic terms, we have:

\begin{thm}
\label{ovstoaovs}
$\AOVS$ is a reflective subcategory of $\OVS$.
\end{thm}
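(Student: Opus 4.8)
The statement is essentially a repackaging of the universal property established in the paragraph immediately preceding it, so the plan is to assemble that material into a proof and fill the two small gaps the discussion glosses over. First I would record that $\AOVS$ is a \emph{full} subcategory of $\OVS$: by Definition~\ref{aovs} an Archimedean ordered $\Q$-vector space is an ordered $\Q$-vector space with an extra property, and homomorphisms are unchanged, so the inclusion functor is fully faithful and nothing needs to be done there. The real content is that $V\mapsto\aovs(V)$ is left adjoint to this inclusion, which by the standard criterion reduces to showing that $\eta_V\colon V\to\aovs(V)$ is a morphism in $\OVS$ whose target lies in $\AOVS$, and that it is initial among all morphisms from $V$ into objects of $\AOVS$.

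The first gap is that $\aovs(V)$ genuinely lies in $\AOVS$. I would isolate the key lemma that a subset $U\subseteq V$ is absolutely convex and absorbent with respect to $V_+$ if and only if it is absolutely convex and absorbent with respect to $\aovs(V)_+$: the reverse direction is immediate since $\aovs(V)_+\supseteq V_+$, while the forward direction is the observation that $\aovs(V)_+$, being the intersection of the family of all such $U$, is contained in each member of that family. Granting this, $\aovs(V)_+$ is a positive cone — it is an intersection of convex sets, hence convex, and closure under addition and under multiplication by positive rationals follows from the scaling trick that $\tfrac12 U$ is absolutely convex absorbent whenever $U$ is — and, crucially, $\aovs(V)_+$ then coincides with the intersection of the absolutely convex absorbent sets computed \emph{inside} $\aovs(V)$, which is precisely the Archimedean condition of Definition~\ref{aovs}. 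Pointedness of the cone is enforced, as everywhere in the paper, by the convention~\eqref{equaldef} rather than proved.

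It then remains to verify the universal property, and here I would simply reproduce the argument already sketched above: $\eta_V$ is a homomorphism because $V_+\subseteq\aovs(V)_+$; uniqueness of a factorization $\bar f\colon\aovs(V)\to W$ of a given homomorphism $f\colon V\to W$ (with $W$ Archimedean) is immediate since $\eta_V$ is the identity on underlying vector spaces; and for existence one sets $\bar f\defin f$ as a linear map and checks monotonicity, namely that $f(x)\in W_+$ for $x\in\aovs(V)_+$ — which holds because, $W$ being Archimedean, it suffices to show $f(x)\in U'$ for every absolutely convex absorbent $U'\subseteq W$, and $f^{-1}(U')$ is absolutely convex absorbent in $V$ (preimage of a convex set is convex, it contains $V_+$ since $f(V_+)\subseteq W_+\subseteq U'$, and it is absorbent since $U'$ is), hence contains $x$ by definition of $\aovs(V)_+$. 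I do not expect any serious obstacle here; the only point requiring genuine care is the interchangeability lemma of the second paragraph, on which both the well-definedness of $\aovs$ and the verification that its output is Archimedean depend.
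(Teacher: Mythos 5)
Your proof matches the paper's own argument in all essentials: you show $\aovs(V)$ is Archimedean by the observation that the absolutely convex absorbent sets of $V$ and of $\aovs(V)$ coincide, and you verify the universal property exactly as the paper does (uniqueness because $\eta_V$ is the identity on underlying sets; existence because $f^{-1}(U')$ is absolutely convex absorbent in $V$). You supply slightly more detail than the paper's ``easily checked'' regarding $\aovs(V)_+$ being a cone; note only that for closure under positive scalar multiplication the scaling trick requires $\lambda U$ for general $\lambda\in\Qpos$, not merely $\tfrac{1}{2}U$, which is an immediate extension of what you wrote.
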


\begin{rem}
\label{QvsR}
Here and in all of the following, we work with vector spaces over $\Q$. One can go further and turn $\aovs(V)$ into an $\R$-vector space by completing with respect to the uniform structure that it carries naturally. However, since our current goal is mostly to give a complete answer to Question~\ref{mainqstn}, we leave the investigation of this additional step to an $\R$-vector space structure to future work. 
\end{rem}

The following result is a Hahn--Banach theorem which finally enables us to answer the question as to which elements of an ordered commutative monoid can be distinguished by functionals (Question~\ref{mainqstn}). It will be an important technical tool that we will make repeatedly make use of in the remainder of this paper.

\begin{thm}[Hahn--Banach separation for cones]
\label{aovshb}
For an Archimedean ordered $\Q$-vector space $W$ and $x\in W$, the following are equivalent:
\begin{enumerate}
\item\label{acapos} $x\geq 0$,
\item\label{fpos} $f(x)\geq 0$ for all functionals $f$.
\end{enumerate}
\end{thm}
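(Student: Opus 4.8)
The plan is to establish the nontrivial implication \ref{fpos}$\Rightarrow$\ref{acapos} by contraposition, extracting a separating functional from the Archimedean hypothesis via a Minkowski gauge and the Hahn--Banach theorem over $\Q$. The reverse implication \ref{acapos}$\Rightarrow$\ref{fpos} is immediate: a functional is by definition a homomorphism $W\to\R$ (Definition~\ref{deffunc}) and hence an ordered map, so $x\geq 0$ forces $f(x)\geq f(0)=0$.

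So suppose $x\not\geq 0$, i.e.\ $x\notin W_+$; I will exhibit a functional $\phi$ with $\phi(x)<0$. Since $W$ is Archimedean (Definition~\ref{aovs}), $W_+$ is the intersection of all absolutely convex absorbent subsets, so there is an absolutely convex absorbent $U\subseteq W$ with $x\notin U$. I would then form the Minkowski functional $p(y)\defin\inf\{\,\lambda\in\Qpos \mid y\in\lambda U\,\}$. Absorbency of $U$ makes the set on the right nonempty, so $p$ is a well-defined function $W\to\Rplus$; convexity of $U$ together with $0\in U$ (which holds because $W_+\subseteq U$) yields subadditivity and positive $\Q$-homogeneity, so $p$ is sublinear in the sense required by Theorem~\ref{hbext}. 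Two further properties are the crux of the argument: first, $p$ vanishes on $W_+$, since $W_+$ is a cone contained in $U$, so $\lambda^{-1}y\in W_+\subseteq U$ for every $y\in W_+$ and every $\lambda\in\Qpos$; second, $p(x)\geq 1$, because $\lambda U\subseteq U$ for $\lambda\in(0,1]$ (convexity and $0\in U$), so $x\notin U$ excludes $x\in\lambda U$ for $\lambda<1$.

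Next I would apply Hahn--Banach. On the one-dimensional subspace $\Q x$ (note $x\neq 0$), define $g(\lambda x)\defin\lambda\,p(x)$; this equals $p(\lambda x)$ when $\lambda\geq 0$ and is $\leq 0\leq p(\lambda x)$ when $\lambda<0$, so $g\leq p$ on $\Q x$. The $\Q$-vector space Hahn--Banach extension theorem (Theorem~\ref{hbext}, proved in Appendix~\ref{appendixhb}) then produces a $\Q$-linear $f:W\to\R$ extending $g$ with $f\leq p$ everywhere. Put $\phi\defin -f$. From $f\leq p$ and $p\equiv 0$ on $W_+$ we get $\phi\geq 0$ on $W_+$; since $\phi$ is $\Q$-linear, it is order-preserving (if $y\geq z$ then $y-z\in W_+$, so $\phi(y)-\phi(z)=\phi(y-z)\geq 0$) and hence a functional. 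Finally $\phi(x)=-f(x)=-g(x)=-p(x)\leq -1<0$, which completes the contrapositive.

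The argument is routine once the separating set $U$ has been produced by the Archimedean hypothesis; the only steps that need genuine care are verifying that the Minkowski gauge really has the two decisive properties $p|_{W_+}=0$ and $p(x)\geq 1$, and the sign bookkeeping that makes $\phi=-f$ (rather than $f$) come out monotone. The one imported ingredient is Hahn--Banach over $\Q$, so I expect whatever subtlety exists in the overall treatment to live in that appendix rather than in this proof.
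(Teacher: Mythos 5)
Your argument is correct and follows the same route as the paper's own proof: witness $x\notin W_+$ by an absolutely convex absorbent $U$, form the Minkowski gauge $p$, verify sublinearity, $p|_{W_+}=0$ and $p(x)\geq 1$, extend by Hahn--Banach over $\Q$ (Theorem~\ref{hbext}), and negate. The only cosmetic difference is that you set $g(\lambda x)\defin\lambda\,p(x)$ on $\Q x$ rather than $g(\lambda x)\defin\lambda$, which changes nothing of substance.
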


Since the Hahn--Banach theorem is among the most basic results of functional analysis and many variants of it have been proven, we do not expect this result to be original, although we have not found it anywhere in this precise form (over $\Q$). In the resource-theoretic interpretation it could be related to an earlier application of the Hahn--Banach theorem in the foundations of thermodynamics~\cite{clausius}.

\begin{proof}
Only the direction~\ref{fpos}$\Rightarrow$\ref{acapos} is nontrivial, and we prove its contrapositive: if $x\not\geq 0$, then there exists $f$ with $f(x)<0$.

The starting situation is that we have the positive cone $W_+\subseteq W$ and $x\not\in W_+$. Due to Archimedeanicity, we can witness $x\not\in W_+$ by an absolutely convex absorbent set $U\subseteq W$ with $x\not\in U$. Associated to $U$ we have the Minkowski gauge
\beq
p \: : \: W \lra \R, \qquad y \longmapsto \inf \{\: \lambda\in\Qplus \: |\: y\in \lambda U\:\}.
\eeq
Since $U$ is absorbent, the infimum is guaranteed to exist. We also have $p(x) \geq 1$ since $x\not\in U$. Furthermore, $p$ is positively homogeneous, $p(\mu y) = \mu p(y)$ for all $\mu>0$, and subadditive, $p(y+z)\leq p(y) + p(z)$, thanks to convexity of $U$. On the one-dimensional subspace $\Q x$, we can define a linear map $g:\Q x\to \R$ by $g(\mu x)\defin \mu$, and then we have $g\leq p$ on this subspace, which follows for positive $\mu$ from $p(x) \geq 1$ and for negative $\mu$ from nonnegativity of $p$. By the Hahn--Banach extension theorem for sublinear functionals Theorem~\ref{hbext} from the appendix, this linear map extends to $g:W\to\R$ such that $g(x)=1$ and $g\leq p$. For $y\geq 0$, we have $p(y)=0$, and therefore $g(y)\leq 0$. In conclusion, $f:=-g$ is the desired functional with $f(x)=-g(x)=-1 < 0$.
\end{proof}

We can think of the theorem as saying that for every Archimedean ordered $\Q$-vector space $W$, there exists a family of functionals $f_i:W\to\R$ indexed by some $i\in I$ which jointly reflects the order, i.e.~$x\geq 0$ if and only if $f_i(x)\geq 0$ for all $i$. Equivalently speaking, we have an embedding $X\lra\R^I$ given by $x\longmapsto (i\longmapsto f_i(x))$. Or in category-theoretic terms, we can say that the object $\R$ is a cogenerator in $\AOVS$.

In order to get a complete answer to Question~\ref{mainqstn}, we would now like to translate Theorem~\ref{aovshb} back into the world of ordered commutative monoids by traversing all three regularizations backwards. However, we have not yet found a general and reasonably nice way to do this; the difficulties are in characterizing the Archimedeanization of an arbitrary ordered $\Q$-vector space in a clean manner. So we state our results only under the additional assumption that a generating pair exists, in which case we can resort to Lemma~\ref{seqclose} for an appealing description of the Archimedeanization.

\begin{thm}
\label{ovshb}
Let $V$ be an ordered $\Q$-vector space with generator $g\in V$. Then the following are equivalent for any $x\in V$:
\begin{enumerate}
\item $x\geq 0$ in $\aovs(V)$.
\item $x+\eps g\geq 0$ for all $\eps>0$.
\item $f(x)\geq 0$ for all functionals $f$.
\end{enumerate}
\end{thm}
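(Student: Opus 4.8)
The plan is to reduce everything to results already established, namely Lemma~\ref{seqclose} for the first equivalence and Theorem~\ref{aovshb}, applied to $W = \aovs(V)$, for the equivalence with the third statement. No genuinely new argument is needed; the proof is an assembly of existing pieces, together with one routine observation about functionals.

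First I would dispatch (1)$\Leftrightarrow$(2). Note that $g$ remains a generator in $\aovs(V)$: we have $g \in V_+ \subseteq \aovs(V)_+$, and for every $x$ any $n$ with $ng \geq x$ in $V$ still satisfies $ng \geq x$ in $\aovs(V)$, since the order of $\aovs(V)$ extends that of $V$. Now the asserted equivalence is exactly the content of Lemma~\ref{seqclose}: $x \in \aovs(V)_+$ if and only if $x + \eps g \geq 0$ for all $\eps > 0$.

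For (1)$\Leftrightarrow$(3), the key point is that $V$ and $\aovs(V)$ have \emph{the same} functionals, viewed as functions $V \to \R$. They share the same underlying $\Q$-vector space, so in either case a functional is a $\Q$-linear map carrying the respective positive cone into $\Rplus$. Since $V_+ \subseteq \aovs(V)_+$, every functional on $\aovs(V)$ restricts to a functional on $V$. Conversely, given a functional $f$ on $V$ and a rational $\eps > 0$, the set $\{\, y \in V \mid f(y) \geq -\eps \,\}$ is convex, contains $V_+$ (hence is absolutely convex), and is absorbent; by Definition~\ref{aovs} it therefore contains $\aovs(V)_+$, and letting $\eps \to 0$ gives $f(\aovs(V)_+) \subseteq \Rplus$, so $f$ is a functional on $\aovs(V)$. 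This is precisely the computation underlying the proof of Theorem~\ref{ovstoaovs}, and I would either reproduce it in one line or cite that theorem.

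With this identification in place, I apply Theorem~\ref{aovshb} to the Archimedean ordered $\Q$-vector space $\aovs(V)$: it says $x \geq 0$ in $\aovs(V)$ if and only if $f(x) \geq 0$ for every functional $f$ on $\aovs(V)$, which by the previous paragraph is the same as $f(x) \geq 0$ for every functional $f$ on $V$. This is exactly (1)$\Leftrightarrow$(3), and the proof is complete. The only mildly delicate step is the coincidence of the two classes of functionals, but it is elementary once Definition~\ref{aovs} is unwound; the remaining equivalences are direct invocations of Lemma~\ref{seqclose} and Theorem~\ref{aovshb}.
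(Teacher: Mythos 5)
Your proposal is correct and follows essentially the same route as the paper, which proves the theorem by simply saying "combine Theorem~\ref{aovshb} with Lemma~\ref{seqclose}." You fill in the detail that functionals on $V$ and on $\aovs(V)$ coincide — which is indeed the (implicit) glue in the paper's terse proof, and which also follows at once from the universal property of Theorem~\ref{ovstoaovs}. One minor remark: the opening observation that $g$ remains a generator in $\aovs(V)$ is true but never used, since Lemma~\ref{seqclose} is applied to $V$ itself and Theorem~\ref{aovshb} needs no generator.
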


\begin{proof}
Combine Theorem~\ref{aovshb} with Lemma~\ref{seqclose}.
\end{proof}

\begin{thm}
\label{oaghb}
Let $G$ be an ordered abelian group with generator $g$. Then the following are equivalent for any $x\in G$:
\begin{enumerate}
\item\label{oagaovs} $x\geq 0$ in $\aovs(G)$.
\item\label{oagcond} For all $\eps>0$, there exist $n,k\in\Npos$ with $k\leq\eps n$ and $nx+kg\geq 0$.
\item\label{oagfun} $f(x)\geq 0$ for all functionals $f$.
\end{enumerate}
\end{thm}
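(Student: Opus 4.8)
The plan is to reduce Theorem~\ref{oaghb} to Theorem~\ref{ovshb} by viewing $x$ inside $\ovs(G)=\ovs(\tf(G))$ and reading off what each of the three conditions there translates to. First I would check that the generator survives regularization: since the ordering of $\tf(G)$ is coarser than that of $G$, the defining property of $g$ (namely $g\geq 0$ and $\forall z\,\exists n\colon ng\geq z$) is inherited by $\tf(G)$, and then for a formal fraction $(m,z)\in\ovs(\tf(G))$ one chooses $n$ with $ng\geq mz$ to obtain $n\cdot(1,g)\geq (m,z)$; so the image of $g$ is a generator of $\ovs(G)$ in the sense of Definition~\ref{oaggenerator}. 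Theorem~\ref{ovshb} applied to $V\defin\ovs(G)$ then yields the equivalence of: (a)~$x\geq 0$ in $\aovs(\ovs(G))$; (b)~$x+\eps g\geq 0$ in $\ovs(G)$ for all $\eps>0$; (c)~$h(x)\geq 0$ for all functionals $h$ on $\ovs(G)$. By the notational convention that suppresses inner regularization functors, $\aovs(\ovs(G))$ is just $\aovs(G)$, so (a) is exactly condition~\ref{oagaovs}. And because $\R$ is a torsion-free divisible abelian group, the universal properties of the $\tf$- and $\ovs$-reflections identify functionals on $\ovs(G)$ with functionals on $G$ compatibly with evaluation at $x$ (under the identification $x\equiv\gamma_G(x)$), so (c) is exactly condition~\ref{oagfun}.

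The remaining work is to match condition~(b) with condition~\ref{oagcond}, which is a routine translation through the explicit model $\ovs(\tf(G))=\Npos\times G$. Writing $\eps=p/q$ with $p,q\in\Npos$ and using $\tfrac1q\cdot(1,g)=(q,g)$, one computes
\[
x+\eps g \;=\; (1,x) + (q,pg) \;=\; (q,\;qx+pg)
\]
in $\ovs(\tf(G))$, so $x+\eps g\geq 0$ there is equivalent to $qx+pg\geq 0$ in $\tf(G)$, i.e.\ to the existence of $m\in\Npos$ with $(mq)x+(mp)g\geq 0$ in $G$. Taking $(n,k)\defin(mq,mp)$ gives $k\leq\eps n$ (in fact $k=\eps n$) and $nx+kg\geq 0$ in $G$, which is condition~\ref{oagcond} for that $\eps$; this proves (b)$\Rightarrow$\ref{oagcond}. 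For the converse, suppose $nx+kg\geq 0$ in $G$ with $kq\leq pn$ (i.e.\ $k\leq\eps n$). Multiplying by $q$ and adding $(pn-kq)g\geq 0$, which uses $g\geq 0$ in $G$, gives $n(qx+pg)\geq 0$ in $G$, hence $qx+pg\geq 0$ in $\tf(G)$ and so $x+\eps g\geq 0$ in $\ovs(G)$. Thus condition~\ref{oagcond} amounts to saying that for every $\eps>0$ there is a rational $\delta$ with $0<\delta\leq\eps$ and $x+\delta g\geq 0$ in $\ovs(G)$, and since $(\eps-\delta)g\geq 0$ in $\ovs(G)$ this is the same as condition~(b).

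I do not expect a genuine obstacle: all the depth is already invested in Theorem~\ref{aovshb} (the Hahn--Banach separation theorem for cones), on which Theorem~\ref{ovshb} rests. The only points that need care are (i)~verifying that a generator is preserved by the $\tf$- and $\ovs$-regularizations, which I would record as a one-line lemma, and (ii)~the bookkeeping of denominators and of the many-copy witness $m$ in the translation of condition~\ref{oagcond}; it is easy to misplace a factor of $q$ or to conflate ``$\geq$ in $G$'' with ``$\geq$ in $\tf(G)$'' here.
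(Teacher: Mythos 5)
Your proposal is correct and takes essentially the same route as the paper: the paper proves the cycle \ref{oagaovs}$\Rightarrow$\ref{oagcond}$\Rightarrow$\ref{oagfun}$\Rightarrow$\ref{oagaovs}, invoking Theorem~\ref{ovshb} for the first implication and Theorem~\ref{aovshb} for the last, which is the same skeleton you obtain by matching the three conditions of Theorem~\ref{ovshb} term by term. Two small remarks: you are right that one must check the image of $g$ is still a generator of $\ovs(G)$ before applying Theorem~\ref{ovshb} (the paper uses this silently), but your one-line verification has a transcription slip --- by the ordering convention~\eqref{ovsord}, $n(1,g)=(1,ng)\geq(m,z)$ unwinds to $mng\geq z$, not $ng\geq mz$; the fix is to take any $n$ with $ng\geq z$ and then $mng\geq ng\geq z$ since $g\geq 0$, exactly the sort of denominator bookkeeping you flagged as easy to get wrong.
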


\begin{proof}
\begin{enumerate}
\item[\implproof{oagaovs}{oagcond}] By Theorem~\ref{ovshb}, we know that $x+\eps g\geq 0$ holds for all $\eps>0$ in $\ovs(G)$. By the definition of the ordering on $\ovs(G)$, this implies that there exists an $n\in\Npos$ with $n\eps\in\N$ such that $nx + n\eps g \geq 0$. So it is even possible to take $k=n\eps$.
\item[\implproof{oagcond}{oagfun}] Straightforward.
\item[\implproof{oagfun}{oagaovs}] Theorem~\ref{aovshb}. \qedhere
\end{enumerate}
\end{proof}

In~\ref{oagcond}, one is allowed to make use of a number of copies of the generator which is sublinear $o(n)$ in the desired number of conversions $n$. This also applies at the level of ordered commutative monoids:

\begin{thm}
\label{ocmhb}
Let $A$ be an ordered commutative monoid with generating pair $(g_+,g_-)$. Then the following are equivalent for any $x,y\in A$:
\begin{enumerate}
\item\label{ocmaovs} $x\geq y$ in $\aovs(A)$.
\item\label{ocmcond} For all $\eps>0$, there exist $n\in\N$ and $k\in\N$ with $k\leq\eps n$ and $nx + kg_+ \geq ny + kg_-$.
\item\label{ocmfun} $f(x) \geq f(y)$ for all functionals $f$.
\end{enumerate}
\end{thm}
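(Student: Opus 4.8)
The plan is to prove the three implications \ref{ocmaovs}$\Leftrightarrow$\ref{ocmfun}, \ref{ocmcond}$\Rightarrow$\ref{ocmfun}, and \ref{ocmaovs}$\Rightarrow$\ref{ocmcond}. Throughout, write $G\defin\oag(A)$ and $g\defin g_+-g_-\in G$; as recorded after Definition~\ref{oaggenerator}, $g$ is a generator of $G$. The first two implications are soft; the real work is in the third, and I expect the main obstacle to be that Theorem~\ref{oaghb} only controls the \emph{group-level} order on $G=\oag(\canc(A))$, whose construction permits an arbitrary catalyst, whereas \ref{ocmcond} is a statement purely inside $A$ with the \emph{same} coefficient $k$ on $g_+$ and $g_-$.

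For \ref{ocmaovs}$\Leftrightarrow$\ref{ocmfun}: since $\R$ is at once an ordered abelian group, an ordered $\Q$-vector space, and Archimedean, the universal properties of Theorems~\ref{ocmtooag}, \ref{oagtoovs} and~\ref{ovstoaovs} show that restriction along $A\to\oag(A)\to\ovs(A)\to\aovs(A)$ is a bijection between functionals on $\aovs(A)$ and functionals on $A$, compatible with evaluation at the images of $x$ and $y$. Hence \ref{ocmfun} is equivalent to $f(x)\geq f(y)$ for all functionals $f$ on the Archimedean ordered $\Q$-vector space $\aovs(A)$, and by Theorem~\ref{aovshb} applied to the element $x-y$ (using translation invariance of the order) this is equivalent to $x-y\geq 0$ in $\aovs(A)$, i.e.\ to \ref{ocmaovs}.

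For \ref{ocmcond}$\Rightarrow$\ref{ocmfun}: given a functional $f$ and $\eps>0$, pick $n\in\Npos$ and $k\in\N$ with $k\leq\eps n$ and $nx+kg_+\geq ny+kg_-$; applying $f$, dividing by $n$, and using $f(g_+)\geq f(g_-)$ (which holds since $g_+\geq g_-$) gives $f(x)-f(y)\geq-\tfrac{k}{n}\bigl(f(g_+)-f(g_-)\bigr)\geq-\eps\bigl(f(g_+)-f(g_-)\bigr)$. As $\eps>0$ was arbitrary, $f(x)\geq f(y)$.

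For \ref{ocmaovs}$\Rightarrow$\ref{ocmcond}: by the notational convention, \ref{ocmaovs} says $x-y\geq 0$ in $\aovs(A)$, equivalently in $\aovs(G)$ for the ordered abelian group $G=\oag(A)$ with generator $g$. Applying Theorem~\ref{oaghb} with $\tfrac{\eps}{2}$ in place of $\eps$ yields, for each $\eps>0$, some $n,k\in\Npos$ with $k\leq\tfrac{\eps}{2}n$ and $n(x-y)+kg\geq 0$ in $G=\oag(\canc(A))$; unwinding the definitions of the orderings on $\oag(-)$ and $\canc(-)$, this produces a catalyst $z\in A$ with $nx+kg_++z\geq ny+kg_-+z$ in $A$. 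Put $u\defin nx+kg_+$ and $v\defin ny+kg_-$; a one-line induction on $j$ then gives $ju+z\geq jv+z$ for all $j\in\Npos$. Since $z$ is now fixed, the generating-pair property (Definition~\ref{generatingpair}), together with monotonicity of both of its conditions in the coefficient (using $g_+\geq g_-$), lets us choose a single $M\in\N$ with $Mg_+\geq z+Mg_-$ and $Mg_++z\geq Mg_-$. Adding $Mg_+$ to the first of these, chaining through $ju+z\geq jv+z$, adding a further $Mg_+$, and invoking the second then yields
\[
ju+2Mg_+ \;\geq\; jv+2Mg_-,\qquad\text{equivalently}\qquad jnx+(jk+2M)g_+ \;\geq\; jny+(jk+2M)g_-.
\]
Finally choose $j$ so large that $\tfrac{2M}{j}\leq\tfrac{\eps}{2}n$; then $jk+2M\leq\eps\cdot jn$, so $N\defin jn$ and $K\defin jk+2M$ witness \ref{ocmcond}. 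The crucial idea in this last step is to amplify by $j$ copies \emph{before} paying the fixed overhead needed to create and absorb $z$, so that this overhead becomes sublinear in the number of conversions; this is exactly where the generating pair is indispensable.
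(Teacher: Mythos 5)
Your proof is correct and follows essentially the same route as the paper's: reduce \ref{ocmaovs}$\Leftrightarrow$\ref{ocmfun} to Theorem~\ref{aovshb} via the universal properties, and for \ref{ocmaovs}$\Rightarrow$\ref{ocmcond} invoke Theorem~\ref{oaghb} to get a catalytic inequality $nx+kg_++z\geq ny+kg_-+z$, then repeat it $j$ times and pay a fixed $O(1)$ overhead in $(g_+,g_-)$ to create and absorb $z$, so that the amortized cost per copy vanishes. The only cosmetic difference is in the bookkeeping: you decouple the amplification factor $j$ from the catalyst's generating-pair index $M$ and take $j$ large, whereas the paper sets them equal ($j=l=M$) and instead assumes $k\geq 2$ without loss of generality to absorb the constant; both versions are valid.
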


\begin{proof}
\begin{enumerate}
\item[\implproof{ocmaovs}{ocmcond}] By Theorem~\ref{oaghb} and the construction of $\oag(A)$, the assumption $x\geq y$ means that for all $\eps>0$, there exist $n,k\in\N$ with $k\leq\tfrac{\eps}{2} n$ and
\beq
n(x-y) + k(g_+-g_-)\geq 0
\eeq
in $\oag(A)$, and we can assume that $k\geq 2$ without loss of generality. In terms of $A$ itself, this means that there exists $z\in A$ with
\beqn
\label{nxmgz}
nx + kg_+ + z \geq ny + kg_- + z,
\eeqn
and we now need to show that we can get rid of $z$. Since $(g_+,g_-)$ is a generating pair, we have $l\in\N$ with $lg_+ + z \geq lg_-$ and $lg_+\geq z + lg_-$. The main trick for getting rid of $z$ is to apply~\eqref{nxmgz} repeatedly: for every $j\in\{1,\ldots,l\}$,~\eqref{nxmgz} tells us that
\beq
j(nx + kg_+) + (l-j)(ny + kg_-) + z \geq (j-1)(nx + kg_+) + (l-j+1)(ny+kg_-) + z ,
\eeq
where the right-hand side is exactly the left-hand side, except that the value of $j$ has decreased by one. So we can chain all these inequalities together, from the left-hand side with $j=l$ until the right-hand side with $j=1$, and obtain
\beqn
\label{knxmgz}
lnx + lkg_+ + z \geq lny + lkg_- + z.
\eeqn
But now since $lg_+ + z \geq lg_-$ and $lg_+\geq z + lg_-$, we also have
\begin{align*}
lnx + l(k+1)g_+ & \geq lnx + lkg_+ + z + lg_- + lg_+ \\
& \geq lny + lkg_- + z + lg_+ + lg_- \\
& \geq lny + l(k+2)g_-.
\end{align*}
Putting $n'\defin ln$ and $k'\defin l(k+2)$ therefore yields
\beq
n' x + k'g_+ \geq n' y + k'g_-,
\eeq
which is the desired result thanks to $k'=l(k+2)\leq l(k+k)\leq \eps ln = \eps n'$.
\item[\implproof{ocmcond}{ocmfun}] Straightforward. 
\item[\implproof{ocmfun}{ocmaovs}] Theorem~\ref{aovshb}.
\qedhere
\end{enumerate}
\end{proof}

The main step in this proof is the derivation of~\eqref{knxmgz} from~\eqref{nxmgz}. This part generalizes~\cite[Lemma~2.3]{extstates}, and also is an elaboration on an argument of Lieb and Yngvason~\cite[Theorem 2.1]{secondlaw1} which was also used independently by Devetak, Harrow and Winter~\cite[Lemma~4.6]{quantumshannon}. The interpretation of this argument is that the catalytic regularization is redundant after one has performed the seed regularization, since the use of $(g_+,g_-)$ as a seed lets us buy any catalyst which can then be used over and over again.

\newpage
\section{Classifying additive monotones: the structure of functionals}
\label{sectfun}

The results of the previous section highlight the distinguished role played by functionals. Theorem~\ref{ocmhb} delineates accurately how much functionals can ``see'' of the structure of an ordered commutative monoid with a generating pair. But in practice, how can we possibly check whether a condition like $f(x)\geq f(y)$ holds for \emphalt{all} functionals $f$? Doing so requires a good understanding of the structure of the functionals on the given ordered commutative monoid. In this section, we develop some basic methods for analyzing this structure.

\begin{defn}
A functional $f$ is \emph{extremal} if for any other functional $f'$ with $f\geq f'$, we have $f'=\lambda f$ for some constant $\lambda\in\Rplus$.
\end{defn}

This definition uses the pointwise ordering on functionals, in which $f\geq f'$ stands for $f(x)\geq f'(x)$ for all $x\in A$. Then $f\geq f'$ is equivalent to the statement ``$f-f'$ is a functional itself''. 

One can regard the functionals on an ordered commutative monoid $A$ as forming a convex cone within the space of linear maps $\aovs(A)\to\R$. It is straightforward to show that a functional is extremal if and only if it spans an extremal ray of this convex cone.

\begin{ex}
If $W$ is a finite-dimensional Archimedean ordered $\Q$-vector space for which $W_+\subseteq W$ is a polyhedral cone, then a functional on $f:W\to\R$ is extremal if and only if its kernel is a facet of $W_+$. The reason for this is that the facets correspond precisely to the extremal rays of the dual cone, which is the convex cone of functionals.
\end{ex}

In general, extremal functionals on an Archimedean ordered $\Q$-vector space need not exist. For example, if $W$ is an Archimedean ordered $\Q$-vector space of dimension $\geq 1$ and with trivial positive cone $W_+=\{0\}$, then no functional is extremal. However, we will see that if $W$ has a generator, then sufficiently many extremal functionals exist to span all other functionals in a suitable sense. For now, we work on an Archimedean ordered $\Q$-vector space $W$ with generator $g$, and later lift our results to the world of ordered commutative monoids.

\begin{defn}
On an Archimedean ordered $\Q$-vector space with generator $g$, a functional $f$ is \emph{normalized} if $f(g)=1$.
\end{defn}

For every functional $f$, we have either $f(g)=0$, which implies $f=0$ since $g$ is a generator, or $f(g)>0$, which implies that $f$ is a scalar multiple of a normalized functional.

The set of normalized functionals is a convex subset of the set of all functionals. We equip the set of all functionals and the set of normalized functionals with the weakest topology which makes all evaluation maps
\beq
f\longmapsto f(x)
\eeq
continuous; this is the \emph{weak-* topology}. Then the set of normalized functionals is compact by the Banach--Alaoglu theorem. From now on, we also use other standard notions and results from convex functional analysis~\cite{tvsSW,tvsKN}.

\begin{lem}
A normalized functional is extremal if and only if it is an extreme point of the compact convex set of normalized functionals.
\end{lem}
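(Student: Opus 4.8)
The plan is to reduce the statement to the classical fact that, in a convex cone realized as the cone over a convex base, the extremal rays of the cone are exactly the extreme points of the base. First I would set up the bridge between the cone of functionals and the set $S$ of normalized functionals. Because $g$ is a generator, every $x \in W$ satisfies $-mg \le x \le ng$ for suitable $m, n \in \N$, hence $-m\,f(g) \le f(x) \le n\,f(g)$ for every functional $f$; consequently $f(g) = 0$ forces $f = 0$, and any nonzero functional $f$ has $f(g) > 0$ and equals $f(g)\cdot\bigl(f(g)^{-1}f\bigr)$, a positive scalar multiple of a (unique) normalized functional. So the cone of functionals is precisely $\{0\}\cup\{\,tf : t>0,\ f\in S\,\}$, and — as already observed before the lemma — a functional is extremal exactly when it spans an extremal ray of this cone. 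It then remains to carry out the two implications.

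For ``if'', I would take $f\in S$ to be an extreme point of $S$ and a functional $f'$ with $0\le f'\le f$, i.e.\ with both $f'$ and $f-f'$ functionals. The cases $f'=0$ and $f'=f$ are immediate, so assume both $f'$ and $f-f'$ are nonzero; then $t\defin f'(g)$ satisfies $0<t<1$ by the normalization step applied to $f'$ and to $f-f'$. Now $t^{-1}f'$ and $(1-t)^{-1}(f-f')$ both lie in $S$, and
\[
f \;=\; t\cdot\bigl(t^{-1}f'\bigr) \;+\; (1-t)\cdot\bigl((1-t)^{-1}(f-f')\bigr)
\]
realizes $f$ as a proper convex combination of two elements of $S$. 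Extremality of $f$ forces $t^{-1}f' = f$, i.e.\ $f' = tf$, which is the conclusion with $\lambda = t$.

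For ``only if'', I would take $f\in S$ extremal and suppose $f=\tfrac12(f_1+f_2)$ with $f_1,f_2\in S$. Then $\tfrac12 f_1$ is a functional with $0\le \tfrac12 f_1\le f$, since $f-\tfrac12 f_1 = \tfrac12 f_2\ge 0$; by extremality $\tfrac12 f_1 = \lambda f$ for some $\lambda\in\Rplus$, and evaluating at $g$ gives $\lambda = \tfrac12$, whence $f_1 = f$, and symmetrically $f_2 = f$. Thus $f$ is an extreme point of $S$.

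I do not anticipate a real obstacle: the argument is essentially the standard cone/base dictionary, and notably neither the Archimedean property of $W$ nor the compactness of $S$ (Banach--Alaoglu) enters — these are recorded only for later use. The one place where the hypotheses genuinely matter is the normalization step, i.e.\ that $f(g)>0$ for every nonzero functional $f$; this is exactly what is needed to present the cone of functionals as a cone over the base $S$, and without a generator the very notion of ``normalized functional'' would be unavailable.
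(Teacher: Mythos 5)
Your proof is correct and takes essentially the same route as the paper's: the ``extreme point $\Rightarrow$ extremal'' direction is the same case split on $f'(g) \in \{0\} \cup (0,1) \cup \{1\}$ followed by rescaling $f'$ and $f-f'$ into normalized functionals, and the ``extremal $\Rightarrow$ extreme point'' direction is the same computation, just specialized to midpoints (a valid characterization of extreme points) where the paper allows general $t \in [0,1]$. The preliminary cone-over-base remarks echo what the paper already observes just before the lemma, but your two implications do not actually rely on them, so the substance is identical.
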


\begin{proof}
Suppose $f$ is extremal, and $f=t f_1 + (1-t) f_2$ for normalized functionals $f_1,f_2$ and $ t\in [0,1]$. Then in particular $f\geq t f_1$, and hence by extremality there exists $s\in\Rplus$ with $t f_1=s f$. Normalization of $f$ and $f_1$ then implies that $s = t$, and therefore also $(1-t)f_2 = f - t f_1 = (1-t)f$. Thus $f$ cannot be written as a convex combination of normalized functionals in a nontrivial manner, which makes it into an extreme point.

Conversely, suppose $f$ is an extreme point and $f\geq f'$ for some functional $f'$. Then $f-f'$ is also a functional. We distinguish three cases: first, if $f'(g)=0$, then $f'=0$ since $g$ is a generator, and then there is nothing more to prove. Second, if $f'(g) = 1$, then also $(f-f')(g)=0$, and therefore $f-f'=0$ again because $g$ is a generator, and there is nothing more to be shown. Third, if $0<f'(g)<1$, then we obtain the equation
\beq
f = f'(g) \cdot \frac{f'}{f'(g)} + (1-f'(g)) \cdot \frac{f-f'}{1-f'(g)},
\eeq
which decomposes $f$ into a convex combination of other normalized functionals. By assumption, this means that $\tfrac{f'}{f'(g)} = f$, and hence $f'=f'(g)\cdot f$. In all cases, we have again exhibited $f'$ as a nonnegative scalar multiple of $f$.
\end{proof}

\newcommand{\ext}{\overline{\mathrm{ex}}}

We write $\ext(W,g)$ for the closure in the weak-* topology of the set of normalized extremal functionals on $W$ with respect to the generator $g$.

\begin{thm}
\label{rieszrepthm}
For every functional $f$ there exists a regular Borel measure $\mu$ on $\ext(W,g)$ such that
\beqn
\label{rieszrepeq}
f(x) = \int \hat{f}(x) \;\mathrm{d}\mu(\hat{f}).
\eeqn
\end{thm}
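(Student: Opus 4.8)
The plan is to represent $f$, after normalization, as the barycentre of a measure on the compact convex set of normalized functionals via Choquet theory, and then to transport that measure onto $\ext(W,g)$.

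Recall from the discussion preceding the theorem that the normalized functionals form a weak-* compact convex set, which I denote $K$ (compactness is the Banach--Alaoglu argument already given, which also follows by noting that $g$ being a generator forces $-n\leq f(x)\leq n$ for suitable $n$ and every normalized $f$, so that $K$ is a closed subset of a product of compact intervals). By the lemma just proved, the extreme points of $K$ are precisely the normalized extremal functionals, and hence $\ext(W,g)=\overline{\mathrm{ex}(K)}$ is a closed, and therefore Borel, subset of $K$. The same discussion shows that any nonzero functional $f$ has $f(g)>0$, so $f_0\defin f/f(g)$ lies in $K$; and if $f=0$ the zero measure already witnesses the claim, so we may assume $f\neq 0$.

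Now I would apply the Choquet--Bishop--de Leeuw theorem to the compact convex set $K$, sitting inside the locally convex space of all linear maps $W\to\R$ equipped with the weak-* topology. This produces a probability measure $\nu$ on $K$ that is maximal in the Choquet ordering and has barycentre $f_0$. A maximal measure has its closed support contained in $\overline{\mathrm{ex}(K)}=\ext(W,g)$, and, being obtained via the Riesz representation theorem from a positive linear functional on $C(K)$, it is automatically a regular Borel (Radon) measure; its restriction to the closed set $\ext(W,g)$ is again regular Borel. For each fixed $x\in W$ the evaluation map $\hat f\mapsto\hat f(x)$ is a weak-* continuous linear functional on the ambient space, so the defining property of the barycentre yields
\[
f_0(x)=\int_{\ext(W,g)}\hat f(x)\;\mathrm{d}\nu(\hat f).
\]
Setting $\mu\defin f(g)\cdot\nu$ then gives $f(x)=f(g)\,f_0(x)=\int\hat f(x)\;\mathrm{d}\mu(\hat f)$, which is~\eqref{rieszrepeq}.

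The main difficulty is that $W$ is typically infinite-dimensional without any separability hypothesis, so $K$ need not be metrizable and $\mathrm{ex}(K)$ need not be a Borel set; this is exactly why one must use the Bishop--de Leeuw strengthening of Choquet's theorem instead of its elementary metrizable version, and why the representing measure is placed on the closure $\ext(W,g)$ rather than on $\mathrm{ex}(K)$ itself. Everything else — weak-* compactness of $K$, the identification of its extreme points, the barycentre computation, and regularity of the measure — is either already available from the preceding material or is built into the Radon-measure formalism underlying Choquet theory.
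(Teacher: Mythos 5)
Your proof is correct and takes a genuinely different route from the paper's. You invoke the Choquet--Bishop--de Leeuw theorem to get a maximal probability measure $\nu$ on the compact convex set $K$ of normalized functionals with barycentre $f_0 = f/f(g)$, then use the fact that maximal measures have their closed support contained in $\overline{\mathrm{ex}(K)} = \ext(W,g)$ (which follows from the Bishop--de Leeuw property of vanishing on Baire sets disjoint from $\mathrm{ex}(K)$, combined with inner regularity of Radon measures on a compact Hausdorff space), and finish with the barycentre formula for the weak-* continuous functionals $\mathrm{ev}_x$. The paper instead avoids the Choquet machinery altogether: it embeds $W$ into $C(\ext(W,g))$ via $x \mapsto \mathrm{ev}_x$, checks via Krein--Milman and Theorem~\ref{aovshb} that this embedding reflects the order and that the sup-envelope $p(F)=\max F$ restricts on $W$ to $\inf\{\lambda : \lambda g \geq x\}$, extends the normalized functional $f$ from $W$ to a $p$-dominated linear map on all of $C(\ext(W,g))$ by the Hahn--Banach extension Theorem~\ref{hbext}, and then applies the Riesz representation theorem directly. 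Notably, the paper remarks immediately after the statement that the author ``has not been able to derive it in this exact form using standard Choquet theory''; your argument shows that one in fact can, the key extra ingredient beyond the most-commonly-quoted form of Bishop--de Leeuw being the support statement in the non-metrizable case. What the paper's bespoke route buys is self-containedness: it only reuses tools already established in the text (Theorem~\ref{hbext}, Theorem~\ref{aovshb}, Krein--Milman, Riesz), whereas your route is shorter but imports a comparatively heavy black box. One small improvement would be to cite explicitly the proposition that a maximal measure is supported by $\overline{\mathrm{ex}(K)}$ (e.g.~Phelps, \emph{Lectures on Choquet's Theorem}), since the statement you need is one step beyond the pseudo-support version usually stated.
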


Roughly speaking, the theorem says that every functional is a nonnegative linear combination of normalized extremal functionals, where ``nonnegative linear combination'' in general has to be understood in the sense of an integral. This statement seems closely related to Choquet theory, which studies points in compact convex sets and how they can be written as integrals over extreme points or boundary points, but we have not been able to derive it in this exact form using standard Choquet theory. We also do not know whether a similar statement holds with respect to the set of normalized extremal functionals $\mathrm{ex}(W,g)$ without taking the closure.

\newcommand{\ev}{\mathrm{ev}}

\begin{proof}
We consider the $\R$-vector space $C(\ext(W,g))$ of real-valued continuous functions on $\ext(W,g)$. For every $x\in W$, the evaluation map
\beq
\ev_x : \ext(W,g)\lra \R,\qquad \hat{f}\longmapsto \hat{f}(x)
\eeq
is continuous by definition of the topology on $\ext(W,g)$, and therefore is an element of $C(\ext(W,g))$. Moreover, the map which sends every $x\in W$ to the associated evaluation map,
\beqn
\label{WtoC}
W \lra C(\ext(W,g)),\qquad x\longmapsto \ev_x,
\eeqn
is $\Q$-linear and order-preserving. Here, we equip $C(\ext(W,g))$ with the pointwise ordering, in which $F\geq 0$ if and only if $F(\hat{f})\geq 0$ for all $\hat{f}\in\ext(W,g)$. That~\eqref{WtoC} is order-preserving then means that if $x\geq 0$, then $\ev_x\geq 0$. We furthermore claim that~\eqref{WtoC} reflects the order, which means that $\ev_x\geq 0$ implies $x\geq 0$. The assumption $\ev_x\geq 0$ means that $\hat{f}(x)\geq 0$ for all $\hat{f}\in\ext(W,g)$; but by the Krein-Milman theorem, the closed convex hull of all these $\hat{f}$ coincides with the set of all normalized functionals, and therefore $f(x)\geq 0$ for all normalized $f$, which implies $f(x)\geq 0$ for all functionals $f$. The claim $x\geq 0$ then follows from Theorem~\ref{aovshb}.

Since~\eqref{WtoC} reflects the order, it follows that it also reflects equality, i.e.~is injective. In particular, we can identify $W$ with its image in $C(\ext(W,g))$. We furthermore equip $C(\ext(W,g))$ with the sublinear function
\beqn
\label{supp}
p(F) \defin \max \{\: F(\hat{f}) \:|\: \hat{f}\in\ext(W,g) \:\} ,
\eeqn
which assigns to every continuous function its maximal value. This behaves like an ordered version of the supremum norm $||\cdot||_\infty$.

On the subspace $W$ of $C(\ext(W,g))$, we have
\begin{align}
\label{pdefs}
\begin{split}
p(x) \defin p(\ev_x) & = \max \{\: \hat{f}(x) \:|\: \hat{f}\in\ext(W,g) \:\} = \max \{\: f(x) \:|\: f(g) = 1 \:\} \\
 & = \inf \{\: \lambda\in\Q \:|\: \lambda g\geq x \:\}
\end{split}
\end{align}
where the last two equations follow again from the Krein-Milman theorem and Theorem~\ref{aovshb}. Now let us be given an arbitrary normalized functional $f$. By~\eqref{pdefs}, we have
\beq
f(x) \leq p(x),
\eeq
and therefore the Hahn--Banach extension theorem in the form of Theorem~\ref{hbext} allows us to extend $f$ from a $\Q$-linear map $W\to\R$ satisfying this inequality to a $\Q$-linear map $C(\ext(W,g))\to\R$ satisfying the analogous inequality. By this very inequality, this extension must automatically be $\R$-linear. Also, the extension maps every $F\leq 0$ to a nonpositive number, and hence every $F\geq 0$ to a nonnegative number. Therefore by the Riesz representation theorem, there exists a regular Borel measure $\mu$ on $\ext(W,g)$ such that this linear map is of the form
\beq
C(\ext(W,g)) \lra \R,\qquad F\longmapsto \int F(\hat{f})\; \mathrm{d}\mu(\hat{f}).
\eeq
Upon restriction to the subspace $W$, where we know the linear map to be given by $x\longmapsto f(x)$, we therefore obtain the desired equation~\eqref{rieszrepeq}.
\end{proof}

We now extend this result from Archimedean ordered $\Q$-vector spaces to ordered commutative monoids $A$ with a generating pair $(g_+,g_-$).

\begin{defn}
A functional $f:A\to\R$ is \emph{normalized} if
\beqn
\label{ocmnorm}
f(g_+) = f(g_-) + 1.
\eeqn
\end{defn}

In the $K$-theory of operator algebras, normalized functionals on an ordered abelian group with a generator are known as \emphalt{states}~\cite[Definition~6.8.1]{Ktheoryop}.

\begin{lem}
\label{normalizable}
Every nonzero functional $f$ is a scalar multiple of a normalized functional.
\end{lem}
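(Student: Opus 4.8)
The plan is to reduce the lemma to a single positivity statement: for a nonzero functional $f$, the number $f(g_+) - f(g_-)$ is strictly positive. Once this is known, the normalized functional associated to $f$ is simply $\lambda f$ with $\lambda := (f(g_+)-f(g_-))^{-1}$, and $f = \lambda^{-1}(\lambda f)$ exhibits $f$ as a scalar multiple of it. It is worth noting at the outset that we only ever rescale by a \emph{positive} real number; this matters because multiplying a functional by a negative scalar would turn an order-preserving map into an order-reversing one, so the real content of the lemma is precisely the strict positivity of $f(g_+)-f(g_-)$.

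Since $g_+ \geq g_-$ and $f$ is order-preserving, we immediately get $f(g_+) \geq f(g_-)$, so $f(g_+) - f(g_-) \geq 0$ and the only thing to exclude is the degenerate case $f(g_+) = f(g_-)$. I would handle this by contradiction: assuming $f(g_+) = f(g_-)$, I will show that $f$ vanishes identically. Fix an arbitrary $x \in A$ and invoke the defining property of the generating pair $(g_+,g_-)$ from Definition~\ref{generatingpair} to obtain some $n \in \N$ with $ng_+ \geq x + ng_-$ and $ng_+ + x \geq ng_-$. Applying the homomorphism $f$ to both inequalities and using additivity, the first gives $nf(g_+) \geq f(x) + nf(g_-)$ and the second gives $nf(g_+) + f(x) \geq nf(g_-)$; under the assumption $f(g_+) = f(g_-)$ the terms $nf(g_+)$ and $nf(g_-)$ cancel, leaving $f(x) \leq 0$ and $f(x) \geq 0$. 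Hence $f(x) = 0$ for every $x \in A$, so $f = 0$, contradicting the hypothesis that $f$ is nonzero.

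Consequently $f(g_+) - f(g_-) > 0$. Setting $\lambda := (f(g_+) - f(g_-))^{-1} \in \Rpos$, the map $\lambda f$ is additive, sends $0$ to $0$, and is order-preserving because $\lambda > 0$; thus $\lambda f$ is again a functional, and by construction $(\lambda f)(g_+) = (\lambda f)(g_-) + 1$, so it is normalized. Since $f = \lambda^{-1}(\lambda f)$, the original functional $f$ is a scalar multiple of a normalized functional, which is what we wanted.

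I do not expect any genuine obstacle in this argument; the cancellation trick using the generating-pair inequalities is short and routine. The one point that deserves attention is simply the remark that the rescaling factor $(f(g_+)-f(g_-))^{-1}$ must be positive, so that the rescaled map remains a bona fide (order-preserving) functional — and this is exactly the role played by the generating-pair hypothesis, which is what forces $f(g_+) - f(g_-) > 0$ whenever $f \neq 0$.
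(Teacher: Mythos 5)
Your proof is correct and follows essentially the same approach as the paper: observe $f(g_+)\geq f(g_-)$ from monotonicity, rule out the equality case by applying $f$ to the generating-pair inequalities $ng_+\geq x+ng_-$ and $ng_++x\geq ng_-$ to conclude $f\equiv 0$, and otherwise rescale by $(f(g_+)-f(g_-))^{-1}$. Your added emphasis that the rescaling factor is positive (so that $\lambda f$ remains order-preserving) is a worthwhile clarification but does not change the substance of the argument.
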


\begin{proof}
By $g_+\geq g_-$, we know $f(g_+)\geq f(g_-)$. If this is a strict inequality, then we can rescale $f$ such that the two sides differ by $1$ and the resulting functional is normalized. Otherwise, if equality $f(g_+) = f(g_-)$ holds, then $f$ is the zero functional: for any $x\in A$ we have $n\in\N$ with $ng_+ \geq x + ng_-$ and $ng_+ + x \geq ng_-$, and applying $f$ to these inequality results in $0\geq f(x)$ and $f(x)\geq 0$.
\end{proof}

Similar to before, we equip the convex set of normalized functionals with the weakest topology which makes the evaluation maps
\beq
f\longmapsto f(x)
\eeq
continuous for all $x\in A$. Again this is the weak-* topology, and it coincides with the weak-* topology which one obtains by uniquely extending every functional to $\aovs(A)$ and equipping the normalized functionals with the weak-* topology there:

\begin{lem}
For every $y\in\aovs(A)$, also the evaluation map
\beq
f\longmapsto f(y)
\eeq
is continuous.
\end{lem}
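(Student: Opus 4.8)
The plan is to reduce the statement to the defining property of the weak-* topology, via the observation that every element of $\aovs(A)$ is, after clearing a denominator, a formal difference of (images of) elements of $A$. Concretely, I would first unwind the layered construction $\aovs(A)=\aovs(\ovs(\tf(\oag(\canc(A)))))$: since $\canc$ and $\tf$ leave underlying sets unchanged and $\aovs$ leaves the underlying vector space unchanged, while $\oag(G)=G\times G$ and $\ovs(G)=\Npos\times G$ as sets, the underlying set of $\aovs(A)$ is $\Npos\times A\times A$. Tracing the canonical homomorphisms and the identifications made along the way, a generic element $(n,(x_1,x_2))$ of $\aovs(A)$ is exactly $\tfrac1n(x_1-x_2)$, where $x_1,x_2\in A$ are identified with their images under the canonical homomorphism $A\to\aovs(A)$. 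So the first thing to record is: every $y\in\aovs(A)$ has the form $y=\tfrac1n(x_1-x_2)$ with $n\in\Npos$ and $x_1,x_2\in A$.

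Next, any functional $f$, viewed via the adjunctions as a homomorphism on $\aovs(A)$, preserves negatives (it takes values in the ordered abelian group $\R$, so $f(-x)=-f(x)$, as noted in Section~\ref{sectoag}) and is compatible with the formal division by $n$ coming from the $\ovs$ construction (so $f(\tfrac1n z)=\tfrac1n f(z)$, as in Section~\ref{sectovs}). Hence for $y$ as above,
\beq
f(y)=\tfrac1n\bigl(f(x_1)-f(x_2)\bigr).
\eeq
By definition, the weak-* topology on the normalized functionals is the coarsest one making each map $f\mapsto f(x)$ with $x\in A$ continuous, so $f\mapsto f(x_1)$ and $f\mapsto f(x_2)$ are continuous $\R$-valued functions; since addition, negation and multiplication by the fixed scalar $\tfrac1n$ are continuous on $\R$, the map $f\mapsto\tfrac1n(f(x_1)-f(x_2))=f(y)$ is continuous, which is the claim. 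As a by-product this shows that the weak-* topology on normalized functionals defined directly via elements of $A$ coincides with the one obtained by first extending all functionals to $\aovs(A)$.

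The only step demanding any care is the first one — correctly bookkeeping the four reflections to see that every element of $\aovs(A)$ is a rational combination of elements of $A$. Once that is in hand the continuity argument is immediate and, it is worth noting, does not use the generating pair $(g_+,g_-)$ at all; that hypothesis is only needed elsewhere (e.g.\ for Lemma~\ref{normalizable}), not for this lemma.
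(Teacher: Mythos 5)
Your proof is correct and follows essentially the same route as the paper's: write each $y\in\aovs(A)$ as $\tfrac1n(x_1-x_2)$ with $x_1,x_2\in A$, use linearity of $f$ to reduce $f\mapsto f(y)$ to the maps $f\mapsto f(x_1)$ and $f\mapsto f(x_2)$, and invoke continuity of the field operations on $\R$. The paper merely splits this into two stages (first $\oag(A)$, then $\ovs(A)$, then observes $\aovs(A)$ has the same underlying elements as $\ovs(A)$), whereas you collapse them into one; the content is the same.
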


\begin{proof}
We first prove the statement for $\oag(A)$. In this case, a generic element is given by $x-y$ for $x,y\in A$. The associated evaluation map is
\beq
f\longmapsto f(x-y) = f(x) - f(y),
\eeq
which is continuous since $f\mapsto f(x)$ and $f\mapsto f(y)$ are.

Now for $\ovs(A)$, every element is of the form $\tfrac{1}{n}\cdot z$ for $z\in\oag(A)$ and $n\in\Npos$. The associated evaluation map is
\beq
f\longmapsto f\left(\tfrac{1}{n}\cdot z\right) = \tfrac{1}{n}\cdot f(z) ,
\eeq
which is continuous since $f\mapsto f(z)$ is.

Finally since the elements of $\aovs(A)$ are precisely the elements of $\ovs(A)$, the claim follows.
\end{proof}

In conclusion, this means that we have an equality of topological spaces
\beq
\ext(A,g_+,g_-) = \ext(\aovs(A),g_+-g_-).
\eeq
If we now apply Theorem~\ref{rieszrepthm} to $\aovs(A)$, we obtain its generalization to ordered commutative monoids:

\begin{cor}
\label{ocmfunrep}
For every functional $f$ there exists a regular Borel measure $\mu$ on $\ext(A,g_+,g_-)$ such that 
\beq
f(x) = \int \hat{f}(x) \;\mathrm{d}\mu(\hat{f}).
\eeq
\end{cor}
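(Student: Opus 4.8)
The plan is to reduce the statement directly to Theorem~\ref{rieszrepthm} by transporting everything through the three regularizations. First I would record that $\aovs(A)$ is an Archimedean ordered $\Q$-vector space by construction, and that $g \defin g_+ - g_-$ is a generator in it: $(g_+,g_-)$ being a generating pair makes $g_+ - g_-$ a generator in $\oag(A)$ (as noted just after Definition~\ref{oaggenerator}), and a generator of $\oag(A)$ stays a generator after passing to $\ovs(\oag(A))$ and then to $\aovs(\,\cdot\,)$, since in each step the relevant positive cone only grows and the passage to formal $n$-th parts can be absorbed into the multiplier. Hence Theorem~\ref{rieszrepthm} applies to $W = \aovs(A)$ with the generator $g$.

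Next I would use the universal properties of Theorems~\ref{ocmtooag}, \ref{oagtoovs} and~\ref{ovstoaovs} to note that every functional $f : A \to \R$ extends uniquely along the canonical homomorphism $A \to \aovs(A)$ to a functional $\bar f : \aovs(A) \to \R$, and that this extension/restriction correspondence is an order isomorphism between the functionals on $A$ and those on $\aovs(A)$: if $f' \leq f$ on $A$, then $f - f'$ is again a homomorphism $A \to \R$, and its unique extension is $\bar f - \bar{f'}$, so $\bar{f'} \leq \bar f$, and the converse is immediate by restriction. Consequently this correspondence carries normalized functionals to normalized ones --- the condition $f(g_+) = f(g_-) + 1$ is literally $\bar f(g) = 1$ --- and extremal functionals to extremal functionals, and by the two lemmas immediately preceding the corollary it is a homeomorphism for the weak-* topologies. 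This is precisely the equality of topological spaces $\ext(A,g_+,g_-) = \ext(\aovs(A),g)$ already recorded in the text.

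To finish: given a functional $f$ on $A$, apply Theorem~\ref{rieszrepthm} to $\bar f$ on $\aovs(A)$ to obtain a regular Borel measure $\mu$ on $\ext(\aovs(A),g)$ with $\bar f(y) = \int \hat f(y)\,\mathrm d\mu(\hat f)$ for all $y \in \aovs(A)$. Reading $\mu$ as a measure on $\ext(A,g_+,g_-)$ through the identification above, and evaluating the integral formula on (the images of) elements $x \in A$, where $\bar f(x) = f(x)$ and each $\hat f$ restricts to the corresponding normalized functional on $A$, yields the claimed identity $f(x) = \int \hat f(x)\,\mathrm d\mu(\hat f)$.

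I expect no serious obstacle here: all the real content sits in Theorem~\ref{rieszrepthm}, and the corollary is bookkeeping to the effect that functionals, normalization, extremality and the weak-* topology all transport faithfully along $A \to \aovs(A)$. The only two points needing any care --- that $g_+ - g_-$ is genuinely a generator of $\aovs(A)$, and that the correspondence of functionals is an order isomorphism in both directions, so that extremality is preserved --- are short and essentially carried out in the surrounding material. If one wishes to be maximally economical, the entire argument collapses to: apply Theorem~\ref{rieszrepthm} to $\aovs(A)$ and invoke $\ext(A,g_+,g_-) = \ext(\aovs(A),g_+-g_-)$.
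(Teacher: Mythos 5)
Your proposal is correct and follows essentially the same route as the paper: the paper proves the corollary exactly by applying Theorem~\ref{rieszrepthm} to $\aovs(A)$ and invoking the equality $\ext(A,g_+,g_-) = \ext(\aovs(A),g_+-g_-)$, which is recorded immediately before the statement. Your final sentence already collapses your elaboration to precisely this one-line argument, so you and the paper agree.
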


Again, we think of this as roughly saying that every functional is a nonnegative linear combination of extremal functionals.

\begin{ex}
\label{renyi}
There is another ordered commutative monoid of relevance to quantum information theory: $\ProbMaj$, the ordered commutative monoid of finite probability spaces ordered by majorization. The elements of $\ProbMaj$ are finite probability spaces, which are pairs $(\inalph,P)$ consisting of a finite set $\inalph$ and a probability distribution $P:\inalph\to[0,1]$. For two finite probability spaces $(\inalph,P)$ and $(\outalph,Q)$, we take their combination $(\inalph,P)+(\outalph,Q)$ to be given by the product space
\beq
(\inalph\times\outalph, P\times Q),
\eeq
which corresponds to sampling from $P$ and $Q$ independently. We write $n\defin\max(|\inalph|,|\outalph|)$ and sort the individual probabilities in nonincreasing order,
\beq
P_1 \geq P_2 \geq\ldots\geq P_n,\qquad Q_1\geq Q_2 \geq\ldots\geq Q_n,
\eeq
where the distribution on the smaller set needs to be appended by zeroes. Then we put $(\inalph,P)\geq (\outalph,Q)$ whenever
\beq
\sum_{j=1}^k P_j \geq \sum_{j=1}^k Q_j
\eeq
holds for all $k=1,\ldots,n$. This defines the ordered commutative monoid $\ProbMaj$. In the case of equal cardinality $|\inalph|=|\outalph|$, the ordering of $\ProbMaj$ is the \emph{majorization order}~\cite{majorize}. Extending to the case of unequal cardinality by appending zeroes is a natural choice, and it is precisely in this form that majorization comes up in quantum entanglement theory: $\ProbMaj$ is the ordered commutative monoid describing the resource theory of two-party pure state entanglement~\cite{entmajor}.

$\ProbMaj$ has a generating pair given by $g_+\defin 0$ and taking $g_-$ to be given by a two-element set, such as $\{\textrm{heads},\textrm{tails}\}$, equipped with the uniform distribution which assigns probability $\tfrac{1}{2}$ to each outcome. For every parameter value $t\in[0,\infty]$, the R\'enyi entropy $H_t : \ProbMaj\to\R$ is a functional given by\footnote{The R\'enyi entropies with parameter $t<0$ are not functionals, since they take infinite values on distributions of non-full support and therefore are not even maps of the form $\ProbMaj\to\R$. If desired, one could try to fix this by only allowing distributions with full support as resource objects.}
\beq
H_t( (\inalph,P) ) \defin \frac{1}{1-t}\, \log_2 \left( \sum_{a\in\inalph} P(a)^t \right).
\eeq
The deep results of Klimesh~\cite{catmajorK} and Aubrun and Nechita~\cite{catmajorAN} suggest that the following might be true:
\begin{itemize}
\item All R\'enyi entropies $H_t$ are normalized extremal functionals.
\item There are no other normalized extremal functionals besides the R\'enyi entropies.
\item Moreover, the set of R\'enyi entropies is closed in the weak-* topology.
\end{itemize}
Taken together, these three statements would imply that $\ext(\ProbMaj,g_+,g_-)$ coincides with the set of R\'enyi entropies. If this turns out to be true, then Corollary~\ref{ocmfunrep} shows that every functional $f:\ProbMaj\to\R$ is of the form
\beq
f(x) = \int_0^\infty H_t(x) \;\mathrm{d}\mu(t)
\eeq
for some regular Borel measure $\mu$ on the extended half-line $[0,\infty]$. Here, the topology on $[0,\infty]$ would a priori be the weak-* topology introduced earlier; but if the above is true, then it seems reasonable to expect this topology to coincide with the usual topology on $[0,\infty]$, which is the one-point compactification of $\Rplus=[0,\infty)$.

Majorization is also highly relevant in the resource theories studied in the context of thermodynamics~\cite{nonuniformity,secondlaws}. However, the ordered commutative monoid there is slightly different, and the difference lies in how the ordering is defined in the case of unequal cardinality $|\inalph|\neq|\outalph|$. Nevertheless, it might be possible to make an analogous conjecture here as for $\ProbMaj$, the only difference being that the R\'enyi entropies $H_t$ should be replaced by the R\'enyi divergences $D_t$ relative to the uniform distribution (on the same sample space as the distribution under consideration).
\end{ex}

\begin{ex}
\label{grphfun}
For $\Grph$, we have certain examples of multiplicative functionals like the complementary Lov\'asz number from Proposition~\ref{lovprop}, the fractional chromatic number from Example~\ref{moregraphinvariants} and the projective rank, and their logarithms are genuine additive functionals. Since e.g.~the complementary Lov\'asz number and the fractional chromatic number coincide on complete graphs but differ on some other graphs, such as the pentagon graph $\pentagon$, at least two of these functionals are linearly independent. Hence the cone of functionals is at least two-dimensional, and therefore $\aovs(\Grph)$ is at least two-dimensional as well.

We do not know anything else about the structure of functionals on $\Grph$. In particular, we do not have any other bounds on these dimensions and we also do not know whether either of these functionals is extremal.
\end{ex}

\newpage
\section{Rates and the rate formula: the structure of two-dimensional slices}
\label{sectrates}

In many situations, it is not just of interest to try to convert one copy of $x$ into one copy of $y$,
\beq
x \stackrel{?}{\geq} y,
\eeq
but one would like to obtain as many copies of $y$ from $x$ as possible,
\beqn
\label{xsupny}
\sup\, \{\: m\in\N \: |\: x \geq m y \:\} = \mathop{?}
\eeqn
Similarly, one may be interested in the minimal number of copies of $x$ that are required for producing one copy of $y$,
\beqn
\label{nxinfy}
\inf\, \{\: n\in\N \: |\: n x\geq y \:\} = \mathop{?}
\eeqn
In a mass production setting, these two questions become subsumed by a third one of a similar flavour. In this case, one has many copies of $x$ available and tries to turn these into as many copies of $y$ as possible. So then the problem is to maximize the ratio $\tfrac{m}{n}$ for which $nx\geq my$,
\beqn
\label{defmaxrate}
R_{\max}(x\to y) \defin \sup \left\{\: \frac{m}{n} \: \Big|\: nx \geq my \:\right\}.
\eeqn
Here, the supremum ranges over all $m,n\in\N$, and we use the conventions $\tfrac{m}{0}\defin \infty$ for $m>0$ and $\tfrac{0}{0}\defin 0$, where the latter ensures that $m=n=0$ does not contribute to the supremum. This quantity is the \emph{maximal rate} of converting $x$ into $y$. It represents the average number of $y$'s which one can maximally extract from one copy of $x$. If no number of copies of $x$ can be converted to any number of copies of $y$, then we have $R_{\max}(x\to y)=0$; this will typically happen e.g.~for $x=0$. On the other hand, the fractions $\frac{m}{n}$ with $nx\geq my$ may also be unbounded, in which case we have $R_{\max}(x\to y)=\infty$. This arises either because there is some $n$ such that $nx$ can be converted into any number of copies of $y$, e.g.~if $x\geq 0\geq y$, or for the weaker reason that the maximal number of $y$'s that can be extracted from $nx$ grows superlinearly in $n$. The latter can also be phrased as saying that the number of $x$'s that is necessary for producing $my$ grows sublinearly in $m$.

Roughly speaking, a rate maximization of the form~\eqref{defmaxrate} is what Shannon's noisy coding theorem~\cite{Shannon} in the resource theory of communication is about: it gives a concrete answer to the question of how many copies of the noiseless channel which communicates one bit perfectly one can obtain from many copies of a given channel. However, due to the lack of a suitable way to deal with the ``epsilons'' in the noisy coding theorem, this is currently still a non-example for us (Remark~\ref{epsilonification}). But there are interesting examples that do fit our definition:

\begin{ex}
\label{moregraphinvariants}
In $\Grph$, let $x=\mathcal{G}$ be any graph and $y=\mathcal{K}_2$ be the complete graph on $2$ vertices. Then the rate $R_{\max}(\mathcal{G}\to \mathcal{K}_2)$ is the supremum over all fractions $\tfrac{m}{n}$ for which there exists a graph map $\mathcal{K}_{2^m}\to\mathcal{G}^{\ast n}$. In terms of the clique number $\omega$ from Example~\ref{grphex}, for a given $n$ the maximal feasible $m$ is precisely
\beq
m = \lfloor \log_2 \omega(\mathcal{G}^{\ast n}) \rfloor,
\eeq
and hence the maximal rate is
\beq
R_{\max}(\mathcal{G}\to \mathcal{K}_2) = \sup_n \frac{\lfloor \log_2 \omega(\mathcal{G}^{\ast n}) \rfloor}{n} = \lim_{n\to\infty} \frac{\lfloor \log_2 \omega(\mathcal{G}^{\ast n}) \rfloor}{n},
\eeq
where the supremum coincides with the limit due to Fekete's lemma. Since omitting the flooring $\lfloor\cdot\rfloor$ makes a difference of at most $\tfrac{1}{n}$, and as such preserves both convergence and the limit of the sequence, we can omit the flooring and write
\beq
R_{\max}(\mathcal{G}\to \mathcal{K}_2) = \lim_{n\to\infty} \frac{\log_2 \omega(\mathcal{G}^{\ast n})}{n} = \log_2 \lim_{n\to\infty} \sqrt[n]{\omega(\mathcal{G}^{\ast n})} = \log_2 \Theta(\overline{\mathcal{G}}) .
\eeq
The right-hand side is precisely the logarithm of the graph invariant known as the \emph{Shannon capacity} $\Theta$ of the complement graph $\overline{\mathcal{G}}$. In this way, we have recovered another famous graph invariant, at least up to the trivial transformations of taking the complement and the logarithm.

Also the rate $R_{\max}(\mathcal{K}_2\to\mathcal{G})$ turns out to recover a well-studied graph invariant. Similar to above, it is the supremum over all fractions $\tfrac{m}{n}$ for which there exists a graph map $\mathcal{G}^{\ast m}\to \mathcal{K}_{2^n}$. In terms of the chromatic number $\chi$ from Example~\ref{grphex}, this means that for a given $m$, the minimal feasible $n$ is precisely
\beq
n = \lceil \log_2 \chi(\mathcal{G}^{\ast m}) \rceil,
\eeq
and hence the maximal rate is
\beq
R_{\max}(\mathcal{K}_2\to\mathcal{G}) = \lim_{m\to\infty} \frac{m}{\lceil \log_2 \chi(\mathcal{G}^{\ast m}) \rceil} = \left(\lim_{m\to\infty} \frac{\lceil \log_2 \chi(\mathcal{G}^{\ast m}) \rceil}{m} \right)^{-1}.
\eeq
Similar to before, we can omit the ceiling $\lceil \cdot\rceil$ and write
\beq
R_{\max}(\mathcal{K}_2\to\mathcal{G}) = \left(\lim_{m\to\infty} \frac{\log_2 \chi(\mathcal{G}^{\ast m})}{m} \right)^{-1} = \left( \log_2 \lim_{m\to\infty} \sqrt[m]{\chi(\mathcal{G}^{\ast m})} \right)^{-1}.
\eeq
This shows that this maximal rate is also an incarnation of a famous graph invariant: it is the reciprocal of the logarithm of the \emph{fractional chromatic number} of $\mathcal{G}$~\cite[Corollary~3.4.3]{fgt}. Of course, all of this also holds with any other complete graph in place of $\mathcal{K}_2$, and only the base of the logarithm changes.
\end{ex}

As opposed to~\eqref{xsupny} and~\eqref{nxinfy}, we might alternatively be interested in turning $x$ into as \emphalt{few} copies of $y$ as possible,
\beqn
\label{xinfny}
\inf\, \{\: m\in\N \: |\: x \geq m y \:\} = \mathop{?} 
\eeqn
or similarly in absorbing as \emphalt{many} copies of $x$ as possible into $y$,
\beqn
\label{nxsupy}
\sup\, \{\: n\in\N \: |\: n x \geq y \:\} = \mathop{?} 
\eeqn
These are relevant questions when $x$ is a resource object which is considered undesirable and we try to get rid of instances of $x$ by turning them into $y$'s. Correspondingly, there is a \emph{minimal rate} at which $x$'s can be converted into $y$'s,
\beqn
\label{defminrate}
R_{\min}(x\to y) \defin \inf \left\{\: \frac{m}{n} \: \Big|\: nx \geq my \:\right\} .
\eeqn
Here we use the conventions $\tfrac{m}{0}\defin \infty$ for $m>0$ and also $\tfrac{0}{0}\defin \infty$, where the latter ensures that $m=n=0$ does not contribute to the infimum. Just as the maximal rate can be $0$ or $\infty$ in certain cases, we have $R_{\min}(x\to y)=\infty$ if no number of copies of $x$ can be converted to any number of copies of $y$. Similarly, $R_{\min}(x\to y) = 0$ if already a finite number of $y$'s is sufficient for absorbing any number of $x$'s, or if the number of $x$'s that can be absorbed by $my$ grows superlinearly in $m$, or equivalently if the number of $y$'s required for absorbing $nx$ grows sublinearly in $n$.

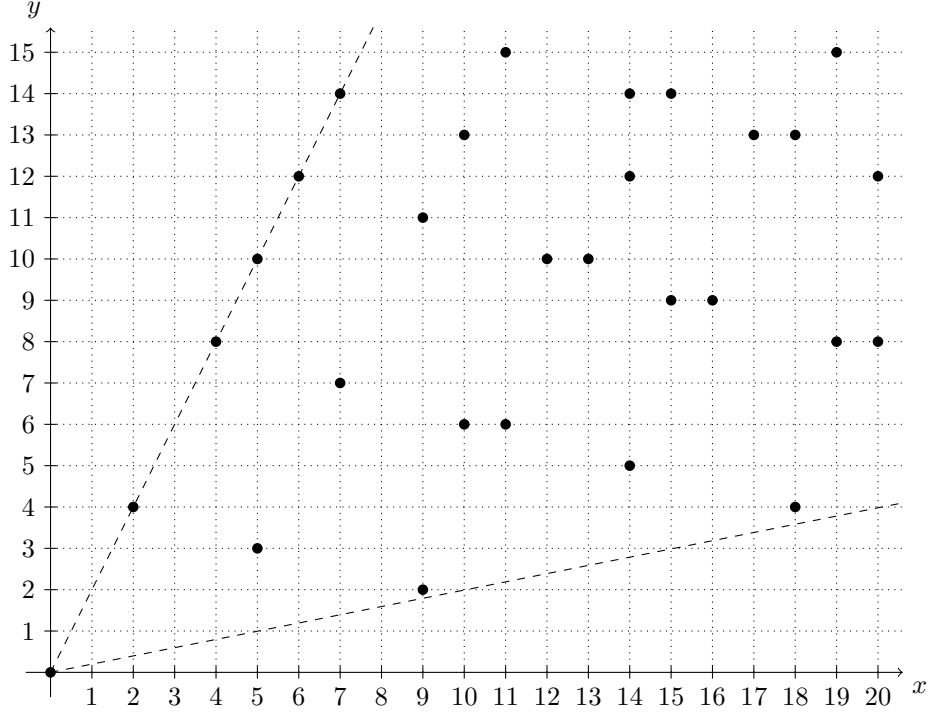
\begin{figure}
\begin{tikzpicture}[scale=.55]
\tikzstyle{point}=[fill,circle,inner sep=0pt,minimum size=4pt]
\draw[->] (-.6,0) -- (20.6,0) node [below right] {$x$} ;
\draw[->] (0,-.6) -- (0,15.6) node [above left] {$y$} ;
\foreach \x in {1,...,20} {
	\draw[xshift=\x cm] (0,-.15) node[below] {$\x$} -- (0,.15) ;
	\draw[xshift=\x cm,dotted] (0,0) -- (0,15.6) ; }
\foreach \y in {1,...,15} {
	\draw[yshift=\y cm] (-.15,0) node[left] {$\y$} -- (.15,0) ;
	\draw[yshift=\y cm,dotted] (0,0) -- (20.6,0) ; }
\foreach \xy in {(0,0),(2,4),(4,8),(5,3),(5,10),(6,12),(7,7),(7,14),(9,2),(9,11),(10,6),(10,13),(11,6),(11,15),(12,10),(13,10),(14,5),(14,12),(14,14),(15,9),(15,14),(16,9),(17,13),(18,4),(18,13),(19,8),(19,15),(20,8),(20,12)}
	\node[point] at \xy {} ;
\draw[dashed] (0,0) -- (7.8,15.6) ;
\draw[dashed] (0,0) -- (20.6,4.1) ;
\end{tikzpicture}
\caption{An example of what a commutative submonoid of $\N^2$ may look like. The maximal and minimal rates are the slopes of the upper and lower bounding rays, respectively.}
\label{ratefig}
\end{figure}

\begin{rem}
\label{2dslice}
There is an intuitive geometrical interpretation of both maximal and minimal rates as follows. Generalizing Remark~\ref{annihilator}, we may consider all pairs $(n,m)\in\N^2$ for which $nx\geq my$. This set of pairs forms a submonoid of $\N^2$ under addition; see Figure~\ref{ratefig} for what this may look like. For $nx\geq my$, the corresponding fraction $\tfrac{m}{n}$ is then the slope of the line or ray connecting the origin to the point $(n,m)\in\N^2$, which is an element of the submonoid. So the maximal rate, as the supremum over all such slopes, can be identified with the ray tightly bounding the submonoid from above, and similarly the minimal rate corresponds to the ray tightly bounding the submonoid from below. All of this is determined by the ordered commutative submonoid spanned by the elements $x$ and $y$ under consideration, and we think of this submonoid as a ``two-dimensional slice''.
\end{rem}

\begin{rem}
Due to certain technical issues that we encounter below, we need to point out that these definitions of maximal and minimal rate may not yet be the most appropriate ones, and we investigate an improved definition below. So while this section provides the results and methods expected of a basic theory of rates, the precise technical details of the definitions and results may be subject to revision in future work.
\end{rem}

In cases when the arguments $x$ and $y$ are clear from the context, we omit their mention and simply write $R_{\max}$ and $R_{\min}$ for the maximal and minimal rates. Here are some basic observations on these rates with our present definitions.

\begin{lem}
\label{carnotlem}
For every $x,y,z\in A$, we have
\begin{align}
\label{carnot}
\begin{split}
R_{\max}(x\to z) &\geq R_{\max}(x\to y) R_{\max}(y\to z) ,\\[4pt]
R_{\min}(x\to z) &\leq R_{\min}(x\to y) R_{\min}(y\to z),
\end{split}
\end{align}
\end{lem}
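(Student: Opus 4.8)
The plan is to establish both inequalities in~\eqref{carnot} directly from the definitions by composing conversions, treating the two statements symmetrically. The underlying idea is exactly the ``Carnot'' reasoning alluded to by the lemma's label: if one can convert $x$'s into $y$'s at some rate, and $y$'s into $z$'s at some rate, then chaining the two conversions gives a conversion of $x$'s into $z$'s whose rate is (at least, for the maximal rate) the product of the two rates.

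First I would handle the maximal rate. Suppose $\tfrac{m_1}{n_1}$ is a feasible ratio for $x\to y$, meaning $n_1 x \geq m_1 y$, and $\tfrac{m_2}{n_2}$ is feasible for $y\to z$, meaning $n_2 y \geq m_2 z$. The natural move is to scale the first inequality by $n_2$ and the second by $m_1$, obtaining $n_1 n_2 x \geq m_1 n_2 y$ and $m_1 n_2 y \geq m_1 m_2 z$ (using monotonicity of addition in the stronger form noted in the excerpt, i.e.\ that $a \geq b$ implies $ka \geq kb$); transitivity then yields $n_1 n_2 x \geq m_1 m_2 z$, so $\tfrac{m_1 m_2}{n_1 n_2}$ is feasible for $x \to z$. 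Taking the supremum over all such $m_1,n_1$ and $m_2,n_2$ gives $R_{\max}(x\to z) \geq R_{\max}(x\to y)\, R_{\max}(y\to z)$. For the minimal rate one argues with the reverse monotonicity of $\inf$: every feasible ratio for $x\to z$ obtained by this composition is bounded below by products of feasible ratios, but here one wants an upper bound on $R_{\min}(x\to z)$, so instead I would observe that any pair of feasible ratios $\tfrac{m_1}{n_1}$ (for $x\to y$) and $\tfrac{m_2}{n_2}$ (for $y\to z$) produces a feasible ratio $\tfrac{m_1 m_2}{n_1 n_2}$ for $x \to z$ by the same chaining, hence $R_{\min}(x\to z) \leq \tfrac{m_1 m_2}{n_1 n_2}$; taking the infimum over the two pairs of ratios on the right gives the claim.

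The main obstacle, and the only genuinely delicate point, is bookkeeping the degenerate cases built into the conventions $\tfrac{m}{0}\defin\infty$ for $m>0$, $\tfrac{0}{0}\defin 0$ (for $R_{\max}$) or $\tfrac{0}{0}\defin\infty$ (for $R_{\min}$), together with the possibility that a rate equals $0$ or $\infty$. One has to check that the product on the right-hand side is well-defined (e.g.\ $0\cdot\infty$ does not arise in a way that breaks the inequality) and that the chaining argument still goes through when, say, $R_{\max}(x\to y)=\infty$ because some $n_1 x$ dominates arbitrarily many copies of $y$. I would dispatch these by noting that if either factor on the right of the $R_{\max}$ inequality is $0$ the inequality is trivial (the left side is nonnegative), if either factor is $\infty$ and the other is positive one produces feasible ratios $\tfrac{m_1 m_2}{n_1 n_2}$ of unbounded size so the left side is $\infty$, and the one potentially problematic case $0\cdot\infty$ cannot occur since a rate of $0$ for $x\to y$ still allows $m_1=n_1=0$ (contributing nothing), whereas it is the \emph{positivity} of the other rate that drives the product. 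The analogous case analysis for $R_{\min}$ uses $\tfrac{0}{0}\defin\infty$ so that the trivially feasible pair $m=n=0$ does not spuriously force $R_{\min}=0$. I expect the clean write-up to be a short paragraph of composition plus a compact case check; no serious mathematical difficulty is hidden here beyond careful handling of these conventions.
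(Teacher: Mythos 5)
Your proposal is correct and takes essentially the same approach as the paper's proof: one chains $n_1 n_2 x \geq m_1 n_2 y \geq m_1 m_2 z$ to see that $\tfrac{m_1 m_2}{n_1 n_2}$ is feasible for $x\to z$, then passes to the supremum (or infimum). The paper finishes with an $\eps$-approximation argument rather than your ``supremum of products equals product of suprema'' phrasing, and does not bother with the degenerate-case bookkeeping you sketch; the core composition step is identical.
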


\begin{proof}
We only prove the first inequality since the second is analogous.

For $\eps>0$, we choose $n,m\in\N$ such that $\tfrac{m}{n} \geq R_{\max}(x\to y) - \eps$ and $nx\geq my$; and similarly, we choose $n',m'\in\N$ with $\tfrac{m'}{n'} \geq R_{\max}(y\to z) - \eps$ and $n'y\geq m'z$. Thereby we also obtain
\[
n' nx\geq n' my = mn'y \geq mm'z ,
\]
and hence
\[
R_{\max}(x\to z) \geq \frac{mm'}{n'n} = \frac{m}{n} \cdot \frac{m'}{n'} \geq \left( R_{\max}(x\to y) - \eps \right) \cdot \left( R_{\max}(y\to z) - \eps \right) .
\]
This yields the claim in the limit $\eps\to 0$.
\end{proof}

\begin{lem}
\label{ratedicho}
For every $x\in A$, we have either $R_{\max}(x\to x) = 1$ or $R_{\max}(x\to x) = \infty$. Similarly, either $R_{\min}(x\to x) = 1$ or $R_{\min}(x\to x) = 0$.
\end{lem}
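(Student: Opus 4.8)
The plan is to derive both dichotomies as essentially formal consequences of Lemma~\ref{carnotlem}, combined with the two trivial bounds $R_{\max}(x\to x)\geq 1$ and $R_{\min}(x\to x)\leq 1$. These bounds hold because reflexivity gives $x\geq x$, so the pair $(n,m)=(1,1)$ satisfies $nx\geq mx$ and contributes the ratio $1$ both to the supremum defining $R_{\max}(x\to x)$ and to the infimum defining $R_{\min}(x\to x)$. I would record these first.

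Next I would specialize Lemma~\ref{carnotlem} to $y=z=x$, which yields $R_{\max}(x\to x)\geq R_{\max}(x\to x)^{2}$ and $R_{\min}(x\to x)\leq R_{\min}(x\to x)^{2}$, with products computed in $[0,\infty]$ under the conventions already in force; since the quantities in question are $\geq 1$ (resp.\ $\leq 1$) and hence strictly positive in the only cases where it matters, no $0\cdot\infty$ ambiguity arises. It then remains to run the elementary arithmetic: writing $r\defin R_{\max}(x\to x)$, the inequality $r\geq r^{2}$ forces $r=\infty$, or else $r<\infty$ and division by $r>0$ gives $r\leq 1$, whence $r=1$ because $r\geq 1$; writing $s\defin R_{\min}(x\to x)$, the inequality $s\leq s^{2}$ forces $s=0$, or else $s>0$ and division by $s$ gives $s\geq 1$, whence $s=1$ because $s\leq 1$.

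I do not expect a genuine obstacle here; the only point deserving a word of care is invoking Lemma~\ref{carnotlem} with the value $\infty$ permitted, but since it is literally the same quantity appearing on both sides of the submultiplicativity inequality this is harmless. If a self-contained argument avoiding Lemma~\ref{carnotlem} is preferred, one can reason directly: if $R_{\max}(x\to x)>1$, pick a witnessing pair $(n,m)$ with $nx\geq mx$ and $m>n$ (the degenerate case $n=0$ giving $R_{\max}(x\to x)=\infty$ outright), prove by induction on $k$ via the monotonicity of addition that $n^{k}x\geq m^{k}x$, and conclude $R_{\max}(x\to x)\geq (m/n)^{k}$ for all $k$, hence $R_{\max}(x\to x)=\infty$; the minimal-rate statement is symmetric. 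Either route is short, and I would present the Lemma~\ref{carnotlem} version as the main proof.
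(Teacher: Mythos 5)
Your proposal is correct and follows essentially the same route as the paper: note $R_{\max}(x\to x)\geq 1$ from reflexivity, plug $y=z=x$ into Lemma~\ref{carnotlem} to get $R_{\max}\geq R_{\max}^2$, and conclude by elementary arithmetic, with the minimal-rate case handled symmetrically. The extra care you take with the $\infty$ case and the alternative direct argument are fine but not needed; the main argument matches the paper's.
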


\begin{proof}
We only prove the first part since the second is analogous.

We certainly have $R_{\max}\geq 1$ thanks to $x\geq x$. Furthermore, plugging in $z=y=x$ in~\eqref{carnot} shows that we necessarily must have $R_{\max}\geq R_{\max}^2$. With $R_{\max}\geq 1$, this implies $R_{\max}=1$ or $R_{\max}=\infty$.
\end{proof}

Putting the previous two lemmas together yields immediately:

\begin{cor}
\label{carnotcor}
If all rates in $A$ are finite, then
\beqn
\label{carnotineqmax}
R_{\max}(x\to y) \cdot R_{\max}(y\to x) \leq 1.
\eeqn
If all rates are strictly positive, then
\beqn
\label{carnotineqmin}
R_{\min}(x\to y) \cdot R_{\min}(y\to x) \geq 1.
\eeqn
\end{cor}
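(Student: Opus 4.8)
The plan is to deduce both inequalities directly from the two preceding lemmas, which is why the corollary is stated as following ``immediately''. The key observation is that taking $z = x$ in Lemma~\ref{carnotlem} produces the self-rate $R_{\max}(x \to x)$ (respectively $R_{\min}(x \to x)$) on the left-hand side, and Lemma~\ref{ratedicho} pins this self-rate down to one of two values. The finiteness (respectively strict positivity) hypothesis then selects the value $1$, which is exactly what makes the product bound come out.

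First I would prove~\eqref{carnotineqmax}. Setting $z = x$ in the first inequality of~\eqref{carnot} gives
\[
R_{\max}(x \to x) \;\geq\; R_{\max}(x \to y)\, R_{\max}(y \to x).
\]
By Lemma~\ref{ratedicho} we have $R_{\max}(x \to x) \in \{1, \infty\}$. Under the hypothesis that all rates in $A$ are finite, in particular $R_{\max}(x \to x) < \infty$, so it must equal $1$, and the displayed inequality becomes $1 \geq R_{\max}(x \to y)\, R_{\max}(y \to x)$, which is~\eqref{carnotineqmax}.

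The argument for~\eqref{carnotineqmin} is the mirror image. Setting $z = x$ in the second inequality of~\eqref{carnot} gives $R_{\min}(x \to x) \leq R_{\min}(x \to y)\, R_{\min}(y \to x)$, and Lemma~\ref{ratedicho} tells us $R_{\min}(x \to x) \in \{1, 0\}$; the assumption that all rates are strictly positive forces $R_{\min}(x \to x) = 1$, whence $1 \leq R_{\min}(x \to y)\, R_{\min}(y \to x)$.

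There is essentially no obstacle here: both steps are one-line invocations of results already in hand, and the only mild subtlety worth a remark is that the global hypotheses (\emph{all} rates finite, or \emph{all} rates strictly positive) are used only through their consequence for the single self-rate $R_{\max}(x\to x)$ or $R_{\min}(x\to x)$; one could state a slightly sharper corollary assuming just that, but keeping the global hypothesis matches the phrasing of the surrounding discussion.
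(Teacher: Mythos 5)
Your proof is correct and matches the paper's intent exactly: the paper gives no explicit proof, stating only that "putting the previous two lemmas together yields immediately," and the substitution $z=x$ in Lemma~\ref{carnotlem} followed by the dichotomy of Lemma~\ref{ratedicho} is precisely that immediate combination.
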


This is reminiscent of ``Carnot-style'' reasoning in thermodynamics: composing a conversion of $x$'s into $y$'s with a conversion of $y$'s into $x$'s cannot lead to an impossible multiplication of $x$'s into more $x$'s. In thermodynamics, an $x$ could stand for the combination of a hot and a cold reservoir of a certain finite size, while $y$ would correspond to the same two reservoirs in equilibrium together with a certain amount of extracted work.

\begin{lem}
\label{ratenhom}
\begin{enumerate}
\item For every $n\in\Npos$,
\beq
R_{\max}(nx\to y) = \tfrac{1}{n}\cdot R_{\max}(x\to y), \qquad R_{\min}(nx\to y) = \tfrac{1}{n}\cdot R_{\max}(x\to y).
\eeq
\item For every $m\in\Npos$,
\beq
R_{\max}(x\to my) = m\cdot R_{\max}(x\to y), \qquad R_{\min}(x\to my) = m\cdot R_{\min}(x\to y).
\eeq
\end{enumerate}
\end{lem}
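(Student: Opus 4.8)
The plan is to prove all four equalities by one uniform recipe: translate the substitution $x\mapsto nx$ (respectively $y\mapsto my$) into a statement about the natural-number coefficients occurring in the defining inequalities, read off how this reshapes the set of achievable ratios $\tfrac{m}{n}$, and then take a supremum (for $R_{\max}$) or an infimum (for $R_{\min}$). Throughout, I would call a ratio $\tfrac{q}{p}$ \emphalt{achievable for} $R_{\max}(u\to v)$ when $pu\geq qv$ holds in $A$, using the stipulated conventions when $p=0$ or $p=q=0$; the same pairs are the achievable ones for $R_{\min}(u\to v)$, only with the other $\tfrac00$ convention.

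For part~(a), the $\N$-semimodule structure of $A$ gives $p\,(nx)=(pn)\,x$, so $(p,q)$ is achievable for $R_{\max}(nx\to y)$ precisely when $(pn)\,x\geq qy$, i.e.\ precisely when $(pn,q)$ is achievable for $R_{\max}(x\to y)$. This is a bijection between the achievable pairs for $R_{\max}(nx\to y)$ and those achievable pairs for $R_{\max}(x\to y)$ whose first coordinate is divisible by $n$. The substantive point is that this divisibility restriction does not move the extremal ratio: from any achievable $kx\geq ly$, the monotonicity of addition~\eqref{monotoneplus} (applied repeatedly) gives $(nk)\,x\geq(nl)\,y$, which is achievable, has first coordinate divisible by $n$, and carries the same ratio $\tfrac{nl}{nk}=\tfrac{l}{k}$; hence the achievable ratios with $n$-divisible first coordinate exhaust all achievable ratios for $R_{\max}(x\to y)$. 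Pushing $\tfrac{q}{p}$ across the bijection $(p,q)\leftrightarrow(pn,q)$ and re-reading the defining supremum produces the factor $\tfrac1n$, giving $R_{\max}(nx\to y)=\tfrac1n R_{\max}(x\to y)$; the verbatim argument with $\inf$ in place of $\sup$ gives $R_{\min}(nx\to y)=\tfrac1n R_{\min}(x\to y)$.

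For part~(b), the computation is the mirror image: since $q\,(my)=(qm)\,y$, the pair $(p,q)$ is achievable for $R_{\max}(x\to my)$ exactly when $(p,qm)$ is achievable for $R_{\max}(x\to y)$, which sets up a bijection with those achievable pairs whose second coordinate is divisible by $m$. This restriction is harmless for the same reason as before ($kx\geq ly$ forces $(mk)\,x\geq(ml)\,y$, with unchanged ratio), and pushing $\tfrac{q}{p}$ across $(p,q)\leftrightarrow(p,qm)$ now brings out the factor $m$, so that $R_{\max}(x\to my)=m\,R_{\max}(x\to y)$ and, with $\inf$, $R_{\min}(x\to my)=m\,R_{\min}(x\to y)$. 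It then remains only to confirm that the degenerate values behave correctly, i.e.\ that each rate is $0$ (respectively $\infty$) on a rescaled pair iff it is so on the original; this is immediate from the bijections, since $p=0\Leftrightarrow pn=0$ and $q=0\Leftrightarrow qm=0$, so that the only pair forcing an undefined $\tfrac00$ is $(0,0)$ on either side.

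I expect the step needing the most care in a full write-up to be the claim that restricting the coefficient inequalities to coefficients divisible by $n$ (respectively $m$) leaves both the supremum and the infimum of the achievable ratios unchanged; once that is nailed down, everything else is bookkeeping with the conventions of Section~\ref{sectrates}. A second point worth stating explicitly, since it is easy to drop or to invert, is the exact power of $n$ (respectively $m$) that the ratio picks up when transported through the bijection — this is precisely what encodes the homogeneity of $R_{\max}$ and $R_{\min}$ of degree $-1$ in the first argument and $+1$ in the second.
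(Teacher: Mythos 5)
Your framework — translate the substitution into a bijection of achievable pairs, check that the induced divisibility restriction leaves the extremum unchanged (justified correctly via $kx\geq ly\Rightarrow(nk)x\geq(nl)y$), then carry the ratio across — is the right way to make precise what the paper only calls ``Straightforward,'' and you correctly identify the divisibility step as the one needing care. The problem is that the final arithmetic is inverted. In part~(a), a pair $(p,q)$ achievable for $R_{\max}(nx\to y)$ carries ratio $\tfrac{q}{p}$, while its image $(pn,q)$ on the $R_{\max}(x\to y)$ side carries ratio $\tfrac{q}{pn}=\tfrac1n\cdot\tfrac{q}{p}$; so each achievable ratio on the $nx$ side is $n$ \emph{times} the corresponding ratio on the $x$ side, hence
\[
R_{\max}(nx\to y)=n\,R_{\max}(x\to y),
\]
not $\tfrac1n\,R_{\max}(x\to y)$. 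The same inversion occurs in part~(b), where the bijection $(p,q)\leftrightarrow(p,qm)$ yields $R_{\max}(x\to my)=\tfrac1m\,R_{\max}(x\to y)$, not $m\,R_{\max}(x\to y)$; and likewise with $\inf$ in place of $\sup$ for $R_{\min}$.

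These are the reciprocals of the coefficients printed in the lemma, so you have been steered by a misprinted statement. That the lemma as printed cannot hold is quick to check: in $A=\Q$ with $x=y=1$ one has $R_{\max}(2\cdot1\to1)=\sup\{\tfrac{m}{k}:2k\geq m\}=2=2\cdot R_{\max}(1\to1)$, whereas the printed lemma would force $\tfrac12$. It is also incompatible with the rate formula of Theorem~\ref{rateformula}, from which $\Rreg_{\max}(nx\to y)=\inf_f f(nx)/f(y)=n\,\Rreg_{\max}(x\to y)$ and $\Rreg_{\max}(x\to my)=\tfrac1m\,\Rreg_{\max}(x\to y)$. Once you carry the factor in the correct direction your bijection argument does prove the corrected lemma, and since the published proof is the single word ``Straightforward,'' your write-up would supply the actual content. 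You should also note the further misprint in the second display of part~(a), which reads $R_{\max}$ where $R_{\min}$ is intended — a correction you already made silently.
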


\begin{proof}
Straightforward.
\end{proof}

The concepts of maximal and minimal rates are special cases of a general notion of rate. The definition is this:

\begin{defn}
\label{defrate}
\begin{enumerate}
\item\label{defratea} A nonnegative extended real number $r\in\Rplus\cup\{\infty\}$ is a \emph{rate} from $x$ to $y$ if every neighbourhood of $r$ contains a fraction $\tfrac{m}{n}$ with $nx\geq my$.
\item The set of all rates is the \emph{rate region}.
\end{enumerate}
\end{defn}

Here and in the following, when we speak of a \emph{fraction $\tfrac{m}{n}$}, we refer to a pair of natural numbers $m,n\in\N$, not both of which are zero. This terminology seems appropriate in our current context, where we want $\tfrac{m}{n}$ to be interpreted as the rational number (or $\infty$) which it denotes. In all cases, it is permitted for one of $m$ or $n$ to be zero. Sometimes we want to require both $m>0$ and $n>0$; we then speak of a \emph{finite fraction}.

\begin{rem}
\begin{enumerate}
\item An equivalent definition of rate would be that a rate is a number $r\in\Rplus\cup\{\infty\}$ for which there exists a sequence of fractions $\left(\tfrac{m_j}{n_j}\right)_{j\in\N}$ which converges to $r$ and such that $n_j x \geq m_j y$ for all $j$.
\item The rate region is also determined by the two-dimensional slice of Remark~\ref{2dslice}, and the rates are exactly those slopes of rays in Figure~\ref{ratefig} which can be approximated arbitrarily closely by rays through points in the submonoid.
\item If the only case in which $nx\geq my$ holds is with $n=m=0$, then no rate exists. This is an unpleasant special case in which our current definitions do not play together so well: $R_{\max}=0$ and $R_{\min}=\infty$ are still well-defined values, although no rate exists.
\end{enumerate}
\end{rem}

Due to the following observation, to compute rates it is sufficient to compute the minimal and maximal rates:

\begin{prop}
\label{rateregion}
The rate region is the closed interval $[R_{\min}, R_{\max}]$.
\end{prop}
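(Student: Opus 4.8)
The plan is to show two things: that the rate region is contained in the closed interval $[R_{\min},R_{\max}]$, and that the whole interval consists of rates. The first inclusion is essentially immediate from the definitions: if $r$ is a rate, then by Definition~\ref{defrate}\ref{defratea} every neighbourhood of $r$ contains a fraction $\tfrac{m}{n}$ with $nx\geq my$, so $r$ is a limit of such fractions. Each such fraction satisfies $R_{\min}(x\to y)\leq \tfrac{m}{n}\leq R_{\max}(x\to y)$ by the very definitions~\eqref{defmaxrate} and~\eqref{defminrate} as an infimum and supremum over exactly these fractions. Since $[R_{\min},R_{\max}]$ is closed in $\Rplus\cup\{\infty\}$, the limit $r$ lies in it as well. (One should double-check the degenerate conventions: if no fraction with $nx\geq my$ exists at all except $n=m=0$, then there are no rates, $R_{\min}=\infty$, $R_{\max}=0$, and $[R_{\min},R_{\max}]=[\infty,0]=\varnothing$, so the statement holds vacuously; this edge case is flagged in the preceding remark and the interval notation should be read accordingly.)

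For the reverse inclusion, first I would observe that $R_{\min}$ and $R_{\max}$ are themselves rates whenever they are finite and a nondegenerate fraction exists, since they are by construction the infimum and supremum of the set $\{\tfrac{m}{n} : nx\geq my\}$, hence limits of elements of that set, hence approximable arbitrarily closely by fractions $\tfrac{m}{n}$ with $nx\geq my$; the cases $R_{\max}=\infty$ or $R_{\min}=0$ are handled directly from the definitions (e.g. if $R_{\max}=\infty$, arbitrarily large fractions with $nx\geq my$ exist, so every neighbourhood of $\infty$ contains one). The crux is then to show that any $r$ strictly between $R_{\min}$ and $R_{\max}$ is a rate. The key tool is that the set $S\defin\{(n,m)\in\N^2 : nx\geq my\}$ is a submonoid of $\N^2$ under addition, as noted in Remark~\ref{2dslice} --- this follows from the strengthened monotonicity of addition~\eqref{monotoneplus}: if $nx\geq my$ and $n'x\geq m'y$ then $(n+n')x\geq(m+m')y$. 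So if $\tfrac{m}{n}$ and $\tfrac{m'}{n'}$ are two slopes realized by points of $S$, then so is the slope of $(n+n', m+m')$, which by the mediant inequality lies strictly between them. Iterating and combining with scalings $(n,m)\mapsto(kn,km)$, the slopes realized by $S$ are dense in the interval between any two of them; taking the two endpoints to be slopes approaching $R_{\min}$ and $R_{\max}$ respectively gives density in the open interval $(R_{\min},R_{\max})$, which is exactly the statement that every such $r$ is a rate.

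I expect the main obstacle to be bookkeeping around the boundary values $0$ and $\infty$ and the non-strict-versus-strict inequalities in the conventions $\tfrac{m}{0}\defin\infty$, $\tfrac{0}{0}\defin 0$ or $\infty$ --- in particular making sure the mediant argument still produces genuinely intermediate slopes when one of the two bounding slopes is $0$ or $\infty$ (so that the corresponding point of $S$ has $n=0$ or $m=0$), and confirming that the closedness of $[R_{\min},R_{\max}]$ inside $\Rplus\cup\{\infty\}$ is being used with the correct topology (the one-point-at-infinity compactification, in which a neighbourhood of $\infty$ is the complement of a bounded set). None of this is deep, but it needs care so that the clean geometric picture of Figure~\ref{ratefig} --- the rate region is the cone of slopes between the lower and upper bounding rays of the submonoid $S$ --- is matched precisely by the formal definitions.
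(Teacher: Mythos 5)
Your proof is correct and takes essentially the same route as the paper: after disposing of the easy inclusion and the endpoints, both arguments show density of the realized slopes in any subinterval by taking nonnegative integer combinations of two lattice points in the submonoid $S\subseteq\N^2$. Your ``iterated mediants plus scalings'' phrasing and the paper's ``approximate the convex weight $t$ by a fraction $k/l$ and take the corresponding integer combination'' are the same computation written in two dialects, and your attention to the degenerate conventions at $0$, $\infty$, and the empty case matches the paper's implicit treatment.
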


\begin{proof}
Due to the definition of rate, every rate is bounded below by the minimal rate and bounded above by the maximal rate. Hence it only needs to be shown that every $r\in [R_{\min},R_{\max}]$ is indeed a rate. Having such an $r$ requires $R_{\min}\leq R_{\max}$, so that there is at least one fraction $\tfrac{m}{n}$ with $nx\geq my$. Then if $r=R_{\min}$ or $r=R_{\max}$, the claim follows from the definition of minimal and maximal rate.

Otherwise, we have $R_{\min}<r<R_{\max}$, so that there exist finite fractions $\tfrac{m_-}{n_-}<r$ and $\tfrac{m_+}{n_+}>r$ such that 
\beqn
\label{nmass}
n_- x \geq m_- y, \qquad  n_+ x \geq m_+ y.
\eeqn
We now take the unique $t\in[0,1]$ with
\beqn
\label{ralpha}
r = t \frac{m_-}{n_-} + (1-t) \frac{m_+}{n_+}
\eeqn
and approximate $t$ by a finite fraction $\tfrac{k}{l}\leq 1$ with
\beq
\left|t - \frac{k}{l}\right| \leq \eps.
\eeq
By adding the $kn_+$-th multiple of the first inequality in~\eqref{nmass} to the $(l-k)n_-$-th multiple of the second, we obtain
\beq
k n_+ n_- x + (l-k) n_+ n_- x \geq k n_+ m_- y + (l-k) n_- m_+ y,
\eeq
and hence we know that the number
\beq
\frac{k n_+ m_- + (l - k) n_- m_+}{k n_+ n_- + (l - k) n_+ n_-} = \frac{k}{l}\cdot \frac{m_-}{n_-} + \left(1-\frac{k}{l}\right)\cdot \frac{m_+}{n_+}
\eeq
is a rate as well. As $\eps\to 0$, this indeed converges to $r$ as desired.
\end{proof}

\begin{lem}
\label{rbound}
If $r$ is a rate from $x$ to $y$, then for all functionals $f$,
\beqn
\label{fvsr}
f(x) \geq r f(y)
\eeqn
\end{lem}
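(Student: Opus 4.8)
The plan is to reduce the statement to a short limiting argument using the sequential characterisation of rates noted in the remark following Definition~\ref{defrate}: saying that $r$ is a rate from $x$ to $y$ is the same as saying there is a sequence of fractions $\tfrac{m_j}{n_j}$, $j\in\N$, with $\tfrac{m_j}{n_j}\to r$ and $n_j x\geq m_j y$ for every $j$. Since $f$ is a homomorphism, hence additive and order-preserving, applying it to $n_j x\geq m_j y$ gives
\[
n_j f(x) \;=\; f(n_j x)\;\geq\; f(m_j y)\;=\; m_j f(y)
\]
for all $j$, and everything then follows by dividing by the appropriate factor and passing to the limit $j\to\infty$.

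First I would handle the main case $r<\infty$. A fraction is never $\tfrac{0}{0}$, so $n_j=0$ would force $m_j\geq 1$ and hence $\tfrac{m_j}{n_j}=\infty$, incompatible with $\tfrac{m_j}{n_j}\to r<\infty$; therefore $n_j\geq 1$ for all sufficiently large $j$. For those $j$ we divide the displayed inequality by the positive integer $n_j$ to obtain $f(x)\geq \tfrac{m_j}{n_j}\,f(y)$. Letting $j\to\infty$, the right-hand side converges to the real number $r f(y)$, while $f(x)$ is a fixed upper bound for every term of the sequence, so $f(x)\geq r f(y)$, which is \eqref{fvsr}.

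For $r=\infty$ the same computation first yields $f(y)\leq 0$: since $\tfrac{m_j}{n_j}\to\infty$ we have $m_j\geq 1$ for large $j$, and dividing $n_j f(x)\geq m_j f(y)$ by $m_j$ gives $f(y)\leq \tfrac{n_j}{m_j} f(x)\to 0$. With $f(y)\leq 0$ the extended-real product $r f(y)=\infty\cdot f(y)$ appearing on the right of \eqref{fvsr} is nonpositive --- indeed $-\infty$ whenever $f(y)<0$ --- so \eqref{fvsr} holds. I expect the only delicate point of the whole proof to be exactly this bookkeeping at the boundary: keeping track of which of $m_j, n_j$ may vanish, and pinning down the meaning of $r f(y)$ when $r=\infty$ consistently with the conventions already fixed for $R_{\max}$ and $R_{\min}$ in \eqref{defmaxrate} and \eqref{defminrate}; away from these degenerate values the argument is just the elementary limit above.
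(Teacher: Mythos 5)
Your proof is correct and takes essentially the same route as the paper's: apply the homomorphism to $n_jx\geq m_jy$, divide, and pass to the limit, with the $r=\infty$ case yielding $f(y)\leq 0$. The only superficial difference is that you work with the sequential characterisation rather than the neighbourhood formulation and spell out the boundary bookkeeping (when $m_j$ or $n_j$ can vanish) explicitly; note that the paper sidesteps any ambiguity about $\infty\cdot f(y)$ by simply declaring that for $r=\infty$ the inequality \eqref{fvsr} is to be read as $0\geq f(y)$, which is exactly what you derive.
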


In the case $r=\infty$, this inequality is to be interpreted as $0\geq f(y)$.

\begin{proof}
If $r$ is a rate, then in every neighbourhood of $r$ we can find a fraction $\tfrac{m}{n}$ with $n x\geq m y$. Then we also have $n f(x)\geq m f(y)$. For $n>0$, this implies the conclusion upon dividing by $n$ and letting $\tfrac{m}{n}$ tend to $r$. For $n=0$, we must have $m>0$, since otherwise $\tfrac{m}{n}$ would not be a fraction. So also in this case, we conclude the expected $0\geq f(y)$.
\end{proof}

The first part of the argument also shows how rates behave under homomorphisms:

\begin{lem}
\label{ratehom}
If $f:A\to B$ is a homomorphism and $r$ is a rate from $x$ to $y$ in $A$, then $r$ is also a rate from $f(x)$ to $f(y)$ in $B$. 
\end{lem}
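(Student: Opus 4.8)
The plan is to unwind the definition of rate (Definition~\ref{defrate}\ref{defratea}) and push every witnessing fraction through $f$. Recall that $r\in\Rplus\cup\{\infty\}$ being a rate from $x$ to $y$ in $A$ means: every neighbourhood of $r$ contains a fraction $\tfrac{m}{n}$ with $nx\geq my$ in $A$. So to show $r$ is a rate from $f(x)$ to $f(y)$ in $B$, I would fix an arbitrary neighbourhood of $r$, pick such a fraction $\tfrac{m}{n}$ with $nx\geq my$ in $A$, and verify that the same fraction witnesses the rate condition in $B$.

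The key step is that $f$ is a homomorphism of ordered commutative monoids: it is additive with $f(0)=0$, and it is an ordered map. Additivity gives $f(nx)=nf(x)$ and $f(my)=mf(y)$ for all $n,m\in\N$ (including the degenerate cases $n=0$ or $m=0$, since $f(0)=0$), and the ordered-map property applied to $nx\geq my$ yields $f(nx)\geq f(my)$. Combining these, $nf(x)\geq mf(y)$ in $B$. Since the neighbourhood of $r$ was arbitrary, every neighbourhood of $r$ contains a fraction $\tfrac{m}{n}$ with $nf(x)\geq mf(y)$, which is exactly the statement that $r$ is a rate from $f(x)$ to $f(y)$ in $B$.

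There is essentially no obstacle here; the only point requiring a word of care is the bookkeeping with fractions in which one of $m,n$ vanishes, but this is harmless because a fraction by definition has $(m,n)\neq(0,0)$ and $f$ sends $0$ to $0$ while preserving the ordering. This is also precisely the computation already carried out in the first part of the proof of Lemma~\ref{rbound}, so the argument can simply be cited from there if desired. (Note in particular that one does \emph{not} need to invoke Proposition~\ref{rateregion} or reason about $R_{\min},R_{\max}$ separately: working directly with the neighbourhood formulation of Definition~\ref{defrate} handles all rates uniformly, including $r=\infty$.)
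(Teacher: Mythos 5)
Your proposal is correct and takes essentially the same approach as the paper, which indeed just observes that the first part of the proof of Lemma~\ref{rbound} (pushing a witnessing fraction $\tfrac{m}{n}$ with $nx\geq my$ through $f$ to get $nf(x)\geq mf(y)$) already establishes this. Your extra care about the degenerate fractions and the explicit citation of Lemma~\ref{rbound} match the paper's reasoning exactly.
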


In an ordered $\Q$-vector space, we can rewrite the definition of maximal rate in a form which looks closer to~\eqref{xsupny} and~\eqref{nxinfy},
\beqn
\label{rmaxalt}
R_{\max}(x\to y) = \sup\, \{\: \beta\in\Qplus \: |\: x \geq \beta y \:\} = \left( \inf\, \{\: \alpha\in\Qplus \: |\: \alpha x\geq y \:\} \right)^{-1},
\eeqn
where one needs to understand the supremum of an empty set as being $0$, in order to correctly cover the case when $nx\geq my$ holds only when $n=0$. Similarly, we can rewrite the definition of minimal rate as
\beqn
\label{rminalt}
R_{\min}(x\to y) = \inf\, \{\: \beta\in\Qplus \: |\: x \geq \beta y \:\} = \left( \sup\, \{\: \alpha\in\Qplus \: |\: \alpha x\geq y \:\} \right)^{-1},
\eeqn
where again the supremum of the empty set is declared to be $0$. The idea behind these expressions is that in an ordered $\Q$-vector space, one can rewrite the minimal and maximal rates in a form similar to~\eqref{xsupny}--\eqref{nxinfy} and~\eqref{xinfny}--\eqref{nxsupy}. We will encounter this kind of expression also in~\eqref{infsup}.

In the light of Theorem~\ref{aovshb}, these definitions suggest that Lemma~\ref{rbound} should also have a converse in the case of an Archimedean ordered $\Q$-vector space. If this was the case, then Lemma~\ref{rbound} would not only yield useful upper bounds on maximal rates and lower bounds on minimal rates, but the converse would even prove these bounds to be tight. However, with our current definition of rate, such a converse does not exist even in the well-behaved case of finite $r$:

\begin{ex}
\label{funnyrate}
Let $W=\Q^3$ be the Archimedean ordered $\Q$-vector space in which $(\alpha,\beta,\gamma)\geq 0$ if and only if
\beq
|\sqrt{2} \alpha + \beta| \leq \gamma.
\eeq
Since this condition can be written as two separate linear inequalities, it defines a polyhedral cone, which is automatically topologically closed in $\Q^3$ and therefore Archimedean (Example~\ref{fdarch}).

On the $(\alpha,\beta,0)$-plane, the ordering is trivial: $(\alpha,\beta,0)\geq 0$ would require $\sqrt{2}\alpha+\beta=0$, which is impossible with $\alpha,\beta\in\Q$. In particular, there is no rate from $e_1=(1,0,0)$ to $e_2=(0,1,0)$. On the other hand, we claim that
\beqn
\label{fsqrt2}
f(e_1) = \sqrt{2} f(e_2)
\eeqn
holds for all functionals $f$, thereby showing that the converse of Lemma~\ref{rbound} is not true even for Archimedean ordered $\Q$-vector spaces. To see this, we choose an $\eps>0$ and an $\eps$-approximation $\lambda\in\Q$ to $\sqrt{2}$,
\beq
\sqrt{2} - \eps \leq \lambda \leq \sqrt{2} + \eps .
\eeq
By definition of the positive cone, we have
\beq
e_1 - \lambda e_2 + \eps e_3 = (1,-\lambda,\eps) \geq 0, \qquad -e_1 + \lambda e_2 + \eps e_3 = (-1,\lambda,\eps) \geq 0.
\eeq
Hence for any functional $f$,
\beq
f(e_1) - \lambda f(e_2) \geq -\eps f(e_3),\qquad f(e_1) - \lambda f(e_2) \leq \eps f(e_3).
\eeq
The claim~\eqref{fsqrt2} follows in the limit $\eps\to 0$.
\end{ex}

However, the converse to Lemma~\ref{rbound} \emphalt{does} hold for Archimedean ordered $\Q$-vector spaces under the additional assumption $y\geq 0$:

\begin{prop}
\label{ratecrit}
If $W$ is an Archimedean ordered $\Q$-vector space with $x,y\in W$ and $y\geq 0$, then $r\in\R_{\geq 0}\cup\{\infty\}$ is a rate from $x$ to $y$ if and only if 
\beqn
\label{ineqrf}
f(x)\geq r f(y)
\eeqn
for all functionals $f$.
\end{prop}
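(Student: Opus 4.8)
The implication ``$r$ is a rate $\Rightarrow$ \eqref{ineqrf}'' is precisely Lemma~\ref{rbound}, which uses nothing about the sign of $y$, so only the converse requires work. The plan is to exploit that, since $y\geq 0$, every functional satisfies $f(y)\geq 0$; this is exactly what lets one weaken the hypothesis $f(x)\geq rf(y)$ to $f(x)\geq\beta f(y)$ for every rational $\beta\leq r$ without spoiling an inequality, and it is also exactly the feature missing in Example~\ref{funnyrate}, where $y$ is not positive and the converse fails.

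So, assume $f(x)\geq rf(y)$ for all functionals $f$. I would first handle $r=\infty$: the hypothesis then reads $0\geq f(y)$ for all $f$ (as stipulated in the convention following Lemma~\ref{rbound}), and combined with $f(y)\geq 0$ this forces $f(y)=0$ for every $f$; applying Theorem~\ref{aovshb} to $y$ and to $-y$ gives $y=0$. Then the pair $(n,m)=(0,1)$ satisfies $0\cdot x\geq 1\cdot y$, so the corresponding fraction $\tfrac{1}{0}=\infty$ lies in every neighbourhood of $r=\infty$, and $r$ is a rate by Definition~\ref{defrate}.

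For $r<\infty$ the core step is the following. Fix any rational $\beta$ with $0\leq\beta\leq r$. For every functional $f$, using $f(y)\geq 0$ we get $f(x-\beta y)=f(x)-\beta f(y)\geq rf(y)-\beta f(y)=(r-\beta)f(y)\geq 0$, and hence $x-\beta y\geq 0$ by Theorem~\ref{aovshb}. Writing $\beta=\tfrac{m}{n}$ with $n\geq 1$ this says $nx\geq my$, so $\beta$ is a rate from $x$ to $y$. Consequently $\beta=0$ is a rate (in particular $x\geq 0$, so some fraction does satisfy $nx\geq my$), giving $R_{\min}=0$, while $R_{\max}\geq\beta$ for all rational $\beta\in[0,r]$ gives $R_{\max}\geq r$. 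Therefore $r\in[R_{\min},R_{\max}]$, which is the rate region by Proposition~\ref{rateregion}, and $r$ is a rate. (One can also skip Proposition~\ref{rateregion} and argue directly: any neighbourhood of $r$ contains an interval $(r-\eps,r+\eps)$, which contains a rational $\beta$ with $\max(0,r-\eps)<\beta\leq r$, and that $\beta$ is a rate by the displayed computation.)

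The two computations with $f$ are routine, as is the bookkeeping imposed by the conventions of Definition~\ref{defrate} concerning fractions in which $m$ or $n$ vanishes; the only points demanding care are the degenerate cases $r=\infty$ and $y=0$, which is why I would dispatch them up front. There is no real hard part: the crux is the observation that positivity of $y$ is exactly the hypothesis under which Theorem~\ref{aovshb} transports the functional inequality back into an order relation, after which one only has to check that the approximating rates produced, though they all lie on the lower side of $r$, still meet every neighbourhood of $r$.
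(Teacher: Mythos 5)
Your proof is correct and relies on the same key tools as the paper's: Theorem~\ref{aovshb} (Hahn--Banach separation) to move between ``$f(z)\geq 0$ for all $f$'' and ``$z\geq 0$'', and the hypothesis $y\geq 0$ precisely to control the sign of $f(y)$. The only real difference is presentational: you argue directly by showing every rational $\beta\in[0,r]$ is a rate and then pass to the limit, whereas the paper proves the contrapositive by noting $R_{\min}=0$, picking a single rational $\lambda$ with $R_{\max}<\lambda<r$, using Theorem~\ref{aovshb} to produce $f$ with $f(x-\lambda y)<0$, and concluding $f(x)<\lambda f(y)\leq rf(y)$. Both routes are sound; the paper applies Hahn--Banach once in the ``hard'' (existence) direction, yours applies its easy consequence at each rational $\beta\leq r$, but the mathematical substance is the same.
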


In the $r=\infty$ case, we again interpret the inequality as stating that $0\geq f(y)$ for all $f$. However, Theorem~\ref{aovshb} tells us that this happens if and only if $0\geq y$, which is an uninteresting case due to the assumption $y\geq 0$.

\begin{proof}
The ``only if'' part is covered by Lemma~\ref{rbound}. For the ``if'' part, we consider a given $r$ which is not a rate and find a functional $f$ which violates~\eqref{ineqrf}. Since $y\geq 0$, we have $R_{\min}=0$, and so the only way for $r$ not to be a rate is if $r>R_{\max}$. So we can choose $\lambda\in\Q$ with $R_{\max}<\lambda<r$ and consider the point $x - \lambda y\in W$. Since $\lambda$ is not a rate either, we know that $x - \lambda y\not\geq 0$. Hence Theorem~\ref{aovshb} gives us a functional $f$ with $f(x - \lambda y) < 0$, and therefore
\beq
f(x) < \lambda f(y) \leq r f(y),
\eeq
where the second inequality uses $\lambda<r$ and $f(y)\geq 0$.
\end{proof}

This results in a formula for rates on Archimedean ordered $\Q$-vector spaces:

\begin{thm}
\label{aovsrate}
If $W$ is an Archimedean $\Q$-vector space and $x,y\geq 0$ in $W$, then
\beq
 R_{\min}(x\to y) = 0, \qquad R_{\max}(x\to y) = \inf_f \frac{f(x)}{f(y)}, 
\eeq
where the infimum ranges over all functionals $f$ with $f(y)\neq 0$.
\end{thm}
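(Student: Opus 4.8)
The plan is to read off both assertions from two results already in hand: Proposition~\ref{rateregion}, which says the rate region is the closed interval $[R_{\min}(x\to y),R_{\max}(x\to y)]$, and Proposition~\ref{ratecrit}, which—precisely under the hypothesis $y\geq 0$—characterizes the rates from $x$ to $y$ as exactly those $r\in\Rplus\cup\{\infty\}$ with $f(x)\geq r f(y)$ for all functionals $f$. So the work is just to translate that characterization into the stated formula.

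First I would dispose of the minimal rate. Since $x\geq 0$, the fraction $\tfrac{0}{1}$ satisfies $1\cdot x\geq 0\cdot y$, so $0$ belongs to the set whose infimum defines $R_{\min}(x\to y)$; as every fraction is nonnegative, this forces $R_{\min}(x\to y)=0$. In particular $R_{\min}(x\to y)\leq R_{\max}(x\to y)$, so the rate region $[0,R_{\max}(x\to y)]$ is nonempty and Proposition~\ref{rateregion} applies without caveat. For the maximal rate, write $c\defin\inf_f \tfrac{f(x)}{f(y)}$, the infimum taken over functionals $f$ with $f(y)\neq 0$ (and $c\defin\infty$ if there are none). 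The key elementary point is that, because $y\geq 0$ forces $f(y)\geq 0$ for every functional $f$, the condition ``$f(x)\geq r f(y)$ for all functionals $f$'' is equivalent to ``$r\leq c$'': if it holds, then for each $f$ with $f(y)\neq 0$ we have $f(y)>0$ and hence $r\leq \tfrac{f(x)}{f(y)}$, so $r\leq c$; conversely, if $r\leq c$, then for $f$ with $f(y)\neq 0$ we get $f(x)\geq r f(y)$ from $\tfrac{f(x)}{f(y)}\geq c\geq r$, while for $f$ with $f(y)=0$ only $f(x)\geq 0$ is needed, which follows from $x\geq 0$. (In the boundary case $r=\infty$, interpreted as in Proposition~\ref{ratecrit} as ``$0\geq f(y)$ for all $f$'', one checks using Theorem~\ref{aovshb} that both sides are equivalent to $y=0$.) Combining this equivalence with Proposition~\ref{ratecrit}, the set of rates from $x$ to $y$ is exactly $\{r\in\Rplus\cup\{\infty\}: r\leq c\}$; comparing with Proposition~\ref{rateregion}, which exhibits this set as $[0,R_{\max}(x\to y)]$, and matching upper endpoints yields $R_{\max}(x\to y)=c$.

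I do not expect a genuine obstacle here, since the substantive content is already packaged into Propositions~\ref{rateregion} and~\ref{ratecrit}; what remains is bookkeeping around the sign of $f(y)$ and the conventions for $\infty$ and for $\inf\emptyset$. The one place deserving a moment's care is the degenerate case $y=0$—which is forced exactly when no functional distinguishes $y$ from $0$—where one must confirm that the conventions $R_{\max}(x\to 0)=\infty$ and $\inf\emptyset=\infty$ line up; they do, and Theorem~\ref{aovshb} is what guarantees that ``$f(y)=0$ for all $f$'' really does imply $y=0$ given $y\geq 0$. An alternative route, bypassing Proposition~\ref{rateregion}, would derive $R_{\max}(x\to y)\leq c$ directly from $nx\geq my\Rightarrow nf(x)\geq mf(y)$ and the reverse inequality from applying Theorem~\ref{aovshb} to $x-\lambda y$ for each rational $\lambda<c$; but the route above is cleaner.
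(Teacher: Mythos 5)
Your proof is correct and takes essentially the same route as the paper's: both rest on Proposition~\ref{ratecrit} to characterize rates via the inequalities $f(x)\geq rf(y)$, and both then translate that condition into $r\leq\inf_f f(x)/f(y)$ using the sign information $f(x),f(y)\geq 0$. The only cosmetic difference is that you derive $R_{\min}=0$ directly from $x\geq 0$ (via the fraction $\tfrac{0}{1}$), which is actually the cleaner argument; the paper attributes it to $y\geq 0$, which requires an extra step. Your careful handling of the $r=\infty$, $y=0$, and $\inf\emptyset$ conventions is sound and slightly more explicit than the paper's terse treatment.
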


\begin{proof}
Since $y\geq 0$, it is clear that $R_{\min} = 0$. Concerning $R_{\max}$, we know from Proposition~\ref{ratecrit} that $R_{\max}$ is the largest number for which $f(x)\geq r f(y)$ holds for all functionals $f$. Since $f(x)\geq 0$ and $f(y)\geq 0$, this inequality is equivalent to $r \leq \tfrac{f(x)}{f(y)}$ for all functionals $f$ with $f(y)\neq 0$.
\end{proof}

In the case that a generating pair exists, we do have an improved definition of rate:

\begin{defn}
\label{defrrate}
\begin{enumerate}
\item Let $A$ be an ordered commutative monoid with generating pair $(g_+,g_-)$. A nonnegative extended real number $r\in\Rplus\cup\{\infty\}$ is a \emph{regularized rate} from $x$ to $y$ if for every $\eps>0$ and for every neighbourhood of $r$ there exist a fraction $\tfrac{m}{n}$ in the neighbourhood and $k\in\N$ such that $k\leq\eps \max(m,n)$ and
\beqn
\label{rregineq}
nx + kg_+ \geq my + kg_-.
\eeqn
\item The set of all regularized rates is the \emph{regularized rate region}.
\end{enumerate}
\end{defn}

Every rate is also a regularized rate, since then one can simply take $k=0$. However, not every regularized rate is a rate: in Example~\ref{funnyrate}, we have a generating pair given by $g_+=e_3$ and $g_-=0$, and we have $\sqrt{2}$ as a regularized rate from $e_1$ to $e_2$ which is not a rate. As we will see below, whether a number $r$ is a regularized rate or not does not depend on the particular choice of generating pair.

Fortunately, in many cases there is no difference between rates and regularized rates:

\begin{prop}
\label{ratevsregrate}
Let $x$ and $y$ be elements of an ordered commutative monoid with generating pair $(g_+,g_-)$. If
\beqn
\label{maxgmin}
R_{\max}(x\to g_+) > 0,\qquad R_{\min}(g_-\to y) < \infty,
\eeqn
then $r$ is a regularized rate from $x$ to $y$ if and only if it is a rate from $x$ to $y$.
\end{prop}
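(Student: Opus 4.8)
The plan is to prove only the nontrivial direction — that every regularized rate is a rate — by ``internalizing'' the seed $(g_+,g_-)$. The hypothesis $R_{\max}(x\to g_+)>0$ provides (in the non-degenerate case) $a,b\in\Npos$ with $ax\geq bg_+$, so copies of $g_+$ can be manufactured out of copies of $x$ at a positive rate; the degenerate possibility, where the supremum is witnessed only by a fraction with zero denominator, means $0\geq mg_+$ for some $m\geq 1$, and then step three below becomes vacuous. The hypothesis $R_{\min}(g_-\to y)<\infty$ forces the defining set of $R_{\min}$ to contain a fraction with nonzero denominator, hence supplies $n_0\in\Npos$ and $m_0\in\N$ with $n_0g_-\geq m_0y$, so a bounded number of $g_-$'s can be absorbed into $y$'s. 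The idea: given an inequality $nx+kg_+\geq my+kg_-$ handed to us by Definition~\ref{defrrate}, replace $kg_+$ on the left by roughly $\tfrac ab k$ extra copies of $x$ and convert $kg_-$ on the right into roughly $\tfrac{m_0}{n_0}k$ extra copies of $y$; since $k$ may be taken to be an arbitrarily small fraction of $\max(m,n)$, these changes perturb the fraction $\tfrac mn$ by an arbitrarily small amount, and the resulting clean inequality $n'x\geq m'y$ certifies that $r$ is a rate.

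In detail: every rate is trivially a regularized rate (take $k=0$), so fix a regularized rate $r$ and a neighbourhood $V$ of $r$; choose $\eps>0$ and, if $r$ is finite and positive, a smaller neighbourhood $V'\subseteq V$ (both to be shrunk later), and let Definition~\ref{defrrate} supply a fraction $\tfrac mn\in V'$ and $k\leq\eps\max(m,n)$ with $nx+kg_+\geq my+kg_-$. Normalise: scaling $(n,m,k)$ by a large common factor $t$ preserves both $\tfrac mn$ and the bound on $k$, and replacing $k$ by any $k'\geq k$ preserves the inequality (from $g_+\geq g_-$ one gets $(k'-k)g_+\geq(k'-k)g_-$, and one adds this and the original inequality appropriately); so after scaling and then bumping $k$ up to the next multiple of $n_0$ we may assume $n_0\mid k$, that $k\leq 2\eps\max(m,n)$, and that $n$ is as large as desired. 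Step three: from $ax\geq bg_+$ we get $(\lceil k/b\rceil a)\,x\geq kg_+$, and writing $p\defin\lceil k/b\rceil a\leq\tfrac ab k+a$ this gives $(n+p)x\geq nx+kg_+\geq my+kg_-$. Step four: since $n_0\mid k$, $kg_-=\tfrac{k}{n_0}(n_0g_-)\geq\tfrac{k}{n_0}m_0y$, whence $(n+p)x\geq\bigl(m+\tfrac{km_0}{n_0}\bigr)y$.

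The remaining work is the closeness estimate. For $\tfrac mn$ in a fixed neighbourhood of a finite positive $r$ one has $\max(m,n)\leq C_r\min(m,n)$ for a constant $C_r$, so $p\leq\tfrac ab\cdot C_r\eps\cdot n+a$ and $\tfrac{km_0}{n_0}\leq\tfrac{m_0}{n_0}C_r\eps\cdot m$; thus by choosing $\eps$ small, $n$ large (so that the additive $a$ is negligible), and $V'$ small, the fraction $\tfrac{m+km_0/n_0}{n+p}$ can be forced to lie in $V$. Together with the inequality from step four this produces a fraction in $V$ realising a conversion $n'x\geq m'y$, so $r$ is a rate. The extreme cases $r=0$ and $r=\infty$ run through the same computation with $\max(m,n)$ identically equal to $n$, respectively to $m$ (and are if anything easier), and the degenerate $R_{\max}(x\to g_+)$-witness is absorbed by noting that $0\geq mg_+$ makes $kg_+\leq 0$ for suitable $k$, so that $nx\geq nx+kg_+\geq my+kg_-$ already holds with $p=0$.

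I expect the only real friction to be the bookkeeping in this last paragraph — keeping track of how small $\eps$, how small $V'$, and how large $n$ must be, uniformly over the cases $r\in(0,\infty)$, $r=0$, $r=\infty$ — together with the slightly annoying treatment of the zero-denominator witness for $R_{\max}(x\to g_+)$. The conceptual content is entirely in steps three and four, and in the observation that the inequality is monotone in $k$, which is what lets us arrange $n_0\mid k$ in the first place.
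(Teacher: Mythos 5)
Your proposal takes essentially the same approach as the paper --- trade the $g_+$ part of the seed for copies of $x$ via the first rate hypothesis, convert the $g_-$ part into $y$'s via the second, and check that the resulting clean fraction still approximates $r$. But there is a genuine gap in step three.

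Step three multiplies $ax\geq bg_+$ by $\lceil k/b\rceil$ to obtain $\lceil k/b\rceil a\cdot x\geq\lceil k/b\rceil b\cdot g_+$ and then deduces $\lceil k/b\rceil a\cdot x\geq kg_+$. That last inference requires $\left(\lceil k/b\rceil b - k\right)g_+\geq 0$, i.e.\ a nonnegative multiple of $g_+$ to be $\geq 0$; but the proposition does not assume $g_+\geq 0$ (the generating-pair hypothesis only gives $g_+\geq g_-$, and $A$ is not assumed positive). So as written the inequality $(n+p)x\geq nx+kg_+$ is not established unless $b\mid k$. The same issue recurs in the degenerate-witness paragraph, where $kg_+\leq 0$ is deduced from $0\geq m'g_+$ ``for suitable $k$'' --- suitable here must mean $m'\mid k$, and that divisibility condition needs to be explicitly arranged, just as you arranged $n_0\mid k$.

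The fix is the same device you already use for $n_0$: in the normalization step, scale by a multiple of $\mathrm{lcm}(b,n_0)$ (and of the degenerate $m'$ if applicable) so that $b\mid k$ as well; then $\lceil k/b\rceil b = k$ and no sign condition on $g_+$ is needed, and the rest of your closeness estimate goes through unchanged. The paper sidesteps the issue entirely by not rounding: it multiplies the inequality $nx+kg_+\geq my+kg_-$ through by the fixed factor $m_+n_-$ and then substitutes $n_+n_-k\,x\geq m_+n_-k\,g_+$ (i.e.\ $n_+x\geq m_+g_+$ scaled by $n_-k$, exactly), and similarly on the $g_-$ side, yielding $(m_+n_-n+n_+n_-k)\,x\geq(m_+n_-m+m_+m_-k)\,y$ and hence the fraction $\tfrac{m+(m_-/n_-)k}{n+(n_+/m_+)k}$. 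This keeps everything exact at the cost of larger intermediate coefficients; your version, once patched, is equivalent, and your convergence bookkeeping (the $\max(m,n)\leq C_r\min(m,n)$ estimate and the separate treatment of $r=0,\infty$) is spelled out more carefully than the paper's brief ``this rate also tends to $r$''.
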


\begin{proof}
Since any rate is trivially also a regularized rate, we only need to prove that if $r$ is a regularized rate, then it is also a rate. If $r$ is a regularized rate, then for every neighbourhood of $r$ and every $\eps>0$ we have a fraction $\tfrac{m}{n}$ in that neighbourhood and $k\leq \eps \max(m,n)$ with
\beq
nx + kg_+ \geq my + kg_-.
\eeq
By the assumption~\eqref{maxgmin}, we also have fractions $\tfrac{m_+}{n_+}$ with $m_+>0$ and $n_+ x\geq m_+ g_+$, and $\tfrac{m_-}{n_-}$ with $n_->0$ and $n_- g_- \geq m_- y$. This implies
\beq
(m_+ n_- n + n_+ n_- k ) x \geq m_+ n_- n x + m_+ n_- k g_+ \geq m_+ n_- m y + m_+ n_- k g_- \geq ( m_+ n_- m + m_+ m_- k)  y 
\eeq
Hence $x$ can be converted into $y$ at a rate of
\beq
\frac{m_+ n_- m + m_+ m_- k}{m_+ n_- n + n_+ n_- k} = \frac{m + \frac{m_-}{n_-}k}{n + \frac{n_+}{m_+} k}.
\eeq
Since the $m_\pm$ and $n_\pm$ all remain fixed as one takes the limit $n\to\infty$ or $m\to\infty$, this rate also tends to $r$ as both the original neighbourhood around $r$ and $\eps$ get smaller and smaller.
\end{proof}

\begin{ex}
\label{rreggrph}
Continuing Example~\ref{moregraphinvariants}, we can also express some regularized rates in $\Grph$ in terms of graph invariants. With $g_+=\mathcal{K}_2$ and $g_-=0$ as before, we indeed have a positive rate from any graph $\mathcal{G}$ with at least one edge to $\mathcal{K}_2$, since having an edge guarantees the existence of a graph map $\mathcal{K}_2\to\mathcal{G}$ and therefore $\mathcal{G}\geq \mathcal{K}_2$. Moreover, for any graph $\mathcal{H}$ we trivially have a rate $<\infty$ for going from $\mathcal{H}$ to $0$ since $\mathcal{H}\geq 0$. Hence Proposition~\ref{ratevsregrate} tells us that if $\mathcal{G}$ is a graph with at least one edge, then $\Rreg_{\max}(\mathcal{G}\to\mathcal{H})=R_{\max}(\mathcal{G}\to\mathcal{H})$. So up to taking the logarithm, complement and the reciprocal, $\Rreg_{\max}(\mathcal{G}\to \mathcal{K}_2)$ is the Shannon capacity of $\mathcal{G}$ as in Example~\ref{moregraphinvariants}, and $\Rreg_{\max}(\mathcal{K}_2\to\mathcal{G})$ is the fractional chromatic number.
\end{ex}

Many of the properties of rates also apply to regularized rates, and it is straightforward to prove the analogues of Lemma~\ref{carnotlem} to Lemma~\ref{ratenhom} for regularized rates. But regularized rates are better behaved in general, since the analogue of Proposition~\ref{ratecrit} holds even without the additional assumption $y\geq 0$:

\begin{prop}
\label{rregprop}
In an Archimedean ordered $\Q$-vector space $W$ with a generator $g$, a number $r\in\Rplus\cup\{\infty\}$ is a regularized rate from $x$ to $y$ if and only if
\beqn
\label{rregbound}
f(x)\geq r f(y)
\eeqn
for every (extremal) functional $f$.
\end{prop}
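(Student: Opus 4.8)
The plan is to prove the two implications separately, using the generator $g$ for two distinct purposes: to turn the ``additive'' condition $f(x)\ge rf(y)$ into a genuine positivity statement about an element of $W$, to which Theorem~\ref{aovshb} applies, and to control the sign of $f(y)$ via the order-unit property. First I would fix normalizations. By Lemma~\ref{unitinterior} the pair $(g_+,g_-)=(g,0)$ is a generating pair, so Definition~\ref{defrrate} says that $r$ is a regularized rate from $x$ to $y$ precisely when, for every $\eps>0$ and every neighbourhood of $r$, there are $m,n,k\in\N$ with $(m,n)\ne(0,0)$, with $\tfrac{m}{n}$ in that neighbourhood, with $k\le\eps\max(m,n)$, and with $nx+kg\ge my$; dividing by $n$ when $n>0$ (valid since $W$ is divisible) this is the same as $x+\tfrac{k}{n}g\ge\tfrac{m}{n}y$ with $\tfrac{k}{n}\le\eps\max(\tfrac{m}{n},1)$, and any inequality $x+\mu'g\ge\mu y$ with $\mu,\mu'\in\Qplus$ clears denominators to the original form. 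I would also fix once and for all an $N\in\Npos$ with $-Ng\le y\le Ng$ (it exists because $g$ is an order unit, Lemma~\ref{unitinterior}); then every functional $f$ satisfies $|f(y)|\le Nf(g)$, and for $f\ne0$ one has $f(g)>0$ (since $f(g)=0$ forces $f=0$), hence $|f(y)/f(g)|\le N$. Finally, since by Theorem~\ref{rieszrepthm} every functional is an integral of normalized extremal functionals over $\ext(W,g)$, and since both $\hat f\mapsto\hat f(x)-r\hat f(y)$ (for $r<\infty$) and $\hat f\mapsto\hat f(y)$ are weak-$*$ continuous, the condition ``$f(x)\ge rf(y)$ for all functionals $f$'' is equivalent to the same condition for all extremal functionals, so it suffices to argue with all functionals.

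For the ``only if'' direction --- the regularized analogue of Lemma~\ref{rbound} --- let $f$ be a functional and let $r$ be a regularized rate. Choosing a sequence $\eps_j\downarrow0$ and neighbourhoods of $r$ shrinking to $\{r\}$, Definition~\ref{defrrate} produces $m_j,n_j,k_j$ as above. If $r<\infty$, necessarily $n_j>0$ since $\tfrac{m_j}{n_j}$ is near the finite $r$, and putting $\mu_j:=\tfrac{m_j}{n_j}\to r$ and $\kappa_j:=\tfrac{k_j}{n_j}\le\eps_j\max(\mu_j,1)\to0$ we get $f(x)+\kappa_j f(g)\ge\mu_j f(y)$; letting $j\to\infty$ yields $f(x)\ge rf(y)$. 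If $r=\infty$, then either some $n_j=0$, in which case $k_j f(g)\ge m_j f(y)$ with $k_j\le\eps_j m_j$ and $m_j>0$ gives $\eps_j f(g)\ge f(y)$, or $n_j>0$, in which case $f(x)\ge\tfrac{m_j}{n_j}\bigl(f(y)-\eps_j f(g)\bigr)$ with $\tfrac{m_j}{n_j}\to\infty$, which forces $f(y)\le0$ since otherwise the right-hand side would tend to $+\infty$; either way $f(y)\le0$, which is the agreed interpretation of $f(x)\ge rf(y)$ at $r=\infty$.

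For the ``if'' direction, assume $f(x)\ge rf(y)$ for every functional $f$. Take first $r<\infty$. Given $\eps>0$ and a neighbourhood $(r-\delta,r+\delta)$ of $r$, pick a rational $\eps'$ with $0<\eps'\le\eps$, $\eps'/N<\delta$, and (if $r>0$) also $\eps'/N<r$; then pick a rational $\mu\ge0$ with $|\mu-r|\le\eps'/N$ (take $\mu=0$ if $r=0$). For every functional $f$, using $f(x)\ge rf(y)$ and $|f(y)|\le Nf(g)$,
\[
f\bigl(x-\mu y+\eps' g\bigr)=f(x)-\mu f(y)+\eps' f(g)\ \ge\ (r-\mu)f(y)+\eps' f(g)\ \ge\ \bigl(\eps'-|r-\mu|\,N\bigr)f(g)\ \ge\ 0
\]
(trivially when $f=0$), so $x-\mu y+\eps' g\ge0$ by Theorem~\ref{aovshb}. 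Writing $\mu=\tfrac{m}{n}$ and $\eps'=\tfrac{k}{n}$ over a common denominator $n>0$ and multiplying by $n$ gives $nx+kg\ge my$ with $\tfrac{m}{n}=\mu\in(r-\delta,r+\delta)$ and $k=\eps'n\le\eps n\le\eps\max(m,n)$; hence $r$ is a regularized rate. For $r=\infty$, the hypothesis reads $f(y)\le0$ for all $f$, so $-y\ge0$ by Theorem~\ref{aovshb}, i.e.\ $y\le0$; then for any $\eps>0$ and any neighbourhood $(M,\infty]$ of $\infty$ the data $n=0$, $m=1$, $k=0$ works, since $0\ge y$, $0\le\eps$, and $\tfrac{1}{0}=\infty$, so $\infty$ is a regularized rate.

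I expect the only real subtlety to be the choice of the auxiliary parameter in the ``if'' direction: one must control $|\mu-r|$ by $\eps'/N$ rather than by $\eps/N$, so that the factor $\eps'-|r-\mu|N$ multiplying $f(g)$ stays nonnegative while $\eps'$ remains small enough (and $\mu$ still lands in the prescribed neighbourhood and stays $\ge0$). Everything else is routine once Theorem~\ref{aovshb} and the order-unit bound $|f(y)/f(g)|\le N$ are in hand, and the boundary cases $r=0$ and $r=\infty$ are dealt with directly as above.
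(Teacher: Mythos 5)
Your proof is correct, and the ``if'' direction takes a genuinely different and considerably simpler route than the paper's.

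For the ``only if'' direction, your argument (split into cases according to whether the approximating fractions are eventually finite or of the form $\tfrac{m}{0}$) is in substance the same as the paper's, and your reduction to extremal functionals via Theorem~\ref{rieszrepthm} and density in $\ext(W,g)$ parallels the paper's remark that one can restrict to normalized functionals and invoke Krein--Milman.

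The interesting divergence is the ``if'' direction. The paper proceeds by contrapositive, assumes $r$ is not a regularized rate, invokes the Hahn--Banach extension theorem (Theorem~\ref{hbext}) directly, and works through six cases depending on how the affine line $\{x-\lambda y:\lambda\in\Q\}$ meets the positive cone, constructing a separating functional by hand in each case; the paper even flags that ``Our earlier results like Theorem~\ref{aovshb} do not seem to cover all the cases that come up.'' Your argument is direct and shows that Theorem~\ref{aovshb} in fact does suffice, once it is combined with the order-unit estimate $|f(y)|\le Nf(g)$. The key observation is that choosing a rational $\mu$ within $\eps'/N$ of $r$ makes the perturbation $\eps' g$ absorb the error $(r-\mu)f(y)$ uniformly in $f$, so that $f(x-\mu y+\eps' g)\ge0$ for all $f$, and $x-\mu y+\eps' g\ge0$ then follows at once from Theorem~\ref{aovshb}; clearing denominators produces the required triple $(m,n,k)$. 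This sidesteps the entire case analysis, avoids a second explicit appeal to Hahn--Banach, and also has the advantage of constructing the witnessing fraction explicitly rather than only ruling out the existence of a separating functional. Both proofs buy the same theorem, but yours exposes more clearly why the generator hypothesis is needed (to get the uniform bound $|f(y)/f(g)|\le N$), whereas in the paper's proof the generator enters more diffusely through the finiteness of the sublinear gauge $p$.
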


Putting ``extremal'' in brackets means that one can either consider all functionals or the extremal functionals only; the statement holds in either case. 

\newenvironment{heartproof}[1]{\begin{proof}[#1]\renewcommand*{\qedsymbol}{\(\heartsuit\)}}{\end{proof}}

\begin{heartproof}{\textsc{Proof.}}
For the ``only if'' part, we need to derive~\eqref{rregbound} from the assumption that $r$ is a regularized rate. We first consider the case that $r$ can be approximated by fractions as in Definition~\ref{defrrate} which are all finite. Then for given $\eps>0$, we have a finite fraction $\tfrac{m}{n}$ and $k\in\N$ as in the definition, where we can take $g_+=g$ and $g_-=0$. Applying $f$ to~\eqref{rregineq} results in
\beq
n f(x) + k f(g) \geq mf(y) ,
\eeq
which, due to $n>0$, can be rearranged to
\beq
f(x) + \frac{k}{n} f(g) \geq \frac{m}{n}\, f(y).
\eeq
In the limit $\tfrac{m}{n}\to r$ and $\eps\to 0$, this yields the claim~\eqref{rregbound}.

The only case in which this argument does not apply is if $r=\infty$ and the only fractions which witness this are of the form $\tfrac{m}{0}$. In this case, we start with $k\leq \eps m$ and $kg\geq my$, and therefore $\eps g\geq\tfrac{k}{m}g\geq y$, from which $0\geq y$ follows in the limit $\eps\to 0$ thanks to Lemma~\ref{approach}.

The ``if'' part is more complicated. Our earlier results like Theorem~\ref{aovshb} do not seem to cover all the cases that come up, and we need to dig a little bit deeper by resorting to Theorem~\ref{hbext} directly. We prove that if $r$ is not a regularized rate, then there is a functional $f$ which violates~\eqref{rregbound}. If $r=\infty$, then the assumption that $r$ is not a regularized rate is equivalent to $0\not\geq y$ again by Lemma~\ref{approach}, and we conclude by Theorem~\ref{aovshb}. Hence we can focus on the $r<\infty$ case from now on.

Consider the subspace $W_0 \defin \lin_{\Q}\{x,y\}$.
We will apply Theorem~\ref{hbext} to $W_0$ with respect to the sublinear map
\beqn
\label{rregp}
p : W \lra \R,\qquad z\longmapsto \inf\{\: \mu\in\Q \:|\: \mu g \geq z \:\}. 
\eeqn
Here, the infimum is guaranteed to be finite for two reasons: first, we know that there exists a $\mu\in\Q$ with $\mu g\geq z$. Second, the set of all such $\mu$ is bounded below: choosing $\kappa\in\Q$ with $\kappa g\geq -z$ yields, for $\mu g \geq z$,
\beq
(\mu + \kappa) g\geq z - z = 0.
\eeq
So if there is a $\mu$ which is less than $-\kappa$, then we conclude $-g\geq 0$. But if this is the case, then we have $W=0$, in which case there is nothing to prove. Hence we can assume $\mu \geq -\kappa$, which is the desired lower bound making $p$ finite. We think of $p$ as analogous to~\eqref{supp}, where $p$ was defined as the maximal value of a continuous function.

The remainder of the proof consists in constructing a linear map $f_0:W_0\to\R$ which violates~\eqref{rregbound} and such that $f_0$ is $p$-dominated. Then it can be extended to a $p$-dominated linear map $f:W\to\R$ by Theorem~\ref{hbext}. The inequality $f(z)\leq p(z)$ for all $z\in W$ also guarantees that $f$ is a functional, i.e.~that $f(W_+)\subseteq \Rplus$, since $z\geq 0$ implies $p(-z)\leq 0$, and hence $-f(z) = f(-z)\leq p(-z) \leq 0$. As an extension of $f_0$, this $f$ also violates~\eqref{rregbound}.

In order to construct $f_0$, we consider the affine line
\beq
L \defin \{\: x - \lambda y \:|\: \lambda \in \Q \:\},
\eeq
and distinguish six (not entirely disjoint) cases:
\begin{enumerate}
\item $L$ intersects $W_+$ at $\lambda > r$, i.e.~there exists $\lambda\in\Q$ greater than $r$ such that $x-\lambda y \geq 0$.

In this case, we choose $\mu>r$ with $x-\mu y\not\geq 0$, which exists because $r$ is not a rate. We then obtain an $f$ with $f(x-\mu y) < 0$ from Theorem~\ref{aovshb} directly. Then $f(x) < \mu f(y)$, but $f(x) \geq \lambda f(y)$. Since $f$ is linear along $L$ and $r<\mu<\lambda$, we conclude $f(x) < r f(y)$, as desired.
\item $L$ intersects $W_+$ at $\lambda < r$. This is analogous to the previous case.
\item $L$ intersects $W_+$ exactly at $\lambda = r$, i.e.~$x \geq r y$. This can only happen if $r$ is rational, and is in contradiction with the assumption that $r$ is not a rate.
\item $L$ does not intersect $W_+$, but almost intersects $W_+$ in the $\lambda\to\infty$ direction. By this we mean that for every $\eps\in\Qpos$ there exists $\lambda > r$ such that $x - \lambda y + \eps g \geq 0$. 

In this case, for every $n\in\N$ we choose $\lambda_n\in\Qpos$ with $x - \lambda_n y + \tfrac{1}{n} g \geq 0$ and $\kappa\in\Q$ such that $\kappa g\geq x$. This results in
\beq
-y + \frac{\kappa + \tfrac{1}{n}}{\lambda_n} g \geq 0 .
\eeq
Since $\lambda_n\to\infty$ as $n\to\infty$, we conclude $-y \geq 0$ from Lemma~\ref{approach}, and record this for later use.

Now we choose a rational $\mu>r$ with $x-\mu y\not\geq 0$. In particular, we have $p(\mu y-x) > 0$ because $W$ is Archimedean. On the one-dimensional subspace $\Q\cdot(x-\mu y)$, we can specify $f_0$ by taking $f_0(\mu y - x)\defin p(\mu y-x)$ and extending linearly. Then $f_0$ is $p$-dominated on this one-dimensional space thanks to positive homogeneity and
\beq
f_0(\mu y - x) = p(\mu y - x),\qquad f_0(x - \mu y) = - p(\mu y - x) \leq p(x - \mu y),
\eeq
where the last inequality is due to subadditivity and $p(0)=0$. A first application of Theorem~\ref{hbext} then lets us extend this to the desired $f_0$ on $W_0$, and it remains to show that this extension satisfies $f_0(x) < r f_0(y)$. To this end, we use $-y\geq 0$, which implies $f_0(y)\leq 0$ and the claim follows from $f_0(x) < \mu f_0(y)\leq r f_0(y)$.
\item $L$ does not intersect $W_+$, but almost intersects $W_+$ in the $\lambda\to -\infty$ direction. This is analogous to the previous case.
\item $L$ does not intersect $W_+$ at all, not even almost. This means that there exists $\eps>0$ such that $x-\lambda y + \eps g \not\geq 0$ for all $\lambda\in\Q$. We can reformulate this as $p(\lambda y - x)\geq \eps$ for all $\lambda$.

We then claim that $p(y)\geq 0$, and also $p(-y)\geq 0$. For if we had $p(y)<0$, then there would be $\kappa>0$ such that $0 \geq y + \kappa g$, and hence also $-y\geq \gamma \cdot (-x)$ for a certain $\gamma>0$. This would result in $x-\gamma^{-1}y\geq 0$, in contradiction to the assumption that $L$ does not intersect $W_+$. The argument for $p(-y)\geq 0$ is analogous.

Furthermore, in the present case we also know that $x$ and $y$ must be linearly independent. Hence we can specify the linear map $f_0 : W_0\to\R$ by specifying its value on $x$ and on $y$, and for these we take
\beq
f_0(x)\defin - \inf_{\lambda\in\Q} p(\lambda y - x),\qquad f_0(y)\defin 0 .
\eeq
Here, our current assumption guarantees that the first value is indeed negative, so that these assignments yield a violation of~\eqref{rregbound}.

We still need to check that $f_0(z) \leq p(z)$ for all $z\in W_0$, i.e.~that $-\alpha\, \inf_\lambda p(\lambda y - x) \leq p(\alpha x + \beta y)$ for all $\alpha,\beta\in\Q$. By positive homogeneity of $p$, it is sufficient to consider the cases $\alpha\in\{-1,0,+1\}$.
\begin{enumerate}
\item If $\alpha=-1$, we need to prove $\inf_\lambda p(\lambda y - x)\leq p(\beta y - x)$, which is trivial.
\item If $\alpha=0$, we need to prove $p(\beta y)\geq 0$ for all $\beta$, which follows from $p(y)\geq 0$ and $p(-y)\geq 0$ by positive homogeneity.
\item If $\alpha=+1$, we need to prove $-\inf_\lambda p(\lambda y - x) \leq p(x+\beta y)$, which is equivalent to $p(\lambda y - x) + p(\beta y + x) \geq 0$ for all $\beta$ and $\lambda$. This is due to subadditivity and $p((\lambda + \beta)y)\geq 0$.
\end{enumerate}
In all cases, the desired bound holds, so that $f_0$ has all the required properties.
\end{enumerate}
Finally, the restriction to extremal functionals is not essential, since we can restrict to normalized functionals without loss of generality and then resort to the Krein-Milman theorem as in the proof of Theorem~\ref{rieszrepthm}.
\end{heartproof}

\begin{rem}
In light of this result, we hope that if one performs the additional step of regularization to Archimedean ordered $\R$-vector spaces (Remark~\ref{QvsR}), then the necessary and sufficient condition for $r<\infty$ to be a regularized rate is that $x\geq ry$. In fact, this could be a potential future \emphalt{definition} of regularized rate, which would differ from the present one in that it would not be allowed to be infinite, but it would be allowed to be negative.
\end{rem}

\begin{rem}
\label{minmaxregrates}
In particular, Proposition~\ref{rregprop} implies that the regularized rate region is a closed interval as well, and we denote its endpoints by $\Rreg_{\min}$ and $\Rreg_{\max}$, respectively. Another consequence is that the regularized rate region does not depend on the choice of generator, although this could have been derived more easily directly from Definition~\ref{defrrate}. We also note that Lemma~\ref{ratehom} applies to regularized rates as well.
\end{rem}

This result generalizes to all ordered commutative monoids $A$ with a generating pair if we can show that the regularized rate region does not change under regularizing $A$. This indeed turns out to be the case:

\begin{prop}[{generalizes~\cite[Corollary~6.8.4]{Ktheoryop}}]
\label{rrind}
A number $r\in\Rplus\cup\{\infty\}$ is a regularized rate for $x$ to $y$ in an ordered commutative monoid $A$ with a generating pair if and only if it is so in $\aovs(A)$.
\end{prop}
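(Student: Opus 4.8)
The plan is to prove the two implications separately. The implication ``regularized rate in $A$ $\Rightarrow$ regularized rate in $\aovs(A)$'' is essentially formal, while the converse is where Theorem~\ref{ocmhb} carries the load.

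First I would record a preliminary point: the images of $g_+$ and $g_-$ under the canonical homomorphism $\eta_A \colon A \to \aovs(A)$ still form a generating pair in $\aovs(A)$, since $g := g_+ - g_-$ satisfies $g \geq 0$ there and is a generator (by Remark~\ref{minmaxregrates} the particular choice does not matter anyway). So Definition~\ref{defrrate} may be applied in $\aovs(A)$ with this same generating pair. For the easy implication I would then note that $\eta_A$ is order-preserving and additive and fixes $g_\pm$, so any witness $nx + kg_+ \geq my + kg_-$ in $A$ for a fraction $\tfrac mn$ near $r$ with $k \leq \eps\max(m,n)$ maps term-by-term to the same inequality in $\aovs(A)$; hence a regularized rate from $x$ to $y$ in $A$ is one in $\aovs(A)$. (Equivalently, this is the regularized-rate version of Lemma~\ref{ratehom} noted in Remark~\ref{minmaxregrates}.)

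For the converse, suppose $r$ is a regularized rate from $x$ to $y$ in $\aovs(A)$ and fix a target $\eps_0 > 0$ together with a neighbourhood $U$ of $r$. Applying Definition~\ref{defrrate} in $\aovs(A)$ with $\eps = \tfrac{\eps_0}{2}$ produces a fraction $\tfrac mn \in U$ and $k \in \N$ with $k \leq \tfrac{\eps_0}{2}\max(m,n)$ and $nx + kg_+ \geq my + kg_-$ in $\aovs(A)$. The key observation is that both sides of this inequality are honest elements of $A$, so Theorem~\ref{ocmhb} applies to the pair $(nx + kg_+,\, my + kg_-)$ and, for the parameter $\tfrac{\eps_0}{2}$, yields $n' \in \Npos$ and $k' \in \N$ with $k' \leq \tfrac{\eps_0}{2}n'$ and $n'(nx + kg_+) + k'g_+ \geq n'(my + kg_-) + k'g_-$ in $A$. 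Rewriting this via the semimodule laws, with $N := n'n$, $M := n'm$ and $K := n'k + k'$, gives $Nx + Kg_+ \geq My + Kg_-$ in $A$; moreover $\tfrac MN = \tfrac mn \in U$, and since any fraction has $\max(m,n) \geq 1$ we get $K \leq \tfrac{\eps_0}{2}n'\max(m,n) + \tfrac{\eps_0}{2}n' = \tfrac{\eps_0}{2}n'(\max(m,n)+1) \leq \eps_0\max(M,N)$. As $\eps_0$ and $U$ were arbitrary, $r$ is a regularized rate from $x$ to $y$ in $A$.

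The main obstacle is nothing deep but purely a bookkeeping matter: passing through Theorem~\ref{ocmhb} multiplies all coefficients by $n'$ and injects a further $k'$ copies of $g_\pm$, so one must verify that the sublinearity budget $K \leq \eps_0\max(M,N)$ survives — which is exactly why I split $\eps_0$ as $\tfrac{\eps_0}{2} + \tfrac{\eps_0}{2}$ and use $\max(m,n) \geq 1$ to combine the two error terms. I would also check the degenerate fractions separately: $n = 0$ forces $r = \infty$ and $m = 0$ forces $r = 0$, and in both cases the estimate above and the membership $\tfrac MN \in U$ go through verbatim (for $r = \infty$ the fraction $\tfrac M0$ lies in every neighbourhood of $\infty$), so no extra argument is needed.
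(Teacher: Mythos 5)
Your proof is correct and follows the same strategy as the paper's: apply Theorem~\ref{ocmhb} to the pair $(nx+kg_+,\,my+kg_-)\in A\times A$, expand via the semimodule laws, and check that the combined seed count stays sublinear. The only difference is bookkeeping --- the paper feeds the parameter $\eps\max(m,n)$ into Theorem~\ref{ocmhb} and accepts a final bound of $2\eps\max(jm,jn)$, whereas you split $\eps_0$ in half and use $\max(m,n)\geq 1$ to absorb the extra $k'g_\pm$; both are fine.
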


The same can clearly not hold for the ``naive'' notion of rate of Definition~\ref{defrate}, since $A$ may not be cancellative, so that already the rate region of $\oag(A)$ may be bigger than that of $A$ itself. But as shown in the proof of Theorem~\ref{ocmhb}, allowing the ``seeds'' $kg_+$ and $kg_-$ in the definition of regularized rate automatically removes that obstruction by taking catalysis into account.

\begin{proof}
If $r$ is a regularized rate in $A$, then it clearly is one in $\aovs(A)$ as well.

The main part of the proof consists in showing the converse. We take the generator of $\aovs(A)$ to be $g_+-g_-$. If $r$ is a regularized rate in $\aovs(A)$, then for every neighbourhood of $r$ and every $\eps>0$ there exist a fraction $\tfrac{m}{n}$ and $k\in\N$ with $k\leq \eps \max(m,n)$ and such that $n x + k g_+ \geq m y + k g_-$. By Theorem~\ref{ocmhb}, in terms of $A$ the latter means in particular that we can find $j,l\in\Npos$ with $l\leq \eps j\max(m,n)$ and such that
\beq
jnx + (jk+l) g_+ \geq jmy + (jk+l)g_-.
\eeq
Then the fraction $\tfrac{jm}{jn}$ still lives in the same neighbourhood of $r$, and moreover we have
\beq
jk+l\leq \eps j\max(m,n) + \eps j\max(m,n) = 2\eps\max(jm,jn).
\eeq
This is all that is required for showing that $r$ is a regularized rate in $A$.
\end{proof}

This result implies immediately:

\begin{cor}
\label{ocmrrate}
Proposition~\ref{rregprop} generalizes to any ordered commutative monoid $A$ with a generating pair: $r$ is a regularized rate from $x$ to $y$ if and only if
\beq
f(x) \geq r f(y)
\eeq
holds for all (extremal) functionals $f$.
\end{cor}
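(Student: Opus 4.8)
The plan is to obtain this as an immediate consequence of Proposition~\ref{rregprop} together with the regularization-invariance of Proposition~\ref{rrind}. First I would recall that, since $A$ carries a generating pair $(g_+,g_-)$, the Archimedean ordered $\Q$-vector space $\aovs(A)$ possesses the generator $g_+-g_-$, as noted right after Definition~\ref{oaggenerator}. Hence Proposition~\ref{rregprop} applies verbatim to $W=\aovs(A)$ with this generator: a number $r\in\Rplus\cup\{\infty\}$ is a regularized rate from $x$ to $y$ \emph{in $\aovs(A)$} if and only if $\hat f(x)\geq r\,\hat f(y)$ for every (extremal) functional $\hat f$ on $\aovs(A)$. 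Next I would invoke Proposition~\ref{rrind}, which asserts that $r$ is a regularized rate from $x$ to $y$ in $A$ precisely when it is one in $\aovs(A)$. Chaining these two equivalences, it only remains to match the two quantifications over functionals.

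For that I would use the universal properties of the three regularizations (Theorems~\ref{ocmtooag}, \ref{oagtoovs}, \ref{ovstoaovs}) applied with codomain $\R$: every functional $f\colon A\to\R$ extends uniquely to a functional $\hat f\colon\aovs(A)\to\R$, and since $x,y\in A$ and the canonical homomorphism $A\to\aovs(A)$ sends each element to "itself", we have $\hat f(x)=f(x)$ and $\hat f(y)=f(y)$. Moreover $f\mapsto\hat f$ is a bijection between the functionals on $A$ and those on $\aovs(A)$, and it is an isomorphism for the pointwise order on functionals, because $f\geq f'$ holds on $A$ if and only if $\hat f\geq\hat f'$ holds on $\aovs(A)$ (both being equivalent to the difference being a functional, using $\hat f-\hat f'=\widehat{f-f'}$). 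In particular it carries extremal functionals to extremal functionals, as extremality is phrased purely in terms of this pointwise order. Therefore the condition "$\hat f(x)\geq r\,\hat f(y)$ for all (extremal) $\hat f$ on $\aovs(A)$" is equivalent to "$f(x)\geq r\,f(y)$ for all (extremal) $f$ on $A$", and combining the equivalences finishes the proof.

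The only point meriting a moment's attention—rather than a genuine obstacle—is the verification that $f\mapsto\hat f$ respects extremality, which reduces to observing that the pointwise orderings on the two cones of functionals correspond under this bijection; everything else is a matter of assembling statements already established earlier in the paper.
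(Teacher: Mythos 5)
Your proof is correct and follows exactly the route the paper intends (the paper supplies no proof at all, simply stating that the corollary ``follows immediately'' from Proposition~\ref{rrind} together with Proposition~\ref{rregprop}). You have merely made explicit the two ingredients of that immediacy---that $g_+-g_-$ is a generator in $\aovs(A)$, and that restriction along $A\to\aovs(A)$ gives an order-preserving bijection of functionals that fixes $f(x)$ and $f(y)$ and hence preserves extremality---which is the right way to flesh out the paper's one-line remark.
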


This yields another one of our main results, which is a formula for regularized rates:

\begin{thm}
\label{rateformula}
In an ordered commutative monoid with a generating pair and $x,y\geq 0$, we have
\beq
\boxed{\Rreg_{\min}(x\to y) = 0, \qquad \Rreg_{\max}(x\to y) = \inf_f \frac{f(x)}{f(y)}}
\eeq
where the infimum ranges over all (extremal) functionals $f$ with $f(y)\neq 0$.
\end{thm}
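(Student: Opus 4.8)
The plan is to read off both equalities directly from Corollary~\ref{ocmrrate} --- which says that $r$ is a regularized rate from $x$ to $y$ if and only if $f(x)\geq r f(y)$ for all (extremal) functionals $f$ --- combined with Remark~\ref{minmaxregrates}, which tells us that the regularized rate region is the closed interval $[\Rreg_{\min},\Rreg_{\max}]$. The whole argument parallels the proof of Theorem~\ref{aovsrate}, with Corollary~\ref{ocmrrate} playing the role that Proposition~\ref{ratecrit} played there.

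First I would dispose of $\Rreg_{\min}$. Since $x\geq 0$ and $y\geq 0$, every functional satisfies $f(x)\geq 0$ and $f(y)\geq 0$, so in particular $f(x)\geq 0=0\cdot f(y)$; by Corollary~\ref{ocmrrate} this means $r=0$ is a regularized rate from $x$ to $y$. As every regularized rate lies in $\Rplus\cup\{\infty\}$ by Definition~\ref{defrrate}, we conclude $\Rreg_{\min}=0$.

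For $\Rreg_{\max}$ the point is again that $y\geq 0$ forces $f(y)\geq 0$ for every functional $f$. Hence, for a fixed $r\in\Rplus\cup\{\infty\}$, the condition of Corollary~\ref{ocmrrate} --- that $f(x)\geq r f(y)$ for all functionals $f$ --- is automatically satisfied on those $f$ with $f(y)=0$ (because $f(x)\geq 0$), and is equivalent to $r\leq f(x)/f(y)$ on those $f$ with $f(y)>0$. Therefore the set of regularized rates is
\beq
[\Rreg_{\min},\Rreg_{\max}] \;=\; \{\, r\in\Rplus\cup\{\infty\} \ \big|\ r\leq f(x)/f(y) \ \text{ for all functionals } f \text{ with } f(y)\neq 0 \,\},
\eeq
with the usual convention that the condition is vacuous, and the infimum below equals $\infty$, when no such $f$ exists --- which by Theorem~\ref{aovshb} applied to $-y$ in $\aovs(A)$ happens precisely when $y=0$ in $\aovs(A)$. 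The largest element of this set is $\inf_f f(x)/f(y)$, the infimum ranging over all functionals $f$ with $f(y)\neq 0$, so $\Rreg_{\max}=\inf_f f(x)/f(y)$ as claimed.

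Finally, rerunning the same computation with the word ``extremal'' inserted throughout --- legitimate since Corollary~\ref{ocmrrate} holds verbatim for extremal functionals --- identifies $\Rreg_{\max}$ with the infimum of $f(x)/f(y)$ over extremal functionals $f$ with $f(y)\neq 0$ as well, so the two infima agree. There is no real obstacle in this proof; the only things that require care are the bookkeeping of the degenerate cases --- $f(y)=0$, the empty infimum, and the value $\infty$ --- all of which are consistent with the stated conventions.
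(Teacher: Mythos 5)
Your argument is correct and takes essentially the same route as the paper: the paper proves Theorem~\ref{rateformula} by saying ``As for Theorem~\ref{aovsrate}'', and what you have done is replay that proof with Corollary~\ref{ocmrrate} standing in for Proposition~\ref{ratecrit}, which is exactly the intended substitution. Your treatment of the degenerate cases (functionals with $f(y)=0$, the empty infimum, the extremal restriction) spells out bookkeeping that the paper leaves implicit, but the underlying idea is identical.
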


\begin{proof}
As for Theorem~\ref{aovsrate}.
\end{proof}

\begin{ex}
In $\Grph$, we can apply the rate formula together with Example~\ref{rreggrph} to yield a formula for the Shannon capacity of a graph:
\[
\log_2 \Theta(\overline{\mathcal{G}}) = \Rreg_{\max}(\mathcal{G}\to \mathcal{K}_2) = \inf_f f(\mathcal{G}),
\]
where the infimum ranges over all (extremal) functionals $f:\Grph\to\R$ which satisfy the normalization $f(\mathcal{K}_2) = 1$. In terms of multiplicative functionals, exponentiating this equation proves that the Shannon capacity of a graph $\mathcal{G}$ is given by
\beq
\label{scapform}
\boxed{ \Theta(\mathcal{G}) = \inf_f f(\overline{\mathcal{G}}) }
\eeq
where the infimum now ranges over all graph invariants that are monotone under graph maps, multiplicative under disjunctive products, and normalized such that $f(\mathcal{K}_2)=2$.
\end{ex}

While the rate formula of Theorem~\ref{rateformula} is rather unwieldy in general, we hope that it will provide a useful method to compute rates in some situations:

\begin{ex}
\label{probmajconj}
If our conjectural characterization of extremal functionals in $\ProbMaj$ (Example~\ref{renyi}) is correct, then the regularized maximal rates can in this example be computed as
\beq
\Rreg_{\max}((\inalph,P)\to(\outalph,Q)) = \inf_{t\in[0,\infty]} \frac{H_t(P)}{H_t(Q)}.
\eeq
For given $P$ and $Q$, this quotient of R\'enyi entropies is straightforward to evaluate and minimize.
\end{ex}

\newpage
\section{Notions of one-dimensionality: numerical ordered commutative monoids}
\label{sectonedim}

It is an interesting question under which conditions the ordering $x\geq y$ can be completely characterized by a single functional in the sense that
\beqn
\label{numeq}
x\geq y \quad\Longleftrightarrow\quad f(x)\geq f(y).
\eeqn
In this section, we study this question abstractly and derive two characterizations. These are similar to results of von Neumann and Morgenstern in decision theory and of Lieb and Yngvason in thermodynamics.

\begin{defn}
\label{numdefn}
An ordered commutative monoid $A$ is \emph{numerical} if there exists $f$ with~\eqref{numeq}.
\end{defn}

We can understand~\eqref{numeq} as saying that $f$ embeds $A$ into the real line $\R$. This makes $A$ \emph{one-dimensional} in a certain sense, and the study of numerical ordered commutative monoids is equivalent to the study of numerical ordered submonoids of $\R$. The term ``numerical'' reflects this idea: the elements of $A$ can be regarded as real numbers. In resource-theoretic terms, there is a measure of value of a resource object which completely reflects the convertibility relation, and we can therefore identify every resource object with its value. This embedding into $\R$ is similar to how the numerical semigroups of Example~\ref{numsg} are submonoids of $\N$; the important difference is that we consider $\N$ and hence also any numerical semigroup to carry the trivial ordering, while here $\R$ is equipped with the standard ordering.

For being numerical, it is obviously necessary for $A$ to be \emph{totally ordered}: for every $x,y\in A$, we need to have $x\geq y$ or $y\geq x$. For an Archimedean ordered $\Q$-vector space, this turns out to be sufficient:

\begin{prop}
\label{numaovs}
Let $W$ be an Archimedean ordered $\Q$-vector space. Then the following conditions are equivalent:
\begin{enumerate}
\item\label{Wembed} $W$ is numerical.
\item\label{Wfun} There is $f:W\to\R$ such that every other functional is a scalar multiple of $f$.
\item\label{Wuniquefun} If $f:W\to\R$ is nonzero, then every other functional is a scalar multiple of $f$.
\item\label{Wcomplconv} The complement $W\setminus W_+$ is convex.
\item\label{Wpartition} $W$ is totally ordered, i.e.~$W_+\cup (-W_+) = W$.
\end{enumerate}
\end{prop}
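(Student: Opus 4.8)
The plan is to establish the five equivalences by the cycle $\ref{Wembed}\Rightarrow\ref{Wpartition}\Rightarrow\ref{Wuniquefun}\Rightarrow\ref{Wfun}\Rightarrow\ref{Wembed}$ together with the side equivalence $\ref{Wpartition}\Leftrightarrow\ref{Wcomplconv}$. Several of these links are purely formal. For $\ref{Wembed}\Rightarrow\ref{Wpartition}$: if $f$ witnesses numericality then, since $\R$ is totally ordered, $f(x)\geq f(0)$ or $f(0)\geq f(x)$, hence $x\in W_+$ or $x\in -W_+$. For $\ref{Wcomplconv}\Rightarrow\ref{Wpartition}$: if some $x$ has neither $x\in W_+$ nor $-x\in W_+$, then both lie in the convex set $W\setminus W_+$, forcing $0=\tfrac12 x+\tfrac12(-x)\in W\setminus W_+$, a contradiction; and $\ref{Wpartition}\Rightarrow\ref{Wcomplconv}$ follows because pointedness gives $W\setminus W_+=(-W_+)\setminus\{0\}$, and a short check shows no nontrivial rational convex combination of nonzero elements of the cone $-W_+$ can vanish (otherwise one would land in $W_+\cap(-W_+)=\{0\}$). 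Finally $\ref{Wuniquefun}\Rightarrow\ref{Wfun}$ is trivial, taking $f$ to be a nonzero functional if one exists and $f=0$ otherwise.

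The two steps with real content are $\ref{Wpartition}\Rightarrow\ref{Wuniquefun}$ and $\ref{Wfun}\Rightarrow\ref{Wembed}$. For the first I would assume $W$ totally ordered, take a nonzero functional $f$ (if none exists, $\ref{Wuniquefun}$ is vacuous), and first produce a ``normalized'' element $e\geq 0$ with $f(e)=1$: any $w$ with $f(w)\neq 0$ satisfies $w\geq 0$ or $-w\geq 0$ by total order, and whichever of $\pm w$ is positive has strictly positive $f$-value, so we rescale. Then for any $x$ and any rationals $\nu<f(x)<\mu$, monotonicity of $f$ rules out $x-\mu e\geq 0$ and rules out $\nu e-x\geq 0$, so total order gives $\nu e\leq x\leq\mu e$; applying an arbitrary functional $g$ and letting $\nu,\mu\to f(x)$ yields $g(x)=g(e)\,f(x)$, i.e.\ $g=g(e)f$ with $g(e)\geq 0$, which is $\ref{Wuniquefun}$. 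For $\ref{Wfun}\Rightarrow\ref{Wembed}$ I would invoke Theorem~\ref{aovshb}: if the distinguished $f$ is zero then all functionals vanish, so $W_+=W$ and pointedness forces $W=\{0\}$, which is numerical; if $f\neq 0$ then $W_+\subseteq\{x:f(x)\geq 0\}$ by monotonicity, while conversely $f(x)\geq 0$ gives $g(x)=\lambda f(x)\geq 0$ for every functional $g=\lambda f$ (with $\lambda$ a nonnegative scalar, which is the reading of ``scalar multiple'' consistent with the definition of extremal functional), hence $x\geq 0$ by Theorem~\ref{aovshb}; so $x\geq y\Leftrightarrow f(x-y)\geq 0\Leftrightarrow f(x)\geq f(y)$.

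I expect the main obstacle to be the implication $\ref{Wpartition}\Rightarrow\ref{Wuniquefun}$ — concretely, setting up the squeeze so that the two-sided rational approximation of $f(x)$ always yields honest inequalities $\nu e\leq x\leq\mu e$ in $W$, together with cleanly disposing of the degenerate case where no nonzero functional exists. It is worth flagging that Archimedeanness is invoked in exactly one place, via Theorem~\ref{aovshb} in $\ref{Wfun}\Rightarrow\ref{Wembed}$; all other implications hold for an arbitrary ordered $\Q$-vector space, and the lexicographically ordered $\Q^2$ (totally ordered, all functionals proportional, but not embeddable into $\R$) shows that the hypothesis cannot be dropped.
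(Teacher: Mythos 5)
Your proof is correct and takes a genuinely different route from the paper's. The paper establishes the cycle \ref{Wembed}$\Rightarrow$\ref{Wfun}$\Rightarrow$\ref{Wuniquefun}$\Rightarrow$\ref{Wcomplconv}$\Rightarrow$\ref{Wpartition}$\Rightarrow$\ref{Wembed}, whereas you prove \ref{Wembed}$\Rightarrow$\ref{Wpartition}$\Rightarrow$\ref{Wuniquefun}$\Rightarrow$\ref{Wfun}$\Rightarrow$\ref{Wembed} together with \ref{Wpartition}$\Leftrightarrow$\ref{Wcomplconv}. The substantive difference lies in how the total-order hypothesis is put to work: the paper's \ref{Wpartition}$\Rightarrow$\ref{Wembed} constructs the embedding explicitly via $f(x) = \inf\{\lambda : \lambda g\geq x\} = \sup\{\mu : x\geq\mu g\}$ and invokes Lemma~\ref{approach} (hence Archimedeanness) to show this is finite-valued and order-reflecting, while your \ref{Wpartition}$\Rightarrow$\ref{Wuniquefun} normalizes a given functional to obtain $e\geq 0$ with $f(e)=1$ and squeezes $\nu e \leq x \leq \mu e$ for rationals $\nu < f(x) < \mu$, using only the total order. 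Both proofs invoke Theorem~\ref{aovshb}, the paper in \ref{Wuniquefun}$\Rightarrow$\ref{Wcomplconv} and you in \ref{Wfun}$\Rightarrow$\ref{Wembed}. Your reorganization has the pleasant feature of isolating Archimedeanness to a single implication, and the lexicographic counterexample you cite confirms this is exactly where it is essential; the paper, by contrast, uses it in two of the five links. The paper's route is a bit more constructive (its $\inf/\sup$ formula yields an explicit embedding into $\R$), while yours is leaner in its use of the hypotheses. Your remark that ``scalar multiple'' in \ref{Wfun} and \ref{Wuniquefun} must be read as a nonnegative scalar multiple is correct and is needed; it is left implicit in the paper but is exactly what makes its derivation of $W_+ = f^{-1}(\Rplus)$ in the proof of \ref{Wuniquefun}$\Rightarrow$\ref{Wcomplconv} go through.
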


\begin{proof}
\begin{enumerate}
\item[\implproof{Wembed}{Wfun}] 
We start with the case $W=\R$ and take $f$ to be the identity functional $\R\to\R$. Let $f':\R\to\R$ be some other functional. Then we claim that $f'(x) = f'(1) x$ for all $x\in\R$. First, since $\R$ is an ordered $\Q$-vector space and every homomorphism of $\Q$-vector spaces is automatically $\Q$-linear, we conclude $f'(x) = f'(1) x$ for all $x\in\Q$. Then the claim follows from approximating every $x\in\R$ by a sequence of rational numbers from above and another sequence from below, using monotonicity of $f'$.

For general numerical $W$, let $f:W\to\R$ be a functional which reflects the order. Then either $W\cong\{0\}$, or $f$ realizes $W$ as a dense subset of $\R$. In the first case there is nothing to prove, so we focus on the second. Any other $f':W\to\R$ extends uniquely to a functional $\R\to\R$ by density, and this we already know to be a multiple of the identity.
\item[\implproof{Wfun}{Wuniquefun}] Straightforward.
\item[\implproof{Wuniquefun}{Wcomplconv}] If $W\cong\{0\}$, then there is nothing to prove. Otherwise, there is $x\in W$ with $x\not\geq 0$, and hence a functional $f:W\to\R$ with $f(x)<0$ by Theorem~\ref{aovshb}. In particular, $f$ is nonzero, and hence every other functional is a scalar multiple of $f$. But then again by Theorem~\ref{aovshb}, we have $W_+ = f^{-1}(\Rplus)$ and $W\setminus W_+ = f^{-1}(\R\setminus\Rplus)$. Now the claim follows because $\R\setminus\Rplus$ is convex, and the inverse image of a convex set under a $\Q$-linear map is convex too\footnote{Recall that for us, ``convex'' always means convex with respect to rational coefficients.}.
\item[\implproof{Wcomplconv}{Wpartition}] 
For $x\in W\setminus W_+$ we have $-x\in W_+$, for otherwise $-x\in W\setminus W_+$ would imply $0\in W\setminus W_+$ by convexity.

\item[\implproof{Wpartition}{Wembed}]
In case that every $x\in W_+$ also satisfies $-x\in W_+$, then we have $W\cong\{0\}$ and $f:x\mapsto 0$ does the job.

Otherwise, we fix $g\in W_+$ with $-g\not\in W_+$. For every $x\in W$, we define
\beqn
\label{infsup}
f(x) \defin \inf \{\: \lambda\in\Q \:|\: \lambda g\geq x \:\} = \sup \{\: \mu\in\Q \:|\: x \geq \mu g \:\} ,
\eeqn
where the infimum is an expression that we have already met in~\eqref{rregp}. To see the equality between the infimum and the supremum, the inequality ``$\geq$'' is clear since $\lambda g\geq x\geq \mu g$ implies $(\lambda - \mu) g\geq 0$, and therefore $\lambda - \mu\geq 0$ by $-g\not\geq 0$. Now suppose that there was a gap between the infimum and the supremum. This gap would contain some $\nu\in\Q$, resulting in $\nu g\not\geq x$ and $x\not\geq\nu g$. This contradicts the assumption that $x-\nu g$ must lie in either $W_+$ or $-W_+$. Next, we show that $f(x)\in\R$, i.e.~that it is neither $-\infty$ nor $+\infty$. That it is not $-\infty$ follows from the definition as an infimum: if there was a sequence $(\lambda_n)_{n\in\N}$ with $\lambda_n\leq 0$ and $\lambda_n\to -\infty$, then this would imply that $g\leq 0$ by $g + |\lambda_n|^{-1} x \leq 0$ and Lemma~\ref{approach}. That it is not $+\infty$ follows similarly from the supremum characterization. We have shown in passing that $g$ is a generator.

Hence $f:W\to\R$ is well-defined, and we claim that it is a homomorphism. First, $f(x)\geq 0$ for $x\geq 0$ is clear by the definition as a supremum. Second, $f(x+y)\geq f(x)+f(y)$ also follows from the definition as a supremum; the other inequality direction follows from the infimum characterization. In order to prove~\eqref{numeq}, it remains to show that $f(x)\geq 0$ implies $x\geq 0$. But this in turn is another direct consequence of Lemma~\ref{approach}: $f(x)\geq 0$ means that there is a sequence $(\mu_n)_{n\in\N}$ with $\mu_n\leq 0$ and $\mu_n\to 0$ and such that $x + |\mu_n| g\geq 0$.
\qedhere
\end{enumerate}
\end{proof}

We would now like to extend the previous result to the world of ordered commutative monoids. This is less straightforward: in order for an ordered commutative monoid $A$ to be numerical, it is not sufficient that the order is total. The reason is that in order for~\eqref{numeq} to hold, it is necessary for the canonical homomorphism $A\to\aovs(A)$ to reflect the order as well, since any functional $f$ factors through $\aovs(A)$.

\begin{ex}
Take $A\defin\R^2$ with the usual addition and the \emphalt{lexicographic ordering}, in which the positive cone consists of all those $(x,y)$ which satisfy either $x>0$ or $x=0$ and $y\geq 0$. Since either $(x,y)$ or $-(x,y)$ satisfy this condition, this $A$ is an ordered $\Q$-vector space which is totally ordered. However, $\aovs(A)_+$ also contains all points $(x,y)$ with $x=0$ and $y<0$, so that the homomorphism $A\to\aovs(A)$ does not reflect the order. Thus $A$ is not numerical.
\end{ex}

In the following, we write ``$x>y$'' as shorthand for ``$x\geq y$ and $y\not\geq x$''. Even in a totally ordered commutative monoid, $x_1 > y_1$ and $x_2 > y_2$ does not necessarily imply $x_1 + x_2 > y_1 + y_2$: for example, in the totally ordered commutative monoid consisting of $\{0,1,2\}$ with its usual order and the usual addition truncated at $2$, we have $2>1$, but $2+2 \not> 1+1$. This is the main reason for why we generally prefer to work with ``$\geq$'' rather than ``$>$''; we introduce the latter mainly in order to formulate the following theorem.

\begin{thm}
\label{numocm}
Let $A$ be an ordered commutative monoid. Then the following conditions are equivalent:
\begin{enumerate}
\item\label{Anum} $A$ is numerical via $f:A\to\R$.
\item\label{AvNM} $A$ is totally ordered, and satisfies the conditions:
\begin{enumerate}
\item\label{vNMcanc} if $x>y$, then $x + z > y + z$ for all $z$;
\item\label{vNMarch} if $x>y>z$, then there exists $\eps\in(0,1)$ such that for all $n,k\in\N$ with $k\leq \eps n$,
\beq
(n-k) x + k z > n y > k x + (n - k) z.
\eeq
\end{enumerate}
\item\label{ALY} $A$ is totally ordered, and if $x,y,z_+,z_-\in A$ are such that for all $\eps>0$ there exist $n,k\in\N$ with $k\leq \eps n$ and
\beq
nx + kz_+ \geq ny + kz_-,
\eeq
then $x\geq y$.
\end{enumerate}
If these conditions hold, then any functional $f':A\to\R$ is a scalar multiple of the above $f$.
\end{thm}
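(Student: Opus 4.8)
The plan is to prove the equivalences in the cycle $\ref{Anum}\Rightarrow\ref{AvNM}\Rightarrow\ref{ALY}\Rightarrow\ref{Anum}$, and then handle the uniqueness of $f'$ separately. First, for $\ref{Anum}\Rightarrow\ref{AvNM}$: suppose $f:A\to\R$ is a homomorphism reflecting the order, so $A$ embeds into $\R$. Then $A$ is obviously totally ordered. For~\ref{vNMcanc}, if $x>y$ then $f(x)>f(y)$, hence $f(x)+f(z)>f(y)+f(z)$, i.e. $f(x+z)>f(y+z)$, and reflecting the order gives $x+z>y+z$. For~\ref{vNMarch}, given $x>y>z$ set $a\defin f(x)$, $b\defin f(y)$, $c\defin f(z)$ in $\R$ with $a>b>c$; choose $\eps\in(0,1)$ small enough that $\eps(a-c)<\min(a-b,b-c)$. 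Then for $k\leq\eps n$ one checks by a direct real-number computation that $(n-k)a+kc > nb > ka+(n-k)c$, and reflecting the order back through $f$ gives the two strict inequalities in $A$. The only subtlety is the edge case $n=0$, where all three expressions are $0$; this is vacuously fine since then the chain of strict inequalities is not required to be strict — or one simply restricts to $n\geq 1$, which suffices for what follows.

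Next, $\ref{AvNM}\Rightarrow\ref{ALY}$: assume $A$ is totally ordered and satisfies~\ref{vNMcanc} and~\ref{vNMarch}, and suppose $x,y,z_+,z_-$ are such that for every $\eps>0$ there are $n,k$ with $k\leq\eps n$ and $nx+kz_+\geq ny+kz_-$. We must show $x\geq y$. Argue by contradiction: if $x\not\geq y$, then by totality $y>x$. The idea is to use $z_+,z_-$ as a ``frame'': by totality, each of $z_+,z_-$ is comparable to both $x$ and $y$, and we can find elements $w_+\geq z_+, z_-$ and $w_-\leq z_+, z_-$ comparable to everything in sight (e.g. $w_+$ the larger of $z_+,z_-$, $w_-$ the smaller). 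Replacing $z_+$ by $w_+$ on the left and $z_-$ by $w_-$ on the right only strengthens the hypothesis, so we may assume $z_+\geq z_-$. Now pick $\eps$ as supplied by~\ref{vNMarch} applied to the triple $y>x$ together with an auxiliary element (one needs a third element strictly below $x$ or strictly above $y$ to run the Archimedean condition; if none exists the monoid is ``order-bounded'' and a separate, easier argument applies). The strict inequalities of~\ref{vNMarch}, iterated $k$ times via~\ref{vNMcanc}, produce for all sufficiently small $\eps$ a strict inequality of the form $ny + (\text{something involving }kz_-) > nx + (\text{something involving }kz_+)$ that contradicts the assumed $nx+kz_+\geq ny+kz_-$. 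This contradiction-by-frame step is where the bookkeeping is heaviest and is the \textbf{main obstacle}: one has to manage simultaneously the roles of $z_+$ versus $z_-$ (they need not be comparable a priori), the requirement $k\leq\eps n$ on both sides, and the degenerate cases where $A$ has no element strictly outside the interval $[x,y]$.

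Then $\ref{ALY}\Rightarrow\ref{Anum}$: from~\ref{ALY} we first show the canonical map $\gamma:A\to\oag(A)$ and the further maps to $\ovs(A)$ and $\aovs(A)$ all reflect the order. Indeed, $x\geq y$ in $\aovs(A)$ means, by Theorem~\ref{ocmhb} (using that a generating pair exists — and here one should first note that total order plus the conditions of~\ref{ALY} force the existence of a generating pair, or else the monoid is trivial; otherwise pick any $g_+>g_-$ in $A$, say from a pair of distinct elements, and verify it generates using totality and~\ref{vNMarch}-type domination), that for all $\eps>0$ there are $n,k\in\N$ with $k\leq\eps n$ and $nx+kg_+\geq ny+kg_-$ — which is exactly the hypothesis of~\ref{ALY} with $z_\pm = g_\pm$, giving $x\geq y$ in $A$. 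Hence $A\hookrightarrow\aovs(A)$ is order-reflecting, and $\aovs(A)$ inherits total order from $A$. By Proposition~\ref{numaovs}, $\aovs(A)$ is numerical, so there is a functional $F:\aovs(A)\to\R$ reflecting its order; composing with $A\to\aovs(A)$ gives $f:A\to\R$, which is a homomorphism and reflects the order because both maps in the composite do. This establishes~\ref{Anum}.

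Finally, for the uniqueness claim: let $f':A\to\R$ be any functional. It factors uniquely through $\aovs(A)$ as $f'=F'\circ(A\to\aovs(A))$ for a functional $F':\aovs(A)\to\R$, by the universal properties of Theorems~\ref{ocmtooag}, \ref{oagtoovs}, \ref{ovstoaovs}. Likewise the order-reflecting $f$ of part~\ref{Anum} factors as $F\circ(A\to\aovs(A))$ with $F$ reflecting the order on $\aovs(A)$, as constructed above. By Proposition~\ref{numaovs}\ref{Wuniquefun}, since $F$ is nonzero, every functional on $\aovs(A)$ — in particular $F'$ — is a scalar multiple of $F$, say $F'=\lambda F$. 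Composing with $A\to\aovs(A)$ yields $f'=\lambda f$, as claimed. (If instead $\aovs(A)\cong\{0\}$, then every functional on $A$ is zero and the statement is trivial.)
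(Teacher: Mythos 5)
Your cycle $\ref{Anum}\Rightarrow\ref{AvNM}\Rightarrow\ref{ALY}\Rightarrow\ref{Anum}$ matches the paper's decomposition, and your arguments for $\ref{Anum}\Rightarrow\ref{AvNM}$, $\ref{ALY}\Rightarrow\ref{Anum}$ and the uniqueness of $f'$ are essentially the paper's (with one small slip in the last: the generating-pair verification in $\ref{ALY}\Rightarrow\ref{Anum}$ should invoke condition~\ref{ALY} itself, not ``\ref{vNMarch}-type domination,'' since in that direction~\ref{vNMarch} is not available). The problem is $\ref{AvNM}\Rightarrow\ref{ALY}$, which you yourself flag as the ``main obstacle'' and do not actually complete; as written, that is a genuine gap, not just heavy bookkeeping left to the reader.

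Your proposed ``frame'' strategy also contains two misdirections. First, the concern that $z_+$ and $z_-$ ``need not be comparable a priori'' is moot: $A$ is totally ordered, so they are. Second, the worry about ``degenerate cases where $A$ has no element strictly outside $[x,y]$'' suggests you want to fabricate an auxiliary third element in order to run~\ref{vNMarch}; no such element is needed. The paper proves the contrapositive directly: assume $x<y$ and show that for small enough $\eps$ and all $k\leq\eps n$ one has $nx+kz_+ < ny+kz_-$. To get $(n-k)x + kz_+ < ny$, split on whether $z_+\leq y$ or $z_+>y$: in the first case this is a $k$-fold application of~\ref{vNMcanc} (pure cancellativity, no third element), and in the second case $z_+>y>x$ is a strict chain to which~\ref{vNMarch} applies verbatim, its second inequality being exactly the claim. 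Symmetrically, splitting on $z_-\geq x$ vs.\ $z_- < x$ gives $nx < (n-k)y + kz_-$. Adding these and weakening $(2n-k)x+kz_+<(2n-k)y+kz_-$ to $2nx+kz_+<2ny+kz_-$ via $kx\leq ky$, then adjusting $\eps$, finishes the step. So the ``degenerate'' case is the trivial branch (cancellativity), and the ``frame'' reduction is unnecessary. I would recommend replacing your by-contradiction-plus-frame sketch with this direct four-case argument.
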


\begin{proof}
\begin{enumerate}
\item[\implproof{Anum}{AvNM}] Straightforward.
\item[\implproof{AvNM}{ALY}] 
In the totally ordered setting,~\ref{vNMcanc} is the contrapositive of cancellativity: if $x+z\leq y+z$, then $x\leq y$. So we know that $A$ is cancellative. We prove the contrapositive of the second condition in~\ref{ALY} by showing that if $x<y$, then there is $\eps>0$ such that for all $n,k\in\N$ with $k\leq \eps n$, we also have $n x + k z_- < n y + k z_+$. 

If $z_+\leq y$, then $x<y$ clearly implies $(n-k) x + k z_+ < (n-k) y + k y = n y$ whenever $k < n$. If on the other hand $z_+ > y$, then the same inequality $(n - k) x + k z_+ < n y$ follows from~\ref{vNMarch} for suitable $\eps>0$. So this inequality holds in all cases for all $n,k\in\N$ with $k\leq \eps n$. In the same way, we can derive $n x < (n - k) y + k z_-$. Adding these two inequalities results in $(2n - k) x + k z_+ < (2n - k) y + k z_-$. We can weaken this to $2n x + k z_+ < 2n y + k z_-$, which is enough after a suitable adjustment to $\eps$.

\item[\implproof{ALY}{Anum}] By Proposition~\ref{numaovs}, it is enough to prove that the canonical homomorphism $A\to\aovs(A)$ reflects the order.

First, we choose $g_+,g_-\in A$ with $g_+\geq g_-$, but $g_-\not\geq g_+$; if such a pair does not exist, then we have $A\cong\{0\}$, in which case there is nothing to prove. As the notation suggests, we claim that $(g_+,g_-)$ is a generating pair. So for given $w\in A$, we need to show that there exists $n\in\N$ with $ng_+\geq w + ng_-$. This is indeed the case, for by totality we would otherwise have $ng_+ \leq w + ng_-$ for all $n\in\N$, which by assumption implies $g_+\leq g_-$, in contradiction with $g_-\not\geq g_+$. Similarly we can show $ng_+ + w \geq ng_-$ for some $n\in\N$. The claim that $x\geq y$ in $\aovs(A)$ implies $x\geq y$ in $A$ now follows from Theorem~\ref{ocmhb} and the assumed implication.
\end{enumerate}\medskip

The final claim about $f':A\to\R$ follows from the Proposition~\ref{numaovs}\ref{Wuniquefun} since $f'$ factors uniquely through $\aovs(A)$.
\end{proof}

\begin{ex}
$\Grph$ is not totally ordered, and therefore not numerical. For example, the pentagon $\pentagon$ and $\mathcal{K}_4$ are incomparable.
\end{ex}

\begin{ex}
In~\ref{AvNM}, the cancellativity requirement~\ref{vNMcanc} is essential. Consider $A=\{0,1\}$ with ordering $0<1$ and addition $1+1=1$. Then $A$ is totally ordered, and~\ref{vNMarch} vacuously holds since $A$ contains no sequence of three strictly ordered elements. However, $A$ is not numerical. 
\end{ex}

\subsection*{The von Neumann--Morgensterm theorem} 

We have formulated condition~\ref{numocm}\ref{AvNM} such as to bear strong resemblance to the \emph{utility theorem} of von Neumann and Morgenstern in decision theory and the foundations of economics~\cite[Section~3]{games},~\cite[Chapter~5]{choice}. There, an agent's set of possible outcomes carries a mathematical structure very similar to an ordered commutative monoid. First, preferring some outcomes over others turns the set of outcomes into an ordered set. Second, every convex combination of outcomes represents a ``lottery'' between outcomes, and this lottery can itself be regarded as an outcome, so that the set of outcomes comes equipped with a convex combination operation compatible with the ordering relation. The theorem of von Neumann and Morgenstern provides necessary and sufficient conditions for such a structure to allow for a convex-linear function to $\R$ which preserves and reflects the order; such a function is a \emph{utility function}. Although their axioms are very close to the conditions~\ref{AvNM}, the mathematical setup is a bit different and it is not immediately clear how to derive the von Neumann--Morgenstern theorem from our result or vice versa.

We can also interpret our Theorem~\ref{numocm} in terms of an agent's preferences over outcomes. Instead of combining outcomes via probabilistic mixtures or ``lotteries'', our result applies when one considers actual conjunctive combinations of outcomes, so that one can express things like ``I prefer having a car over a bicycle, but I'd rather have a bicycle and a house instead of only a car''. The theorem provides necessary and sufficient conditions for the existence of a utility function in this kind of setup.

\subsection*{The Lieb--Yngvason theorem}

In the foundations of thermodynamics, Lieb and Yngvason were concerned with the notion of adiabatic accessibility between states of a physical system. Modulo some inessential technicalities, their axioms ``A1'' to ``A5''~\cite{secondlaw1} say that states in thermodynamics form an ordered semimodule over the semiring $\Rplus$, where the ordering is given by adiabatic accessibility, addition corresponds to placing two systems side by side, and scalar multiplication corresponds to scaling the size of a system. The theorem of Lieb and Yngvason provides necessary and sufficient conditions for such a semimodule to allow for an affine-linear function to $\R$ which preserves and reflects the order; such a function is an \emph{entropy function}. At the mathematical level, the theorem of Lieb and Yngvason is very similar to the earlier result of von Neumann and Morgenstern\footnote{In fact, the Lieb--Yngvason theorem has already been applied in decision theory as well~\cite{knightuncertain}.}; the~\ref{AvNM}$\Rightarrow$\ref{ALY} implication of our proof could indicate how to obtain the Lieb--Yngvason theorem as a corollary of the von Neumann--Morgenstern theorem.

In thermodynamics, using scalar multiplication by $\Rplus$ is an idealization corresponding to the macroscopic limit, since actual physical systems cannot be divided arbitrarily into smaller and smaller parts. It should therefore be interesting to investigate Theorem~\ref{numocm} as an improved version of the Lieb--Yngvason applicable also to systems of any size. In fact, we conjecture that thermodynamics with microscopic systems should start with a plain ordered commutative monoid $A$, and taking the macroscopic limit then corresponds to working with the regularization $\aovs(A)$, at least modulo the issues of Remarks~\ref{epsilonification} and~\ref{QvsR}. It is conceivable that this is precisely how Lieb--Yngvason thermodynamics arises from the single-shot thermodynamics of~\cite{secondlaws}.

A more detailed investigation on the applicability of our results in thermodynamics and decision theory and on the relation to the theorems of von Neumann--Morgenstern and Lieb--Yngvason will have to be performed in dedicated work. The remainder of this section could also be relevant for these investigations.

\bigskip

While results along the lines of Theorem~\ref{numocm} are interesting, we strongly expect them to be too limited for many purposes. Being numerical is an extremely strong condition on an ordered commutative monoid, and it is likely to be too strong to hold in many cases of interest. A more generous but still useful requirement is for $\aovs(A)$ to be numerical. In the following, we would like to study in more detail under which conditions this happens. It turns out to be closely related to the notion of regularized rate from Section~\ref{sectrates}.

\begin{defn}
\label{defpos}
An ordered commutative monoid $A$ is \emph{positive} if $x\geq 0$ for all $x\in A$.
\end{defn}

If $A$ is positive and numerical, then it embeds into $\Rplus$. In resource-theoretic terms, positivity means that all resource objects are freely disposable. We focus on the positive case for the sake of technical simplicity, leaving open the question of how the following considerations generalize to the non-positive case.

\begin{lem}
\label{posrateinf}
If $A$ is positive and has a generating pair, then $\Rreg_{\max}(x\to y) = \infty$ if and only if $y=0$ in $\aovs(A)$.
\end{lem}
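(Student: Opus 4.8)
The plan is to use the rate formula of Theorem~\ref{rateformula} together with the characterization of functionals that vanish on $y$, all of which are available since $A$ is positive with a generating pair. By Theorem~\ref{rateformula}, we have $\Rreg_{\max}(x\to y) = \inf_f \frac{f(x)}{f(y)}$ where the infimum ranges over all functionals $f$ with $f(y)\neq 0$; with the convention that the infimum of the empty set is $\infty$. So $\Rreg_{\max}(x\to y) = \infty$ holds precisely when either there is no functional $f$ with $f(y)\neq 0$, or when the infimum over such functionals is $+\infty$. Since $A$ is positive we have $x\geq 0$, hence $f(x)\geq 0$ for every functional $f$, so the quotient $\frac{f(x)}{f(y)}$ is always a nonnegative real number and the infimum can only be $+\infty$ if there is no admissible $f$ at all. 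Therefore $\Rreg_{\max}(x\to y)=\infty$ if and only if $f(y)=0$ for \emph{every} functional $f$ on $A$.

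It remains to identify when $f(y)=0$ for all functionals $f$. Since $A$ is positive, $y\geq 0$, so $f(y)\geq 0$ for all $f$; thus $f(y)=0$ for all $f$ is equivalent to $f(y)\leq 0$ for all $f$, i.e.\ $f(0)\geq f(y)$ for all functionals $f$. By Theorem~\ref{ocmhb}, this is equivalent to $0\geq y$ in $\aovs(A)$. Combined with the fact that $y\geq 0$ always holds in $\aovs(A)$ (positivity is preserved by the canonical homomorphism $A\to\aovs(A)$, since it is order-preserving), we get $f(y)=0$ for all $f$ if and only if $y=0$ in $\aovs(A)$. Chaining the two equivalences gives $\Rreg_{\max}(x\to y)=\infty$ if and only if $y=0$ in $\aovs(A)$, as claimed.

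I do not expect any serious obstacle here; the statement is essentially a direct corollary of Theorem~\ref{rateformula} and Theorem~\ref{ocmhb}. The one point that needs a moment's care is handling the convention for $\inf$ of the empty set and making sure the ``$\infty$ value'' of the infimum cannot arise from unbounded-above quotients $\frac{f(x)}{f(y)}$ with $f(y)\neq 0$ — but positivity ($x\geq 0$, so $f(x)\geq 0$, so each quotient is a finite nonnegative real) rules that out, leaving the emptiness of the index set as the only way to get $\infty$. The other point worth stating explicitly is that $y=0$ in $\aovs(A)$ is genuinely equivalent to ``$f(y)=0$ for all functionals'' rather than merely implied by it, which is exactly what the Hahn--Banach-type Theorem~\ref{ocmhb} provides.
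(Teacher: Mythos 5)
Your proof is correct and clean. It differs from the paper's in its choice of pivot: you invoke Theorem~\ref{rateformula} directly and reduce everything to the observation that the infimum $\inf_f f(x)/f(y)$ can only be $\infty$ when the index set of admissible functionals is empty (since positivity forces each quotient to be a finite nonnegative real), then translate ``$f(y)=0$ for all $f$'' via Theorem~\ref{ocmhb}. The paper instead argues from Corollary~\ref{ocmrrate}: since $\Rreg_{\max}=\infty$ and $\Rreg_{\min}=0$, every positive rational $\lambda$ is a regularized rate, hence $x\geq\lambda y$ in $\aovs(A)$ for all $\lambda$, equivalently $-y+\eps x\geq 0$ for all $\eps>0$, and then Lemma~\ref{approach} (the Archimedean property of $\aovs(A)$) yields $-y\geq 0$ directly; positivity then gives $y=0$ in $\aovs(A)$. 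Both routes rest on the same Hahn--Banach backbone, but the paper's version avoids appealing to the ``infimum of the empty set equals $\infty$'' convention, which is never actually stated in Theorem~\ref{rateformula}, by instead deriving $-y\geq 0$ from the Archimedean lemma. Your route is shorter given Theorem~\ref{rateformula}, but it would be worth one sentence to make the empty-infimum convention explicit, or alternatively to fall back on Corollary~\ref{ocmrrate} (the statement that $r$ is a regularized rate iff $f(x)\geq r f(y)$ for all $f$) which is convention-free and is what the rate formula is distilled from. The converse direction is handled equivalently in both proofs.
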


\begin{proof}
If $\Rreg_{\max}=\infty$, then every $\lambda\in\Qplus$ is a regularized rate due to positivity. By Corollary~\ref{ocmrrate} and Theorem~\ref{ocmhb}, this means that $x\geq\lambda y$ for all $\lambda\in\Q$ in $\aovs(A)$. Equivalently $- y + \eps x \geq 0$ for every $\eps>0$. Hence $-y\geq 0$ in $\aovs(A)$ by Lemma~\ref{approach}, and therefore $y=0$ in $\aovs(A)$ by positivity. 

The converse $\Rreg_{\max}(x\to 0) = \infty$ follows from $nx\geq 0$ for all $n\in\N$.
\end{proof}

We assume the existence of a resource object $\$$ such that $(\$,0)$ is a generating pair, meaning that every other resource object can be produced from a certain number of $\$$'s, and can also be made to disappear for a certain number of $\$$'s. However, if $n\in\N$ is the smallest integer for which $n\$\geq x$, i.e.~so that $x$ can be bought in exchange for $n\$$, it does not follow that $x\geq n\$$, i.e.~that $x$ can also be sold for $n\$$; in economical terms, there may be a gap between the \emph{buying price} and the \emph{selling price}. However, if our business operates on a large scale and we have access to all sorts of tools (Example~\ref{tools}) and work and the level of economy of scale (Section~\ref{sectovs}) and allow even for the use of seeds (Section~\ref{sectaovs}), then the situation can be different, and the buying price and selling price that applies to us per copy of $x$ might be the same.

\begin{thm}
\label{aovsonedim}
For a positive ordered commutative monoid $A$ with generator $\$$, the following are equivalent:
\begin{enumerate}
\item\label{aovspnum} $\aovs(A)$ is numerical.
\item\label{funpnum} There is $f:A\to\R$ such that every other functional is a scalar multiple of $f$.
\item\label{uniquefunpnum} If $f:A\to\R$ is nonzero, then every other functional is a scalar multiple of $f$.
\item\label{currencypnum} If $\Rreg_{\max}(\$\to x) < \infty$, then
\beq
\Rreg_{\max}(\$\to x) \cdot \Rreg_{\max}(x\to\$) = 1.
\eeq
\item\label{ratepnum} If $\Rreg_{\max}(x\to y)<\infty$ and $\Rreg_{\max}(y\to x)<\infty$, then
\beqn
\label{reciprocalrates}
\Rreg_{\max}(x\to y) \cdot \Rreg_{\max}(y\to x) = 1.
\eeqn
\item\label{pnumf} There exists a functional $f:A\to\Rplus$ such that
\beq
x\geq y \quad\Longleftrightarrow\quad f(x)\geq f(y) \text{\normalfont{} in } \aovs(A).
\eeq
\end{enumerate}
\end{thm}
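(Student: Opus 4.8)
The overall strategy is to prove a cycle of implications rather than to treat each pair separately. The natural skeleton is
\[
\ref{aovspnum}\Rightarrow\ref{funpnum}\Rightarrow\ref{uniquefunpnum}\Rightarrow\ref{ratepnum}\Rightarrow\ref{currencypnum}\Rightarrow\ref{aovspnum},
\]
with \ref{pnumf} tacked on at the end as an obvious reformulation: \ref{pnumf}$\Leftrightarrow$\ref{aovspnum} because a functional $f:A\to\R$ reflecting the order on $\aovs(A)$ is by definition the same thing as a numerical structure on $\aovs(A)$ together with positivity of $f$, which is automatic since $A$ is positive (so $f(x)\ge f(0)=0$). The implications \ref{aovspnum}$\Rightarrow$\ref{funpnum}$\Rightarrow$\ref{uniquefunpnum} are essentially immediate translations of the corresponding implications in Proposition~\ref{numaovs} applied to $W=\aovs(A)$, using that every functional on $A$ factors uniquely through $\aovs(A)$ and that $\aovs(A)$ has the generator $\$$; indeed \ref{funpnum}$\Rightarrow$\ref{uniquefunpnum} is trivial, and \ref{aovspnum}$\Rightarrow$\ref{funpnum} follows because a numerical $\aovs(A)$ satisfies Proposition~\ref{numaovs}\ref{Wfun}.

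The interesting content is in the three implications connecting the functional-theoretic conditions to the rate conditions. For \ref{uniquefunpnum}$\Rightarrow$\ref{ratepnum}: assume $\Rreg_{\max}(x\to y)<\infty$ and $\Rreg_{\max}(y\to x)<\infty$. By Lemma~\ref{posrateinf} both hypotheses force $x\ne 0$ and $y\ne 0$ in $\aovs(A)$, so by Theorem~\ref{aovshb} there is a functional $f$ with $f(y)\ne 0$; since $f$ is unique up to scalar and all functionals are nonnegative on the positive monoid $A$, we may fix this $f$ and it has $f(x)\ne 0$ as well (otherwise $x=0$ in $\aovs(A)$ by Theorem~\ref{aovshb} again, since $f$ alone reflects the order). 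Now by Theorem~\ref{rateformula} (the rate formula, applicable since $x,y\ge 0$ and a generating pair exists), $\Rreg_{\max}(x\to y)=\inf_{f'}\tfrac{f'(x)}{f'(y)}=\tfrac{f(x)}{f(y)}$ because the infimum ranges over all functionals and these are all scalar multiples of the single $f$, so the quotient is constant. Symmetrically $\Rreg_{\max}(y\to x)=\tfrac{f(y)}{f(x)}$, and the product is $1$. The implication \ref{ratepnum}$\Rightarrow$\ref{currencypnum} is the trivial specialization $y=\$$, using $\Rreg_{\max}(\$\to\$)=1$ (Lemma~\ref{ratedicho}, or directly) so that $\Rreg_{\max}(x\to\$)<\infty$ is forced when $\Rreg_{\max}(\$\to x)<\infty$ via the Carnot-style inequality of Lemma~\ref{carnotlem}/Corollary~\ref{carnotcor}; one just needs to check $\$\ne0$ in $\aovs(A)$, which holds since $\$$ is a generator.

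The main obstacle, and the step I would spend the most care on, is \ref{currencypnum}$\Rightarrow$\ref{aovspnum}. The plan is to verify that $\aovs(A)$ satisfies Proposition~\ref{numaovs}\ref{Wpartition}, i.e.\ that it is totally ordered: given any $x\in\aovs(A)$, write $x$ (up to positive scalar) as a difference of two elements of $A$, and use the generator $\$$ to reduce to showing that either $x\ge 0$ or $-x\ge 0$ in $\aovs(A)$. Translating via Theorem~\ref{rateformula}/Corollary~\ref{ocmrrate}, the hypothesis $\Rreg_{\max}(\$\to x)\cdot\Rreg_{\max}(x\to\$)=1$ says (when both rates are finite) that $\inf_f\tfrac{f(x)}{f(\$)}$ and $\inf_f\tfrac{f(\$)}{f(x)}$ are reciprocal; since in general $\sup_f\tfrac{f(x)}{f(\$)}=\bigl(\inf_f\tfrac{f(\$)}{f(x)}\bigr)^{-1}$, this equality forces the ratio $f(x)/f(\$)$ to be independent of $f$, which — since $\$$ is a generator so $f(\$)>0$ for all nonzero $f$ — pins all functionals to a single ray, giving Proposition~\ref{numaovs}\ref{Wuniquefun} and hence \ref{aovspnum}. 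The delicate points to handle explicitly are: the case analysis when $\Rreg_{\max}(\$\to x)=\infty$ (then $x=0$ in $\aovs(A)$ by Lemma~\ref{posrateinf}, so nothing to prove), the case where $f(x)=0$ for the distinguished $f$, and the reduction from general elements of $\aovs(A)$ (formal $\Q$-linear combinations) to elements of $A$ of the form $x$ with $x$ versus $\$$ — for a general $y-z$ with $y,z\in A$ one applies the argument to $y$ and to $z$ separately against $\$$ and combines, exactly as in the proof of Proposition~\ref{ocmarchcrit}. Once totality of $\aovs(A)$ is established, Proposition~\ref{numaovs} closes the loop, and the final sentence of the theorem (uniqueness of $f'$ up to scalar) is then Proposition~\ref{numaovs}\ref{Wuniquefun} pulled back along $A\to\aovs(A)$.
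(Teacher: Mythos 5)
Your proposal is correct, but it decomposes the cycle of implications differently from the paper. The paper proves \ref{currencypnum}$\Rightarrow$\ref{ratepnum}$\Rightarrow$\ref{aovspnum}, where \ref{currencypnum}$\Rightarrow$\ref{ratepnum} needs a two-sided Carnot argument (multiplying rate inequalities through $\$$ in both directions) and \ref{ratepnum}$\Rightarrow$\ref{aovspnum} verifies Proposition~\ref{numaovs}\ref{Wpartition}, i.e.\ total ordering of $\aovs(A)$, by a case split on whether $\Rreg_{\max}(x\to y)\ge 1$ or $<1$. You go the other way around the \ref{currencypnum}/\ref{ratepnum} edge: your \ref{ratepnum}$\Rightarrow$\ref{currencypnum} is just the specialization $y=\$$, with the needed finiteness $\Rreg_{\max}(x\to\$)<\infty$ coming from a single Carnot inequality through $\Rreg_{\max}(\$\to\$)=1$; this is genuinely shorter than the paper's \ref{currencypnum}$\Rightarrow$\ref{ratepnum}. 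You then close the loop via \ref{currencypnum}$\Rightarrow$\ref{aovspnum} by aiming at Proposition~\ref{numaovs}\ref{Wuniquefun} (uniqueness of the functional up to scalar) rather than \ref{Wpartition}: normalizing $f(\$)=1$, the hypothesis $\Rreg_{\max}(\$\to x)\cdot\Rreg_{\max}(x\to\$)=1$ unwinds via Theorem~\ref{rateformula} to $\inf_f f(x)=\sup_f f(x)$ for each nonzero $x\in A$, pinning the value $f(x)$ and hence the functional. This is a cleaner algebraic finish than the paper's order-theoretic case analysis; the paper's version, conversely, makes the geometric content (totality of the order) more visible. One note on the writeup: you announce that the plan for the closing implication is to verify \ref{Wpartition}, but the argument you then sketch actually establishes \ref{Wuniquefun} --- that is perfectly fine (Proposition~\ref{numaovs} makes them equivalent), but the prose should be brought in line with what you actually do. The "delicate points" you flag ($\Rreg_{\max}(\$\to x)=\infty$, $f(x)=0$, formal differences) all do resolve: the infinite-rate case is killed by Lemma~\ref{posrateinf}; if some normalized $f_0$ had $f_0(x)=0$ while another had a positive value, then $\Rreg_{\max}(x\to\$)=0$ and the product in \ref{currencypnum} would be $0$, contradicting the hypothesis; and the extension to formal differences $y-z$ is automatic by linearity once you know all normalized functionals agree on $A$.
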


\begin{proof}
Since the functionals $A\to\R$ are in bijective correspondence with functionals $\aovs(A)\to\R$, we already know the equivalence of~\ref{aovspnum}--\ref{uniquefunpnum} from Proposition~\ref{numaovs}. And condition~\ref{pnumf} is just a restatement of~\ref{aovspnum}, so we ignore~\ref{pnumf}. 
\begin{enumerate}
\item[\implproof{uniquefunpnum}{currencypnum}] If $\aovs(A)\cong \{0\}$, then the condition $R_{\max}(\$\to x)<\infty$ is never satisfied due to Lemma~\ref{posrateinf}, and hence there is nothing to be shown. Therefore we assume that there is at least some $y\in A$ with $y>0$ in $\aovs(A)$, and then Theorem~\ref{aovshb} yields a functional $f$ with $f(y)>0$, to which we can apply the assumption.

For $\Rreg_{\max}(\$\to x)<\infty$, we know that $x>0$ in $\aovs(A)$. By assumption, every functional is a scalar multiple of $f$; hence we can compute regularized rates as in Theorem~\ref{rateformula} without the infimum, i.e.~as a simple quotient of values of $f$. This results in
\beq
\Rreg_{\max}(\$\to x) \cdot \Rreg_{\max}(x\to \$) = \frac{f(\$)}{f(x)} \cdot \frac{f(x)}{f(\$)} = 1,
\eeq
as was to be shown.
\item[\implproof{currencypnum}{ratepnum}] Together with positivity, the two finiteness assumptions in~\ref{ratepnum} guarantee that $x>0$ and $y>0$ in $\aovs(A)$ by Lemma~\ref{posrateinf}. But then also $\Rreg_{\max}(\$\to x)<\infty$ and $\Rreg_{\max}(\$\to y)<\infty$, so that~\ref{currencypnum} is applicable. We also obtain $\Rreg_{\max}(x\to x) = 1$ and $\Rreg_{\max}(y\to y) = 1$ from the analogue of Lemma~\ref{ratedicho} for regularized rates. Thanks to the corresponding analogue of~\eqref{carnot}, this results in
\beq
\Rreg_{\max}(x\to y) \cdot \Rreg_{\max}(y\to x) \leq 1.
\eeq
It remains to prove the other direction of this inequality. To this end, the regularized rates analogue of~\eqref{carnot} is useful again,
\begin{align*}
& \Rreg_{\max}(x\to y) \geq \Rreg_{\max}(x\to\$) \cdot \Rreg_{\max}(\$\to y),\\
& \Rreg_{\max}(y\to x) \geq \Rreg_{\max}(y\to\$) \cdot \Rreg_{\max}(\$\to x).
\end{align*}
These two inequalities can be multiplied to
\begin{align*}
\Rreg_{\max}(x\to y)  \cdot \Rreg_{\max}(y\to x) & \geq \Rreg_{\max}(x\to\$) \cdot \underbrace{\Rreg_{\max}(\$\to y)\cdot \Rreg_{\max}(y\to\$)}_{=1} \cdot \Rreg_{\max}(\$\to x) \\
& = \Rreg_{\max}(x\to\$) \cdot \Rreg_{\max}(\$\to x) \\
& = 1.
\end{align*}
\item[\implproof{ratepnum}{aovspnum}] By Proposition~\ref{numaovs}, we need to prove that $\aovs(A)$ is totally ordered. To this end, it is enough to show that $A$ itself is totally ordered in the ordering induced from $\aovs(A)$. For given $x,y\in A$, we distinguish two cases:
\begin{itemize}
\item If $\Rreg_{\max}(x\to y) \geq 1$, then Corollary~\ref{ocmrrate} and $\Rreg_{\min} = 0$ guarantee that $f(x) \geq f(y)$ for all functionals $f$. Together with Theorem~\ref{ocmhb}, this shows $x\geq y$ in $\aovs(A)$.
\item If $\Rreg_{\max}(x\to y) < 1$, then we have $y>0$ by Lemma~\ref{posrateinf}. But then also $\Rreg_{\max}(y\to y)<\infty$, which necessitates $\Rreg_{\max}(y\to y) = 1$ by the regularized rate analogue of Lemma~\ref{ratedicho}. Hence the analogues of inequalities~\eqref{carnot} tell us
\beq
\Rreg_{\max}(x\to y)\cdot \Rreg_{\max}(y\to x) \leq 1,
\eeq
which is relevant because it lets us conclude $\Rreg_{\max}(y\to x) < \infty$. Then the assumption applies, and the inequality is saturated. Therefore $\Rreg_{\max}(y\to x) = \Rreg_{\max}(x\to y)^{-1} > 1$, which takes us back to the first case with $x$ and $y$ swapped, and we conclude $y\geq x$.
\end{itemize}
So in each case, we have $x\geq y$ or $y\geq x$, as was to be shown.
\qedhere
\end{enumerate}
\end{proof}

The resource-theoretic interpretation of this is that if $A$ satisfies the conditions of the theorem, then it enjoys a kind of \emph{perfect asymptotic interconvertibility}: all resource objects are interchangeable at the many-copy level and with the seed regularization. Exchanging one for another may incur a small overhead which grows sublinearly in the number of copies that are being converted. But up to this, all conversions are reversible and can be undone. 

Ordered commutative monoids which satisfy the properties of Theorem~\ref{aovsonedim} are quite well-behaved. For example, as we have used in the proof, maximal regularized rates can be computed as a simple quotient,
\beqn
\label{onedimrate}
\Rreg_{\max}(x\to y) = \frac{f(x)}{f(y)},
\eeqn
where $f$ is an arbitrary nonzero functional. This closely matches some rate formulas which have previously been derived in quantum information theory~\cite{quantumrts,generalquantum}, although these live within an epsilonified setting (Remark~\ref{epsilonification}). While these kinds of rate formulas are certainly useful, we expect that resource theories satisfying perfect asymptotic interconvertibility are still quite rare, although less so than resource theories whose ordered commutative monoids are numerical ``on the nose''. In general, one will need to use the regularized rate formula of our Theorem~\ref{rateformula} or variants of it, since~\eqref{onedimrate} only applies under the conditions of Theorem~\ref{aovsonedim}.

\begin{ex}
$\Grph$ has at least two linearly independent functionals (Examples~\ref{grphfun}). Therefore $\aovs(\Grph)$ is not numerical either.
\end{ex}

\newpage
\section{Comparison to other mathematical theories of resources}
\label{sectcompare}

Now that our framework has been presented, we discuss some of the other approaches to a general mathematical theory of resources that have been developed or are under development.

\subsection{Linear logic.}
\label{linlog}

Mathematical logic is the study of rules of inference, natural deduction, mathematical proof and provability. While the formal logical system describing conventional ``classical'' reasoning is classical sequent calculus, many other kinds of logics and formal systems have been proposed. Of most interest in our context is \emph{linear logic}~\cite{ll}, which is often described as ``resource-conscious'', or as ``if traditional logic is about truth, then linear logic is about food''. We will now try to restate our approach in logical terms and thereby relate it to linear logic.

If we rewrite the convertibility relation as entailment ``$\vdash$'' instead of ``$\geq$'', then we can express Definition~\ref{ocm} as a set of inference rules for a logic of resource convertibility. The following list of rules loosely follows the notation of~\cite{linearlogic} and blends it with ours. The structural rules of our logic are
\begin{center}
\bgroup
\def\arraystretch{3}%
\setlength\tabcolsep{1cm}
\begin{tabular}{cc}
$\infer[\textrm{(init)}]{x\vdash x}{}$
& $\infer[\textrm{(cut)}]{\Delta,\Delta'\vdash\Gamma,\Gamma'}{\Delta\vdash x,\Gamma && \Delta',x\vdash\Gamma'}$ \\
$\infer[\textrm{(left exchange)}]{\Delta,x,y,\Delta'\vdash\Gamma}{\Delta,y,x,\Delta'\vdash\Gamma}$
& $\infer[\textrm{(right exchange)}]{\Delta\vdash\Gamma,x,y,\Gamma'}{\Delta\vdash\Gamma,y,x,\Gamma'}$
\end{tabular}
\egroup
\end{center}
Here, $x$ and $y$ denote individual resource objects, while the symbols $\Delta,\Gamma,\ldots$ denote collections (lists) of resource objects of which one can consider the union by simply writing them next to each other with a comma in between. Each inference rule consists of a set of assumed convertibility relations---above the horizontal line---and one concluded convertibility relation written below the line. The first rule means that any resource object $x$ is convertible into itself. The second rule states that if $\Delta$ is convertible into $x$ plus $\Gamma$, and $\Delta'$ plus $x$ is convertible into $\Gamma'$, then also $\Delta$ plus $\Delta'$ is convertible into $\Gamma$ plus $\Gamma'$. The exchange rules logically implement the commutativity of ordered commutative monoids; these rules are not always stated explicitly in texts on linear logic.

The non-structural rules are those which concern the combination operation ``$+$'', which now becomes a logical connective, as well as the neutral element ``$0$''. These correspond to the multiplicative rules of~\cite{linearlogic}, where both tensor ``$\otimes$'' and par ``$\parr$'' are identified\footnote{This observation is due to an anonymous referee.} with our ``$+$'', and both ``$\mathbf{1}$'' and ``$\bot$'' are identified with our ``$0$''.
\begin{center}
\bgroup
\def\arraystretch{3}%
\setlength\tabcolsep{1cm}
\begin{tabular}{cc}
$\infer[(1L)]{\Delta,0\vdash\Gamma}{\Delta\vdash\Gamma}$
& $\infer[(1R)]{\vdash 0}{}$ \\
$\infer[(\otimes L)]{\Delta,x + y\vdash\Gamma}{\Delta,x,y\vdash \Gamma}$
& $\infer[(\otimes R)]{\Delta,\Delta'\vdash x+y,\Gamma,\Gamma'}{\Delta\vdash x,\Gamma & \Delta'\vdash y,\Gamma'}$ \\
$\infer[(\bot L)]{0\vdash }{}$
& $\infer[(\bot R)]{\Delta\vdash 0,\Gamma}{\Delta\vdash\Gamma}$ \\
$\infer[(\parr L)]{\Delta,\Delta',x+y\vdash \Gamma,\Gamma'}{\Delta,x\vdash \Gamma & \Delta',y\vdash\Gamma'}$
& $\infer[(\parr R)]{\Delta\vdash x + y,\Gamma}{\Delta\vdash x,y,\Gamma}$
\end{tabular}
\egroup
\end{center}
In this way, it is possible to reformulate our calculus of resources as a kind of logic. As a small fragment of linear logic with two connectives identified, this logic seems quite impoverished. Linear logic is much more expressive than this and capable of capturing other aspects of resources that ordered commutative monoids may not, such as the game-theoretic aspect of ``choices'' being made either by the resource-handling agent himself or by an adversary, which behave quite differently. Nevertheless, we hope to have shown in the main part of this paper how one can use tools from algebra and functional analysis in order to transcend the purely logical aspect of resources and derive nontrivial and relevant results about resource theories.

A slightly more expressive logic is the sequent calculus for compact closed categories~\cite{compactlinear}, which also includes negation. Due to the relation between compact closed categories and ordered abelian groups (Appendix~\ref{enrichment}), this is the logic which governs ordered abelian groups. It might be interesting to investigate what kinds of logics describe ordered $\Q$-vector spaces and Archimedean ordered $\Q$-vector spaces.

\subsection{Constructor theory.} 

As recently proposed by Deutsch~\cite{constructortheory}, constructor theory refers to the idea of developing fundamental laws of physics in terms of which transformations can be caused to happen, and which ones are impossible, together with explanations for why this is so. While Deutsch's discussion is relatively informal, a semi-rigorous version of constructor theory can be found in the work of Deutsch and Marletto on the constructor theory of information~\cite{constructorinformation}, which Marletto has applied to evolutionary theory~\cite{life}. Roughly, the idea is that a \emphalt{constructor} is an entity which can cause the transformation of one object $x$ into another object $y$,
\beqn
\label{xyconstructor}
x \xrightarrow{\textrm{constructor}} y.
\eeqn
In the language of~\cite{constructorinformation}, the term ``object'' is to be interpreted as ``substrate having certain attributes'', where a ``substrate'' refers to any object which may have attributes. So the transformation really looks like this:
\beqn
\label{substrateconstructor}
\textrm{input attributes of the substrate(s)} \xrightarrow{\textrm{constructor}} \textrm{output attributes of the substrate(s)} .
\eeqn
Here, it is assumed that the constructor takes part in the transformation, but remains unchanged in its ability to cause the transformation again. It can therefore perform $x\to y$ over and over again on new instances of $x$. All of this is closely related to our considerations, since we are also precisely concerned with questions of which transformations are possible and which ones are not. If one uses our framework as a basis for constructor theory, the constructor in~\eqref{xyconstructor} should itself be considered a resource object just like $x$ and $y$, and it enables the conversion of $x$ into $y$ as a catalyst or tool (Example~\ref{tools}). This achieves a higher degree of unification at the mathematical level. This point of view is corroborated by the statement that ``constructor theory is the ultimate generalisation of the idea of catalysis''~\cite{constructortheory}. Also the notion of \emphalt{task} of~\cite{constructortheory}, as the most important basic concept in constructor theory, can be formulated straightforwardly in the language of ordered commutative monoids: a task is a set of transformations $x\to y$ that are to be achieved; so for us, they correspond to basic orderings $x\geq y$ which generate an ordering relation on a commutative monoid.

There are more close parallels between constructor theory and our framework. For example, our Remark~\ref{epsilonification} essentially coincides with~\cite[Section~3.14]{constructortheory}. Furthermore, there is a notion of \emphalt{universal constructor}, which is supposed to be a programmable constructor capable of emulating all other programmable constructors~\cite[Section~3.8]{constructortheory}. This seems similar to our notion of generating pair (Definition~\ref{generatingpair}), which consists of two resource objects $g_+$ and $g_-$ from which any desired resource object can be extracted and into which any desired resource object can be absorbed, given a sufficient number of copies of $g_+$ and the possibility to dispose of the same number of copies of $g_-$. 

All this suggests that our framework could be an elegant mathematical formalization of constructor theory. However, the latter also seems to be intended to have aspects which are not covered by our definitions. Concretely, the notion of \emphalt{substrate} of~\eqref{substrateconstructor} is intentionally not covered by our framework, since this turns out to be unnecessary for our purposes (those of the introduction). We have arrived at the notion of ordered commutative monoid by following the principle that a mathematical formalization should accurately capture all the relevant structures and ignore all the irrelevant ones\footnote{Compare the ``principle of least power'' in software design~\cite{leastpower}.}, and substrates belong to the latter class.

Nevertheless, it is not hard to imagine that for \emphalt{other} purposes---possibly including those of constructor theory---substrates could play an essential role. We suspect that if one wants to incorporate substrates into our formalism, one can introduce them elegantly by working with a ``relative'' analogue of ordered commutative monoids, roughly as follows. Instead of a plain ordered commutative monoid $A$, one can consider either of two things:
\begin{itemize}
\item a homomorphism $A\to B$ to a fixed base ordered commutative monoid $B$ modelling the substrates, or
\item an ordered commutative monoid $A$ which is \emphalt{indexed} by a symmetric monoidal category $B$ in the sense of a Grothendieck fibration.
\end{itemize}
The difference between these two approaches would be that the first does not distinguish between different ways of how one substrate can evolve into another, while the second retains a genuine category of substrates and maps between substrates.

Finally, the close relation between our work and constructor theory does not mean that we subscribe to Deutsch's idea that the fundamental laws of physics should be formulated in terms of ``what can and what cannot be caused to happen'', thereby renouncing the mainstream view that physics is concerned with ``what happens''. As in the introduction, we regard the framework of resource theories and ordered commutative monoids as a piece of mathematics for engineering, but we do not exclude applications to fundamental physics.

For an earlier attempt towards a framework for theories of physics based on what is possible and what is impossible, see~\cite{possibilistic}.

\subsection{General approaches to resource theories in quantum information theory.}

As explained in the introduction, the idea of resource theories originates in quantum information theory, and in particular in entanglement theory. Hence many of the concepts of this paper have previously been considered there in a less general form. While we have tried to provide references on this throughout the paper, it may help to give a separate rough outline of what has been done in quantum information theory so far. Since the focus of this paper is on the general theory, we only discuss other works which also have looked at more than just one individual resource theory or helped clarify the mathematical structure of resource theories in general.

The selection of the following references is somewhat arbitrary, and we do not intend to assess scientific priority or give an accurate outline of the history. Instead, our goal is merely to illustrate which resource-theoretic ideas have been developed in quantum information theory and provide some references for further study.

\begin{itemize}
\item A close conceptual predecessor to our approach is the idea of ``resource inequalities'' developed in Devetak, Harrow and Winter~\cite{quantumshannon}. These resource inequalities are precisely inequalities in an ordered commutative monoid, and one can consider the present work as being concerned with extending some of the ideas of~\cite{quantumshannon} from the quantum information theory context into a general framework. For example, the quantum teleportation protocol leads to the resource inequality
\beq
[qq] + 2\cdot [c\to c] \geq [q\to q],
\eeq
where $[qq]$ denotes a maximally entangled quantum state of two qubits, $[c\to c]$ stands for one bit of classical communication, and $[q\to q]$ is one qubit of quantum communication. So the teleportation protocol lets us convert a maximally entangled state plus two bits of classical communication into one qubit of quantum communication. More generally, Devetak, Harrow and Winter study two-party quantum information processing from this resource-theoretic perspective. In doing so, they develop definitions and theorems some of which are similar to ours, while others are slightly different. For example,~\cite[Section~3.3]{quantumshannon} develops a notion of \emphalt{asymptotic resource} which achieves a result similar to our many-copy and seed regularizations together. It should be investigated whether a definition of asymptotic resource along these lines makes sense in our general context, e.g.~for any ordered commutative monoid with a generating pair, and whether it could be of use for obtaining an elegant solution for the problem discussed in Remark~\ref{QvsR}. Furthermore, the resulting~\cite[Theorem 3.29]{quantumshannon} is very close to our definition of ordered commutative monoid, and~\cite[Section~4]{quantumshannon} goes along the lines of the equivalence of~\ref{ocmaovs} and~\ref{ocmcond} in our Theorem~\ref{ocmhb}. Finally, we would like to understand better how Devetak, Harrow and Winter have dealt with the problem of epsilonification (Remark~\ref{epsilonification}) in their~\cite[Section~3.3]{quantumshannon}, and whether their solution would be amenable to generalization.

The formalism of resource inequalities has been used for example in~\cite{motherprotocol,entass2,trading,apex} and further developed in~\cite{cleanresource}.
\item In quantum information theory, a resource theory is typically specified in terms of a class of \emphalt{free operations}, which are those quantum operations that are so simple and ``cheap'' to implement that their cost is considered negligible compared to all other operations. The free operations can be used to convert between quantum states or even between other (non-free) quantum operations, which defines the resource ordering. An early reference for this idea is~\cite{uniqueinfo}, although the main goal of that work is to introduce a particular resource theory: Horodecki, Horodecki and Oppenheim investigate the resource theory of quantum states with conversions generated by arbitrary unitaries, introducing new systems in totally mixed states, and discarding subsystems. Subsequently, this resource theory has become known as the theory of \emphalt{informational nonequilibrium} or \emphalt{nonuniformity theory}~\cite{nonuniformity}.

\item In~\cite{frames}, Gour and Spekkens describe the free operations paradigm in more detail and advocate its use. Specifically, they define and investigate a family of resource theories in which the free operations are given by those quantum operations that respect the given action of a symmetry group. This is relevant e.g.~for the problem of aligning reference frames between distant observers. This family of resource theories has subsequently been studied extensively in works such as~\cite{asymmetry,modes}.

\item In~\cite{quantumrts}, Horodecki and Oppenheim review resource theories defined in terms of a class of free operations. They propose a canonical additive monotone based on relative entropy for resource theories of this kind and study its properties, and they derive~\eqref{onedimrate} in their setting.
\item The free operations paradigm has been made rigorous and generalized beyond the quantum information context in~\cite{resourcesI} using category theory as a general framework for theories of processes. The main focus in that paper was the \emphalt{construction} of the ordered commutative monoids that formalize the resource theories under consideration. In the present work, we have started to develop a toolbox for working with ordered commutative monoids and mostly ignored the question of how to construct them in situations of interest. Hence~\cite{resourcesI} complements the present work.
\item In~\cite{generalthermoI,generalthermoII}, Yunger Halpern and Renes have defined and investigated resource theories in thermodynamics quite generally, where one can consider different kinds of ``baths'', like \emphalt{heat baths}, \emphalt{particle baths}, etc., and this leads to an entire family of resource theories for thermodynamics.
\item Finally, very recent work by Brand{\~a}o and Gour~\cite{generalquantum} has considered the paradigm in which resource objects are quantum states which can be converted into each other through the use of free operations as explained above. Under a certain assumption on the free operations, they derive that all states are perfectly asymptotically interconvertible. Roughly, this is a result along the lines of Theorem~\ref{aovsonedim}; the difference is that Brand{\~a}o and Gour work in an epsilonified setting (Remark~\ref{epsilonification}). However, the assumption that any operation which takes free states to free states should itself be a free operation seems so restrictive that the result is unlikely to be of use for most operationally meaningful resource theories.
\end{itemize}

\appendix

\newpage
\section{The Hahn--Banach extension theorem}
\label{appendixhb}

Our main results rely crucially on the Hahn--Banach extension theorem for sublinear maps. This is one of the most basic results in functional analysis; but since we need it for vector spaces over $\Q$ whereas the standard formulation is concerned with vector spaces over $\R$ (or $\C$), we record the statement and proof here for completeness.

\begin{thm}
\label{hbext}
Let $V$ be a $\Q$-vector space and $p:V\to\R$ a map which is sublinear in the sense that
\beq
p(x+y) \leq p(x) + p(y),\qquad p(\lambda x) = \lambda p(x)
\eeq
for all $x,y\in V$ and $\lambda\in\Qplus$. Let $U\subseteq V$ be a subspace and $f:U\to\R$ a linear map which is $p$-dominated in the sense that
\beq
f(x)\leq p(x)
\eeq
for all $x\in U$. Then there exists a linear map $\hat{f}:V\to \R$ which extends $f$ and is $p$-dominated on all of $V$.
\end{thm}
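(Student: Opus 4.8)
The plan is to prove the Hahn--Banach extension theorem for $\Q$-vector spaces by the classical Zorn's lemma argument, which goes through verbatim over $\Q$ because the only field-specific step---extending across a single new dimension---requires only that between a supremum of rationals and an infimum of rationals one can find a rational number, which is of course true. So the real content is the one-step extension lemma, and then Zorn's lemma packages it.

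First I would set up the poset of partial extensions: consider all pairs $(W,g)$ where $W$ is a $\Q$-subspace of $V$ with $U\subseteq W$, and $g:W\to\R$ is linear, $p$-dominated on $W$, and extends $f$. Order by $(W_1,g_1)\leq(W_2,g_2)$ iff $W_1\subseteq W_2$ and $g_2|_{W_1}=g_1$. This poset is nonempty (it contains $(U,f)$), and every chain has an upper bound given by the union of the domains with the evidently well-defined common extension; $p$-domination is inherited since each element of the union lies in some member of the chain. By Zorn's lemma there is a maximal element $(\hat W,\hat f)$. The whole proof then reduces to showing $\hat W=V$.

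The key step---and the only place anything needs to be checked carefully---is the one-step extension: if $\hat W\neq V$, pick $x_0\in V\setminus\hat W$ and show $\hat f$ extends to a $p$-dominated linear map on $\hat W+\Q x_0$, contradicting maximality. Writing the extension as $\tilde f(w+\lambda x_0)=\hat f(w)+\lambda c$ for $w\in\hat W$, $\lambda\in\Q$, the required inequality $\tilde f(w+\lambda x_0)\leq p(w+\lambda x_0)$ for all $w,\lambda$ splits (by positive homogeneity, treating $\lambda>0$ and $\lambda<0$ separately, the case $\lambda=0$ being the hypothesis) into: $c\leq p(w+x_0)-\hat f(w)$ for all $w\in\hat W$, and $c\geq \hat f(w')-p(w'-x_0)$ for all $w'\in\hat W$. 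Such a $c\in\R$ exists iff $\sup_{w'}\bigl(\hat f(w')-p(w'-x_0)\bigr)\leq \inf_w\bigl(p(w+x_0)-\hat f(w)\bigr)$, and this inequality follows from $\hat f(w)+\hat f(w')=\hat f(w+w')\leq p(w+w')\leq p(w+x_0)+p(w'-x_0)$ after rearranging. Note $c$ is merely required to be a real number, so no divisibility or completeness of the scalar field is needed; this is exactly why the argument survives the passage from $\R$ to $\Q$.

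I do not expect a genuine obstacle here---the proof is entirely routine. The one point worth a sentence of care is that over $\Q$ we cannot invoke the usual remark that a subadditive positively-homogeneous-over-$\Rplus$ function is automatically what one needs; the hypothesis as stated only gives $p(\lambda x)=\lambda p(x)$ for $\lambda\in\Qplus$, but that is precisely what the homogeneity-splitting in the one-step extension uses, so there is nothing missing. I would also remark in passing that $V$ being of uncountable dimension over $\Q$ is no issue since we are using Zorn's lemma rather than a countable induction.
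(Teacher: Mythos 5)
Your proposal is correct and follows precisely the same route as the paper's proof: Zorn's lemma on the poset of $p$-dominated partial extensions, reduction to the one-step extension over $\hat W+\Q x_0$, and the observation that the needed constant $c$ lives in $\R$ (not $\Q$), which is exactly the remark the paper makes after its proof. No differences worth noting.
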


Here and in the following proof, the term ``linear'' always means ``$\Q$-linear'', and similarly for ``subspace''.

\begin{proof}
We consider pairs $(\hat{U},\hat{f})$ consisting of a subspace $\hat{U}\subseteq V$ which contains $U$ and a linear map $\hat{f}:\hat{U}\to\R$ which extends $f$ and is $p$-dominated. The set of all these pairs is naturally partially ordered via $(\hat{U},\hat{f})\leq (\hat{U}',\hat{f}')$ if and only if $\hat{U}\subseteq\hat{U}'$ and $\hat{f}'$ extends $\hat{f}$. This partially ordered set has the property that every totally ordered subset has an upper bound. Therefore by Zorn's lemma, it also has a maximal element $(\hat{U},\hat{f})$.

We claim that $\hat{U}=V$, since otherwise $(\hat{U},\hat{f})$ could not be maximal. We will see this by showing that if $\hat{U}\subsetneq V$, then we can choose $y\in V\setminus\hat{U}$ and extend $\hat{f}$ to the bigger subspace $\hat{U} + \Q y$. Such an extension is uniquely determined by linearity and its value on $y$; if we denote this to-be-determined value by $v$, then the extension takes the form
\beq
\hat{U} + \Q y \lra \R,\qquad x + \lambda y \longmapsto \hat{f}(x) + \lambda v .
\eeq
We need to make sure that $v$ can be chosen in such a way that the extension is $p$-dominated as well. Hence we need to guarantee the inequality
\beq
\hat{f}(x) + \lambda v \leq p(x + \lambda y)
\eeq
holds for all $x\in\hat{U}$ and $\lambda\in\Q$. By homogeneity under rescaling by positive numbers, it is sufficient to consider the two cases $\lambda = +1$ and $\lambda = -1$, in which case we get the two conditions
\beqn
\label{veqs}
v \leq p(x + y) - \hat{f}(x), \qquad \hat{f}(x) - p(x - y) \leq v .
\eeqn
We can find a $v$ satisfying all these inequalities if and only if all these lower bounds are less than or equal to all these upper bounds, which means
\beqn
\label{dedekindcut}
\hat{f}(x) - p(x - y) \leq p(x' + y) - \hat{f}(x').
\eeqn
This in turn follows from the assumption that $\hat{f}$ is $p$-dominated on $\hat{U}$ together with subadditivity of $p$,
\beq
\hat{f}(x + x') \leq p(x + x') \leq p(x - y) + p(x' + y) . \qedhere
\eeq
\end{proof}

\bigskip

Since this proof is precisely the conventional one, we have found that it is of no relevance that the field of coefficients is $\Q$ rather than $\R$. However, it is of paramount importance that the \emphalt{codomain} of $\hat{f}$ is allowed to be $\R$, since otherwise checking~\eqref{dedekindcut} would not be enough to guarantee the existence of a solution to~\eqref{veqs}: it may happen that there are infinitely many lower bounds and infinitely many upper bounds with only one number in between, and this number may be irrational, even if all the bounds are rational.

\newpage
\section{Ordered commutative monoids as symmetric monoidal categories}
\label{enrichment}

In the main text, we have developed a toolbox for investigating questions of resource convertibility. All of these questions were variants of one basic question: \emphalt{is it possible} to convert a resource object $x$ into a resource object $y$? While this type of question is undoubtedly important, answering it in the positive does not tell us \emphalt{how} to convert $x$ into $y$. Trying to answer it means that we should consider the collection of resource objects not as an ordered set, where $x\geq y$ represents the convertibility, but as an entire \emph{category} in which the morphisms $x\to y$ are the conversions of $x$ into $y$~\cite[Section~3]{resourcesI}. In closely related interpretations, the morphisms $x\to y$ may be the different ways in which $y$ can simulate $x$, or the different protocols available for turning $x$ into $y$.

Any category becomes an ordered set by putting $x\geq y$ for objects $x$ and $y$ if and only if there exists a morphism $x\to y$. This is a standard construction called the \emph{preorder reflection}\footnote{The convention for drawing the direction of the arrows is usually opposite, as in Example~\ref{graphs}.}. In our interpretation, the preorder reflection remembers which objects can be converted into which other ones, but it forgets how these conversions can be realized~\cite[Section~4]{resourcesI}.

By regarding an ordered set as a \emph{thin category}, which is a category in which any two parallel morphisms are equal, the \emphalt{if}-type questions become special cases of the \emphalt{how}-type ones. It turns out that certain well-known pieces of category theory can be interpreted as providing answers to the \emphalt{how}-type questions in a way which parallels and generalizes part of the toolbox developed in the main text:

\medskip
\begin{itemize}
\item Symmetric monoidal categories generalize ordered commutative monoids~\cite[Section~3]{resourcesI}.
\item Strong symmetric monoidal functors generalize homomorphisms of ordered commutative monoids.
\item Full faithfulness of functors generalizes Property~\eqref{ff} of homomorphisms to reflect the order.
\item Symmetric monoidal equivalences generalize isomorphisms of ordered commutative monoids.
\item A strong symmetric monoidal functor is an equivalence if and only if it is fully faithful and essentially surjective, and this generalizes Proposition~\ref{isocrit}.
\item Traced symmetric monoidal categories generalize cancellative ordered commutative monoids\footnote{This observation is due to David Spivak (personal communication).}.
\item Compact closed categories generalize ordered abelian groups~\cite{compactlinear}.
\item The Int construction~\cite{tmc}, which freely embeds a traced symmetric monoidal category in a compact closed category, generalizes the embedding of a cancellative ordered commutative monoid $A$ into $\oag(A)$.
\end{itemize}
\medskip

Moreover, the preorder reflection takes us from symmetric monoidal categories down to ordered commutative monoids, and similarly for traced and compact closed categories. So far, we do not know how to generalize ordered $\Q$-vector spaces or even Archimedean ordered $\Q$-vector spaces in a similar manner.

We can consider commutative monoids as ordered commutative monoids in which the ordering is symmetric. This results in analogous correspondences between unordered algebraic structures and symmetric monoidal \emphalt{dagger} categories.

\begin{ex}
Dagger compact categories generalize ordered abelian groups, and the preorder reflection assigns to every dagger compact category an abelian group. For example in topology, manifolds with boundary are morphisms in the \emphalt{cobordism category} interpolating between one part of the boundary and another~\cite[Section~1.2]{cob}. Since this is a dagger compact category, its preorder reflection yields an abelian group known as the \emphalt{cobordism group}~\cite[Section~1.2.19]{cob}.
\end{ex}

Enriched category theory~\cite{enriched} provides yet another perspective on thin categories. With $\B=\{0,1\}$ denoting the Booleans, considered as a symmetric monoidal category with one non-identity morphism $0\to 1$ and multiplication as the monoidal product, thin categories are ``the same thing as'' $\B$-categories: every thin category is automatically $\B$-enriched, and conversely every $\B$-category has an underlying category~\cite[Section~1.3]{enriched} which is thin, and these two constructions are inverses of each other. The preorder reflection can be understood as the canonical change of base from the category of sets to the Booleans. 

It is of interest to consider other base categories for the enrichment as well. A particularly relevant example should be $\Rplus$, considered as a thin category. $\Rplus$-categories are a variant of metric spaces~\cite{lawveremetrics}, and therefore developing an $\Rplus$-enriched theory analogous to the main text is one way to address Remark~\ref{epsilonification}.

\newpage
\bibliographystyle{unsrt}
\bibliography{resources_general}

\end{document}